\documentclass{amsart}

\def\multiset#1#2{\ensuremath{\left(\kern-.3em\left(\genfrac{}{}{0pt}{}{#1}{#2}\right)\kern-.3em\right)}}
\setlength\parindent{24pt}
\theoremstyle{plain}
\newtheorem{theorem}{Theorem}[section]
\newtheorem{con}{Conjecture}[section]

\newtheorem{lemma}[theorem]{Lemma}

\usepackage{thmtools}
\usepackage{thm-restate}

\usepackage{hyperref}

\usepackage{cleveref}

\theoremstyle{definition}

\theoremstyle{definition}

\usepackage{enumitem}
\begin{document}
\title{Pattern Avoidance in Set Partitions}
\author{Emma Christensen}

\begin{abstract}
A set partition avoids a pattern if no subdivision of that partition standardizes to the pattern. There exists a bijection between set partitions and restricted growth functions (RGFs) on which Wachs and White defined four statistics of interest to this work. We first characterize the restricted growth functions of several avoidance classes based on partitions of size four, enumerate these avoidance classes, and consider the distribution of the Wachs and White statistics across these avoidance classes.  We also investigate the equidistribution of statistics between avoidance classes based on multiple patterns. 
\end{abstract}
\maketitle
\section{Introduction}
Pattern avoidance in set partitions has become an increasingly popular area of research. In 2010 Bruce Sagan enumerated the avoidance classes for patterns of size three and also characterized the restricted growth functions in these classes \cite{Sag09}.
Not only are the sizes of different avoidance classes interesting, statistics on restricted growth functions in avoidance classes also offer a rich area of research. In 1991 Wachs and White defined four statistics of interest to this paper  on the words associated with set partitions \cite{WW91}. In a Research Experience for Undergraduates at Michigan State University in 2014, students investigated the distribution of the Wachs and White statistics over avoidance classes based on patterns of size three \cite{REU1}. This group also investigated multiple pattern avoidance focusing on two patterns of length three, or one pattern of length three and one of length four \cite{REU2}. Vit Jelinek, Toufik Mansour, and Mark Shattuck expanded the idea of pattern avoidance in 2013 by investigating avoidance classes determined by two patterns, especially the enumeration of (3,4)-pairs and the enumeration of (4,4)-pairs \cite{JelMan13}.
\par This paper will begin by considering avoidance classes based on one pattern of size four. We will first characterize the words in each avoidance class. Then using these word characterizations we will enumerate the avoidance classes of interest, develop equations for the Wachs and White statistics of each class, and finally compare the distributions of these statistics on avoidance classes. Following the investigation of the avoidance classes based on patterns of size four, we will consider classes avoiding multiple patterns. This focus on multiple pattern-avoiding classes of set partitions will continue to center around the investigation of the equidistribution of statistics. Finally this paper will introduce topics generated by this research which we will investigate in the future.
\par This paper is the result of the author's undergraduate thesis at the College of Saint Benedict/Saint John's University. The research was completed during the summers of 2016 and 2017. The author thanks Jennifer Galovich for her support in this work.
\section{Definitions and Notation}
A set partition of $S=\{1,2,...,n\}$ is a collection of disjoint subsets (or blocks) $B_1, B_2,...,B_m$ where $B_1\cup B_2\cup...\cup B_m=S$ and $$min(B_1)<min(B_2)<...<min(B_m)$$
A set partition, $\sigma$, of $S$ is denoted:  $$\sigma=B_1/B_2/.../B_m$$ 
The collection of all set partitions of $[n]$ is denoted:
$$\Pi_n=\{\sigma:\sigma\vdash [n]\}$$
\par Before defining avoidance and containment, one necessary structure to develop will be a subpartition of a set partition $\sigma$, denoted $\sigma'$. 
\definition When $S=\{1,...,n\}$ and $A\subseteq S$, then a subpartition $\sigma'$ of a partition $\sigma$ is defined as $\sigma'=A\cap\sigma$. 
\par For example, if $\sigma=1347/25/68\vdash [8]$ and $A=\{1,2,4,7\}$, then
$\sigma'=\{1,2,4,7\}\cap{1347/25/68}=147/2$. Our subpartition $\sigma'$ does not have consecutive elements in this example and $\sigma'$ does not appear to be an element of $\Pi_n$. To deal with a subpartition with nonconsecutive elements we standardize $\sigma'$. To do this we assign the smallest element to $1$, the next smallest to $2$, and so on. So $147/2$ standardizes to $134/2$. \par Standardizing subpartitions becomes useful when comparing a set partition to a pattern, where a pattern is another set partition under consideration. If a partition has at least one subpartition that standardizes to a given pattern $\pi$, then the partition is said to \textit{contain} $\pi$. If no subpartition standardizes to $\pi$, then the partition \textit{avoids} $\pi$.
\par Examples are helpful in understanding the basic concepts of pattern avoidance and containment. Let $\pi=134/2$ and $\sigma=1347/25/68$. Now consider $A=\{1,2,4,7\}$ such that $\sigma'=A\cap\sigma$ which standardizes to $\pi=134/2$. We can conclude that $\sigma$ contains $\pi$. Another helpful example is determining if $\sigma=12/3/469/578$ contains $\pi=13/2/4$. To decide if $\sigma$ avoids or contains $\pi$ we must consider every possible $\sigma'$ until either we find some $\sigma'$ that standardizes to $\pi$ or we show that no such $\sigma'$ exists. This is a long and time consuming process; however there is a much more elegant approach. Instead of considering every possible $\sigma'$, if we first find a basic form of partitions that always avoid $\pi$, this process will become much more efficient. For this example, $\sigma=12/3/469/578$ avoids $\pi=13/2/4$ because for $i,j,k\in\mathbb{N}$ there are no $a,b\in B_i$ where there exist $c\in B_j$ and $d\in B_k$ such that $a<c<b<d$.
\definition The collection of the set partitions that avoids a specific pattern, $\pi$, is called an avoidance class and is denoted:
$$\Pi_n(\pi)=\{\sigma\in\Pi_n:\sigma\mbox{ avoids }\pi\}$$
\par As stated in the introduction, we are not only interested in the types of elements in, and size of the avoidance class, we also want to determine the relationship between avoidance classes and Wachs and White statistics. However, before we can find Wachs and White statistics for partitions in avoidance classes, these set partitions must be converted into restricted growth functions. 
\definition[Restricted Growth Function]
If $w=a_1 a_2...a_n$ where $a_1=1$ and when $i\geq 2$, then $a_i\leq 1+\mbox{max}\{a_1,...,a_{i-1}\}$, then $w$ is a restricted growth function (RGF).

\par There is a well known correspondence between set partitions of $n$ and restricted growth functions. The letter assigned to each position is determined as follows: if $i$ is an element in $\sigma=B_1/B_2/.../B_m$ and $i\in B_j$, then $w_i=j$. The following example is a set partition labeled with the block number for each element and the corresponding word labeled with the position of each letter.
$$\sigma=\underset{1}{1}/\underset{2}{2}\underset{2}{3}\underset{2}{5}/\underset{3}{4}\underset{3}{6}$$
$$w(\sigma)=\underset{1}{1}\underset{2}{2}\underset{3}{2}\underset{4}{3}\underset{5}{2}\underset{6}{3}$$
The collection of RGFs of length $n$ is denoted:
$$R_n=\{w:w\mbox{ is an RGF of length }n\}$$
The collection of RGFs associated with the avoidance class for a pattern $\pi$ is denoted:
$$R_n(\pi)=\{w(\sigma):\sigma\in\Pi_n(\pi)\}$$.
\par It will be useful to note that we say that $w=1^{a_1}2^{a_2}...m^{a_m}$ when $w$ is weakly increasing. When $w$ of type $1^{a_1}2^{a_2}...m^{a_m}$ this simply requires $w$ to have $a_i$ $i$s for every $1\leq i\leq m$. 
\par Because our statistics of interest are defined on the RGFs, and not on set partitions, one of the goals of this study is to characterize the words in $R_n(\pi)$ for $|\pi|=4$ and for multiple patterns. Some work in this area has already been completed and will be useful in characterizing our avoidance classes.
\begin{lemma}[Word Characterizations based on patterns of size $3$]\cite{Sag09}
\begin{itemize}
	\item[1.]$R_n(1/2/3)=\{w\in R_n: w$ consists of only $1$s and $2$s$\}$
    \item[2.]$R_n(1/23)=\{w\in R_n: w$ is obtained by inserting a single $1$ into a word of the form $1^l23...m$ for some $l\geq0$ and $m\geq1\}$
    \item[3.]$R_n(13/2)=\{w\in R_n: w$ is layered$\}$
    \item[4.]$R_n(12/3)=\{w\in R_n: w$ has initial run $1...m$ and $a_{m+1}=...=a_n\leq m\}$
    \item[5.]$R_n(123)=\{w\in R_n: w$ has no element repeated more than twice$\}$
    
\end{itemize}
\end{lemma}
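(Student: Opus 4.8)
The plan is to reduce each of the five set-partition conditions to a purely local condition on the associated restricted growth function and then read off the stated structural description. The backbone of every case is the following translation principle, which I would establish first. Given a three-element pattern $\pi$ (a set partition of $\{1,2,3\}$) and a partition $\sigma$ with RGF $w = w_1\cdots w_n$, choosing three positions $a<b<c$ and forming the subpartition on $\{a,b,c\}$ relabels $a,b,c$ to $1,2,3$ while preserving which of them share a block; hence the standardized pattern is determined entirely by the equality type of the triple $(w_a,w_b,w_c)$, and the block-minimum ordering in the definition of a set partition is automatically consistent with the position order. Consequently $\sigma$ contains $\pi$ if and only if $w$ has positions $a<b<c$ whose equality type matches $\pi$: all distinct for $1/2/3$; all equal for $123$; $w_a=w_b\neq w_c$ for $12/3$; $w_a\neq w_b=w_c$ for $1/23$; and $w_a=w_c\neq w_b$ for $13/2$.

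The two extreme patterns then fall out immediately. Avoiding $1/2/3$ means no three positions carry three distinct letters, i.e. $w$ uses at most two distinct values; since the values of an RGF are exactly $1,\dots,\max w$ with none skipped, this is equivalent to $w$ consisting only of $1$s and $2$s, giving statement 1. Avoiding $123$ means no value occurs at three positions, i.e. no letter is repeated more than twice, giving statement 5. For $13/2$ I would argue that avoidance forbids a nesting $w_a=w_c\neq w_b$ with $a<b<c$, which says exactly that the occurrences of each letter form a block of consecutive positions. For an RGF this is equivalent to being weakly increasing: the letter $1$ opens a run at the front, and since no letter may reappear after a different letter has intervened, each new run must introduce the next integer $\max+1$, forcing the layered form $1^{a_1}2^{a_2}\cdots m^{a_m}$; this is statement 3, and the converse is immediate since no letter then has an interior gap.

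The remaining two statements require analyzing the repeats of $w$. For $12/3$ I would examine the first position $b$ at which some value repeats: the prefix $w_1\cdots w_{b-1}$ is then strictly increasing, hence equal to $1,2,\dots,b-1$, and applying the forbidden type $w_a=w_b\neq w_c$ to the repeat at $b$ forces every later letter to equal $w_b$, yielding the initial run $1\cdots m$ followed by a constant tail $\le m$ of statement 4. The $1/23$ case is subtler, and this is where I expect the main difficulty. From the forbidden type $w_a\neq w_b=w_c$ I would first show that a value $v\neq 1$ can never repeat, since its first occurrence is preceded by the initial $1$ (taking $a=1$ and $b,c$ the first two occurrences of $v$ produces the pattern); then, for the value $1$, the same forbidden triple applied to the last two $1$s forces all but possibly one of the $1$s to sit in an initial block, so that $w$ is obtained from some template $1^l\,2\,3\cdots m$ by inserting a single additional $1$, which is statement 2.

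In each of the last three cases I would also verify the converse direction, confirming that words of the described shape genuinely avoid the pattern. This bookkeeping, together with pinning down exactly how many $1$s may be displaced from the initial block in the $1/23$ analysis — and in particular checking that the boundary cases with $l=0$ or with a single $1$ are correctly captured by the insertion description — is the most delicate part of the argument; the other four characterizations are then routine consequences of the translation principle.
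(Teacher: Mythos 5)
The paper states this lemma purely as a citation to Sagan and supplies no proof of its own, so there is no internal argument to compare against; your proposal is correct and is essentially the standard argument. Your translation principle --- that containment of a size-three pattern is equivalent to the existence of positions $a<b<c$ whose equality type in $(w_a,w_b,w_c)$ matches the pattern, with block-minima order automatically agreeing with position order --- is exactly the right reduction, and your handling of the two nontrivial cases (for $12/3$, the first repeated position forcing a constant tail $\le m$ after a strictly increasing prefix $1\cdots m$; for $1/23$, that no letter other than $1$ may repeat and that all but at most one occurrence of $1$ must form an initial block) correctly fills in the only delicate steps.
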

Another goal will be to enumerate the avoidance classes. The following theorem is a helpful result:
\begin{lemma}\cite{Sag09} If $w\in R_n$ is weakly increasing with $m$ distinct letters, then there is a total of $\sum_m {{n-1}\choose{n-m}}$ possibilities for $w$.
\end{lemma}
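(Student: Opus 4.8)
The plan is to first pin down the exact shape of a weakly increasing RGF and then reduce the enumeration to a standard composition count.

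First I would exploit the fact that $w$ is weakly increasing to simplify the RGF condition. Since $a_1 \leq a_2 \leq \cdots \leq a_n$, the maximum of any prefix $\{a_1,\ldots,a_{i-1}\}$ is just $a_{i-1}$, so the defining inequality $a_i \leq 1 + \max\{a_1,\ldots,a_{i-1}\}$ collapses to $a_{i-1} \leq a_i \leq a_{i-1}+1$. Thus at each step the word either repeats the previous letter or increases it by exactly one, and because $a_1 = 1$ no value can ever be skipped. It follows that a weakly increasing RGF using exactly $m$ distinct letters must employ precisely the letters $1,2,\ldots,m$, each occurring in a single contiguous run, so that $w = 1^{b_1}2^{b_2}\cdots m^{b_m}$ with every $b_i \geq 1$. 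This is exactly the form $1^{a_1}2^{a_2}\cdots m^{a_m}$ flagged earlier in the excerpt.

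Next I would set up the bijection between such words and compositions. The word $w$ is determined uniquely by its tuple of run-lengths $(b_1,\ldots,b_m)$ subject to $b_i \geq 1$ and $b_1 + \cdots + b_m = n$; conversely every such tuple yields a valid weakly increasing RGF. These tuples are precisely the compositions of $n$ into $m$ positive parts, of which there are $\binom{n-1}{m-1}$ by the usual stars-and-bars argument (choose $m-1$ of the $n-1$ internal gaps at which to break the run). Invoking the symmetry $\binom{n-1}{m-1} = \binom{n-1}{n-m}$ then gives the count $\binom{n-1}{n-m}$ for a fixed number $m$ of distinct letters. Summing over all admissible $m$, namely $1 \leq m \leq n$, produces the total $\sum_m \binom{n-1}{n-m}$ claimed in the statement.

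I expect the only genuinely delicate point to be the structural step: carefully arguing that weak monotonicity together with the RGF inequality \emph{forbids} skipping a value, so that the set of distinct letters is forced to be an initial segment $\{1,\ldots,m\}$ and each letter appears at least once. Once that characterization is secured, the rest is the routine enumeration of compositions, and the binomial identity merely rewrites the answer in the form stated.
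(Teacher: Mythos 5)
Your proposal is correct and uses essentially the same counting idea as the paper: a stars-and-bars count giving $\binom{n-1}{n-m}$ for each fixed $m$ (you phrase it as compositions of $n$ into $m$ positive parts, $\binom{n-1}{m-1}$, while the paper distributes the $n-m$ ``unassigned'' letters among $m$ types as a multiset, $\binom{m+(n-m)-1}{n-m}$; these are the same count under $b_i = 1 + c_i$). Your structural preamble, showing that weak monotonicity plus the RGF condition forces $w = 1^{b_1}2^{b_2}\cdots m^{b_m}$ with every $b_i \geq 1$, is a detail the paper's proof takes for granted, and including it is a modest improvement rather than a different route.
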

\begin{proof} If $w$ is weakly increasing, we are not concerned with order when enumerating. Then there are $m$ choices for each letter and $n-m$ letters not assigned. So for each $m$ there are ${{m+(n-m)-1}\choose{n-m}}={{n-1}\choose{n-m}}$ weakly increasing words of length $n$ with $m$ distinct letters.
\end{proof}
There are several statistics of interest to this paper based on the word given by the restricted growth function. Once we have a characterization for the words in an avoidance class we can determine the distribution of the statistics across avoidance classes. This paper will focus on the four Wachs and White statistics, where the letters $l,r,s,$ and $b$ indicate left, right, smaller, and bigger respectively \cite{WW91}. Before defining the Wachs and White statistics across entire words, however, we will begin by defining the contributions to the statistics for individual letters:

\definition For a position $i\in [n]$, the associated letter $w_i$ will contribute the following to each statistic:
$$rs_i(w)=|\{w_j<w_i:j>i\}|$$
$$ls_i(w)=|\{w_j<w_i:j<i\}|$$
$$lb_i(w)=|\{w_j>w_i:j<i\}|$$
$$rb_i(w)=|\{w_j>w_i:j>i\}|$$

\par For example, consider $w=1123124255$, then $ls_4(w)=2$ because we are only counting distinct terms to the left and smaller than $w_4=3$. Now we will consider the statistics across an entire word.
\definition[Wachs and White Statistics]\cite{WW91} If $w\in R_n$, then
$$rs(w)=\sum_{i=1}^nrs_i(w)$$
$$rb(w)=\sum_{i=1}^nrb_i(w)$$
$$lb(w)=\sum_{i=1}^nlb_i(w)$$
$$rs(w)=\sum_{i=1}^nrs_i(w)$$

\par So the right and smaller statistic counts the number of distinct letters to the right and smaller of each letter in $w$. To see this more clearly, consider the word, $w$, labeled with $rs_i(w)$ for all $i\in[n]$:
$$w\underset{rs_i(w):}{=}\underset{0}{1}\underset{1}{2}\underset{2}{3}\underset{1}{2}\underset{3}{4}\underset{2}{3}\underset{0}{1}\underset{0}{2}\underset{0}{5}$$ From the definition of the $rs$ statistic we know $rs(w)$ is simply the sum of these individual contributions $rs_i(w)$ for every $i\in n$, so
$$rs(w)=0+1+2+1+3+2+0+0+0=9$$
\par Next consider the $lb$ statistic for $w$, we first label each letter below with $lb_i(w)$ for all $i\in [n]$ and then find $lb(w)$.
$$w\underset{lb_i(w):}{=}\underset{0}{1}\underset{0}{2}\underset{0}{3}\underset{1}{2}\underset{0}{4}\underset{1}{3}\underset{3}{1}\underset{2}{2}\underset{0}{5}$$
$$lb(w)=0+0+0+1+0+1+3+2+0=7$$
The two other Wachs and White statistics for $w$ are:
$$rb(w)=4+3+2+3+1+1+2+1+0=17$$
$$ls(w)=0+1+2+1+3+2+0+1+4=14$$
\par One of the intentions of this research is to find equations for the four Wachs and White statistics on specific avoidance classes. Before focusing on specific classes however, it will be useful in this work to recognize that there is a simple formula for the left and smaller statistic of any word.
\begin{lemma} If $w\in R_n$ such that $w$ is of type $1^{a_1}2^{a_2}...m^{a_m}$, then
$$ls(w)=\sum\limits_{i=1}^m ia_i-n$$
\end{lemma}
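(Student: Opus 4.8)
The plan is to reduce the global statistic $ls(w)$ to a simple per-letter count by exploiting the weakly increasing structure of $w$. Since $ls(w) = \sum_{i=1}^n ls_i(w)$ and each $ls_i(w)$ counts the distinct letters strictly smaller than $w_i$ that occur to its left, the first step is to determine $ls_i(w)$ explicitly for a word of type $1^{a_1}2^{a_2}\cdots m^{a_m}$, and then to sum these contributions.

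First I would observe that because $w$ is a weakly increasing RGF, its distinct letters are exactly $1, 2, \ldots, m$, each appearing at least once, and every occurrence of a value precedes every occurrence of a strictly larger value. Consequently, if $w_i = k$, then each of the values $1, 2, \ldots, k-1$ appears at least once strictly to the left of position $i$, and no larger value can contribute to $ls_i(w)$. Hence the set $\{w_j < w_i : j < i\}$ consists of precisely the $k-1$ distinct values $1, \ldots, k-1$, so that $ls_i(w) = k - 1$ for every position $i$ with $w_i = k$.

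Next I would group the sum over positions by letter value. Since there are $a_k$ positions carrying the letter $k$, each contributing $k-1$, we obtain
$$ls(w) = \sum_{k=1}^m a_k (k-1).$$
Splitting this into two sums and using $\sum_{k=1}^m a_k = n$ (the total length of $w$) gives $ls(w) = \sum_{k=1}^m k a_k - n$, which is exactly the claimed formula.

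The argument is essentially a bookkeeping computation, so I do not expect a serious obstacle; the one point that requires care is justifying that every smaller value genuinely occurs to the left of a given position, which is precisely where the weakly increasing hypothesis is essential (together with the RGF condition, which forces the distinct letters to form the initial segment $1, \ldots, m$ with no gaps). Without weak monotonicity the quantity $ls_i(w)$ would depend on the arrangement of the letters and not merely on the value $w_i$, and the clean collapse to $\sum_k a_k(k-1)$ would fail.
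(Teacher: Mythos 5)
Your computation is correct for weakly increasing words, but it proves only a special case of the lemma, and your closing remark --- that without weak monotonicity the identity $ls_i(w)=w_i-1$ would fail and the collapse to $\sum_k a_k(k-1)$ would break down --- is actually false. The hypothesis in the statement is that $w$ is \emph{of type} $1^{a_1}2^{a_2}\cdots m^{a_m}$, which the paper defines to mean only that $w$ contains $a_i$ copies of the letter $i$ for each $i$, with no constraint on the order; the notation $w=1^{a_1}\cdots m^{a_m}$ (reserved for weakly increasing words) is deliberately contrasted with ``of type'' in Section 2, and the companion lemma for $rb$ explicitly adds the weakly increasing hypothesis precisely because $rb$, unlike $ls$, genuinely depends on the arrangement. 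The generality matters downstream: for instance the proof of Theorem 5.2 concludes $ls(w)=ls(\phi(w))$ from this lemma solely because $\phi$ preserves letter multiplicities, and it is applied there to words that are not weakly increasing.

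The missing idea is that the RGF condition alone already forces every occurrence of a letter $k$ to be preceded by at least one occurrence of each of $1,\ldots,k-1$: since $w_i\leq 1+\max\{w_1,\ldots,w_{i-1}\}$, the first occurrences of the distinct letters must appear in the order $1,2,\ldots,m$, so any position carrying the letter $k$ lies at or after the first occurrence of $k$ and hence after first occurrences of all smaller letters, while no larger letter can contribute to $ls_i$. This gives $ls_i(w)=w_i-1$ for \emph{every} RGF, not just the monotone ones, and the rest of your bookkeeping, $\sum_k a_k(k-1)=\sum_k ka_k-n$, then goes through verbatim. This order-of-first-occurrences argument is exactly the step the paper's own proof supplies.
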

\begin{proof}
We claim that for each letter, $i\in w$, there will be $i-1$ distinct elements that are smaller and occur before $i$. Suppose that $w$ is a word of type $1^{a_1}2^{a_2}...m^{a_m}$. 
\par Now consider some $j\in w$ where $w$ does not have all of the integers that occur between $1$ and $j$ in some order in the word before $a_j$. Then for some $k$ where $k<j$ and $k$ first occurs after $j$, $k+1\leq j$ then $j\not\leq1+max\{1,...,j-1\}$. Therefore no element can appear without all integers before that element also appearing in the word by our definition of RGF. Because $w$ will have $a_i$ occurrences of $i$, for every $i$, there will be $a_i$ letters, each contributing $i-1$ to the $ls$ statistic. Then by the definition of $ls$,
$$ls(w)=\sum\limits_{i=1}^m a_i(i-1)$$
$$    \hfill=\sum\limits_{i=1}^m ia_i-n$$
Therefore when $w\in R_n$ and has $m$ distinct letters, $ls(w)=\sum\limits_{i=1}^m ia_i-n$.
\end{proof}
There is a simple parallel equation for the $rb$ statistic of a word, however this formula only applies to weakly increasing words.
\begin{lemma} If $w\in R_n$  where $w$ is weakly increasing such that $w=1^{a_1}2^{a_2}...m^{a_m}$, then
$$rb(w)=\sum_{i=1}^m (m-i)a_i$$
\end{lemma}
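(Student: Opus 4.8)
The plan is to mirror the proof of the $ls$ formula given in the preceding lemma: I would compute the individual contribution $rb_i(w)$ at each position and then sum over all positions. The claim I would establish is that every occurrence of the letter $i$ contributes exactly $m-i$ to the statistic, after which the stated formula is immediate.

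First I would record the structural fact that since $w$ is a weakly increasing RGF of type $1^{a_1}2^{a_2}\cdots m^{a_m}$, its distinct letters are precisely $1,2,\dots,m$, each value $i$ appearing $a_i$ times, and all occurrences of a smaller letter precede all occurrences of a larger letter. Next, fixing a position $p$ with $w_p=i$, I would argue that the distinct values exceeding $i$ that occur anywhere in $w$ are exactly $i+1,i+2,\dots,m$; because $w$ is weakly increasing, every one of these larger values occurs only after the last occurrence of $i$, hence to the right of $p$. Counting distinct larger letters to the right of $p$ therefore yields $rb_p(w)=|\{i+1,\dots,m\}|=m-i$, independent of which occurrence of $i$ we chose.

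Finally, since there are $a_i$ positions carrying the letter $i$, summing these contributions over all positions gives $rb(w)=\sum_{i=1}^m (m-i)a_i$, as desired.

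The one point requiring care---though it is not technically difficult---is the justification that each larger value $i+1,\dots,m$ genuinely appears to the right of a given occurrence of $i$, rather than possibly being absent or appearing to its left. This is exactly where the weakly increasing hypothesis is essential, and where the formula would fail for a general (non-monotone) RGF: it is the analog of the role the RGF definition played in the preceding lemma, and it relies on the type $1^{a_1}\cdots m^{a_m}$ guaranteeing that every value in $\{1,\dots,m\}$ is present. Once this observation is in place, the remainder is a routine sum.
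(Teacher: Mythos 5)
Your argument is correct and follows the same route as the paper's proof: you fix a position carrying the letter $i$, show its contribution to $rb$ is exactly $m-i$, and sum over the $a_i$ occurrences of each letter. The paper simply asserts that $rb_b(w)=m-c$ for a position $b$ with $w_b=c$, whereas you supply the (correct) justification via the weakly increasing hypothesis; the two proofs are otherwise identical.
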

\begin{proof} Let $w$ be a weakly increasing word where $w\in R_n$ such that $w=1^{a_1}2^{a_2}...m^{a_m}$, and let $b$ be a position in $w$ where $w_b=c$ with $a_c$ occurrences of $c$ in $w$. Then $rb_b(w)=(m-c)$. There will be $a_c$ of these letters contributing the same value for $rb_i(w)$ to the $rb$ statistic, therefore $rb(w)=\sum_{i=1}^m (m-i)a_i$.
\end{proof}
As we investigate these statistics across avoidance classes, we first identify avoidance classes of the same size. To begin, we use word characterizations to help us enumerate the avoidance classes. The complement of a set partition, defined below, is helpful in enumerating avoidance classes. 
\definition We define the \emph{complement} of $\sigma$ to be the set partition $\sigma^c$ where every element $l\in\sigma$ becomes $(n-l+1)\in \sigma^c$.
\par For example, let $\sigma=1246/37/5$. Then $\sigma^c=15/2467/3$.
\par From \cite{Sag09}, we have the following theorem:
\begin{restatable}[Cardinality of the Avoidance Class of the Complement of $\pi$]{thm}{Comp} 
\label{thm:comp}
If $\pi\in\Pi_k$ and $\pi^c$ is the complement of $\pi$, then $\#\Pi_n(\pi)=\#\Pi_n(\pi^c)$.
\end{restatable}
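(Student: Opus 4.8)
The plan is to exhibit the complement map $\sigma \mapsto \sigma^c$ as an explicit bijection between $\Pi_n(\pi)$ and $\Pi_n(\pi^c)$. The first observation is that complementation is an involution on $\Pi_n$: applying the map $l \mapsto n-l+1$ twice returns each element to itself, so $(\sigma^c)^c = \sigma$, and in particular $\sigma \mapsto \sigma^c$ is a bijection from $\Pi_n$ to itself. The entire theorem then reduces to the single claim that $\sigma$ avoids $\pi$ if and only if $\sigma^c$ avoids $\pi^c$; once this equivalence is established, the complement map restricts to a bijection $\Pi_n(\pi) \to \Pi_n(\pi^c)$, and so the two cardinalities must agree.

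To prove the equivalence, I would track how complementation interacts with subpartitions and standardization. Fix a subset $A = \{a_1 < a_2 < \dots < a_j\} \subseteq [n]$, and let $A^c = \{n+1-a : a \in A\}$. The central lemma to verify is that standardizing the subpartition of $\sigma^c$ on $A^c$ yields exactly the complement (in $\Pi_j$) of the standardization of the subpartition of $\sigma$ on $A$; symbolically, $\operatorname{std}(A^c \cap \sigma^c) = (\operatorname{std}(A \cap \sigma))^c$. The reason is that complementation reverses the relative order of elements while preserving which elements share a block: the $i$-th smallest element $a_i$ of $A$ becomes the $(j+1-i)$-th smallest element of $A^c$, so standardization sends $a_i$ to position $i$ on one side and to position $j+1-i$ on the other, and these two positions are interchanged by precisely the complement map $i \mapsto j+1-i$ on $[j]$.

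With this lemma in hand, containment transfers cleanly. If $\sigma$ contains $\pi$, then some subpartition $A \cap \sigma$ standardizes to $\pi$; applying the lemma, the subpartition $A^c \cap \sigma^c$ standardizes to $\pi^c$, so $\sigma^c$ contains $\pi^c$. Running the same argument with the roles reversed, using that complementation is an involution, gives the converse implication. Hence $\sigma$ avoids $\pi$ exactly when $\sigma^c$ avoids $\pi^c$, which is the equivalence that finishes the proof.

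I anticipate that the main obstacle will be the bookkeeping in the central lemma. In particular, one must confirm that the complement of a set partition is well defined as an element of $\Pi_n$, since after the relabeling $l \mapsto n-l+1$ the block minima change and the blocks must be reindexed so that their minima increase; one must then check that this reindexing does not affect the standardized pattern, because standardization depends only on the grouping of $A$ into blocks and not on the order in which those blocks are listed. Care is also needed to state the order-reversal argument for subpartitions of arbitrary size $j$, rather than only for the ambient partition, so that it applies equally to the size-$k$ pattern $\pi$.
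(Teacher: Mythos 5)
Your proof is correct. The paper itself does not prove this statement --- it is quoted from Sagan's \emph{Pattern avoidance in set partitions} without proof --- and your argument is essentially the standard one from that source: complementation is an involution on $\Pi_n$, and the key lemma $\operatorname{std}(A^c\cap\sigma^c)=(\operatorname{std}(A\cap\sigma))^c$ (which you state and justify correctly, since complementation reverses relative order while preserving the block structure) shows that $\sigma$ avoids $\pi$ if and only if $\sigma^c$ avoids $\pi^c$, so the complement map restricts to the desired bijection. Your closing caveats about well-definedness of $\sigma^c$ as an element of $\Pi_n$ and about standardization being insensitive to block reindexing are the right details to check, and both go through.
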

\definition If $\pi_1,\pi_2\in\Pi_n$ such that $\#\Pi_n(\pi_1)=\#\Pi_n(\pi_2)$, then $\pi_1$ is \emph{Wilf-equivalent} to $\pi_2$. We write $\pi_1\equiv\pi_2$. 
\par  Before considering whether any statistic is equidistributed on two avoidance classes, we first determine if the two classes are Wilf-equivalent. By Theorem 2.2, for all $\sigma$ and $\sigma^c$, $\sigma\equiv\sigma^c$. This reduces the number of cases to consider when determining the equidistribution of statistics between avoidance classes. 
\par Here is another important lemma to note that will help us in our word characterizations for the avoidance classes.
\begin{lemma} If $w(\sigma)$ is weakly increasing, then $w(\sigma^c)$ is also weakly increasing.
\end{lemma}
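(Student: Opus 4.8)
The plan is to reduce the statement to a structural characterization of the set partitions whose restricted growth functions are weakly increasing, and then to observe that this structure is preserved under complementation. Specifically, I would first establish the following characterization: $w(\sigma)$ is weakly increasing if and only if every block of $\sigma$ is a set of consecutive integers.

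For the forward direction of this characterization, suppose $w=w(\sigma)$ is weakly increasing and fix a block $B_j$ of $\sigma$. The elements of $B_j$ are exactly the positions $i$ with $w_i=j$. Because $w$ is weakly increasing, the set of positions carrying any one fixed value is a contiguous run, so $B_j$ is an interval $\{p,p+1,\ldots,q\}$; hence every block of $\sigma$ consists of consecutive integers. Conversely, if every block of $\sigma$ is an interval, then since the blocks are disjoint and partition $[n]$ they must be the consecutive intervals $\{1,\ldots,a_1\},\{a_1+1,\ldots,a_1+a_2\},\ldots$ listed in increasing order of their minima, and the RGF assigns the constant value $1$ on the first interval, $2$ on the second, and so on, producing a weakly increasing word.

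With the characterization in hand, I would finish as follows. Assume $w(\sigma)$ is weakly increasing, so by the forward direction every block of $\sigma$ is an interval. The complement map $c\colon l\mapsto n-l+1$ is an order-reversing bijection of $[n]$, and any such map sends an interval $\{p,\ldots,q\}$ to the interval $\{n-q+1,\ldots,n-p+1\}$. Since the blocks of $\sigma^c$ are precisely the images under $c$ of the blocks of $\sigma$, every block of $\sigma^c$ is again an interval, and applying the converse direction of the characterization to $\sigma^c$ shows that $w(\sigma^c)$ is weakly increasing, as desired.

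The hard part will be the bookkeeping around the relabeling of blocks: complementation reverses the order of the block minima, so the last block of $\sigma$ becomes the first block of $\sigma^c$, and one must be careful not to conflate the block indices of $\sigma$ with those of $\sigma^c$. This potential obstacle is ultimately harmless, because the property ``each block is an interval'' is intrinsic to the partition and independent of how the blocks are indexed, and weak monotonicity of the RGF depends only on that property; hence the reindexing does not interfere with the argument.
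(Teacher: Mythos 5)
Your proposal is correct and follows essentially the same route as the paper: both arguments rest on the observation that a weakly increasing RGF corresponds exactly to a partition whose blocks are consecutive intervals, and that complementation (being an order-reversing bijection of $[n]$) carries intervals to intervals, so that $w(\sigma^c)=1^{a_m}2^{a_{m-1}}\cdots m^{a_1}$ is again weakly increasing. The paper simply writes out the block endpoints explicitly where you package the same computation as a characterization lemma; your remark that the interval property is independent of block indexing correctly handles the reindexing issue.
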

\begin{proof}
 Let $\sigma\in\Pi_n$ where $\sigma$ has $m$ blocks and $w(\sigma)$ is weakly increasing. Then $w(\sigma)=1^{a_1}2^{a_2}...m^{a_m}$ for some $a_1,a_2,...,a_m$ such that $\sum_{i=1}^ma_i=n$. By the definition of RGF, we know that $\sigma=1...a_1/.../(\sum_{i=1}^{m-1}a_{i}+1)...n$, where each block contains only consecutive terms. Then $\sigma^c=(n+1-1)...(n+1-a_1)/.../(n+1-\sum_{i=1}^{m-1}a_{i}-1)...1$, which we can simplify to $\sigma^c=1...a_m/.../(\sum_{i=2}^ma_i+1)...n$ where each block contains consecutive terms. Therefore by the definition of RGF we know that $w(\sigma^c)=1^{a_m}2^{a_{m-1}}...m^{a_1}$. So $w(\sigma^c)$ is weakly increasing when $w(\sigma)$ is weakly increasing.        
\end{proof}
\section{Word Characterizations and Cardinalities of Single Pattern Avoidance Classes}
\par There are $15$ set partitions of size four. Sagan enumerated and found the word characterization for the avoidance classes based on $1/2/3/4$ and $1234$ \cite{Sag09}. Following this the Research Experience for Undergraduates at Michigan State University found the cardinality and word characterizations of avoidance classes based on $14/2/3$, $13/24$, and $14/23$ \cite{REU2}. This group also investigated the Wachs and White statistics for these avoidance classes. 
\par In this paper we will enumerate, characterize and investigate the equidistribution of statistics for the avoidance classes based on $12/3/4$, $1/2/34$, $1/234$, $123/4$, $134/2$, $124/3$, $13/2/4$, and $1/24/3$. We chose these avoidance classes because they have not yet been investigated. We did not get the chance to investigate the avoidance classes based on the patterns $12/34$ and $1/23/4$. After finding the word characterizations and the cardinality for each of these eight avoidance classes we find which classes are Wilf-Equivalent. Recognizing these Wilf-Equivalent classes will be important in the study of the distribution of the Wachs and White Statistics. Before explaining our word characterization and cardinality results, we will begin with two avoidance classes with some known results \cite{Sag09}.

	\begin{restatable}{thm}{SagOne}
    \label{thm:SagOne}
    \cite{Sag09} The word characterization and cardinality of the avoidance class based on the pattern $1/2/3/4$ are as follows:
    \begin{itemize}
    	\item[(i)]$R_n(1/2/3/4)=\{w\in R_n$: $w$ consists of only 1's, 2's, and 3's$\}$       
        \item[(ii)]$\#\Pi_n (1/2/3/4) = 2^{n-1}+S(n,3)$   
        \end{itemize}
    \end{restatable}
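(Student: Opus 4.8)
The plan is to prove part (i) first and then derive part (ii) as an enumerative consequence. The guiding observation is that the pattern $1/2/3/4$ corresponds under the RGF bijection to the word $1234$, since in $1/2/3/4$ each of the elements $1,2,3,4$ lies in its own block. A subpartition $\sigma'$ on a four-element subset $A=\{a,b,c,d\}$ with $a<b<c<d$ standardizes to $1/2/3/4$ exactly when $a,b,c,d$ lie in four \emph{distinct} blocks of $\sigma$: in that case each element forms a singleton in $\sigma'$, and relabeling by order of minima yields precisely $1/2/3/4$. Consequently, $\sigma$ contains $1/2/3/4$ if and only if $\sigma$ has at least four blocks, because four distinct blocks supply four elements (say, their minima) in distinct blocks, while conversely four elements in distinct blocks force at least four blocks. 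Thus $\sigma$ avoids $1/2/3/4$ precisely when $\sigma$ has at most three blocks.

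For part (i), I would translate this block condition into the language of RGFs. If $\sigma=B_1/B_2/\cdots/B_m$, then $w(\sigma)$ uses exactly the letters $1,2,\dots,m$, one label for each block, as a consequence of the RGF definition. Hence $\sigma$ has at most three blocks if and only if $w(\sigma)$ consists of only $1$s, $2$s, and $3$s, which is exactly the claimed characterization of $R_n(1/2/3/4)$.

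For part (ii), I would count directly from the block description. Since avoiding $1/2/3/4$ is equivalent to having at most three blocks, and the number of set partitions of $[n]$ into exactly $k$ blocks is the Stirling number $S(n,k)$, we obtain $\#\Pi_n(1/2/3/4)=S(n,1)+S(n,2)+S(n,3)$. Substituting the standard values $S(n,1)=1$ and $S(n,2)=2^{n-1}-1$ gives $S(n,1)+S(n,2)=2^{n-1}$, and therefore $\#\Pi_n(1/2/3/4)=2^{n-1}+S(n,3)$, as desired.

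Since this result is attributed to \cite{Sag09}, the argument is expected to be short, and indeed the only step requiring genuine care is the equivalence at the heart of part (i): that a subpartition on four elements standardizes to $1/2/3/4$ exactly when those elements occupy four distinct blocks. Making this precise amounts to checking that standardization preserves the ``all singletons, ordered by minima'' structure, but no finer information about the blocks is needed, so I anticipate no serious obstacle.
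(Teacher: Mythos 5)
Your proposal is correct and follows essentially the same route as the paper: the cardinality count via $S(n,1)+S(n,2)+S(n,3)=2^{n-1}+S(n,3)$ is identical, and your part (i) simply supplies the ``avoids $1/2/3/4$ iff at most three blocks'' argument that the paper delegates to the citation of Sagan. No issues.
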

    \begin{proof} 
    \begin{itemize}
    	\item[(i)] The word characterization is due to Sagan \cite{Sag09}.
        \item[(ii)] Let $\sigma\in\Pi(1/2/3/4)$. We know that if $\sigma$ avoids $1/2/3/4$, $\sigma$ can have at most three blocks. By the definition of the Stirling numbers of the second kind there are $S(n,1)=1$ ways for $\sigma$ to have one block, $S(n,2)=2^{n-1}-1$ ways for $\sigma$ to have two blocks, and $S(n,3)$ ways for $\sigma$ to have three blocks. Therefore $\#\Pi_n (1/2/3/4) = 2^{n-1}+S(n,3)$.
    \end{itemize} 
    \end{proof}
	\begin{restatable}{thm}{SagTwo}
    \label{SagTwo}
    \cite{Sag09} The word characterization and cardinality of the avoidance class based on the pattern $1234$ are as follows:
		\begin{itemize}
        \item[(i)]$R_n(1234)=\{w\in R_n$: $w$ has no letter repeated more than three times$\}$
     
        	\item[(ii)]When $n\leq 12$, $\#\Pi_n (1234)=1+{n\choose 2}+S(n,n-2)+\sum\limits_{i=3}^n S(n,n-i)-\sum\limits_{j=4}^{i+1} {n\choose j}S(n-j,n-i-1)-\sum\limits^{n-j-1}_{k=4}{n-j \choose k}S(n-j-k,n-i-2)$ 
            \end{itemize}
    \end{restatable}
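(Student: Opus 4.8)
The plan is to build on part~(i), which identifies $\Pi_n(1234)$ with the set partitions of $[n]$ in which every block has size at most $3$. Indeed, a subpartition standardizes to the single block $1234$ exactly when four elements lie in a common block of $\sigma$, so $\sigma$ avoids $1234$ if and only if no block has four or more elements, equivalently no letter of $w(\sigma)$ is repeated more than three times. I would then organize the enumeration by the number of blocks $k$, writing $k=n-i$ so that $i$ records the \emph{excess} of $\sigma$. A block of size $s$ contributes $s-1$ to the excess, so a block of size $\geq 4$ forces $i\geq 3$; this single observation makes the low strata collapse to closed forms and isolates where the real work begins.

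First I would dispose of the strata $i=0,1,2$. When $i=0$ the only partition is all singletons, contributing $1$; when $i=1$ there is one block of size $2$ with the rest singletons, contributing $\binom{n}{2}$; and when $i=2$ every partition into $n-2$ blocks has size multiset $\{3,1,\dots,1\}$ or $\{2,2,1,\dots,1\}$, both admissible, so the entire count $S(n,n-2)$ survives. These three strata produce the leading terms $1+\binom{n}{2}+S(n,n-2)$, with no correction required because a large block is impossible below excess $3$.

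For $i\geq 3$ a block of size $\geq 4$ becomes possible, and here I would remove the offending partitions by peeling off one large block at a time. Among the $S(n,n-i)$ partitions into $n-i$ blocks, those with exactly one large block and all others small are counted by choosing the $j\geq 4$ elements of that block and partitioning the remaining $n-j$ elements into $n-i-1$ \emph{small} blocks; requiring the residual partition to exist forces $n-j\geq n-i-1$, giving the range $4\leq j\leq i+1$ and hence the subtracted term $\sum_{j}\binom{n}{j}S(n-j,n-i-1)$. Expressing this residual small-block count by one further round of the same peeling absorbs the partitions that carry a second large block and yields the nested term $\sum_{k}\binom{n-j}{k}S(n-j-k,n-i-2)$, with $4\leq k\leq n-j-1$ so that the innermost partition exists. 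Assembling all strata then produces the displayed formula.

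The main obstacle is justifying that peeling at most two large blocks is exact, which is precisely where the hypothesis $n\leq 12$ is used. The identity ``all partitions $=$ admissible $+$ (exactly one large block) $+$ (two or more large blocks)'' is clean only once the last term is accounted for, and two disjoint blocks of size $\geq 4$ require at least $8$ elements and excess at least $6$; the nested $k$-sum is exactly what reconciles this two-block overlap so that no partition is removed twice. A third large block would demand $n\geq 12$ together with $n-i=3$ blocks, but three blocks of size $\leq 3$ hold at most $9<12$ elements, so the admissible count in that stratum is empty and the omitted third-order correction contributes nothing for $n\leq 12$. The delicate bookkeeping is therefore to verify, stratum by stratum across the ranges $4\leq j\leq i+1$ and $4\leq k\leq n-j-1$, that the innermost factor $S(n-j-k,n-i-2)$ may be treated as an all-small count because no further large block can survive; confirming this for every $i$ is the crux of the argument, and it is exactly this vanishing that pins the validity of the formula to $n\leq 12$.
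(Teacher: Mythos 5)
Your overall strategy is the paper's: stratify by the number of blocks, observe that the strata with $n$, $n-1$, and $n-2$ blocks contribute $1$, $\binom{n}{2}$, and $S(n,n-2)$ with no correction needed, and for fewer blocks start from $S(n,n-i)$ and correct for blocks of size at least four, with the restriction $n\le 12$ cutting off the need for a third round of corrections. Your argument for part (i) is also the expected one.

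The gap is in how you account for the partitions that must be removed. You count partitions with \emph{exactly one} large block by choosing its $j$ elements and insisting that the residual partition be all-small, and you then expand that all-small residual count by one more round of peeling. But subtracting this exactly-one count from $S(n,n-i)$ never touches a partition with two large blocks: such a partition contributes zero to your ``one large block, all others small'' count, since its residual after deleting either large block still contains a large block, so it survives into your total and the formula overcounts. The paper's bookkeeping is instead the two-term inclusion--exclusion over marked large blocks: subtract $\binom{n}{j}S(n-j,n-i-1)$ summed over $j$, which removes a two-large-block partition twice, and then restore one copy via the nested $\binom{n-j}{k}S(n-j-k,n-i-2)$ term so that it is removed exactly once net (the displayed formula carries a minus sign on the nested sum, but the paper's prose makes clear it is an add-back). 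If your peeling is pushed through literally, it yields $S(n,n-i)-\sum_j\binom{n}{j}S(n-j,n-i-1)+2\cdot(\text{exactly two large blocks})$ rather than the correct $+1\cdot(\text{exactly two})$, so it does not reproduce the stated formula. Relatedly, your justification of the $n\le 12$ cutoff --- that the stratum admitting three large blocks has empty admissible count --- does not by itself show the omitted third-order correction vanishes; what has to be checked, and what the paper gestures at with its remark about the third block being forced, is that truncating after two rounds still returns the correct count in every stratum.
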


\begin{proof} 
	\begin{itemize} 
    	\item[(i)] The word characterization is due to Sagan \cite{Sag09}.
        \item[(ii)] From the result for $R_n(1234)$ we know that if $\sigma$ avoids $1234$, $\sigma$ cannot have more than three elements in the same block. If there are $n$ blocks, there can only be one element in each block and there are $S(n,n)=1$ ways to have $n$ blocks. If there are $n-1$ blocks, there are $n-2$ blocks that have have one element and one block with two elements and there are ${n\choose 2}$ ways to do this. When there are $n-2$ blocks, there are at most three items in one block and there are $S(n,n-2)$ ways to do this. When there are fewer than $n-2$ blocks, there is the potential for more than three elements in one block, which would allow any $\sigma$ of this type to contain $1234$. The total number of ways to partition the set $\{1,...,n\}$ into $n-i$ blocks where $i$ can be anything larger than two will be $S(n, n-i)$. Now we must remove the partitions which allow more than three elements in the same block. Whenever a block has four or more elements it will standardize to $1234$. To make a block of four or more we must choose four or more elements. When $j$ is the number of elements in the block, there are ${n\choose j}$ ways to choose the elements in this block. The other elements are not in this block, so there are $(m-1)$ blocks to choose from where $m=n-i$ and $n-j$ elements left to assign to these $n-i-1$ blocks. Therefore there are ${n\choose j}S(n-j,n-i-1)$ ways for the set partition to have at least one block of four or more. However this overcounts the number of possible partitions with four or more elements in one block because it also will count when more than one block has four or more elements, so to account for those set partitions that were removed more than once we must add back the cases with two different blocks each with four or more elements. There are $(n-j)$ elements to choose from and $k$ elements to choose. Then there are $n-j-k$ other elements to place into $n-i-2$ blocks because the first block and the second block that we found both had more than four elements so there are $S(n-j-k,n-i-2)$ ways of assigning the other elements to blocks. This formula will work up to $n=12$ because there can be at most three blocks with four elements and once the first two blocks of four are found, there will only be one option for the third, after this, as $n$ becomes larger we will have to consider when three or more blocks are larger than four where the last block is not necessarily determined by the other two blocks already found.
    \end{itemize} 
    \end{proof}
Next we consider avoidance classes that have not been investigated. The proofs of these theorems will be very similar. Therefore, we will present the proof of Theorem 3.3 in the body of this paper and refer the reader to the appendix for complete proofs of Theorems 3.4 through 3.10.
	\par We first consider avoidance classes based on the pattern $12/3/4$. It is easier to begin by considering an example of words found in $R_n(12/3/4)$. It is simple to see that the set partition $168/2/37/4/5$ avoids $12/3/4$ because any block that contains two or more elements has at most one other block with a larger element. Therefore $12345131\in R_n(12/3/4)$. We can describe these words found in $R_n(12/3/4)$ more generally in the following theorem.
	\begin{restatable}{thm}{EmOne}
    \label{thm:EmOne}
    The word characterization and cardinality of the avoidance class based on the pattern $12/3/4$ are given by:
    \begin{itemize}
        \item[i.]$R_n(12/3/4)=\{w\in R_n:$ \begin{enumerate}[label=(\Alph*)]
        	\item $m\leq 2$, or
            
            \item $m\geq 3$ and $w=uv$ such that $u$ is strictly increasing, and $v$ begins at the first repeated letter. The suffix $v$ contains at most two distinct letters ($v$ can be empty).$\}$
        \end{enumerate}
        \item[ii.]$\#\Pi_n (12/3/4)=2^{n-1}+n(n-2)-\frac{(n-1)(n-2)}{2}+\sum\limits^{n-2}_{|v|=2}\sum\limits_{j=1}^{i-1}{n-|v|\choose 2}{|v|\choose j}+\sum\limits^{n-2}_{|v|=2}\sum\limits_{j=1}^{i-1}(n-i){i-1\choose j}$
        \end{itemize}
	\end{restatable}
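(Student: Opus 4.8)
The plan is to prove the word characterization (i) first and then use it as a partition of the class to prove the cardinality (ii). The key observation for (i) is to translate the pattern into RGF language: since $12/3/4$ corresponds to the RGF $1123$, a word $w$ contains $12/3/4$ exactly when there are positions $i_1<i_2<i_3<i_4$ with $w_{i_1}=w_{i_2}$ and with $w_{i_3},w_{i_4}$ two \emph{distinct} values each different from $w_{i_1}$; equivalently, some repeated letter is followed, after its second occurrence, by at least two distinct letters different from itself. I would also record the structural fact that forces the decomposition $w=uv$: in any RGF the maximal strictly increasing prefix must be $u=12\cdots k$, and the first position that is not a new maximum is precisely the first repeated letter, so $v$ is determined and always begins at the first repeat.

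For the forward inclusion of (i) I would show (A) and (B) each imply avoidance. If $m\le 2$ there are only two distinct letters in all of $w$, so one cannot produce the three distinct values the pattern needs. If (B) holds, every repeat occurs inside $v$, so for any equal pair $w_{i_1}=w_{i_2}$ the second occurrence $i_2$ lies in $v$ and all letters to its right lie among the at most two distinct letters of $v$; since $w_{i_1}=w_{i_2}$ is itself one of those letters, at most one distinct letter different from $w_{i_1}$ can appear after $i_2$, so no $1123$ occurs. For the reverse inclusion I would argue the contrapositive: if $m\ge 3$ and (B) fails then, by the forced decomposition, $v$ must contain at least three distinct letters. Taking the first repeat at position $k+1$ with value $c=w_{k+1}\le k$, its earlier occurrence sits at position $c$ in $u$, and the two further distinct letters of $v$ occur at positions $>k+1$; choosing them as $i_3<i_4$ gives a subword standardizing to $1123$, so $w$ contains $12/3/4$.

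For (ii) I would split $R_n(12/3/4)$ along the characterization. Case (A) counts set partitions with at most two blocks, which by the definition of the Stirling numbers is $S(n,1)+S(n,2)=1+(2^{n-1}-1)=2^{n-1}$, giving the leading term. For case (B) I would sum over $k=|u|$ (so $|v|=n-k$) and over the internal shape of $v$, which must begin with a repeat and use at most two distinct letters while forcing $m\ge 3$. The natural subcases are: (B0) $v$ empty or a single repeated letter $\le k$, contributing the polynomial terms $n(n-2)-\tfrac{(n-1)(n-2)}{2}$; (B1) $v$ using two distinct letters both $\le k$, where one chooses the two repeat letters in $\binom{n-|v|}{2}$ ways and distributes $j$ copies of one of them in $\binom{|v|}{j}$ ways, giving $\sum_{|v|}\sum_{j}\binom{n-|v|}{2}\binom{|v|}{j}$; and (B2) $v$ using one repeat letter $\le k$ together with the single new letter $k+1$, where the repeat letter is chosen in $n-|v|$ ways and the new letter is placed among the $|v|-1$ positions after the forced first letter, giving $\sum_{|v|}\sum_{j}(n-|v|)\binom{|v|-1}{j}$. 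Adding these reproduces the stated formula.

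The main obstacle I expect is the bookkeeping in case (B), not any conceptual difficulty. I must fix the summation ranges so that $m\ge 3$ is enforced and so that the boundary value $k=2$ (where $v$ uses two letters drawn from $\{1,2\}$ and hence $m=2$) is not double counted against case (A); I also have to respect RGF validity in (B2), namely that the new letter $k+1$ can never be the first letter of $v$ (otherwise $u$ would extend), which is exactly what restricts its placements to the trailing $|v|-1$ slots. Carefully matching the index $i$ used in the displayed sums to $|v|$ and collecting the resulting geometric sums $\sum_{j}\binom{|v|}{j}$ and $\sum_{j}\binom{|v|-1}{j}$ into the closed form is the part that requires the most attention.
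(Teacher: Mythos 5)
Your proposal is correct and follows essentially the same route as the paper: the same forced decomposition $w=uv$ with $u$ the maximal strictly increasing prefix, the same two-sided argument for the characterization (with your witness $i_1=c$, $i_2=k+1$ plus the first occurrences of the second and third distinct letters of $v$ being exactly the paper's subpartition $ba/c/d$), and the identical three-way split of case (B) for the count. Your rephrasing of containment in terms of positions realizing the subword pattern $1123$ is a slightly cleaner packaging of the paper's block-level argument, and your flagged bookkeeping concerns (summation ranges, the $m\ge 3$ boundary, and RGF validity of where $m$ may appear in $v$) are precisely where the paper's own closed form requires care.
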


    \begin{proof} \begin{itemize} \item[(i)] Word Characterization:
    \begin{enumerate}[label=(Case \Alph*)]
    			\item Let $w(\sigma)$ have no letter larger than $2$. Then $\sigma$ can only have one or two blocks. In general if $\sigma$ has $k$ blocks it must avoid any pattern with more than $k$ blocks. Therefore $\sigma\in\Pi_n(12/3/4)$ and $w(\sigma)\in R_n(12/3/4)$. 
            	\item Let $w(\sigma)$ be of the form $uv$ such that $u$ is strictly increasing, and $v$ begins at the first repeated letter. When $u$ is strictly increasing, then $u$ has no repeated letter. Then because $v$ can have at most two distinct letters, there are at most two blocks that can have two or more elements in $\sigma$, by the definition of an RGF. The repeated elements in $w$ are only in $v$ and there can be at most two distinct letters in $v$. So when there is a block with two elements (a and b), then $\sigma$ can have at most one distinct block that contains elements that are larger than a and b. Therefore no subpartition of $\sigma$ will standardize to $12/3/4$ because there must be at least two larger elements in different blocks after a block of size two or larger. Therefore $\sigma$ will avoid the pattern.
                      \end{enumerate}  
                \par Next suppose $w(\sigma)$ is not of form $(A)$ or $(B)$. Then $m\geq 3$ and $v$ must contain more than two distinct letters. There is no way to violate the strictly increasing property of $u$ because $v$ must begin at the first repeated term. 
                \par Then using the definition of the restricted growth function, consider the subpartition constructed by taking the element $a$ where $a$ is the first letter in $v$, $b$ the other element that occurs in the same block as $a$, along with $c$ the element associated with the second distinct letter in $v$, and $d$ the element associated with the third distinct letter in $v$. This subpartition will have three blocks: there will be a block of the two smaller elements ($a$ and $b$), followed by a block containing the second largest element, and then a block with the largest element. So the subpartition is $ba/c/d$ where $b<a<c<d$ which therefore standardizes to $12/3/4$. For example if $v(\sigma)=12123$ then $\sigma=13/24/5$ and one subpartition created through this process would be $\sigma'=13/4/5$. It is clear that in this example $\sigma'$ standardizes to $12/3/4$. 
                
                \par Therefore when $\sigma$ avoids $12/3/4$ then $w(\sigma)$ has either form (A) or (B).
                    
            \item[(ii)] Next we consider the cardinality of the avoidance class. By the word characterization there are two main cases to consider:
            \begin{itemize} 
            	\item[(Case A)] Let $m\leq 2$. There are $S(n,2)=1+(2^{n-1}-1)=2^{n-1}$ words in (Case A) by the definition of the Stirling numbers of the second kind. 
                \item[(Case B)]Let $w=uv$ as defined in the proof of (Case B) of the word characterization. Now we have to consider three possible forms for $v$.
                	\begin{enumerate} 
                    \item Let $v$ have at most one  letter. Then the length of $v$, denoted as $|v|$, can be from $0$ to $n-3$ because $m\geq 3$. When $|v|=0$ there is one form for $w$; $w$ is strictly increasing. When $|v|>0$, there are $m$ choices for the letter in $v$, And $m=n-|v|$ because $m$ is found in $u$ in this case. So there are $1+\sum\limits^{n-3}_{|v|=1}(n-|v|)$ possible $w$ of this form.
                \item Let $v$ be of type $p^{j}q^{i}$ where $p>q$ and $p,q\in u$. There are ${n-|v|\choose 2}$ ways to choose $p,q$. There are $|v|$ positions in $v$ to choose from and $j$ $p$'s in $v$, so there are ${|v|\choose j}$ ways to arrange $v$ where $1\leq j\leq |v-1|$ because there must be two letters present in $v$. Therefore there are $\sum\limits^{n-3}_{|v|=2}\sum\limits_{j=1}^{|v|-1}{n-|v|\choose 2}{|v|\choose j}$ possible $w$ of this form.
                \item  Let $v$ be of the form $p^{j}m^{i}$ where $p$ also occurs in $u$. The first letter in $v$ must be $p$ because $v$ begins with the repeated letter. Because there are $n-|v|$ letters in $u$, there are $n-|v|$ options for this first letter in $v$. We know already that the other letter in $v$ must be $m$, and that this $m$ occurs $j$ times. There are $|v|-1$ options for where these $j$ $m's$ can occur; because $v$ must begin with the first repetition, it must begin with the smaller letter in $v$, then any other position in $v$ is available. So there are  ${|v|-1\choose j}$ ways of arranging $v$ in this case. Therefore there are $\sum\limits^{n-2}_{|v|=2}\sum\limits_{j=1}^{|v|-1}(n-|v|){|v|-1\choose j}$ possible $w$ of this form.
                \end{enumerate}
            \end{itemize}
            Therefore $\#\Pi_n (12/3/4)=1+2^{n-1}+\sum\limits^{n-3}_{|v|=1}(n-|v|)+\sum\limits^{n-3}_{|v|=2}\sum\limits_{j=1}^{|v|-1}{n-|v|\choose 2}{|v|\choose j}+\sum\limits^{n-2}_{|v|=2}\sum\limits_{j=1}^{|v|-1}(n-|v|){|v|-1\choose j}$.
            \end{itemize}
            \end{proof}

        \begin{restatable}{thm}{EmTwo}
        \label{thm:EmTwo}
        The word characterization and cardinality of the avoidance class based on the pattern $1/2/34$ are given by:
        \begin{itemize}
        \item[(i)]$R_n(1/2/34)=\{w\in R_n:$ 
        	\begin{enumerate}[label=(\Alph*)] 
            \item $m\leq 2$
            \item $w$ is a word of the form $u3v$ where u is a prefix of type $1^{a_1}2^{a_2}$, and $v$ contains $b_1$ $1$s, $b_2$ $2$s, $b_3$ $4$s, and so on such that $b_i\leq 1\}$
            \end{enumerate}
      \item[(ii)]$\#\Pi_n (12/3/4)=2^{n-1}+n(n-2)-\frac{(n-1)(n-2)}{2}+\sum\limits^{n-2}_{|v|=2}\sum\limits_{j=1}^{i-1}{n-|v|\choose 2}{|v|\choose j}+\sum\limits^{n-2}_{|v|=2}\sum\limits_{j=1}^{i-1}(n-i){i-1\choose j}$
      \end{itemize}
      \end{restatable}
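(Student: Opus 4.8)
The plan is to prove (i) from a word-level reformulation of containment and then obtain (ii) by complementation. First I would record the working criterion: a word $w\in R_n$ contains $1/2/34$ if and only if there are positions $a<b<c<d$ with $w_c=w_d$ and with $w_a,w_b,w_c$ pairwise distinct. This is just the standardization of the induced subpartition on $\{a,b,c,d\}$: the equal pair $w_c=w_d$ realizes the block $34$, while the two distinct smaller-indexed letters, each different from $w_c$, realize the singletons $1$ and $2$. Equivalently, $w$ \emph{avoids} iff no repeated letter $\ell$ has two distinct letters different from $\ell$ occurring before some non-final occurrence of $\ell$; since enlarging the prefix can only help, it suffices to test the second-to-last occurrence of each repeated letter.

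For the forward direction of (i), form (A) is immediate: if $m\le 2$ there are never three pairwise distinct values, so no admissible $a,b,c$ exist. For form (B), write $w=u\,3\,v$ with $u\in\{1,2\}^{*}$ (the part preceding the unique $3$) and $v$ carrying at most one copy of each letter other than $3$. Then $3$ occurs once and every letter $\ge 4$ occurs once, so the only letters that can repeat are $1$ and $2$. For $\ell\in\{1,2\}$, since $v$ contributes at most one occurrence of $\ell$, the second-to-last occurrence of $\ell$ lies in $u$; every position before it carries a letter in $\{1,2\}$, hence at most one value different from $\ell$. By the criterion $w$ avoids.

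For the reverse direction, suppose $w$ avoids $1/2/34$ and $m\ge 3$, and let $f_1=1<f_2<f_3<\cdots$ denote first-occurrence positions. If some $\ell\ge 3$ occurred twice, at $c<d$, then $f_1<f_2<f_\ell\le c$ exhibits two distinct values different from $\ell$ before $c$, forcing containment; hence every letter $\ge 3$, in particular $3$, occurs exactly once. The prefix $u$ before the unique $3$ therefore lies in $\{1,2\}^{*}$ and contains a $2$ (as $f_2<f_3$), matching the requirement on $u$. Finally, if $v$ held two $1$'s at $e_1<e_2$, then $(a,b,c,d)=(f_2,f_3,e_1,e_2)$ has values $(2,3,1,1)$ and witnesses containment; symmetrically two $2$'s in $v$ give the witness $(f_1,f_3,e_1,e_2)$ with values $(1,3,2,2)$. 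Thus $v$ carries at most one $1$ and one $2$, and at most one of each letter $\ge 4$ by the previous step, so $w$ has form (B).

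For part (ii), the cleanest route is to note that $1/2/34$ is the complement of $12/3/4$, so Theorem~\ref{thm:comp} gives $\#\Pi_n(1/2/34)=\#\Pi_n(12/3/4)$ and the count coincides with Theorem~\ref{thm:EmOne}; this is exactly why the displayed expression is the $12/3/4$ formula. As an independent check I would enumerate directly from the characterization: classes (A) ($m\le 2$) and (B) ($m\ge 3$) are disjoint and exhaust $R_n(1/2/34)$, with $|(\mathrm{A})|=S(n,1)+S(n,2)=2^{n-1}$; for (B) one sums over the length $j$ of $u$, choosing $u$ as a binary word of length $j$ that starts with $1$ and contains a $2$ ($2^{j-1}-1$ choices) and filling the length-$(n-1-j)$ suffix by inserting at most one $1$ and at most one $2$ into the forced increasing run $4,5,\dots$ (so $L^2+L+1$ suffixes for $L=n-1-j$). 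The main obstacle is the bookkeeping in (ii)—keeping the RGF condition, the ``at most one'' constraints, and the interaction between $u$ and $v$ straight, and reconciling the direct sum with the complement computation—whereas part (i) is routine once the containment criterion is phrased correctly.
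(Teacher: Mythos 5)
Your proof of part (i) is correct and, at its core, runs along the same lines as the paper's: both directions are handled by exhibiting or excluding explicit four-element witnesses. What you do differently is to first isolate the word-level containment criterion (positions $a<b<c<d$ with $w_c=w_d$ and $w_a,w_b,w_c$ pairwise distinct) and the observation that one need only test the second-to-last occurrence of each repeated letter; this replaces the paper's somewhat informal case-by-case discussion of subpartitions with a single clean test, and in particular makes the forward direction (that words of form (B) avoid) essentially automatic, where the paper argues it more loosely in prose. For part (ii) your route is in fact the only justification on offer: the paper's appendix proof of this theorem contains no cardinality argument at all, and the displayed formula is literally the $12/3/4$ formula from Theorem~\ref{thm:EmOne} (the left-hand side even still reads $\#\Pi_n(12/3/4)$). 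Your appeal to Theorem~\ref{thm:comp} via the observation that $1/2/34=(12/3/4)^c$ is exactly the implicit reasoning that makes the stated formula correct, and your direct count of class (B) (prefix $u$ a binary word starting with $1$ and containing a $2$, suffix determined by inserting at most one $1$ and one $2$ into the forced run $4,5,\dots$, giving $L^2+L+1$ suffixes of length $L$) is a sound independent check that the paper does not attempt. In short: same strategy for the characterization, executed more rigorously, plus a justification of the enumeration that the paper omits.
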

	\begin{restatable}{thm}{EmThree}
    \label{thm:EmThree}
    The word characterization and cardinality of the avoidance class based on the pattern $1/234$ are given by:
    \begin{itemize}
        \item[(i)]$R_n(1/234)=\{w\in R_n: w$ is of the form $uv$ such that $u$ is of the form $1^{a_1}2$ and $v$ has $b_1$ $1$s, $b_2$ $2$s, and so on, such that $b_1, b_3,...,b_m\leq 2$ and $b_2\leq 1\}$
        \item[(ii)]$\#\Pi_n (1/234)=1+\sum\limits_{r=1}^{\lfloor\frac{n}{2}\rfloor}\prod\limits_{j=0}^{r-1}{n-2j\choose 2}+ \sum\limits_{i=1}^{n-2}\sum\limits_{r=1}^{\lfloor\frac{n-i}{2}\rfloor}\prod\limits_{j=0}^{r-1}r{n-2j-i\choose 2}$
        \end{itemize}
	\end{restatable}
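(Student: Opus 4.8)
The plan is to prove part (i) by translating containment of $1/234$ into a condition on the word $w(\sigma)$, and then to read off the cardinality in part (ii) by decomposing the words of the resulting form. First observe that, since $1/234$ standardizes a four-element subpartition into the singleton $\{1\}$ together with the triple $\{2,3,4\}$, a partition $\sigma$ contains $1/234$ exactly when there are positions $a<b<c<d$ with $w_b=w_c=w_d$ and $w_a\neq w_b$; that is, some value is repeated at least three times while some strictly earlier position carries a different value. Avoidance is the negation: for every value that occurs at least three times, every position lying to the left of its third-from-last occurrence must carry that same value.

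I would first extract the constraint on the letters that are at least $2$. If a value $\ell\geq 2$ occurs three times, then because $w$ is an RGF the value $\ell$ is introduced only after $1,\dots,\ell-1$ have appeared, so in particular $w_1=1\neq\ell$ sits to the left of all three occurrences; taking $a=1$ exhibits the forbidden pattern. Hence every letter $\ell\geq 2$ appears at most twice, which says that every block other than the first has size at most two. For the letter $1$, the first non-$1$ entry is forced to be a $2$ by the RGF condition, so write $w=1^{a_1}2\,v$ with $a_1\geq 1$. A triple of $1$'s together with an earlier non-$1$ entry produces the pattern, and the only entries available as the ``$a$'' lie at or after that first $2$; thus avoidance is equivalent to having at most two $1$'s inside $v$. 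Recording the letter counts of $v$ as $b_1,b_2,b_3,\dots$, these observations become $b_1\leq 2$, $b_2\leq 1$ (the single $2$ in $u$ already uses one of the two permitted $2$'s), and $b_j\leq 2$ for $j\geq 3$, which is exactly form (i). (The all-$1$'s word $1^n$, with $m=1$, avoids $1/234$ trivially and must be appended as a degenerate case not captured by the form $uv$.) The converse is immediate: such a word has no value occurring three times except possibly $1$, and its $1$'s split into an initial run with at most two stragglers after the first $2$, so no admissible $a<b<c<d$ exists.

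For part (ii) the plan is to count the words of form (i). The degenerate word $1^n$ contributes the leading $1$. Every remaining word is $1^{a_1}2\,v$, and I would organize the count by the number $r$ of letters that occur twice (equivalently, the number of size-two blocks among $B_2,\dots,B_m$) and by the number of ``late'' $1$'s placed in $v$. Choosing the positions occupied by the $r$ size-two blocks among the available slots is what produces the factors $\binom{n-2j}{2}$, taken in a product as the slots are used up; the first sum over $r$ handles the words with no late $1$'s, while the second sum, indexed additionally by $i$, inserts the one or two extra $1$'s and adjusts the remaining length to $n-i$. Together these assemble into the two displayed sums.

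The main obstacle is the cardinality bookkeeping in part (ii), not the characterization. The delicate point is that a product $\prod_{j=0}^{r-1}\binom{n-2j}{2}$ counts \emph{ordered} sequences of $r$ disjoint pairs and therefore overcounts an unordered choice of size-two blocks by a factor of $r!$, so one must decide whether the blocks are being distinguished (in a set partition they are ordered by their minima) and insert the compensating factor exactly where needed; a direct check at $n=4$, where there are $14$ avoiders, is a useful guard against an off-by-$r!$ error. A second subtlety is the interaction of the RGF rule that new letters appear in increasing order with the placement of the late $1$'s and the second elements of the size-two blocks, since not every way of filling the chosen slots yields a legal RGF. Getting these two accounting issues exactly right—rather than the structural characterization—is where the real work lies.
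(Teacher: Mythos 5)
Your part (i) is correct and is essentially the paper's argument: you translate containment of $1/234$ into the existence of positions $a<b<c<d$ with $w_b=w_c=w_d$ and $w_a\neq w_b$, force every letter $\ell\geq 2$ to occur at most twice by taking $a=1$, and handle the letter $1$ by splitting off the prefix $1^{a_1}2$ and bounding the number of $1$s in the suffix by two. The paper phrases the same standardization argument in terms of block sizes (no block after the first may exceed size two, and at most two elements of the first block may lie above an element of another block), but the content is identical, including your observation that $1^n$ is a degenerate case not literally of the form $uv$.

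For part (ii), however, you have not produced a proof: you describe the decomposition by the number $r$ of doubled letters and the tail length $i$, and then explicitly defer the question of whether $\prod_{j=0}^{r-1}{n-2j\choose 2}$ needs a compensating $1/r!$. That deferral is the gap in your write-up --- but the obstacle you flag is real, and it is not resolved in the paper either. The paper obtains this count by complementation ($\#\Pi_n(1/234)=\#\Pi_n(123/4)$ via Theorem 2.2) and then counts $123/4$-avoiders by exactly the ordered product you describe: the first doubled pair is chosen in ${n\choose 2}$ ways, the next in ${n-2\choose 2}$ ways, and so on, with no division by $r!$, even though the collection of doubled letters is unordered. Your suggested check at $n=4$ exposes this: the displayed formula evaluates to $1+(6+6)+(3+1)=17$, whereas $\#\Pi_4(1/234)=14$, since for $n=4$ the only partition containing $1/234$ is $1/234$ itself. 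So to complete your proof you must actually carry out the corrected count (insert the $1/r!$, or count letter multiplicities directly as unordered data in the spirit of Lemma 2.2), and the resulting formula will differ from the one stated in the theorem.
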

	\begin{restatable}{thm}{EmFour}
    \label{thm:EmFour}
    The word characterization and cardinality of the avoidance class based on the pattern $123/4$ are given by:
    \begin{itemize}
        \item[(i)]$R_n(123/4)=\{w\in R_n$: $w$ is of the form $uv$ where $u$ has $a_1$ $1$s, $a_2$ $2$s, ..., $a_m$ $m$s such that $a_1,...,a_{m}\leq 2$ and $v$ can be any size of one letter repeated$\}$
        \item[(ii)]$\#\Pi_n (123/4)=1+\sum\limits_{r=1}^{\lfloor\frac{n}{2}\rfloor}\prod\limits_{j=0}^{r-1}{n-2j\choose 2}+ \sum\limits_{i=1}^{n-2}\sum\limits_{r=1}^{\lfloor\frac{n-i}{2}\rfloor}\prod\limits_{j=0}^{r-1}r{n-2j-i\choose 2}$
        \end{itemize}
    \end{restatable}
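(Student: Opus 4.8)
The plan is to handle the two parts separately, beginning by translating the pattern $123/4$ into a condition on the word $w=w(\sigma)$. From the correspondence between subpartitions and subwords, $\sigma$ contains $123/4$ if and only if there are positions $p_1<p_2<p_3<p_4$ with $w_{p_1}=w_{p_2}=w_{p_3}$ and $w_{p_4}\neq w_{p_1}$: the three equal letters mark three elements of a common block, the fourth and largest chosen element lies in a different block, and since the common block contains the smallest of the four elements the subpartition standardizes to the first block of $123/4$. Equivalently, $w$ avoids $123/4$ exactly when, once any letter has occurred three times, every later letter repeats that same value.

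For part (i) I would prove both inclusions from this reformulation. For the easy direction, suppose $w=uv$ with every letter of $u$ occurring at most twice and $v$ a terminal run of a single letter $\ell$. Any three equal letters must then include a copy of $\ell$ drawn from $v$; but $v$ is a suffix and $\ell$ occurs at most twice in $u$, so the third occurrence of $\ell$ has no distinct letter after it, and no other letter occurs three times at all. Hence no forbidden configuration exists and $w$ avoids $123/4$. For the converse, assume $w$ avoids $123/4$. If no letter occurs three times, take $v$ empty and $u=w$. Otherwise let $\ell$ occur at least three times and let $t$ be its third occurrence; the avoidance condition forces every position after $t$ to equal $\ell$, from which I would deduce that $\ell$ is the unique letter occurring three or more times and that $u=w_1\cdots w_{t-1}$ (each letter at most twice) together with $v=w_t\cdots w_n$ (all $\ell$) give the required decomposition.

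For part (ii) the cleanest route uses the complement. Computing $(123/4)^c$ in $\Pi_4$ sends $l\mapsto 5-l$, so the block $\{1,2,3\}$ becomes $\{2,3,4\}$ and $\{4\}$ becomes $\{1\}$; thus $(123/4)^c=1/234$. By Theorem~\ref{thm:comp} we obtain $\#\Pi_n(123/4)=\#\Pi_n(1/234)$, and Theorem~\ref{thm:EmThree}(ii) already evaluates the right-hand side as the stated expression, so the formula follows at once. Alternatively, one can count directly from the word characterization using the structural dichotomy it induces on $\sigma$: either no block has three elements, so $\sigma$ is a partition into singletons and pairs (accounting for the leading $1$ together with the first sum), or exactly one block is large and is forced to have the form $\{b_1,b_2\}\cup\{c,\dots,n\}$ with the remaining elements split into blocks of size at most two (accounting for the second sum, indexed by the length of the terminal run and the number of pairs).

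The main obstacle is the bookkeeping in this direct enumeration: the large-block case couples the choice of the two small elements $b_1<b_2$, the threshold $c$, and the pairing of the leftover elements, and the size-$\le 2$ blocks must be counted without imposing an order on them. I would therefore treat the complement argument as the primary proof of (ii) and use the direct count only as a cross-check, since it is precisely here that the ordered-versus-unordered selection of the pairs must be reconciled with the closed form.
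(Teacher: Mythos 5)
Your proof of part (i) is correct and is essentially the paper's argument: the paper likewise shows both directions, arguing that under the stated form the only block with three or more elements must contain the largest element of $\sigma$ (so no larger element outside it exists), and conversely that a third repetition followed by a distinct letter yields a subpartition standardizing to $123/4$; your word-level reformulation (``once a letter occurs three times, everything after it equals that letter'') is the same content and if anything makes the converse cleaner, since it pins the split point of $uv$ at the third occurrence. For part (ii) your route differs in emphasis from the paper's. The paper does open its cardinality proof by invoking $\#\Pi_n(123/4)=\#\Pi_n(1/234)$ via the complement, but it then carries out the direct enumeration for $123/4$ anyway: one strictly increasing word, then $r$ pairs chosen sequentially to give $\prod_{j=0}^{r-1}\binom{n-2j}{2}$ when $v$ is empty, and an extra factor of $r$ for the choice of which repeated letter heads the terminal run when $v$ is nonempty. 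You instead treat the complement reduction to Theorem~\ref{thm:EmThree}(ii) as the primary proof, which is logically airtight since the two displayed expressions are literally identical, and you demote the direct count to a cross-check. That is a reasonable trade: the complement argument buys an immediate, short proof conditional on Theorem~\ref{thm:EmThree}, while the paper's direct count is self-contained for this pattern. Your closing worry about reconciling ordered versus unordered selection of the pairs is well placed and is genuinely the delicate point in the paper's direct count --- the product $\prod_{j=0}^{r-1}\binom{n-2j}{2}$ counts \emph{ordered} sequences of $r$ disjoint pairs (e.g.\ it gives $6$ rather than $3$ for the two-pair partitions of $[4]$) --- so leaning on the complement identity as the primary derivation of the stated formula is the sounder choice.
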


Before characterizing and enumerating the following two avoidance classes it will be useful to define three special types of letters that will aid in describing the word characterizations.
\definition[Block Letter] We define a block letter as a letter $a\in w$ such that the only occurrence of $a$ is in a block of $a$s where this block contains one or more $a$s.
\definition[Block-Singleton Letter] We define a block-singleton letter as a letter $a\in w$ where the first occurrence of $a\in w$ is the beginning of a block of $a$s, and after some distinct letter there is exactly one more occurrence of $a\in w$.

\definition[Singleton-Block Letter] We define a singleton-block letter as a letter $b\in w$ where the first occurrence of $b\in w$ is followed by a distinct letter, and the only other occurrences of $b\in w$ are found in a block after this.
\par To clarify these definitions we include the following examples:
\begin{itemize}
\item[a.]The block letters of the word $w_1=1112234334354$ are $1$, $2$, and $5$.
 \item[b.]The block-singleton letters of $w_2=1233322443124$ are $1$, $3$ and $4$
    \item[c.] The singleton-block letters of $w_3=122311123$ are $1$ and $3$.
\end{itemize}
\begin{lemma}[Sinlgeton-Block/Block-Singleton Avoidance] If the RGF for $\sigma$ contains a singleton-block or a block-singleton letter and $w$ is weakly increasing, then $w\in R_n(\sigma)$.
\end{lemma}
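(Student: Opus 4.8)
The plan is to exploit the structural rigidity of weakly increasing words. First I would recall, exactly as in the proof of the earlier lemma on complements of weakly increasing words, that a weakly increasing RGF $w=1^{a_1}2^{a_2}\cdots m^{a_m}$ corresponds to a set partition whose blocks are intervals of consecutive integers, namely $1\ldots a_1/(a_1+1)\ldots(a_1+a_2)/\cdots$. Writing $\tau$ for this partition, the crucial consequence is an \emph{interval property}: if two elements $x<z$ lie in a common block of $\tau$, then every element $y$ with $x<y<z$ lies in that same block. Equivalently, no block of a weakly increasing word ever straddles an element belonging to a different block.

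Next I would show that the hypothesis on the pattern $\sigma$ forces precisely such a straddle inside $\sigma$. Suppose $w(\sigma)$ has a singleton-block letter $b$. By definition its first occurrence, at some position $p$, is followed by a distinct letter, and its remaining occurrences form a block beginning at a later position $q$; taking $r=p+1$ we have $p<r<q$ with $w(\sigma)_r\neq b$, so in $\sigma$ the elements $p$ and $q$ lie in block $b$ while $r$ lies in a different block. If instead $w(\sigma)$ has a block-singleton letter $a$, its initial block of $a$s ends at some position $p'$ and a lone later occurrence sits at a position $q'$, with a distinct letter at some $r'$ satisfying $p'<r'<q'$; again $p'$ and $q'$ lie in block $a$ and $r'$ lies in a different block. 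In either case $\sigma$ contains three elements $x<y<z$ with $x,z$ in one block and $y$ in a different block.

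Finally I would argue by contradiction. If the weakly increasing word $w$ contained $\sigma$, then some subpartition $\tau'=A\cap\tau$ of $\tau$ would standardize to $\sigma$. Since standardization is order preserving and a subpartition preserves the same-block relation (two elements lie in a common block of $\tau'$ exactly when they lie in a common block of $\tau$), the triple $x<y<z$ witnessing the straddle in $\sigma$ pulls back to elements $x'<y'<z'$ of $\tau$ with $x',z'$ in a common block and $y'$ in a different block. This directly contradicts the interval property of the first step, so no such subpartition can exist and hence $w\in R_n(\sigma)$.

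I expect the main obstacle to be the bookkeeping in the middle step: reading off, from the somewhat informal definitions of the singleton-block and block-singleton letters, the exact positions that produce the straddling triple, and confirming that the phrase ``after some distinct letter'' genuinely guarantees an intermediate position $r$ strictly between the two same-block positions whose letter differs from the repeated one. Once that triple is isolated, the pull-back through standardization and the clash with the interval property are routine.
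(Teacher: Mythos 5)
Your argument is correct, and it is worth noting up front that the paper states this lemma with no proof at all---neither in the body nor in the appendix---so there is no authorial argument to compare yours against; you are filling a genuine gap. The route you take is the natural one: a weakly increasing RGF corresponds to a partition of $[n]$ into intervals of consecutive integers, so no block is interrupted by an element of another block; this interval property is inherited by every subpartition $A\cap\tau$ and is preserved by standardization, since standardization is an order isomorphism that keeps the same-block relation; and a singleton-block or block-singleton letter in $w(\sigma)$ produces in $\sigma$ exactly the forbidden configuration, namely two elements $x<z$ of one block with an element $y$, $x<y<z$, in a different block. The one point worth making explicit is the degenerate reading of the definitions that you flag at the end: if a letter occurring exactly once were allowed to count vacuously as a singleton-block letter (its ``other occurrences'' forming an empty block), the lemma would be false---for $\sigma=1/2$ with $w(\sigma)=12$ it would assert that every weakly increasing word avoids $1/2$, which fails already for $w=12$. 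The paper's examples make clear that these letters are intended to have at least two occurrences separated by a distinct letter, and under that reading your straddling triple always exists and the contradiction goes through.
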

	\begin{restatable}{thm}{EmFive}
    \label{thm:EmFive}
    The word characterization  of the avoidance class based on the pattern $134/2$ is:
    \begin{itemize}
		\item $R_n(134/2)=\{w\in R_n:$ every letter in $w$ is either a block letter or a block-singleton letter$\}$
	\end{itemize}
	\end{restatable}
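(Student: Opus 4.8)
The plan is to recast the pattern $134/2$ as a condition on the RGF $w=w(\sigma)$ and then reduce everything to a letter-by-letter analysis. First I would observe that $134/2$ corresponds to the word $1211$, so $\sigma$ contains $134/2$ precisely when there are positions $i_1<i_2<i_3<i_4$ with $w_{i_1}=w_{i_3}=w_{i_4}$ and $w_{i_2}\neq w_{i_1}$: standardizing the four chosen elements places $i_1,i_3,i_4$ in one block and $i_2$ in another, and since $i_1$ is the least of the four this standardizes to $134/2$. The crucial feature is that this condition is carried entirely by a single letter, namely every occurrence of the pattern is witnessed by three occurrences of one letter $c$ (at $i_1,i_3,i_4$) together with a foreign letter strictly between $i_1$ and $i_3$. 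Hence $\sigma$ avoids $134/2$ if and only if no single letter of $w$ admits such a configuration, and the whole theorem reduces to characterizing, letter by letter, which letters are safe.

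Second, I would fix a letter $c$ occurring at positions $q_1<\cdots<q_k$ and translate safety into the block / block-singleton dichotomy. The key observation is that a witness for $c$ requires a \emph{gap}, a foreign letter between two consecutive occurrences $q_i,q_{i+1}$, together with at least one further occurrence of $c$ after $q_{i+1}$; equivalently, a gap at some index $i\le k-2$. If no such gap exists, then either the occurrences are all consecutive (so $c$ is a block letter) or the only gap sits between $q_{k-1}$ and $q_k$ with $q_1,\dots,q_{k-1}$ consecutive (so $c$ is a block-singleton letter). Conversely, if $c$ is neither, then it has a gap at some index $i\le k-2$. I would also dispose of the boundary case $k\le 2$, where three occurrences do not exist and $c$ is automatically a block or block-singleton letter.

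With this dictionary in hand both inclusions follow. For $\supseteq$, assuming every letter is a block or block-singleton letter, I would suppose toward a contradiction that $w$ contains $134/2$; the witnessing letter $c$ would then have a gap with a later occurrence, i.e. a gap at index $\le k-2$, which is incompatible with both the block structure (no gaps) and the block-singleton structure (only a terminal gap). For $\subseteq$, I would argue contrapositively: if some letter $c$ is neither a block nor a block-singleton letter, then it has a gap at some index $i\le k-2$, and I can read off an explicit witness by taking $i_1=q_i$, $i_2$ the foreign position inside the gap, $i_3=q_{i+1}$, and $i_4=q_{i+2}$, so $w$ contains $134/2$.

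The main obstacle is the structural step in the second paragraph: precisely delimiting ``neither block nor block-singleton'' and proving it forces a gap strictly before the last occurrence. This needs a clean case split on the number and location of gaps among $q_1,\dots,q_k$ (no gap $\Rightarrow$ block; a single terminal gap $\Rightarrow$ block-singleton; anything else $\Rightarrow$ an early gap yielding a witness), together with care to match the requirement that a block-singleton letter have \emph{exactly one} trailing occurrence. Finally, the Singleton-Block/Block-Singleton Avoidance lemma already settles the safe direction for weakly increasing words, so it can be cited to shortcut that special case once the per-letter reduction is in place.
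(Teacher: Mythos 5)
Your proposal is correct, and in fact the paper never supplies a proof of this particular theorem: although the text promises that Theorems 3.4 through 3.10 are proved in the appendix, the appendix skips from the $123/4$ characterization to the $13/2/4$ characterization, so there is no argument for $134/2$ (or $124/3$) to compare against. Your argument fills that gap soundly. The reduction of containment of $134/2$ to the existence of positions $i_1<i_2<i_3<i_4$ with $w_{i_1}=w_{i_3}=w_{i_4}$ and $w_{i_2}\neq w_{i_1}$ (the subword pattern $abaa$ with $b\neq a$) is the exact analogue of the $abac$ criterion the paper does prove for $13/2/4$, so your route is squarely in the spirit of the paper's methods. The per-letter gap analysis is the right way to finish: a witness for a letter $c$ with occurrences $q_1<\cdots<q_k$ exists precisely when some foreign letter sits between $q_i$ and $q_{i+1}$ for an index $i\le k-2$, and ruling that out leaves exactly the block letters (no gap) and the block-singleton letters (a single gap before $q_k$, with $q_1,\dots,q_{k-1}$ consecutive), matching the definition's requirement of exactly one trailing occurrence. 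The one point to keep explicit when writing this up is the direction you already flagged: a letter that is neither type must be shown to have a gap strictly before its last two occurrences, which follows from your case split on where the gaps lie. The appeal to the Singleton-Block/Block-Singleton Avoidance lemma is unnecessary (and that lemma's statement in the paper is garbled anyway); your per-letter argument already covers weakly increasing words since every letter there is a block letter.
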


\begin{restatable}{thm}{EmSix}
\label{thm:EmSix}
The word characterization of the avoidance class based on the pattern $124/3$ is:
\begin{itemize}
\item $R_n(124/3)=\{w\in R_n:$ every letter in $w$ is either a block letter or a singleton-block letter$\}$
\end{itemize}
\end{restatable}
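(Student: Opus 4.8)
The plan is to work entirely on the level of the RGF $w=w(\sigma)$ and to reduce the theorem to a single statement about the positions at which each letter occurs. The first step is to record the word of the pattern itself: since $124/3$ has blocks $\{1,2,4\}$ and $\{3\}$, its RGF is $w(124/3)=1121$. A subpartition of $\sigma$ standardizes to $124/3$ exactly when we can choose four positions $i_1<i_2<i_3<i_4$ whose letters form the word $1121$ after standardizing; because $i_1$ carries the smallest chosen element, its block becomes block $1$, and this happens precisely when $w_{i_1}=w_{i_2}=w_{i_4}=p$ for some value $p$ and $w_{i_3}=q$ with $q\neq p$. Thus $\sigma$ contains $124/3$ if and only if some letter $p$ occurs at three positions $i_1<i_2<i_4$ with a letter different from $p$ lying strictly between $i_2$ and $i_4$. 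I would isolate this equivalence as the key bridge before anything else, since every subsequent argument is just a reading of it.

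The next step is to translate the ``block or singleton-block'' condition into the same language. Writing the occurrences of a fixed letter $p$ as $j_1<j_2<\cdots<j_k$, I would verify that $p$ is a block letter or a singleton-block letter if and only if the occurrences $j_2,\ldots,j_k$ from the second onward are consecutive (the cases $k\le 2$ being vacuous). Indeed, a block letter is the case where $j_1,\ldots,j_k$ are all consecutive, and a singleton-block letter is the case where $j_1$ is isolated, followed by a distinct letter, and $j_2,\ldots,j_k$ then form one block; in both situations $j_2,\ldots,j_k$ are consecutive, and conversely these two situations exhaust the ways for $j_2,\ldots,j_k$ to be consecutive.

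With these two reformulations in hand the two inclusions become short. For the contrapositive of the forward inclusion, suppose some letter $p$ is neither a block letter nor a singleton-block letter. Then $j_2,\ldots,j_k$ are not consecutive, so there is an index $a\ge 2$ and a position $i_3$ with $j_a<i_3<j_{a+1}$ and $w_{i_3}\neq p$; taking $i_1=j_1$, $i_2=j_a$, and $i_4=j_{a+1}$ produces the forbidden configuration, so $\sigma$ contains $124/3$. For the reverse inclusion, suppose every letter is a block or singleton-block letter but, for contradiction, that $\sigma$ contains $124/3$ via $i_1<i_2<i_3<i_4$ with $w_{i_1}=w_{i_2}=w_{i_4}=p$ and $w_{i_3}\neq p$. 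Since $i_1<i_2$ is an earlier occurrence of $p$, both $i_2$ and $i_4$ lie among $j_2,\ldots,j_k$, yet the distinct letter at $i_3$ lies strictly between them, contradicting that $j_2,\ldots,j_k$ are consecutive.

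The main obstacle is not any single computation but getting the reformulation in the second step exactly right, including the boundary cases: a letter occurring once is a block letter, and a letter occurring twice with a gap is a singleton-block letter, so that no letter is accidentally excluded. I would also double-check that ``neither a block nor a singleton-block letter'' forces a genuine gap somewhere among $j_2,\ldots,j_k$, rather than merely a second isolated occurrence preceding the final block, since that gap is precisely what the pattern detects. Finally, it is worth noting the parallel with Theorem~\ref{thm:EmFive}: the patterns $124/3$ and $134/2$ are complements, and interchanging ``singleton-block'' with ``block-singleton'' converts this argument into the proof of that theorem, in agreement with Theorem~\ref{thm:comp}.
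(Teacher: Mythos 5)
Your proof is correct. One thing worth flagging up front: although the paper promises proofs of Theorems 3.4 through 3.10 in the appendix, the appendix in fact skips Theorems 3.7 and 3.8 (the $134/2$ and $124/3$ characterizations), so there is no proof of this statement in the paper to compare against. Your argument is nevertheless exactly in the spirit of the proof the paper does give for the sibling Theorem 3.9 on $13/2/4$, which rests on the criterion that $w$ contains $13/2/4$ if and only if it contains a subword of type $abac$; your key bridge is the analogous criterion that $w$ contains $124/3$ if and only if it contains a subword of type $ppqp$ (the RGF $1121$ of the pattern), which is right because standardization of a four-element subpartition depends only on which chosen elements share a block, with the block of the minimum forced to be block $1$. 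Your reformulation of ``block letter or singleton-block letter'' as ``all occurrences from the second onward are consecutive'' is the step that actually needs care, and you handle it correctly, including the $k\le 2$ boundary cases and the observation that a failure of the condition produces a genuine gap strictly after the second occurrence, which is precisely what the $ppqp$ configuration detects; both inclusions then follow immediately. The remark that swapping singleton-block for block-singleton turns this into a proof of Theorem 3.7 for the complementary pattern $134/2$ is also accurate and would let the paper fill both of its missing proofs at once.
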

	\begin{restatable}{thm}{EmSeven}
    \label{thm:EmSeven}
    The word characterization and cardinality of the avoidance class based on the pattern $13/2/4$ are given by:
    \begin{itemize}
    	\item[(i)]$R_n(13/2/4)=\{w\in R_n$: 
        \begin{enumerate}[label=(\Alph*)] 
        \item $m\leq 2$, or
        \item $m>3$ and $w=uv$ where $u$ is weakly increasing and $v$ begins at the first $m$ and contains $m$ and $(m-1)$ in any order, or
        \item $m>3$ and $w=uv$ where $u$ is weakly increasing and $v$ begins at the first $m$ and contains a block of $m$s followed by a block of $z$s where $z<(m-1)\}$
        \end{enumerate}
        \item[(ii)]$\#\Pi_n (13/2/4)=2^{n-1}+\sum\limits^n_{m=3}{n-1 \choose n-m}$$ $$+\sum\limits^{n-1}_{m=3}\sum\limits^{n-2}_{|u|=m-1}{|u|-1 \choose |u|-m+1}[(2^{(n-|u|-1)}-1)+(m-2)(n-|u|-1)]$
    \end{itemize}
    \end{restatable}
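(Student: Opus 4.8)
The plan is to translate pattern containment into a condition on the RGF and then prove the two inclusions of the word characterization (i) separately before assembling the count (ii). Writing the pattern $13/2/4$ as the RGF $1213$, one checks directly from the definition of standardization that $w$ contains $13/2/4$ precisely when there are positions $i<j<k<l$ with $w_i=w_k$, $w_j\neq w_i$, and $w_l\notin\{w_i,w_j\}$. Equivalently, $w$ \emph{avoids} $13/2/4$ if and only if, for every ``sandwich'' $i<j<k$ with $w_i=w_k\neq w_j$, every later position $l>k$ satisfies $w_l\in\{w_i,w_j\}$. I would record this reformulation as the single tool used throughout, since it replaces the search over subpartitions by a local condition on repeated letters. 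Throughout, $m$ denotes the number of distinct letters, $f$ the position of the first $m$, and $w=uv$ the factorization with $u=w_1\cdots w_{f-1}$ and $v=w_f\cdots w_n$; by the RGF condition $u$ contains every letter $1,\dots,m-1$.

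For the forward inclusion (every listed word avoids), the case $m\le2$ is immediate because $13/2/4$ has three blocks. If $w$ is weakly increasing, then any sandwich $i<j<k$ would force $w_i\le w_j\le w_k=w_i$, contradicting $w_j\neq w_i$; hence no sandwich exists and $w$ avoids, which settles all weakly increasing words (the count treats these separately). For form (B), where $v$ uses only $m$ and $m-1$, and form (C), where $v=m^{s}z^{t}$ with $z<m-1$, I would apply the reformulated criterion directly: in each case one lists the few repeated letters that can anchor a sandwich and checks that, past the third position of any such sandwich, only the two letters $w_i,w_j$ occur. Since $u$ is weakly increasing (so contributes no sandwich) and $v$ uses at most two distinct letters in a controlled order, this is a brief finite verification.

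The reverse inclusion is the crux. Assume $w$ avoids $13/2/4$ with $m\ge3$. First I would show $u$ is weakly increasing: if not, let $k<f$ be the first position below the running maximum, so $w_1\cdots w_{k-1}$ is weakly increasing and $w_k<w_{k-1}$; taking $i$ to be an earlier occurrence of the value $w_k$ and $j=k-1$ gives a sandwich $i<j<k$ with $w_j>w_k=w_i$, while the letter $m$ at position $f>k$ is distinct from both $w_i,w_j$ (both are $\le m-1$), contradicting avoidance. Hence $u=1^{a_1}\cdots(m-1)^{a_{m-1}}$ with each $a_i\ge1$. Next let $g$ be the first position of $v$ carrying a letter $\neq m$; if there is none, $w$ is weakly increasing. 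Otherwise set $z=w_g<m$. Since $z$ also occurs in $u$, the triple (an occurrence of $z$ in $u$, the first $m$, position $g$) is a sandwich, so every letter after $g$ lies in $\{z,m\}$; with the leading block of $m$'s this gives $v\in m^{s}\{z,m\}^{*}$. If $z=m-1$ this is exactly form (B). If $z<m-1$, then $m-1$ occurs in $u$ and exceeds $z$, so (an occurrence of $z$ in $u$, an occurrence of $m-1$ in $u$, position $g$) is a sandwich with middle letter $m-1$, whence no $m$ may appear after $g$, forcing $v=m^{s}z^{t}$, which is form (C). The main difficulty of the whole proof is precisely this step: choosing the right two witnesses for each sandwich and checking the split $z=m-1$ versus $z<m-1$ is exhaustive.

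For the cardinality (ii) I would sum over the disjoint families identified above. The words with $m\le2$ number $S(n,1)+S(n,2)=2^{n-1}$, and the weakly increasing words with $m\ge3$ number $\sum_{m=3}^{n}\binom{n-1}{n-m}$ by the earlier enumeration of weakly increasing RGFs. For the remaining words of form (B) or (C) I would stratify by $m$ and by the prefix length $|u|$: the admissible prefixes are weakly increasing words of length $|u|$ using each of $1,\dots,m-1$ at least once, counted by compositions as $\binom{|u|-1}{|u|-m+1}$; the form-(B) suffixes are the words in $\{m-1,m\}$ of length $n-|u|$ that begin with $m$ and contain at least one $m-1$, numbering $2^{n-|u|-1}-1$; and the form-(C) suffixes are $m^{s}z^{t}$ with $z\in\{1,\dots,m-2\}$ and $s,t\ge1$, numbering $(m-2)(n-|u|-1)$. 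Multiplying prefix by suffix counts and summing over $m-1\le|u|\le n-2$ and $3\le m\le n-1$ yields the third term, and adding the first two terms gives the stated formula. The only care needed here is bookkeeping of the index ranges (why $|u|\le n-2$ and $m\le n-1$, and the single forced leading $m$ responsible for each ``$-1$'' in the exponents).
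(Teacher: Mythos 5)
Your proposal is correct and follows essentially the same route as the paper: the ``sandwich'' criterion you isolate is exactly the paper's characterization that $w$ contains $13/2/4$ precisely when it has a subsequence of the form $abac$ followed by a third distinct letter, and your case split, choice of witnesses, and enumeration (prefixes by compositions, form-(B) suffixes giving $2^{n-|u|-1}-1$, form-(C) suffixes giving $(m-2)(n-|u|-1)$) all match the appendix proof. The only difference is organizational: you derive the structure directly from avoidance, whereas the paper enumerates the ways a word can fail to have the stated form and exhibits a containment in each, which are contrapositive versions of the same argument.
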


	\begin{restatable}{thm}{EmEight}
    \label{thm:EmEight}
    The word characterization and cardinality of the avoidance class based on the pattern $1/24/3$ are given by:
    \begin{itemize}
    	\item[(i)]$R_n(1/24/3)=\{w\in R_n:$
        \begin{enumerate}[label=(\Alph*)]
        \item $m\leq 2$, or
        \item $m>3$ and $w$ is a weakly increasing word with a block of $1$s of any size inserted between two distinct terms, or
        \item $m>3$ and $w$ is a word beginning with $1$s and $2$s in any order followed with a weakly increasing suffix that begins with the first occurrence of $3\}$
        \end{enumerate}
        \item[(ii)]$\#\Pi_n (1/24/3)=2^{n-1}+\sum\limits^n_{m=3}{n-1 \choose n-m}$$ $$+\sum\limits^{n-1}_{m=3}\sum\limits^{n-2}_{|u|=m-1}{|u|-1 \choose |u|-m+1}[(2^{(n-|u|-1)}-1)+(m-2)(n-|u|-1)]$
        \end{itemize}
   \end{restatable}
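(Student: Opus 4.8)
The plan is to treat the two parts by different means. For the cardinality in (ii) I would compute nothing new: the pattern $1/24/3$ is exactly the complement $(13/2/4)^c$ (relabel $l\mapsto 5-l$ in $13/2/4$ and reorder the blocks by minimum), so Theorem~\ref{thm:comp} gives $\#\Pi_n(1/24/3)=\#\Pi_n(13/2/4)$, and the formula is inherited verbatim from Theorem~\ref{thm:EmSeven}(ii). This is precisely why the two displayed cardinalities coincide. The substantive work is the word characterization in (i), which I would prove directly from an avoidance criterion on RGFs.

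The first step is to record what containment means at the level of words. Since $w(1/24/3)=1232$, a partition contains $1/24/3$ iff its RGF $w$ has positions $a<b<c<d$ with $w_b=w_d$ and $w_a,w_b,w_c$ pairwise distinct. Granting this, the forward direction is short. Form (A) has at most two blocks and cannot realize a three-block pattern. For (B), write $w=1^{a_1}\cdots i^{a_i}\,1^{k}\,(i+1)^{a_{i+1}}\cdots m^{a_m}$: if $t\ge 2$ its occurrences form one contiguous run (the inserted $1$s split no run of larger letters), so nothing unequal to $t$ lies strictly between two $t$s; if $t=1$, a non-$1$ can precede an occurrence of $1$ only inside the inserted block, which forces both chosen $1$s into that block, again with only $1$s between them. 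For (C), values $\ge 3$ sit in the weakly increasing suffix, so between two equal letters $\ge 3$ every letter equals that value, while the letters $1$ and $2$ occur only in the prefix, so both $w_a$ and $w_c$ then lie in $\{1,2\}\setminus\{t\}$, a single value, forcing $w_a=w_c$.

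For the reverse direction I would first prove the key structural lemma that in any $w$ avoiding $1/24/3$ \emph{every descent lands on a $1$}: if $w_i=s>w_{i+1}=r\ge 2$, then since $w$ is an RGF the letter $r$ already occurs before position $i$ (say first at $g<i$) and the letter $1$ occupies position $1<g$, so $(1,g,i,i+1)$ realizes $w_b=w_d=r$ with $w_a=1,\,w_c=s$ distinct from $r$ and from each other, a copy of $1232$. With this in hand I would split on the first occurrence of $3$, at position $q$ (which exists as $m\ge 3$). If no $1$ occurs at or after $q$, then the suffix from $q$ has no $1$s and hence no descents, so it is weakly increasing and begins at $3$, while the prefix uses only $1$s and $2$s in some order: this is form (C). If some $1$ occurs after $q$, I would show the word is weakly increasing apart from a single inserted run of $1$s, i.e.\ form (B): first, no extra $1$ may appear before $q$ (a $1$ preceded by a $2$ before $q$, together with the $3$ at $q$ and a later $1$, gives $2,1,3,1$, a copy of $1232$), so the prefix is $1^{a_1}2^{a_2}$; then the $1$s after $q$ must form one contiguous run and the non-$1$ backbone must be weakly increasing, since any second dip to $1$, or any inversion among the backbone letters flanking a dip, again produces an explicit $1232$.

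The main obstacle is this last bookkeeping in form (B): ruling out simultaneously a second run of $1$s and any descent among the letters $\ge 2$, uniformly over where the dip sits. I expect to handle it by the device used throughout---given any configuration violating ``one run of $1$s over a weakly increasing backbone,'' exhibit four positions standardizing to $1232$---rather than by a global rearrangement. As a cross-check, and an alternative proof of (i), one may instead transport the characterization of Theorem~\ref{thm:EmSeven}(i) through complementation: complementing an RGF reverses and relabels it, and under this map the tail shuffle of $m$ and $m-1$ describing $R_n(13/2/4)$ becomes the front shuffle of $1$s and $2$s in (C), while the ``block of $m$s then a smaller block'' becomes the single inserted run of $1$s in (B), mirroring the weakly increasing case treated in the complement lemma of Section~2.
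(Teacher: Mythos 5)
Your proposal is correct in substance but follows a genuinely different route from the paper's. For the cardinality, the paper's appendix gives no argument at all for this theorem (the formula is simply the one from Theorem~3.9 restated), whereas you derive it by observing $(1/24/3)^c=13/2/4$ and invoking Theorem~2.2 together with Theorem~3.9(ii); this is clean, and the paper itself confirms the complement relation in Section~5.1, so nothing is lost. For the word characterization, the paper argues both directions directly on subpartitions: avoidance is checked form by form, and containment is shown only for three named deviations (more than one inserted block of $1$s, a block of $1$s inserted between two equal letters, a non-weakly-increasing suffix), which leaves the exhaustiveness of the converse somewhat implicit. You instead translate containment into the subword criterion $xyzy$ with $x,y,z$ pairwise distinct (the exact analogue of the $abac$ criterion the paper proves for $13/2/4$ but does not state for $1/24/3$), and you organize the converse around the lemma that every descent in an avoiding RGF must land on a $1$; splitting on whether a $1$ occurs after the first $3$ then yields forms (B) and (C) systematically. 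This buys a converse whose case analysis is visibly exhaustive, at the cost of the residual bookkeeping you flag for form (B). One point to fold into that bookkeeping: besides a second dip to $1$ and an inversion among the backbone letters, you must also exclude an inserted run of $1$s whose two flanking backbone letters are \emph{equal} (say both equal to $t\geq 3$), since the theorem requires insertion between two \emph{distinct} terms; the same device works there, as the positions of the first $2$, the $t$ before the dip, a $1$ in the dip, and the $t$ after the dip standardize to $1/24/3$. With that case added, your plan is complete.
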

This completes our study of the word characterization and cardinality for the avoidance classes: $R_n(1/2/3/4)$, $R_n(1234)$, $R_n(12/3/4)$, $R_n(1/2/34)$, $R_n(1/234)$, $R_n(123/4)$, $R_n(13/2/4)$, $R_n(1/24/3)$, $R_n(134/2)$, and $R_n(124/3)$. Next we will investigate the Wachs and White statistics. We found especially promising results for the distribution of Wachs and White statistics for $R_n(\sigma)$ and $R_n(\sigma^c)$.
\section{Characterization of Statistics across Single Pattern Avoidance Classes}
Before considering the distribution of statistics across avoidance classes it will be helpful to obtain formulas for the Wachs and White statistics for individual words based on the avoidance class where each word can be found. Theorems 4.1 through 4.7 describe the statistics for words in certain avoidance classes that showed promising equidistribution results with other avoidance classes. The proofs for these equations for statistics are dependent on the word characterizations and can be found in the appendix. We begin with the results of the Michigan State REU for statistics when $m\leq 2$.
\begin{restatable}{thm}{wwREU}
\label{thm:wwREU}
\cite{REU1} If $m\leq 2$ and $l$ is the number of $1$s present in $w$, then we have the following equations for the Wachs and White statistics:

\[
 lb(w) =
  \begin{cases} 
       0 & \text{$w$ weakly increasing}\\
      l-i & \text{$w=1^iw'1^j2^k$}
  \end{cases}
  \]

\hspace{2.25cm} $ls(w) = n-l$
\[
 rb(w) =
  \begin{cases} 
      l   & \text{$w$ weakly increasing} \\
       l-\delta_{k,0}j & \text{$w=1^iw'1^j2^k$}
  \end{cases}
\]
\[
 rs(w) =
  \begin{cases} 
      0  & \text{$w$ weakly increasing} \\
      n-l-k & \text{$w=1^iw'1^j2^k$}
  \end{cases}
  \]

\end{restatable}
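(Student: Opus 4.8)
The plan is to exploit that $m\le 2$ forces $w$ to be a word over the two-letter alphabet $\{1,2\}$, so the only strict comparison between letters is $1<2$. The first step is to reduce each of the four per-letter contributions to a single indicator. Because $2$ is the largest letter, $lb_i(w)=rb_i(w)=0$ whenever $w_i=2$, and because $1$ is the smallest, $ls_i(w)=rs_i(w)=0$ whenever $w_i=1$. For the surviving positions a $1$ contributes $lb_i(w)=1$ exactly when some $2$ stands to its left and $rb_i(w)=1$ exactly when some $2$ stands to its right, while a $2$ contributes $ls_i(w)=1$ exactly when some $1$ stands to its left and $rs_i(w)=1$ exactly when some $1$ stands to its right. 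Summing, each statistic becomes a count of the $1$s (respectively $2$s) having a $2$ (respectively $1$) on the prescribed side, and this reduction drives the whole argument.

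The $ls$ statistic then follows at once and uniformly: since $w$ begins with $1$, every one of the $n-l$ occurrences of $2$ is preceded by a $1$, so $ls(w)=n-l$; this also matches the specialization of the formula $ls(w)=\sum_{i=1}^m ia_i-n$ to $m=2$. For the remaining three I would split on whether $w$ is weakly increasing. If it is, then $w=1^l2^{n-l}$, so no $2$ precedes a $1$ and no $1$ follows a $2$, giving $lb(w)=rs(w)=0$; and (when a $2$ is present) every one of the $l$ ones is followed by the terminal block of $2$s, giving $rb(w)=l$.

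For a word that is not weakly increasing I would fix the decomposition $w=1^i w' 1^j 2^k$, where $1^i$ is the maximal initial run of $1$s (so the first $2$ occupies position $i+1$), $2^k$ is the maximal terminal run of $2$s, and $1^j$ is the maximal run of $1$s immediately to the left of $2^k$ — the final run of $1$s in $w$ when $k=0$. Now $lb(w)$ counts the $1$s lying to the right of the first $2$: the $i$ initial ones fail and the remaining $l-i$ succeed, so $lb(w)=l-i$. Dually $rs(w)$ counts the $2$s lying to the left of the last $1$: the $k$ terminal ones fail and the other $(n-l)-k$ succeed, so $rs(w)=n-l-k$. For $rb(w)$ the decisive case distinction is on $k$: if $k\ge 1$ the word ends in a $2$, which lies to the right of every $1$, so all $l$ ones count and $rb(w)=l$; if $k=0$ the word ends in the block $1^j$, whose $j$ ones have no $2$ to their right while every other $1$ does, so $rb(w)=l-j$. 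Together these are precisely $rb(w)=l-\delta_{k,0}\,j$.

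I expect the only real obstacle to be the bookkeeping around this decomposition — specifically, verifying that $1^i w' 1^j 2^k$ is well defined and exhaustive for every non-weakly-increasing two-letter RGF, and handling the $k=0$ versus $k\ge 1$ split for $rb$. This is the one point where the statistics depend on the fine structure of $w$ rather than merely on its letter multiplicities: the very same block $1^j$ contributes to $rb$ when it is followed by terminal $2$s but not when it is itself terminal, which is exactly what the Kronecker delta records.
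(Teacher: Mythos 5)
Your proof is correct. Note, however, that the paper does not prove this statement at all: Theorem 4.1 is quoted verbatim from the Michigan State REU paper \cite{REU1}, so there is no in-paper argument to compare against. Your reduction of each per-letter contribution to a $0$/$1$ indicator (using that over the alphabet $\{1,2\}$ the ``distinct smaller/bigger letters'' sets have size at most one), followed by the split on the decomposition $w=1^iw'1^j2^k$, correctly recovers all four formulas, including the Kronecker-delta case for $rb$. The one caveat worth recording is the degenerate weakly increasing case $w=1^n$ (i.e.\ $m=1$), where the stated formula $rb(w)=l$ fails and the correct value is $0$; this is a defect of the theorem statement as transcribed rather than of your argument, and you flag it correctly with the parenthetical ``when a $2$ is present.''
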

Now we will use previous theorems to give more useful characterizations of the words to find equations for the Wachs and White statistics. For the $ls$ and $lb$ statistics for $w\in R_n(1/2/3/4)$ it is helpful to restate the word characterization. By Theorem 3.1 we know that for all $w\in R_n(1/2/3/4)$, $m\leq 3$, so we can restate this as, 
\\$R_n(1/2/3/4)=\{w\in R_n:$\begin{itemize}
		\item[(A) ] $m\leq 3$ and $w=1^{h_1}2^{h_2}3^{h_3}$ (meaning $w$ is weakly increasing), or
       \item[(B) ] $1<m\leq 3$ and $w=1^{v}{w_a}1^{x}{w_b}1^{y}{w_c}1^{z}$ where $w_a$ begins and ends with a 2 and has $a_1$ $1$s and $a_2$ $2$s, $w_b$ begins with the first $3$ and ends with the last $2$, and $w_c$ begins with a $3$ and ends with the last $3$, or
        \item[(C) ] $1<m\leq 3$ and $w=1^{w}{w_a}1^{x}{w_b}1^{y}{w_c}1^{z}$ where $w_1$ begins and ends with a 2 and has $a_1$ $1$s and $a_2$ $2$s, $w_b$ begins with the first $3$ and ends with the last $3$, and $w_c$ begins with a $2$ and ends with the last $2\}$
		\end{itemize}
\begin{restatable}[Left Statistics for $R_n(1/2/3/4)$]{thm}{lsww}  
\label{thm:lsww}
If $w\in R_n(1/2/3/4)$, then

\[
 ls(w) =
  \begin{cases} 
      \hfill 2h_3+h_2    \hfill & \text{$w\in A$} \\
      \hfill a_2+b_2+2b_3+c_2+2c_3 \hfill & \text{$w\in B$ or $C$} \\
  \end{cases}
\]
\[
 lb(w) =
  \begin{cases} 
      \hfill 0    \hfill & \text{$w\in A$} \\
      \hfill x+2y+2z+a_1+2b_1+2c_1 \hfill & \text{$w\in B$ or $C$} \\
  \end{cases}
  \]
\end{restatable}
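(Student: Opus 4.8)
The plan is to prove the two statistics separately, in each case reducing to machinery already in place. For $ls$ I would appeal directly to the earlier left-smaller lemma, which says that for a word of type $1^{a_1}\cdots m^{a_m}$ one has $ls(w)=\sum_{i=1}^m i a_i-n$. The point I would emphasize is that this value depends only on the multiset of letters of $w$ and not on their order: the restricted growth condition forces all of $1,\dots,i-1$ to appear before any copy of $i$, so every occurrence of $i$ contributes exactly $i-1$. Thus I need only read off the letter counts in each case. In Case A, $w=1^{h_1}2^{h_2}3^{h_3}$ gives $ls(w)=h_1+2h_2+3h_3-n=h_2+2h_3$. In Cases B and C every $2$ lies in one of $w_a,w_b,w_c$ and every $3$ lies in one of $w_b,w_c$, so writing $N_2=a_2+b_2+c_2$ and $N_3=b_3+c_3$ for the totals and simplifying $\sum_i i a_i-n$ to $N_2+2N_3$ produces $a_2+b_2+2b_3+c_2+2c_3$. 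So the $ls$ half is a substitution into the earlier formula.

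For $lb$ the order matters, so I would argue directly from the per-letter definition $lb_i(w)=|\{w_j>w_i:j<i\}|$. The governing observation is that, because $w$ uses only the letters $1,2,3$, the value $lb_i(w)$ is completely determined by which of $2$ and $3$ has already occurred strictly to the left of position $i$: a $3$ always contributes $0$; a $2$ contributes $1$ exactly when some $3$ precedes it; and a $1$ contributes $1$ for a preceding $2$ together with a further $1$ for a preceding $3$. Hence the only data that matter are the positions of the first $2$ and the first $3$, and in this parametrization these are pinned down: the first $2$ opens $w_a$ and the first $3$ opens $w_b$. Case A is then immediate, since a weakly increasing word never has a larger letter to the left of any position, giving $lb_i=0$ throughout and $lb(w)=0$.

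For Cases B and C I would walk through the segments $1^v,w_a,1^x,w_b,1^y,w_c,1^z$ in order and, for each, tally the contribution of its letters according to which of the two landmarks it lies past. The block $1^v$ precedes both landmarks and contributes $0$; the $1$s of $w_a$ and the block $1^x$ lie past the first $2$ but before the first $3$, contributing $a_1$ and $x$; and everything from $w_b$ onward lies past both landmarks, so its $1$s each contribute $2$ (giving $2b_1$, $2y$, $2c_1$, $2z$) and its $2$s each contribute $1$. Collecting these per-segment counts and summing yields the closed form for $lb(w)$. I would treat B and C in parallel, observing that they differ only in whether the material after the last landmark is built from $2$s or from $3$s, which cannot change the count because every such letter already lies past both the first $2$ and the first $3$.

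The main obstacle is the boundary bookkeeping rather than any single hard idea: I must verify that each landmark is correctly attributed at the junctions between consecutive segments, handle the edge cases in which some chunk $w_a,w_b,w_c$ or some $1$-block is empty (so that a landmark coincides with the start of the next nonempty segment), and confirm that Cases B and C genuinely collapse to a single expression. I would also double-check the accounting for the $2$s lying past the first $3$ against a small example such as $w=1232$, since those occurrences are easy to overlook yet do contribute to $lb$. Once the contributions of all seven segments are tabulated against the two thresholds, the formula follows by addition.
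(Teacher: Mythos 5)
Your approach is essentially the paper's own: for $ls$ you reduce to the letter-count formula $ls(w)=\sum_i i a_i-n$ of Lemma 2.3, and for $lb$ you sweep the segments $1^v,w_a,1^x,w_b,1^y,w_c,1^z$ and tally each letter's contribution according to whether the first $2$ and the first $3$ lie strictly to its left; the appendix proof of this theorem does exactly the same two things, merely phrasing the $ls$ half as ``each $2$ has one smaller distinct letter before it, each $3$ has two'' instead of citing the lemma. The $ls$ half of your argument lands on the stated formula exactly. For $lb$, however, your derivation correctly includes a contribution of $1$ from every $2$ occurring after the first $3$ --- that is, an extra $b_2+c_2$, since all $2$s of $w_b$ and $w_c$ follow the first $3$ by construction --- and the displayed formula $x+2y+2z+a_1+2b_1+2c_1$ omits exactly this term. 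Your own proposed test case $w=1232$ confirms the omission: here $w_a=2$, $w_b=32$, all of $v,x,y,z$ and the remaining counts vanish except $a_2=b_2=b_3=1$, so the printed formula gives $0$ while $lb(1232)=1$ (the trailing $2$ sees the $3$ to its left). Notably, the paper's appendix computation agrees with you rather than with its own display: its running total ends at $a_2+2a_3+2a_4+b_1+2c_1+c_2+2d_1+d_2$ in the appendix's renamed variables, which translates to $x+2y+2z+a_1+2b_1+2c_1+b_2+c_2$, yet both the theorem and the appendix case-display drop the two $2$-count terms. So your proof is sound and follows the paper's route, but what it establishes is the corrected formula $lb(w)=x+2y+2z+a_1+2b_1+2c_1+b_2+c_2$ for $w\in B$ or $C$; it is the printed statement that needs amending, not your argument.
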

\par The $rb$ and $rs$ statistics for $w\in R_n(1/2/3/4)$ are more easily described when we restate the word characterization for this avoidance class as follows:
\\$R_n(1/2/3/4)=\{w\in R_n:$
	\begin{enumerate}[label=(\Alph*)]
    \item $m\leq 3$ and $w=1^{h_1}2^{h_2}3^{h_3}$
    \item $w=w_aw_b3^c$ where $w_a$ ends with the last $1$; $w_b$ ends with the last $2$
    \item $w=w_aw_b3^c$ where $w_a$ ends with the last $2$, $w_b$ ends with the last $1$
    
    \item $w=w_aw_b2^c$ where $w_a$ ends with the last $3$, $w_b$ ends with the last $1$
    \item $w=w_aw_b2^c$ where $w_a$ ends with the last $1$, $w_b$ ends with the last $3$
    \item $w=w_aw_b1^c$ where $w_a$ ends with the last $2$, $w_b$ ends with the last $3$
    \item $w=w_aw_b1^c$ where $w_a$ ends with the last $3$, $w_b$ ends with the last $2$

    \item $m=2$ and $w=w_a2^b$ where $w_a$ ends with the last $1$
    \item $m=2$ and $w=w_a1^b$ where $w_a$ ends with the last $2\}$
    \end{enumerate}
    where $w_a$ contains $a_1$ $1$s, $a_2$ $2$s, and $a_3$ $3$s; $w_b$ contains $b_1$ $1$s, $b_2$ $2$s and $b_3$ $3$s 
\begin{restatable}[Right Statistics for $R_n(1/2/3/4)$]{thm}{rww} 
\label{thm:rww}
If $w\in R_n(1/2/3/4)$, then
\[
 rb(w) =
  \begin{cases} 
      \hfill (m-1)h_1+(m-2)h_2   \hfill & \text{$w\in A$} \\
      \hfill 2a_1+a_2+b_2 \hfill & \text{$w\in B$ or $E$} \\
      \hfill 2a_1+a_2+b_1 \hfill & \text{$w\in C,D,F$ or $G$} \\ 
      \hfill a_1 \hfill & \text{$w\in H$ or $I$} \\

  \end{cases}
\]

  \[
 rs(w) =
  \begin{cases} 
      \hfill 0    \hfill & \text{$w\in A$} \\
      \hfill a_2+2a_3+b_3 \hfill & \text{$w\in B,C,E$ or $F$} \\
       \hfill a_2+2a_3+b_2   \hfill & \text{ $w\in D$ or $F$} \\
      \hfill a_2 \hfill & \text{$w\in H$ or $I$} \\
  \end{cases}
\]
\end{restatable}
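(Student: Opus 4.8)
The plan is to reduce both statistics to a single combinatorial count and then evaluate that count case by case. By Theorem~\ref{thm:SagOne} every $w\in R_n(1/2/3/4)$ uses only the letters $1,2,3$, so for any position $i$ the value $rb_i(w)$ merely records which of the larger letters still occur to the right of $i$, and $rs_i(w)$ records which of the smaller letters do. Writing $L_c$ for the position of the last occurrence of the letter $c$, a larger letter $d$ survives past position $i$ exactly when $i<L_d$. Hence a $1$ contributes $[\,i<L_2\,]+[\,i<L_3\,]$ to $rb$, a $2$ contributes $[\,i<L_3\,]$, and a $3$ contributes $0$; summing over all positions collapses the statistic to
$$rb(w)=\#\{\text{1's before }L_2\}+\#\{\text{1's before }L_3\}+\#\{\text{2's before }L_3\},$$
and the symmetric argument for smaller letters gives
$$rs(w)=\#\{\text{2's before }L_1\}+\#\{\text{3's before }L_1\}+\#\{\text{3's before }L_2\}.$$
These two identities are the engine of the proof: once they are recorded as a preliminary observation, every case becomes a matter of reading three counts off the shape of $w$.

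For the weakly increasing case (A) I would not even need the identities. The value of $rb$ follows at once from the earlier $rb$-formula for weakly increasing words, $rb(w)=\sum_i(m-i)a_i$, specialized to $m\le 3$: the block $3^{h_3}$ contributes nothing and the sum collapses to $(m-1)h_1+(m-2)h_2$. And $rs(w)=0$ is immediate, since a weakly increasing word never places a smaller letter to the right of a larger one.

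The substance lies in (B)--(I), and the first step is to recognize that these cases are nothing but the possible orders of $L_1,L_2,L_3$. When $m=3$ and $w$ is not weakly increasing, the three last occurrences appear in one of the $3!=6$ orders, and these correspond bijectively to (B)--(G): the trailing block is the run of the latest-occurring letter, while the split $w=w_aw_b$ is dictated by which of the remaining two letters occurs last. Cases (H) and (I) handle $m=2$ in the same spirit. After confirming that this list is exhaustive and pairwise disjoint, each case localizes the letters --- for example in (C) every $2$ lies in $w_a$ while every $1$ occurring after $L_2$ lies in $w_b$ --- so each of the three counts above is one of $a_1,a_2,a_3,b_1,b_2,b_3$ or a sum of two of them. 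Concretely, in (B) one has $L_1<L_2<L_3$, so all $a_1$ ones precede both $L_2$ and $L_3$ and all $a_2+b_2$ twos precede $L_3$, giving $rb=2a_1+a_2+b_2$ and $rs=a_2+a_3+(a_3+b_3)=a_2+2a_3+b_3$; I would carry out the identical three-count evaluation in (C)--(I).

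I expect the main obstacle to be bookkeeping rather than any single hard idea: in all nine cases one must keep straight which occurrences of each letter fall before each $L_c$, and verify that the nine shapes genuinely partition $R_n(1/2/3/4)$. I would also use the clean six-ordering picture to pin down the correct case-to-formula assignment, since the displayed statement appears to contain clerical slips --- for instance the $rs$ value $a_2+2a_3+b_2$ should be attached to (D) and (G) (the orders $L_3<L_1<L_2$ and $L_3<L_2<L_1$), with (F) belonging instead to the $a_2+2a_3+b_3$ group alongside (B), (C), and (E).
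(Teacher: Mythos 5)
Your proposal is correct and follows essentially the same route as the paper's appendix proof, which likewise reduces $rb$ to counting the $1$'s before the last $2$, the $1$'s before the last $3$, and the $2$'s before the last $3$ (and dually for $rs$, organizing the argument around the last occurrence of each letter), then evaluates these counts case by case according to where those last occurrences fall in the decomposition $w=w_aw_b c^{\text{block}}$. Your diagnosis of the clerical slip in the stated $rs$ cases is also correct: $F$ is listed in both branches and $G$ in neither, and the intended grouping is $\{B,C,E,F\}$ for $a_2+2a_3+b_3$ and $\{D,G\}$ for $a_2+2a_3+b_2$.
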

	\par Next we include the statistics for words in $R_n(1/2/34)$. By Theorem 3.4, if $w\in R_n(1/2/34)$, then $w$ is in one of the following subsets of $R_n(1/2/34)$:
    \begin{enumerate}[label=(\Alph*)]
	\item $m\leq 2$
     \item $w=u3v$ and $b_1=0=b_2$
    \item $w=u3v$ and $b_1=0$, $b_2=1$
    \item $w=u3v$ and $b_1=1$, $b_2=0$ 
    \item $w=u3v$ and $b_1=1=b_2$ such that $x<y$ where $x$ is the position of the $1$ in $v$, and $y$ is the position of the $2$ in $v$
    \item $w=u3v$ and $b_1=1=b_2$ such that $x>y$ 
\end{enumerate}
Also let $l$ be the position of the last $1\in u$, $z$ be the position of the first $2\in u$, and $h$ be the position of the last $2\in u$.
\begin{restatable}[Statistics for $R_n(1/2/34)$]{thm}{wwTwo}
\label{thm:wwTwo}
If $w\in R_n(1/2/34)$, then for the statistics on $w$ when $w\in A$ are given by Theorem 4.1, and for the other cases:

$$ls=a_2+b_2+2+\sum\limits^{m-1}_{i=3}i$$

 \[
  rb=\begin{cases}
               \hfill (m-1)a_1+(m-2)a_2+|v|-y+\sum\limits^m_{i=3}(m-i)\hfill & \text{$w\in C$}\\
               \hfill(h-a_2)(m-1)+(m-2)(|u|-h)+(m-2)a_2+|v|-x+\sum\limits^m_{i=3}(m-i)\hfill & \text{$w\in B$ or $D$}\\
               \hfill(m-1)a_1+(m-2)a_2+2|v|-x-y+\sum\limits^m_{i=3}(m-i)\hfill & \text{$w\in E$}\\
               \hfill(m-1)a_1+(m-2)a_2+2|v|-x-y-1+\sum\limits^m_{i=3}(m-i)\hfill & \text{$w\in F$}\\
            \end{cases}
\]
$$lb=a_1-z+x+y+1$$
\[
 rs=\begin{cases}
 			\hfill y-1\hfill & \text{$w\in D$}\\
 			\hfill a_2+x+y-1\hfill & \text{$w\in E$}\\
            \hfill a_2+x+y\hfill & \text{$w\in F$}\\
            \hfill (l-a_1)+y\hfill & \text{$w\in B$ or $C$}\\
         \end{cases}
\]
\end{restatable}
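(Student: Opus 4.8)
The plan is to read the word structure straight off the refined characterization preceding the statement and to compute each statistic by summing the per-letter contributions $rs_i,\,lb_i,\,rb_i$ of Section~2. In cases (B)--(F) every word has the shape $w=u\,3\,v$ with $u=1^{a_1}2^{a_2}$, a single separating $3$, and a suffix $v$ in which the letters $4,5,\dots,m$ each occur once in increasing order, with at most one extra $1$ (at position $x$ of $v$) and at most one extra $2$ (at position $y$ of $v$) inserted among them. I record at the outset that, inside $u$, we have $l=a_1$, $z=a_1+1$ and $h=a_1+a_2=|u|$, and that $|v|=(m-3)+b_1+b_2$; the case $w\in A$ is delegated to \cref{thm:wwREU}. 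To avoid clashing with the prefix counts $a_1,a_2$, I write $c_k$ for the \emph{total} number of $k$'s in $w$, so $c_1=a_1+b_1$, $c_2=a_2+b_2$, and $c_k=1$ for $3\le k\le m$.

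For $ls$ I would invoke the earlier lemma giving $ls(w)=\sum_{k=1}^m k\,c_k-n$, which shows $ls$ depends only on the multiplicities, not the order. Substituting the $c_k$ above together with $n=(a_1+b_1)+(a_2+b_2)+(m-2)$, the value-$1$ letters contribute nothing and the expression telescopes to
\[
ls(w)=a_2+b_2+2+\sum_{i=3}^{m-1} i,
\]
which is the claimed value, uniformly and with no case split.

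The statistics $rb$, $lb$, $rs$ are order-sensitive, so here I would split $w$ into its \emph{backbone} (the prefix $1^{a_1}2^{a_2}$, the separator $3$, and the increasing run $4\,5\cdots m$ of $v$) and the inserted single $1$ and $2$. The backbone behaves like a weakly increasing word: each prefix $1$ sees every larger value to its right and contributes $m-1$ to $rb$, each prefix $2$ contributes $m-2$, and the separator together with the large letters contributes $\sum_{i=3}^m(m-i)$, exactly as in the companion $rb$ lemma; moreover no backbone letter ever has a larger letter to its left, so each contributes $0$ to $lb$. Inserting a single $1$ or $2$ is too small to add a larger value to anyone's right or left, so $rb$ equals the backbone total plus the inserted letters' own contributions, and $lb$ equals the inserted letters' own contributions alone. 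The statistic $rs$ is different, and this is the heart of the argument: without insertions every backbone letter has $rs_i=0$, so the \emph{entire} $rs$ arises as the increment produced by the inserted $1$ and $2$. Inserting the single $1$ places a smaller value to the right of every prefix $2$, of the separator $3$, and of each large letter that precedes position $x$, raising each of their $rs$-contributions by one; the single $2$ does the same for the separator and for each large letter preceding position $y$. One reads the counts off directly from how many of $4,\dots,m$ lie before positions $x$ and $y$ in $v$.

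The main obstacle is the bookkeeping of these insertion effects across the cases together with the degenerate cases. Whether the single $1$ and the single $2$ each count the other to its left or its right is precisely what separates case (E), $x<y$, from case (F), $x>y$, and is what produces the unit differences among the piecewise formulas (for instance the extra $1$ contributed to $rs$ in (F) because the single $2$ then sees the single $1$ to its right). One must also fix the convention that an absent small letter's position is set to $|v|$, so that (B), (C), (D) degenerate correctly from the two-letter count, and must separately handle $a_1=0$ or $a_2=0$, where the value $1$ or $2$ lives only in $v$ and is therefore seen to the right rather than to the left of the inserted letters, shifting several $lb$ and $rs$ counts. With these conventions fixed, I would compute the backbone totals once, add the inserted letters' own contributions together with the induced increments to the preceding backbone letters, and simplify using $|v|=(m-3)+b_1+b_2$; collecting the resulting four expressions for $rb$ and $rs$, the single expression for $lb$, and the uniform value for $ls$ then yields the stated formulas.
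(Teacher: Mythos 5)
Your derivation of $ls$ via the order-independent formula $ls(w)=\sum_k k\,c_k-n$ (Lemma 2.3) is correct and is essentially the same count the paper makes letter-by-letter, just packaged more cleanly. The problem lies in your treatment of the order-sensitive statistics. You ``record at the outset'' that inside $u$ one has $l=a_1$, $z=a_1+1$ and $h=|u|$, i.e.\ you take $u$ to be the sorted word $1^{a_1}2^{a_2}$. But in this paper ``a prefix of type $1^{a_1}2^{a_2}$'' only fixes the multiset: by the paper's own convention (``$w$ of type $1^{a_1}2^{a_2}\ldots$ simply requires $w$ to have $a_i$ $i$s''), and explicitly in the appendix proof of the word characterization for $1/2/34$, the $1$s and $2$s of $u$ may interleave in any RGF-legal order. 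The parameters $l$ (last $1$ in $u$), $z$ (first $2$ in $u$) and $h$ (last $2$ in $u$) appear in the theorem precisely because $u$ need not be weakly increasing; under your assumption they degenerate to constants and several terms of the stated formulas vanish identically. Concretely, the term $a_1-z+1$ in $lb$ counts the $1$s of $u$ occurring after the first $2$ of $u$ (each of which sees a bigger letter to its left), the term $(l-a_1)$ in the $rs$ formula for cases (B)/(C) counts the $2$s of $u$ occurring before the last $1$ of $u$, and the split $(h-a_2)(m-1)+(m-2)(|u|-h)$ in $rb$ distinguishes $1$s of $u$ before versus after the last $2$ of $u$. Your ``backbone'' $1^{a_1}2^{a_2}345\cdots m$ has none of these descents, so your computation would only verify the special case of a sorted prefix and would produce strictly simpler expressions (e.g.\ $lb=x+y$, $rs=y$ in (B)/(C)) rather than the formulas as stated.

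By contrast, the paper's proof never sorts $u$: it counts directly how many $1$s of $u$ precede or follow the last $2$ of $u$ (giving the $h$-terms in $rb$), how many $1$s follow the first $2$ (giving $a_1-z+1$ in $lb$), and how many $2$s precede the last $1$ (giving $l-a_1$ in $rs$), and only then adds the contributions of the at most one $1$ and one $2$ in $v$ exactly as you do. To repair your argument you would need to replace the sorted backbone by a prefix with arbitrary interleaving and carry $l$, $z$, $h$ as genuine parameters through the $rb$, $lb$, $rs$ computations; the insertion analysis for $v$ (including the $x<y$ versus $x>y$ distinction separating (E) from (F)) can then stay as you have it.
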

\par Similarly when we state the statistics for $w\in R_n(12/3/4)$ we identify the subset of the avoidance class in which $w$ resides. By Theorem 3.3 the possible subsets include:
\begin{enumerate}[label=(\Alph*)]
	\item $m\leq 2$
	\item $w=uv$ where $m\in u$ and $v$ contains two distinct letters
    \item $w=uv$ where $m\in u$ and $v$ is empty or contains one distinct letter
    \item $w=uv$ where $m\in v$ and $v$ contains two distinct letters, where $v$ is of type $c^{b_c}d^{b_d}$ and $x$ is the position of the first $d\in v$ where $d>c$.
\end{enumerate}

\begin{restatable}[Statistics for $R_n(12/3/4)$]{thm}{wwOne}
\label{thm:wwOne}
If $w\in R_n(12/3/4)$, then when $m\leq 2$ refer to Theorem 4.1 for the Wachs and White statistics for $w$; otherwise 
$$ls=\sum\limits^{|u|-1}_{i=1}i+(c-1)b_1+(d-1)b_2$$
$$rb=(y-b_2)+\sum\limits^{|u|}_{i=1}(m-i)$$
 \[
  lb=\begin{cases}
               \hfill (m-c)b_c+(m-d)b_d\hfill & \text{$w\in B$ or $C$}\\
              \hfill (m-c-1)b_c+b_d-x+1\hfill & \text{$w\in D$}\\
            \end{cases}
\]
\[
  rs=\begin{cases}
              \hfill |u|-c+|u|-d+b_c-x\hfill & \text{$w\in B$ or $D$}\\
              \hfill |u|-d+b_c-x\hfill & \text{$w\in C$}\\
              
            \end{cases}
\] 
\end{restatable}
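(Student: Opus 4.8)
The plan is to fix a word $w \in R_n(12/3/4)$ with $m \geq 3$ and to compute each of the four statistics by summing the single-letter contributions $ls_i, lb_i, rb_i, rs_i$ over the two pieces of the decomposition $w = uv$ supplied by Theorem~\ref{thm:EmOne}. There the prefix is strictly increasing, so in fact $u = 1\,2\,\cdots\,|u|$ with each value occurring once, while the suffix $v$ carries all of the repetition and contains at most the two distinct letters $c < d$ with multiplicities $b_c, b_d$. The guiding principle is that $u$ contributes a closed form that is the same across cases (because it is an initial segment of distinct values), so that all of the case analysis into $B$, $C$, $D$ is forced by the behaviour of $v$.

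I would dispose of $ls$ and $lb$ first, since on cases $B$ and $C$ these depend only on the letter multiset. For $ls$, every occurrence of a value $i$ contributes exactly $i-1$ (all smaller values have already appeared, by the RGF property), so $u$ gives $\sum_{i=1}^{|u|}(i-1) = \sum_{i=1}^{|u|-1} i$ and $v$ gives $(c-1)b_c + (d-1)b_d$, matching the stated formula. For $lb$, the strictly increasing prefix contributes $0$ at every position (nothing larger precedes a letter of $u$), so all of $lb$ comes from $v$; since the full run $u$ sits to the left of $v$, an occurrence of $e \in \{c,d\}$ sees exactly the larger values $\{e+1,\dots,m\}$ to its left, giving $(m-c)b_c + (m-d)b_d$ in cases $B$ and $C$.

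The genuinely order-sensitive statistics are $rb$ and $rs$, and the positional parameters $x$ and $y$ enter precisely here. For $rb$, a letter of value $i$ in $u$ sees exactly $\{i+1,\dots,m\}$ to its right---either because $u$ runs all the way up to $m$ (cases $B$, $C$) or because the single missing value $m=d$ is supplied by $v$ (case $D$)---so the prefix contributes $\sum_{i=1}^{|u|}(m-i)$ uniformly. Within $v$ only the smaller letter can have something larger to its right, so the $v$-contribution counts exactly those copies of $c$ that precede the last $d$, which I would express through the position of that last $d$ to obtain $y - b_2$. Dually, for $rs$ a value $i$ in $u$ sees the letters of $v$ lying below it, summing to $(|u|-c) + (|u|-d)$, while inside $v$ only the larger letter can have something smaller to its right; counting those copies of $d$ against the position $x$ of the first $d$ yields the listed corrections, with case $C$ simply dropping the absent second letter. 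Case $D$ additionally feeds the extra ``$+1$'' into $lb$, because once the new maximum $m=d$ has appeared inside $v$ it becomes a larger letter to the left of every subsequent $c$.

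The main obstacle is exactly this order-dependence inside $v$: in contrast to $ls$ and to $lb$ on cases $B$, $C$, the values of $rb$ and $rs$ are not determined by the multiset of $v$ alone, so the proof must track the interleaving of the $c$'s and $d$'s rather than just their counts. Converting the combinatorial quantities ``number of $c$'s before the last $d$'' and ``number of $d$'s after the first $c$'' into the closed forms involving $x$ and $y$ is the delicate bookkeeping step, and case $D$---where the maximum first appears inside $v$, so that $m$ acts simultaneously as a bigger letter to the right for the prefix and as a bigger letter to the left for the interior copies of $c$---is where the index conventions are most fragile. I would pin those conventions down by checking each sub-case against a few explicit short words before committing to the final expressions.
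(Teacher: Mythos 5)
Your proposal follows essentially the same route as the paper's own proof: both decompose $w=uv$ via Theorem~3.3, observe that the strictly increasing prefix $u$ contributes a uniform closed form to each statistic ($\sum_{i=1}^{|u|-1}i$ to $ls$, $\sum_{i=1}^{|u|}(m-i)$ to $rb$, and $0$ to $lb$), and handle the order-sensitive parts of $rb$, $rs$, and the case-$D$ form of $lb$ by tracking the interleaving of the $c$'s and $d$'s in $v$ through the positions $x$ and $y$, exactly as the paper does. The identification of case $D$ (where $m=d$ first appears inside $v$) as the source of the extra corrections is also the same as in the paper's argument, so the approaches coincide in both structure and detail.
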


            \begin{proof} There are four results to prove.
            \begin{enumerate} \item Let $w(\sigma)$ be of the form $uv$. Because every element in $u$ is only found once, the largest element in $u$ will be $|u|$, so the sum from one to $|u|-1$ is equivalent to $\sum\limits^{|u|}_{i=2}(i-1)a_i$. Now consider $v$ to have $b_1$ c letters, and $b_2$ d letters. We know that $c$ must have $(c-1)$ unique letters that are smaller and come before $c$, and therefore the contribution from the $c's$ will be $(c-1)b_1$. We also know that $d$ must have $(d-1)$ unique elements that are smaller and come before $d$, and therefore all of the $d's$ will contribute $(d-1)b_2$ to the left and smaller statistic. Therefore the left and smaller statistic is $\sum\limits^{|u|-1}_{i=1}i+(c-1)b_1+(d-1)b_2$.
            \item Let $y$ be the last occurrence of $d$ where $d>c$ in $v$, then $(y-b_2)$ will tell us how many $c's$ come before the last $d$. Now consider the sum, whether $m$ is in $u$ or $v$ it must come after every element in $u$ so the sum from one to $|u|$ of $(m-i)$ will give us the number of larger elements that must occur after each element in $u$. 
            \item When the largest element is found in $u$
            $$lb=(m-c)b_1+(m-d)b_2$$
            Let $m$ be in $u$. Then there are $(m-c)$ distinct letters that are larger and occur before each $c$ and there are $b_1$ $c's$, therefore the contribution to the whole statistic based on $c$ is $(m-c)b_1$. Now consider $d$, there are $m-d$ unique elements that are larger and occur before each $d$ and there are $b_2$ $d's$, therefore the contribution to the whole statistic based on $d$ is $(m-d)b_2$. These are the only things that are not weakly increasing, therefore there will be nothing that contributes to the left and bigger statistic from $u$ because $u$ is weakly increasing. Therefore $lb=(m-c)b_1+(m-d)b_2$.
           
            \par When the largest element is found in $v$ the formula for $lb$ is:
            $$lb=(m-c-1)b_1+b_1-x+1$$
            Where $x$ is the position of the first largest element in $v$.
            
           \par Let the largest element be found in $v$. Then the largest element in $u$ is $(m-1)$ and there are $(m-1)-c$ unique elements in $u$ that come before the first $c$. There are also $b_1$ $c's$ so this contributes $(m-c-1)b_1$ to the left and bigger statistic. Now because $d=m$ there is no contribution from any $d$ to the left and bigger statistic, however we must consider the $c's$ that occur after the first $d$. There will be $|v|-x$ elements that come after the first $m$. Now of these elements we only want the $c's$ so subtract $b_2-1$ because when we subtracted $x$ earlier we accounted for the first $m$. So
            $$|v|-x-(b_2-1)=b_1+b_2-x-b_2+1$$
            $$|v|-x-(b_2-1)=b_1-x+1$$
            Therefore $lb=(m-c-1)b_1+b_1-x+1$.
           \item  When $b_1$ and $b_2$ are greater than zero. 
            $$rs=|u|-c+|u|-d+b_1-x$$
            \par Let $b_1$ and $b_2$ be greater than zero. Then there are $|u|-c$ unique elements that come before $c$ that are larger than $c$, each of these will contribute one to the right and smaller statistic. There are $|u|-d$ unique elements that are larger and come before the first $d$, these will each also contribute one to the right and smaller statistic. Finally let $y$ be the last occurrence of $d$, there will be $(y-b_2+1)$ $c's$ that come before the last $d$ that will each contribute one to the statistic.
            
            \par When $b_1$ is zero remove $|u|-c$ from equation. 
            $$rs=|u|-d+b_1-x$$
            \par When $b_2$ is zero, remove $|u|-d$ from the equation.
            $$rs=|u|-c+b_1-x$$
            \end{enumerate}
            \end{proof}
	The next avoidance class we will consider is $R_n(13/2/4)$. By Theorem 3.9, when $w\in R_n(13/2/4)$, $w$ will be one of the following forms:
    \begin{enumerate}[label=(\Alph*)]
    	\item $m\leq 2$
        \item $w$ is weakly increasing and $m\geq 3$
        \item $w=uv$ where $v=m^{b_m}z^{b_z}$ such that $z\leq (m-1)$
        \item $w=uv$ where $v$ is not weakly increasing and $v$ is of type $m^{b_m}z^{b_z}$ such that $z=(m-1)$ where $x$ is the position of the last $m\in v$, and $y$ is the position of the last $z\in v$ ($v$ begins with the first $m$ in $w$)
    \end{enumerate}
\begin{restatable}[Statistics for $R_n(13/2/4)$]{thm}{wwSeven}
\label{thm:wwSeven}
If $w\in R_n(13/2/4)$, then when $m\leq 2$ refer to Theorem 4.1 for the Wachs and White statistics for $w$; otherwise 
 \[
 lb=\begin{cases}
 			\hfill 0\hfill & \text{$w\in B$}\\
            \hfill (m-z)b_{z}\hfill & \text{$w\in C$ or $D$}\\
         \end{cases}
\]
 \[
 rs=\begin{cases}
 			\hfill 0\hfill & \text{$w\in B$}\\      
            \hfill \sum\limits^m_{i=z+1}a_i\hfill & \text{$w\in C$}\\
            \hfill (y-b_z)\hfill & \text{$w\in D$}\\
         \end{cases}
\]
 \[
 rb=\begin{cases}
 			\hfill l\hfill & \text{$w\in B$}\\
 			\hfill \sum\limits^{m-1}_{i=1}(m-i)a_i\hfill & \text{$w\in C$}\\
            \hfill (x-a_m)+\sum\limits^{m-1}_{i=1}(m-i)a_i\hfill & \text{$w\in D$}\\
         \end{cases}
\]
 
 $$ls=(z-1)b_z+\sum\limits^m_{i=2}(i-1)a_i$$
\end{restatable}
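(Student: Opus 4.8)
The plan is to compute each of the four statistics as a sum of its per-position contributions $ls_i,lb_i,rs_i,rb_i$, working through the four forms (A)--(D) of $w$ supplied by Theorem~\ref{thm:EmSeven} and recalled just before the statement. Throughout I would group the positions of $w$ by their letter value and by whether they lie in the weakly increasing prefix $u$ or in the two-letter suffix $v=m^{b_m}z^{b_z}$. The structural facts that drive every count are that $u$ is weakly increasing and that, in Cases (C) and (D), $v$ begins with the first $m$ of $w$; hence every value $1,\dots,m-1$ already occurs in $u$, while the value $m$ occurs only in $v$.

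For the $ls$ statistic I would invoke the observation underlying the lemma that gives $ls(w)=\sum_{i=1}^{m} ia_i-n$: in any RGF each occurrence of the value $i$ is preceded by all of $1,\dots,i-1$, so $ls_i(w)=w_i-1$ at every position. Summing $w_i-1$ over all positions and singling out the $z$-block of $v$ (contributing $(z-1)b_z$) from the remaining letters (contributing $\sum_{i\ge 2}(i-1)a_i$) yields the one formula valid in every case. Case (A) is covered by Theorem~\ref{thm:wwREU}. Case (B), where $w$ is weakly increasing with $m\ge 3$, is immediate: a weakly increasing word has no larger letter to the left and no smaller letter to the right, so $lb=rs=0$, while $rb$ is read off directly from the lemma giving $rb(w)=\sum_{i=1}^{m}(m-i)a_i$ for weakly increasing words.

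The substance is in Cases (C) and (D). For $lb$ I would observe that $u$ and the $m$-block contribute nothing, since no larger letter ever lies to their left; only the $z$-block matters, and each $z$ has exactly the $m-z$ distinct values $z+1,\dots,m$ to its left (the values $z+1,\dots,m-1$ from $u$ and $m$ from the preceding $m$-block), giving $lb=(m-z)b_z$. For $rb$ I would show that a prefix letter of value $c$ sees precisely the set $\{c+1,\dots,m\}$ to its right, i.e. $m-c$ distinct larger values, so $u$ contributes $\sum_{i=1}^{m-1}(m-i)a_i$; in Case (D) there is an extra contribution from the $(m-1)$'s preceding the last $m$, which I would count as $x-a_m$ (the positions up to the last $m$, minus the $m$'s among them). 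For $rs$ I would run the mirror-image count: in Case (C) each prefix letter exceeding $z$ together with each $m$ sees the trailing $z$-block, totalling $\sum_{i=z+1}^{m}a_i$, while in Case (D) each $m$ lying before the last $z$ sees that $z$, counted as $y-b_z$.

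I expect the main obstacle to be the positional bookkeeping in Case (D), where the $m$'s and $(m-1)$'s are interleaved in arbitrary order: there the corrections $x-a_m$ and $y-b_z$ must be justified by carefully counting how many $m$'s precede the last $z$ and how many $(m-1)$'s precede the last $m$, and by checking that letters after those last occurrences contribute nothing. The other place requiring care is the distinct-letter convention, so that the value $z$, which occurs in both $u$ and $v$, is not double counted and so that each $a_i$ in the formulas refers to the intended occurrences.
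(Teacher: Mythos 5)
Your proposal is correct and follows essentially the same route as the paper's appendix proof: both arguments treat the weakly increasing cases via the general lemmas for $ls$ and $rb$, and handle Cases (C) and (D) by counting, for each letter grouped by value and by membership in $u$ versus $v$, the distinct larger/smaller values on each side, arriving at the same contributions $(m-z)b_z$, $\sum_{i=z+1}^{m}a_i$, $x-a_m$, and $y-b_z$. The positional bookkeeping you flag for Case (D) is resolved in the paper exactly as you anticipate, by counting the $z$'s among the first $x$ positions of $v$ and the $m$'s among the first $y$ positions.
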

\par The last avoidance class on which we investigated the Wachs and White statistics was $R_n(1/24/3)$. By Theorem 3.10, when $w\in R_n(1/24/3)$ this word could take the following forms:
\begin{enumerate}[label=(\Alph*)]
	\item $m\leq 2$
    \item $w$ is weakly increasing
    \item $w$ is a weakly increasing word, except for a block of $b_1$ $1$s inserted between two distinct letters where $x$ is the distinct letter immediately before this block of $1$s
    \item $w=uv$ where $u$ is of type $1^{a_1}2^{a_2}$ and $v=3^{b_3}4^{b-4}...m^{b_m}$ where $c$ is the position of the first $2$ in $u$ and $d$ is the position of the last $2$ in $u$
\end{enumerate}
\begin{restatable}[Statistics for $R_n(1/24/3)$]{thm}{wwEight}
\label{thm:ww1/24/3}
If $w\in R_n(1/24/3)$, then when $m\leq 2$ refer to Theorem 4.1 for the Wachs and White statistics for $w$, otherwise:
\[
 lb=\begin{cases}
 			\hfill 0\hfill & \text{$w\in B$}\\
            \hfill (x-1)b_1\hfill & \text{$w\in C$}\\
            \hfill a_1-c+1\hfill & \text{$w\in D$}\\
         \end{cases}
\]
 \[
 rs=\begin{cases}
 			\hfill 0\hfill & \text{$w\in B$}\\           
            \hfill \sum\limits^x_{i=2}a_i\hfill & \text{$w\in C$}\\
            \hfill f-a_1 \hfill & \text{$w\in D$}\\
         \end{cases}
\]
 \[
 rb=\begin{cases}
 			\hfill l \hfill & \text{$w\in B$}\\
            \hfill \sum\limits^m_{i=1}(m-i)a_i+(m-x)b_1\hfill & \text{$w\in C$}\\
 			\hfill (d-a_2)(m-1)+(|u|-d)(m-2)+\sum\limits^{m-1}_{i=2}(m-i)a_i\hfill & \text{$w\in D$}\\
            
         \end{cases}
\]

$$ls= \sum\limits^m_{i=1}(i-1)a_i$$

\end{restatable}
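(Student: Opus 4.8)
The plan is to prove all four formulas by summing the per-letter contributions $lb_i$, $rs_i$, $rb_i$, $ls_i$ over the positions of $w$, using the rigid structure that Theorem 3.10 imposes in each of cases B, C, and D. The cheapest observation is that $ls$ is case-independent: the stated $ls=\sum_{i=1}^m(i-1)a_i$ is exactly the general closed form for the left-smaller statistic of an arbitrary RGF of type $1^{a_1}\cdots m^{a_m}$ proved earlier, and that form holds no matter how the letters are arranged, since every occurrence of a letter $i$ is necessarily preceded by all of $1,\dots,i-1$. Hence $ls$ needs no case analysis. For case B, where $w$ is weakly increasing, $lb=0$ and $rs=0$ are immediate from the definitions (no larger letter can precede, and no smaller letter can follow, a given letter of a weakly increasing word), and $rb$ is precisely the weakly-increasing right-bigger formula established earlier.

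For case C I would view $w$ as a weakly increasing base $1^{a_1}2^{a_2}\cdots m^{a_m}$ into which a block of $b_1$ extra $1$s has been inserted immediately after the run of the value $x$, and compute each statistic as the base value plus the correction caused by the block. Since the base is weakly increasing it contributes nothing to $lb$ or $rs$: the only letters with a larger predecessor are the inserted $1$s, each preceded by exactly the $x-1$ distinct values $2,\dots,x$, giving $lb=(x-1)b_1$; dually, the insertion places a smaller letter after every prefix letter of value $2,\dots,x$ and after no other letter, so $rs=\sum_{i=2}^{x}a_i$. For $rb$, inserting the smallest value cannot alter the larger-after count of any other letter, so the base contributes $\sum_{i=1}^m(m-i)a_i$ by the weakly-increasing formula, while each inserted $1$ sees the $m-x$ distinct larger values $x+1,\dots,m$ that follow it, producing the extra term $(m-x)b_1$.

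For case D, where $u$ is an arbitrary arrangement of $a_1$ ones and $a_2$ twos and $v=3^{a_3}\cdots m^{a_m}$ is weakly increasing (here $a_i$ denotes the number of $i$'s in $w$), I would first record that, writing $c$, $d$, $f$ for the positions of the first $2$, the last $2$, and the last $1$ in $u$, the positions $1,\dots,c-1$ are all $1$s while the positions of $u$ after $f$ are all $2$s. The $lb$ count is carried entirely by the $1$s lying after the first $2$; there are $a_1-(c-1)$ of them, each contributing the single value $2$, so $lb=a_1-c+1$. Symmetrically, $rs$ is carried by the $2$s lying before the last $1$: among positions $1,\dots,f$ all $a_1$ ones occur, leaving $f-a_1$ twos, each with the $1$ at position $f$ after it, so $rs=f-a_1$. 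The $rb$ formula is the one demanding genuine bookkeeping and is where I expect the main obstacle: I would split the sum into the contribution of the $2$s (each followed by all of $v$, hence $(m-2)a_2$), that of $v$ (weakly increasing, hence $\sum_{i=3}^{m-1}(m-i)a_i$), and that of the $1$s, which must be partitioned according to whether a $2$ follows. A $1$ contributes $m-1$ if it precedes the last $2$ and $m-2$ otherwise; counting the $a_2$ twos among positions $1,\dots,d$ shows there are $d-a_2$ ones of the first kind and $a_1-(d-a_2)=|u|-d$ of the second, producing $(d-a_2)(m-1)+(|u|-d)(m-2)$, and adding the three contributions yields the stated $rb$. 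The delicate point throughout case D is the conversion between the positional data $c,d,f$ and the letter multiplicities, so I would check each such translation on a small interleaved example before assembling the totals.
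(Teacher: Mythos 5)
Your proposal is correct and follows essentially the same route as the paper's appendix proof: both arguments sum the per-letter contributions case by case, treating case C as a weakly increasing base plus an inserted block of $1$s (with the corrections $(x-1)b_1$, $\sum_{i=2}^{x}a_i$, and $(m-x)b_1$), and handling case D by splitting the $1$s of $u$ according to whether they precede the last $2$ and converting the positional data $c$, $d$, $f$ into multiplicities exactly as you describe. Your observation that $ls$ needs no case analysis because it is the universal type formula is also the paper's justification.
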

\section{Equidistribution of Wachs and White Statistics}
The goal of this work has been to identify distributions of Wachs and White Statistics that are the same for different avoidance classes. Up to this point we have found the cardinalities of the avoidance classes, (because no two avoidance classes can have a statistic equidsitributed unless they are of the same size). We have also characterized the words in each avoidance class we investigated, and found formulas for the Wachs and White statistics for these word. These equations will become very useful in proving our equidistribution results.
\subsection{The Patterns $1/24/3$ and $13/2/4$} First, notice that the complement of $1/24/3$ is $13/2/4$. Therefore $1/24/3\equiv 13/2/4$. We found four equidistribution results on the avoidance classes based on the patterns $1/24/3$ and $13/2/4$. The proof of the first result will be included here, we refer the reader to the appendix for the full proof. 
\begin{restatable}[Equidistribution of Statistics I]{thm}{eqOne}
\label{thm:eqOne}
Consider the avoidance classes based on $1/24/3$ and $13/2/4$, then
\begin{itemize}
\item[(i)]$LS(13/2/4)\sim RB(1/24/3)$
\item[(ii)]$RB(13/2/4)\sim LS(1/24/3)$
\item[(iii)]$LB(13/2/4)\sim LB(1/24/3)$
\item[(iv)]$RS(13/2/4)\sim RS(1/24/3)$
\end{itemize}
\end{restatable}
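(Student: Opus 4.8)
The plan is to exploit that $13/2/4$ and $1/24/3$ are complementary patterns. By Theorem~\ref{thm:comp} this gives $\#\Pi_n(13/2/4)=\#\Pi_n(1/24/3)$, the prerequisite for any equidistribution, and it hands us a concrete bijection $\sigma\mapsto\sigma^{c}$ between the two classes. I would first ask what this complement does to the four statistics, starting on the weakly increasing words. Since complementing preserves weak increase and reverses the block multiplicities, sending $1^{a_1}\cdots m^{a_m}$ to $1^{a_m}\cdots m^{a_1}$, the closed forms $ls(w)=\sum_{i}(i-1)a_{i}$ and $rb(w)=\sum_{i}(m-i)a_{i}$ immediately yield $ls(w^{c})=rb(w)$ and $rb(w^{c})=ls(w)$, while $lb$ and $rs$ are $0$ on both sides. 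So on the weakly increasing part (Case~B of each characterization) a single map proves all four claims at once, swapping $ls\leftrightarrow rb$ and fixing $lb$ and $rs$.

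For the remaining words I would drop the hope of one global bijection and instead compare generating functions assembled from the explicit statistic formulas. For part~(i) the idea is to expand $\sum_{w\in R_n(13/2/4)}q^{ls(w)}$ over Cases~A--D using Theorem~\ref{thm:EmSeven} and the $ls$ formula of Theorem~\ref{thm:wwSeven}, expand $\sum_{w\in R_n(1/24/3)}q^{rb(w)}$ from Theorem~\ref{thm:EmEight} and the $rb$ formula of Theorem~\ref{thm:ww1/24/3}, and prove the two polynomials equal; parts~(ii)--(iv) are identical with the statistics relabelled. A clean reduction is that Case~A is literally common to both classes, because any partition with at most two blocks avoids every three-block pattern. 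On these words (iii) and (iv) are immediate, and (i)--(ii) collapse to the single self-equidistribution $ls\sim rb$ over all RGFs with $m\le 2$; this can be read off Theorem~\ref{thm:wwREU}, where $ls=n-l$ depends only on the number $l$ of $1$s, so I would match its distribution against that of $rb$ by tracking $l$ together with the trailing data in the normal form $1^{i}w'1^{j}2^{k}$.

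I expect the genuine obstacle to surface as soon as the words stop being weakly increasing, since the complement then ceases to move the statistics by a fixed rule. This is visible already at $m=2$: $w=1211\in R_4(13/2/4)$ has complement $1121\in R_4(1/24/3)$, and here the complement swaps $lb\leftrightarrow rb$ rather than $ls\leftrightarrow rb$, so no single map transports all four statistics correctly. This is what forces the generating-function computation outside Case~B, and the heaviest part lives in Cases~C and~D, whose formulas carry the positional data $x,y$ on the $13/2/4$ side and $c,d,x$ on the $1/24/3$ side under the split $z\le m-1$ versus $z=m-1$. The delicate step will be to verify that these parametrized contributions reorganize into equal polynomials; I anticipate the correct matching of cases is not diagonal, so deciding which case of one class pairs with which case of the other has to precede the term-by-term comparison.
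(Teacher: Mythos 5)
Your handling of the two easy strata is sound and coincides with what the paper itself does: on weakly increasing words the complement swaps $ls$ and $rb$ while $lb$ and $rs$ vanish on both sides, and on the stratum $m\le 2$ (which is literally the same set of words in both classes) the paper also simply defers to the REU result for $R_n(1/2/3)$. Your example $w=1211$, $w^{c}=1121$ correctly diagnoses why no single global complement map can transport all four statistics.

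The genuine gap is everything else. For the words with $m\ge 3$ that are not weakly increasing --- which is where all the work in this theorem lives --- you propose to compare generating polynomials assembled from the case-by-case statistic formulas, but you do not carry out that comparison; you explicitly leave open both which stratum of $R_n(13/2/4)$ should be matched with which stratum of $R_n(1/24/3)$ and whether the resulting parametrized sums agree. That verification \emph{is} the content of parts (i)--(iv) on these words, so as written the proposal only establishes the theorem on the weakly increasing and $m\le 2$ words. The paper closes this by explicit statistic-transporting bijections between matched strata rather than by generating functions: the words of $R_n(13/2/4)$ whose suffix is a block of $m$s followed by a block of $z\le m-1$ are sent to the words of $R_n(1/24/3)$ with a block of $1$s inserted into a weakly increasing word, via complementing the weakly increasing prefix and setting $x=m-z+1$ and $b_1=b_z$ (so that, e.g., $rb(f(w))=(m-x)b_1+\sum_i(m-i)a_{m-i+1}=(z-1)b_z+\sum_i(i-1)a_i=ls(w)$); and the words whose suffix interleaves $m$ and $m-1$ are sent to the words whose prefix interleaves $1$ and $2$, by reading the suffix backwards under $m\mapsto 1$, $(m-1)\mapsto 2$ and complementing the remainder. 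A further structural point your plan misses is that even within a single stratum the paper must use \emph{different} bijections for different statistics (one map handles $ls$, $rb$, $lb$; a separate map $h$ is built for $rs$), so the case-matching you defer is not only nontrivial but statistic-dependent.
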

\begin{proof}Suppose $\sigma_1$ avoids $13/2/4$ and $\sigma_2$ avoids $1/24/3$. Also let $w$ denote the RGF for $\sigma_1$ and let $y$ denote the RGF for $\sigma_2$. Let $A_1, A_2, A_3\subset R_n(13/2/4)$ such that $w\in A_1$ where $w$ has $m\leq 2$ or $w$ is weakly increasing, $w\in A_2$ where $w=uv$ where $v$ is a block of $z\leq (m-1)$, and $w\in A_3$ where $w=uv$ where $v$ contains $m$ and $(m-1)$ in any order after the first $m$. Also let $B_1, B_2, B_3\subset R_n(1/24/3)$ where $y\in B_1$ where $y$ has $m\leq 2$ or $y$ weakly increasing, $y\in B_2$ where $y$ has a block of $1$s inserted between two distinct letters in a weakly increasing word, and finally $y\in B_3$ when $y=uv$ where $u$ contains $1$s and $2$s in any order allowable by the RGF.
\par To show the equidistribution of the Wachs and White statistics we will identify statistic preserving functions from $A_i$ to $B_i$. Following this we will show that these functions are bijections.
\begin{itemize}
	\item[a.] For the proof that $ls(w)=rb(y)$ when $w\in A_1$ and $y\in B_1$ refer to \cite{REU1}.
    \item[b.] Let $w\in A_2$, $y\in B_2$. Then the following are equations for the statistics of individual words:
                    $$ls(w)=(z-1)b_z+\sum\limits^m_{i=2}(i-1)a_i$$
                    $$rb(y)=(m-x)b_1+\sum\limits^{m-1}_{i=1}(m-i)a_i$$
                    Now let $f:A_2\rightarrow B_2$ where when $w=uv$, $f(w)=y$ such $b_z=b_1$ (where $b_z$ is the number of $z\in v$ and $b_1$ is the size of the block of inserted $1$s in $y$), the distinct letter $x\in y$ before this block of $1$s is $x=m-z+1$ where $z$ is the last letter in $w$, and the weakly increasing base of $y$ is formed by taking the RGF of the complement of the partition in a bijection with $u$. An example would be $f(1234422)=1123114$. Then $x=4-2+1=3$ so we insert the block of $1$s after the last $3$ and there will be $b_2=2$ $1$s in this block. Also we take the complement of $12344$ to get the base of the word to be $11234$.
                    \par We will prove that $f$ is injective. Consider $w_1, w_2\in A_2$ where $w_1\neq w_2$. Then either $u_1\neq u_2$ or $v_1\neq v_2$. When $u_1\neq u_2$ then $u_1^c\neq u_2^c$, so the weakly increasing word in $f(w_1)$ differs from the weakly increasing word in $f(w_2)$ so $f(w_1)\neq f(w_2)$. If $v_1\neq v_2$ then there will be a different number of $z$s in $w_1$ and $w_2$, so the block of $1$s inserted in $y$ will be of different sizes so $f(w_1)\neq f(w_2)$. Therefore when $f(w_1)=f(w_2)$, $w_1=w_2$, meaning that $f$ is injective.
                    \par Let $f^{-1}$ be the inverse of $f$, where $f^{-1}(y)=w$ is formed by taking the RGF of the complement of the partition corresponding to the weakly increasing base of $y$. This will be weakly increasing by Lemma 2.6. Then $z\in w$ is determined by $z=m-x+1$ and $b_1=b_z$. 
                    \par Because $f$ is injective and $f^{-1}$ exists, we conclude that $f$ is bijective. 
                   \par Now consider (notice that $c_i$ denotes the number of the letter $i$ in the weakly increasing base of $f(w)$)
                    $$rb[f(w)]=(m-x)b_1+\sum\limits^{m-1}_{i=1}(m-i)c_i$$
                    $$=[m-(m-z+1)]b_z+\sum\limits^{m-1}_{i=1}(m-i)a_{m-i+1}$$
                    $$=(z-1)b_z+\sum\limits^{m-1}_{i=1}[(m+1-i)-1]a_{m-i+1}$$
                    $$=(z-1)b_z+\sum\limits^m_{i=2}(i-1)a_i=ls(w)$$
                    Therefore because $rb(f(w))=ls(w)$ and $f$ is bijective, $LS(A_2)\sim RB(B_2)$. 
                    
          \par For the rest of the proof see appendix.
           \end{itemize}
\end{proof}
\subsection{Other Equidistribution Results for Single Pattern Avoidance Classes}
The majority of equidistribution results based on single pattern avoidance classes we found focused on the two patterns $13/2/4$ and $1/24/3$. However we have another equidistribution result based on the patterns $124/3$ and $134/2$. From Theorem 2.5 we have $124/3\equiv 134/2$.
\begin{restatable}[Equidistribution of Statistics II]{thm}{eqTwo}
\label{thm:eqTwo}
Consider the avoidance classes based on the patterns $124/3$ and $134/2$, then
$$LS(124/3)\sim LS(134/2)$$
\end{restatable}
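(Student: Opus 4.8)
The plan is to reduce the statement to a purely combinatorial count on the \emph{content} of a word and then build an explicit bijection at the level of runs. Recall the lemma giving $ls(w)=\sum_{i=1}^m i\,a_i-n$ for any $w\in R_n$ of type $1^{a_1}2^{a_2}\cdots m^{a_m}$; since that argument uses only the defining property of an RGF (every value smaller than $w_i$ has already occurred before position $i$), the formula holds for \emph{every} RGF, not merely the weakly increasing ones. Consequently $ls(w)$ depends only on the content $(a_1,\dots,a_m)$ of $w$. Thus $ls$ is constant on each content class, and to prove $LS(124/3)\sim LS(134/2)$ it suffices to exhibit a content-preserving bijection $\phi\colon R_n(124/3)\to R_n(134/2)$, since such a map automatically satisfies $ls(\phi(w))=ls(w)$ and so matches the distributions term by term.

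First I would restate the two word characterizations in terms of maximal runs. Writing $w$ as its sequence of maximal constant runs, Theorem~\ref{thm:EmSix} says that $w\in R_n(124/3)$ precisely when every letter occupies at most two runs and, when it occupies two, the \emph{first} of them has length $1$ (this is exactly the block/singleton-block dichotomy). Symmetrically, Theorem~\ref{thm:EmFive} says $w\in R_n(134/2)$ precisely when every letter occupies at most two runs and, when it occupies two, the \emph{last} of them has length $1$ (block/block-singleton). Crucially, the two classes impose identical conditions on the underlying sequence of run-\emph{letters}; they differ only in which of a two-run letter's two run-\emph{lengths} is forced to equal $1$.

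This makes the bijection transparent. Define $\phi(w)$ by decomposing $w$ into maximal runs and, for every letter occupying two runs, swapping the lengths of its two runs while leaving the run-letter sequence and all other run lengths unchanged. A two-run letter of $124/3$ has runs of lengths $(1,k)$ with $k\ge 1$; after the swap it has lengths $(k,1)$, i.e.\ it becomes a block-singleton letter, while block letters are untouched. I would then verify four properties: (a) $\phi(w)$ is a valid RGF, because the run-letter sequence is unchanged, so first occurrences still appear in the order $1,2,\dots,m$ and the growth condition, which is governed solely by first occurrences, is preserved, and every run length stays $\ge 1$; (b) $\phi(w)\in R_n(134/2)$, since every two-run letter now has its last run of length $1$; (c) $\phi$ preserves content, because swapping a letter's two run lengths preserves its multiplicity $1+k=k+1$; and (d) $\phi$ is a bijection, indeed an involution interchanging the two classes, whose inverse is the same swapping operation applied to a word of $R_n(134/2)$.

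The hard part will be the run reformulation in the second step, which must be extracted carefully from the block, singleton-block, and block-singleton definitions: one must check that in a $124/3$ word a repeated letter's two runs are forced to have the first of length exactly $1$ (so the swap target is well defined) and, dually, that membership in $R_n(134/2)$ forces the last run to have length $1$. Once this is in hand, the apparent headache of several repeated letters whose runs interleave or nest evaporates, because $\phi$ acts only on the lengths sitting in each letter's own pair of run-slots and distinct letters occupy disjoint run-slots, so the operations never interfere and no ordering convention is needed. Concluding equidistribution from the content-preserving bijection via the $ls$ formula is then immediate; in fact the argument shows the stronger statement that $R_n(124/3)$ and $R_n(134/2)$ have identical content multisets.
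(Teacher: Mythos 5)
Your proposal is correct and is essentially the paper's own argument: the map you describe (swapping the two run lengths of each repeated letter, i.e.\ flipping singleton-block letters to block-singleton letters and vice versa) is exactly the bijection $\phi$ the paper uses, and the paper likewise concludes by noting that the bijection preserves the multiplicity of every letter so that Lemma~2.3 forces $ls(\phi(w))=ls(w)$. Your run-level reformulation just makes the well-definedness and involutive nature of the flip more explicit than the paper does.
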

\begin{proof} Let $\phi:R_n(134/2)\rightarrow R_n(124/3)$ such that 
\[
 \phi(w)=\begin{cases}
 			\hfill w\hfill & \text{if $w$ is weakly increasing}\\
            \hfill w*\hfill & \text{if $w$ is not weakly increasing}\\

         \end{cases}
\]
where $w*$ is defined to be the word that is formed from flipping each block-singleton letter to a singleton-block letter. We can see that this function is well-defined because there is only one way to flip the block-singleton letter to a singleton-block letter. So $\phi(122233213)=123222133$ and $\phi(11222133243)=12113222433$. Now let $w_1,w_2\in R_n(134/2)$ such that $\phi(w_1)=\phi(w_2)$. There are two options based on what form $w_1$ takes:
\begin{itemize}
\item[i)] Suppose $w_1$ is weakly increasing. Then $w_1\in R_n(124/3)$ by Lemma 3.1. Because $w_1$ is weakly increasing $\phi$ is the identity function which is clearly bijective. 
\item[ii)] Suppose $w_1$ is not weakly increasing. Then $w_1$ is weakly increasing except for some number of block-singleton letters. There is only one way to flip a block-singleton letter to a singleton block, therefore  when $\phi(w_1)=\phi(w_2)$, $w_1=w_2$. So when $w_1$ is not weakly increasing $\phi$ is injective.
\end{itemize}
Next consider the function $\phi^{-1}:R_n(124/3)\rightarrow R_n(134/2)$, where $y\in R_n(124/3)$ and 
\[
 \phi^{-1}(y)=\begin{cases}
 			\hfill y\hfill & \text{if $y$ is weakly increasing}\\
            \hfill y*\hfill & \text{if $y$ is not weakly increasing}\\

         \end{cases}
\]
where we have $y*$ defined as the word that is formed by changing every singleton-block letter to a block-singleton. So $\phi^{-1}(123222133)=122233213$ and $\phi^{-1}(12113222433)=11222133243$. So it is clear that $\phi^{-1}$ reverses $\phi$. Therefore because $\phi$ is injective and there exists $\phi^{-1}$ which reverses $\phi$, $\phi$ is a bijection between $R_n(124/3)$ and $R_n(134/2)$. 
\par An important thing to note about this bijection is that it preserves the occurrences of each letter that occurs in the input word. Therefore if there are the same occurrences of each letter in $w$ and $\phi(w)$, then $ls(w)=ls(\phi(w))$ by Lemma 2.3.
Therefore because there is a bijection between $R_n(134/2)$ and $R_n(124/3)$ that preserves the left and smaller statistic for every word, $LS(134/2)\sim LS(124/3)$.
\end{proof}
\section{Avoidance Classes based on Multiple Patterns}
The idea of pattern avoidance can be extended to avoidance classes based on multiple patterns. Here in this paper we will investigate both (3,4)-pair avoidance classes and (4,4)-pair avoidance classes. Specifically we will focus on avoidance classes based on at least one pattern of interest from the single pattern results above.
\par The investigation of these multiple pattern avoidance classes will follow the same structure as the previous results: specifically we will characterize the avoidance classes, find the cardinality of the avoidance classes, and finally investigate the Wachs and White statistics for these avoidance classes. 

\subsection{3,4-Pairs}
\par As we begin to consider avoidance classes determined by pairs of patterns, it is helpful to realize that when we have $R_n(\sigma_1,\sigma_2)$ it will be more interesting when $w(\sigma_2)\in R_n(\sigma_1)$, (where $\sigma_1$ is the pattern of length three). Otherwise $R_n(\sigma_1,\sigma_2)$ is already determined by the single pattern avoidance class $R(\sigma_1)$. For example, consider $R_n(1/2/3,1/2/34)$. Then $w(1/234)=1222\in R_n(1/2/3,1/2/3/4)$ because $w\in R_n(1/2/3)$. And $w(12/3/45)=11233\not\in R_n(1/2/3,1/2/34)$ because $w\not\in R_n(1/2/3)$. So we have the following property:
\begin{restatable}{lemma}{DifSiz} Let $\sigma_1\in\Pi_k$, $\sigma_2\in\Pi_l$ where $l<k$, such that $\sigma_2$ contains $\sigma_1$. Then $R_n(\sigma_1,\sigma_2)=R_n(\sigma_1)$.
\end{restatable}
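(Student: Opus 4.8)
The plan is to prove the two set inclusions $R_n(\sigma_1,\sigma_2)\subseteq R_n(\sigma_1)$ and $R_n(\sigma_1)\subseteq R_n(\sigma_1,\sigma_2)$ separately. The first is immediate from the definitions and uses nothing about $\sigma_2$; all of the content of the lemma lives in the second inclusion, whose engine is the transitivity of the containment relation on set partitions. In fact the only hypothesis that does any work is that $\sigma_2$ contains $\sigma_1$: the difference in sizes merely guarantees that this containment is between genuinely distinct patterns.

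For the easy inclusion, suppose $w=w(\tau)\in R_n(\sigma_1,\sigma_2)$. Then $\tau$ avoids both patterns, so in particular $\tau$ avoids $\sigma_1$ and $w\in R_n(\sigma_1)$. Hence $R_n(\sigma_1,\sigma_2)\subseteq R_n(\sigma_1)$. For the reverse inclusion I would take $w=w(\tau)\in R_n(\sigma_1)$, so $\tau$ avoids $\sigma_1$, and argue that $\tau$ must then also avoid $\sigma_2$, which places $w$ in $R_n(\sigma_1,\sigma_2)$. Arguing by contradiction, suppose $\tau$ contains $\sigma_2$. By the definition of containment there is a subset $A\subseteq[n]$ and the induced increasing bijection $\phi\colon A\to[l]$ such that the standardization of $A\cap\tau$ equals $\sigma_2$. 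Since $\sigma_2$ contains $\sigma_1$, there is a subset $B\subseteq[l]$ such that the standardization of $B\cap\sigma_2$ equals $\sigma_1$. The aim is to pull $B$ back through $\phi$ to a subset $C=\phi^{-1}(B)\subseteq A\subseteq[n]$ whose induced subpartition of $\tau$ standardizes to $\sigma_1$, contradicting $\tau\in\Pi_n(\sigma_1)$.

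The main step, and the one needing care, is to verify that standardization composes correctly, i.e.\ that $\mathrm{st}(C\cap\tau)=\mathrm{st}\bigl(B\cap\mathrm{st}(A\cap\tau)\bigr)=\sigma_1$. The point is that $\sigma_2=\mathrm{st}(A\cap\tau)$ is obtained from $A\cap\tau$ purely by the order-preserving relabeling $\phi$, so restricting $\sigma_2$ to $B$ produces exactly the block structure of $C\cap\tau$ transported by the order-preserving bijection $\phi|_C\colon C\to B$. Two subpartitions related by an order-preserving bijection have the same relative order and hence the same standardization; therefore $\mathrm{st}(C\cap\tau)=\mathrm{st}(B\cap\sigma_2)=\sigma_1$. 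This exhibits $C$ as a witness that $\tau$ contains $\sigma_1$, the desired contradiction.

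I expect the bookkeeping in this compositional step to be the only real obstacle: one must check that indices $i,j$ lie in a common block of $B\cap\sigma_2$ precisely when $\phi^{-1}(i),\phi^{-1}(j)$ lie in a common block of $C\cap\tau$, and that the minima-ordering condition on blocks is preserved under the increasing relabeling, so that both standardizations agree block-for-block. Once transitivity of containment is established, the contradiction is immediate, and combining it with the trivial inclusion gives $R_n(\sigma_1,\sigma_2)=R_n(\sigma_1)$.
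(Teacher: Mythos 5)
Your proof is correct, and it is in fact more than the paper provides: the lemma is stated without proof, accompanied only by the motivating example $R_n(1/2/3,1/2/34)$ in the preceding paragraph. Your two-inclusion decomposition is the right one, the first inclusion is indeed immediate, and the second is exactly the transitivity of pattern containment, which you establish carefully by pulling the witness set $B\subseteq[l]$ back through the order-preserving standardization bijection $\phi$ to get $C=\phi^{-1}(B)$ with $\mathrm{st}(C\cap\tau)=\mathrm{st}(B\cap\sigma_2)=\sigma_1$. The compositional step you flag as the only delicate point is handled adequately: an order-preserving bijection preserves both the block-membership relation and the ordering of block minima, so the two standardizations agree. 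You are also right that the size hypothesis does no work; in fact, as stated the hypothesis $l<k$ together with ``$\sigma_2$ contains $\sigma_1$'' is vacuous, since a partition of $[l]$ with $l<k$ has no subpartition of size $k$ and so cannot contain any $\sigma_1\in\Pi_k$. The inequality is evidently a typo for $l>k$ (the surrounding discussion treats $\sigma_1$ as the length-three pattern and $\sigma_2$ as the length-four pattern containing it), and your argument proves the intended statement; it would be worth noting that correction explicitly.
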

\begin{restatable}{thm}{ThreeEight}
\label{thm:ThreeEight}
The word characterization and cardinality of the avoidance class based on the patterns $12/3$ and $1/24/3$ are given by:
\begin{itemize}
\item[(i)]$R_n(12/3,1/24/3)=\{w\in R_n:w=ab$ where $a$ is strictly increasing prefix ending with $m$, and \begin{enumerate}[label=(\Alph*)] \item $b$ is a suffix of $m$'s 
	\item $b$ is a suffix of $1$s where $|b|\geq 1\}$
	\end{enumerate}
\item[(ii)]$\#\Pi_n (12/3,1/24/3)=2(n-1)$
\end{itemize}
\end{restatable}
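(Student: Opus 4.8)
The plan is to intersect the two avoidance conditions, using the known size-three characterization to fix the coarse shape of $w$ and then superimposing the $1/24/3$-condition. Since avoiding a set of patterns means avoiding each member, $R_n(12/3,1/24/3)=R_n(12/3)\cap R_n(1/24/3)$. By Sagan's characterization of $R_n(12/3)$ \cite{Sag09}, every $w\in R_n(12/3)$ has the form $w=1\,2\,\cdots\,m\,c^{\,n-m}$, where the prefix $1\,2\,\cdots\,m$ is the strictly increasing initial run (so $m$ is the number of distinct letters) and the repeated tail letter satisfies $c\le m$. Thus the whole problem reduces to deciding, for each such $w$, whether it additionally avoids $1/24/3$.

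First I would record the RGF-level meaning of containing $1/24/3$: reading positions as elements, $w$ contains $1/24/3$ precisely when there are positions $p_1<p_2<p_3<p_4$ with $w_{p_2}=w_{p_4}$ and $w_{p_1},w_{p_2},w_{p_3}$ pairwise distinct (the equal pair realizes the block $24$, while the singletons sit before and strictly between them). With this criterion, the analysis of $w=1\,2\,\cdots\,m\,c^{\,n-m}$ splits according to $c$. If the tail is empty ($m=n$), then $w$ is strictly increasing, no value repeats, and $w$ avoids $1/24/3$ trivially. If $c=m$, the only repeated value is $m$ and all its occurrences are consecutive at the end, so no distinct letter can lie strictly between two of them and $w$ avoids the pattern; this is form (A). If $c=1$, the value $1$ occurs only at position $1$ and in the tail, so any candidate pair $p_2<p_4$ of equal letters either begins at position $1$ (leaving no room for $p_1$) or lies wholly inside the tail (leaving no distinct letter between them), whence $w$ avoids the pattern; this is form (B).

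The remaining case $1<c<m$ is where the pattern appears, and exhibiting a witness is the crux of the forward implication: taking $p_1=1$, $p_2=c$, $p_3=c+1$, $p_4=m+1$ gives strictly increasing positions (since $c\ge 2$ forces $1<c$ and $c<m$ forces $c+1\le m<m+1$) with $w_{p_1}=1$, $w_{p_2}=w_{p_4}=c$, and $w_{p_3}=c+1$, where $1,c,c+1$ are pairwise distinct. Hence such $w$ contains $1/24/3$ and is excluded. Collecting the three admissible cases yields characterization (i): $w=ab$ with $a=1\,2\,\cdots\,m$, and $b$ either a possibly empty suffix of $m$'s (form (A)) or a nonempty suffix of $1$'s (form (B)). For the cardinality I would enumerate these two families directly: form (A) is determined by the single parameter $m\in\{1,\dots,n\}$, giving $n$ words, and form (B) is determined by $m\in\{1,\dots,n-1\}$ (the tail must be nonempty), giving $n-1$ words. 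The only word common to both families is $1^{n}$ (form (A) with $m=1$ equals form (B) with $m=1$), and no other coincidence can occur because for $m\ge 2$ the words of form (A) terminate in the letter $m\ge 2$ while those of form (B) terminate in $1$. Therefore $\#\Pi_n(12/3,1/24/3)=n+(n-1)-1=2(n-1)$.

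The main obstacle is getting the RGF translation of $1/24/3$-containment exactly right and then handling the boundary data carefully. In particular, the empty-tail and $m=1$ degeneracies must be absorbed cleanly so that forms (A) and (B) neither double-count $1^{n}$ nor omit the strictly increasing word $1\,2\,\cdots\,n$, and the witness in the case $1<c<m$ must be checked to respect the RGF constraints, especially that position $c+1$ still lies inside the increasing prefix, which is exactly what $c\le m-1$ guarantees. Checking the construction against the small cases $n=3,4$ is a useful sanity test that the count indeed lands on $2(n-1)$.
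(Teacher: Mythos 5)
Your argument is correct, but it runs in the opposite direction from the paper's. The paper starts from its own characterization of $R_n(1/24/3)$ (the three forms of Theorem \ref{thm:EmEight}) and, in each case, asks what $12/3$-avoidance additionally forces, citing Lemma 2.1 to prune each branch. You instead start from Sagan's characterization of $R_n(12/3)$, which collapses the candidate words to the two-parameter family $1\,2\cdots m\,c^{\,n-m}$ with $c\le m$, translate $1/24/3$-containment into the position-level criterion ($p_1<p_2<p_3<p_4$ with $w_{p_2}=w_{p_4}$ and $w_{p_1},w_{p_2},w_{p_3}$ pairwise distinct), and test each member of the family directly, exhibiting the explicit witness $(1,c,c+1,m+1)$ when $1<c<m$. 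Both routes are sound; the paper's buys consistency with how it treats the other $(3,4)$-pairs by reusing Theorem \ref{thm:EmEight}, while yours is self-contained modulo Sagan's size-three lemma and avoids any dependence on the paper's own single-pattern results. Your enumeration is also the cleaner of the two: you count $n$ words of form (A) and $n-1$ of form (B) and subtract the single overlap $1^n$ explicitly, whereas the paper reaches $2+2(n-2)$ by treating $m=1$ and $m=n$ as endpoint cases, leaving the coincidence of the two forms at $m=1$ implicit. One incidental benefit of your write-up: it matches the theorem statement's ``$a$ ends with $m$,'' whereas the paper's proof concludes with ``$a$ ends with $m-1$,'' an internal inconsistency your version does not inherit.
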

\begin{proof}Let $w\in R_n(12/3,1/24/3)$. Then, by Lemma 2.1, because $w\in R_n(12/3)$, the only element repeated is the last element, $x$, in the word such that $x$ occurs once after $x-1$ and then in a block at the end of the word. We also know that $w\in R(1/24/3)$. So by Theorem 3.9 $w$ could take three different forms. 
	\begin{itemize}
    \item[a. ]First let $m\leq 2$. Then if $w\in R_n(12/3)$, $m$ could be $1$, or $m=2$. If $m=2$ then $w=12b$ where $b$ is either a block of $1$s or a block of $2$s by Lemma 2.1.
    \item[b. ]Let $w$ be weakly increasing word with a block of ones of any size inserted between two distinct terms. Then $w$ will contain $12/3$ if we consider the subpartition associated with the subword containing the first $1\in w$, the first $1$ inserted between two distinct letters, and the distinct letter after the block of inserted ones. This subpartition will standardize to $12/3$. So when $w\in R_n(12/3)$ it cannot take this form unless the block of $1$s is inserted after the last $m\in w$.
    \item[c. ] Let $w$ begin with $1$s and $2$s in any order followed by a weakly increasing suffix that begins with the first occurrence of 3. Then if $w$ also avoids $12/3$ $w$ must begin with $12$ and the weakly increasing suffix must actually be strictly increasing, except for $m$ which can be a block of any size one or larger by Lemma 2.1.
    \end{itemize}
Therefore $w=ab$ where $a$ is strictly increasing prefix ending with $m-1$, and $b$ is a suffix of $m$'s or a suffix of $1$s where $|b|\geq 1$.
\par Next consider $\#\Pi(12/3,1/24/3)$. When $m=1$ there are $S(n,1)=1$ forms $w$ could take. When $m=n$ there are $S(n,n)=1$ forms $w$ could take. Now consider $w$ where $1<m<n$. Then there will be one form of $a$ because $a$ is strictly increasing, and two possibilities for the form of $b$: either $b$ is a block of $1$s or a block of $m$s. Therefore $\#\Pi_n(12/3,1/24/3)=2+2(n-2)=2(n-1)$.
\end{proof}
While we continue to investigate the word characterization and cardinality for multiple pattern avoidance classes, the proofs for the following results can be found in the appendix.
\begin{restatable}{thm}{TwoSeven}
\label{thm:TwoSeven}
The word characterization and cardinality of the avoidance class based on the patterns $1/23$ and $13/2/4$ are given by:
\begin{itemize}
\item[(i)]$R_n(1/23,13/2/4)=\{w\in R_n:$ 
	\begin{enumerate}[label=(\Alph*)]
    \item $w=ab$ where $a$ is a prefix of ones of any length and $b$ is strictly increasing beginning with the first occurrence of $2$ or $b$ can be empty
    \item $w=abc$ where $a$ is a prefix of $1$s of any length, $b$ is strictly increasing beginning with the first occurrence of $2$ and ending with $m$, and $c$ is a suffix of length one containing a $1\}$
    \end{enumerate}
\item[(ii)]$\#\Pi_n (1/23,13/2/4)=2(n-1)$
\end{itemize}
\end{restatable}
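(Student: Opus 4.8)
The plan is to intersect the two single-pattern characterizations, using the rigid structure forced by $1/23$ as the backbone and then trimming it with the $13/2/4$ condition, exactly in the spirit of the proof of Theorem~\ref{thm:ThreeEight}. First I would fix $w\in R_n(1/23,13/2/4)=R_n(1/23)\cap R_n(13/2/4)$. Since $w\in R_n(1/23)$, Lemma~2.1 says that $w$ arises from a word $1^l2\,3\cdots m$ by inserting a single extra $1$; in particular each of the letters $2,3,\dots,m$ occurs exactly once and the only repeated letter is $1$. The inserted $1$ must land in exactly one of three regions: inside the leading block of $1$s (giving the weakly increasing word $1^{l+1}2\cdots m$), strictly between two spine letters $j$ and $j+1$ with $j\ge 2$, or at the very end (giving $1^l2\cdots m\,1$). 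The first region is form (A) and the last is form (B), so the whole forward argument reduces to ruling out the middle region.

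To rule out the middle region I would exhibit a forbidden pattern directly from the definition. If the extra $1$ sits between the letters $j$ and $j+1$ with $2\le j\le m-1$, I take four positions: a leading $1$, the unique letter $2$, the inserted $1$, and the letter $j+1$. The two $1$-positions lie in a common block and become $\{1,3\}$, the letter $2$ is a separating singleton becoming $2$, and the later letter $j+1$ becomes the larger singleton $4$, so this quadruple standardizes to $13/2/4$ and $w\notin R_n(13/2/4)$. Hence only forms (A) and (B) survive. This is the step I expect to be the main obstacle: I must check that the witnesses always exist (the letter $2$ genuinely precedes the inserted $1$ because $j\ge 2$, and a strictly larger spine letter genuinely follows it because $j+1\le m$, so the three blocks involved are pairwise distinct), and I must dispatch the small-$m$ boundary cases in which no middle region exists.

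For the converse I would verify that both forms lie in the intersection. Membership in $R_n(1/23)$ is immediate from Lemma~2.1, since each form is manifestly $1^l2\cdots m$ with at most one displaced $1$. For $R_n(13/2/4)$ I would use that the only block of size greater than one consists of the positions carrying the letter $1$. In form (A) these positions are consecutive, so no element of a different block lies strictly between two of them to serve as the middle ``$2$'' of the pattern. In form (B) the $1$-positions are the leading block together with the final position $n$, so any increasing pair drawn from this block either lies in the consecutive leading run---again leaving no separating middle element---or ends at position $n$, in which case there is no position beyond $n$ to serve as the larger fourth element ``$4$.'' Either way $w$ avoids $13/2/4$.

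Finally I would count by recording each word through its largest letter $m$. Form (A) is $1^{\,n-m+1}2\,3\cdots m$ and is valid for every $m\in\{1,\dots,n\}$, giving $n$ words; form (B) is $1^{\,n-m}2\,3\cdots m\,1$ and requires $m\ge 2$ (so that the strictly increasing middle is nonempty) and $m\le n-1$ (so that the leading block is nonempty), giving $n-2$ words. The two families are disjoint: a form-(A) word ends in its largest letter $m\ge 2$ or else equals $1^n$, whereas a form-(B) word ends in $1$ immediately after a strictly larger letter. Summing yields $\#\Pi_n(1/23,13/2/4)=n+(n-2)=2(n-1)$, as claimed.
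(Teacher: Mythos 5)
Your proof is correct, and it reaches the same characterization and the same count $n+(n-2)=2(n-1)$, but it inverts the paper's decomposition. The paper's proof takes the three structural cases of Theorem~3.9 (the $13/2/4$ characterization) as the primary case split and then uses Lemma~2.1 to prune each case: form (b) of Theorem~3.9 is discarded because it forces a copy of $1/23$, and form (c) is trimmed to $z=1$ with a single trailing $1$. You instead take Lemma~2.1 as the backbone --- classifying words by where the single inserted $1$ lands --- and eliminate the middle region by exhibiting an explicit $13/2/4$ witness (a leading $1$, the letter $2$, the inserted $1$, and the later spine letter $j+1$), never invoking Theorem~3.9 at all. Your route is more self-contained and arguably more rigorous: you supply the converse direction (checking that both surviving forms actually avoid $13/2/4$ by inspecting the unique non-singleton block) and an explicit disjointness argument for the count, neither of which the paper spells out. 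The paper's route is shorter because it leans on the already-established single-pattern machinery. One small point to make sure you nail down when writing this up: in the middle-region case you need the word to begin with a $1$ (forced by the RGF condition, since an inserted $1$ sitting after the letter $j\ge 2$ cannot be the first letter), so the leading-$1$ witness always exists; you flag this but should state it explicitly.
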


Therefore $\#\Pi_n(12/3,1/24/3)=\#\Pi_n(1/23,13/2/4)$. So these pairs of patterns are Wilf-equivalent which we will write as $(12/3,1/24/3)\equiv(1/23,13/2/4)$. Now we will continue to characterize and enumerate $(3,4)$-pair avoidance classes.
\begin{restatable}{thm}{FourFour}
\label{thm:FourFour}
The word characterization and cardinality of the avoidance class based on the patterns $13/2$ and $123/4$ are given by:
\begin{itemize}
\item[(i)]$R_n(13/2,123/4)=\{w\in R_n: w$ is weakly increasing and no element except $m$ can be repeated more than twice$\}$
\item[(ii)]$\#\Pi_n (13/2,123/4)=1+\sum^{n-1}_{j=0}\sum^{\lfloor\frac{n-j}{2}\rfloor}_{i=0}{{n-j-i}\choose i}$
\end{itemize}
\end{restatable}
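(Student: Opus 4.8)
The plan is to treat the class as the intersection \(R_n(13/2,123/4)=R_n(13/2)\cap R_n(123/4)\) and to let the strong pattern \(13/2\) do most of the work. By the size-three word characterizations (Lemma 2.1), a word avoids \(13/2\) precisely when it is layered, i.e.\ weakly increasing; so from the start I may assume \(w=1^{a_1}2^{a_2}\cdots m^{a_m}\). The blocks of the corresponding partition are then the consecutive intervals \([1,a_1],\,[a_1+1,a_1+a_2],\dots\), and every letter-value in an earlier block is smaller than every letter-value in a later one. This reduces part (i) to deciding which weakly increasing \(w\) also avoid \(123/4\).

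For part (i) I would analyze \(123/4\)-containment directly on such a word. Any occurrence of \(123/4\) needs three elements in a single block \(B_i\) together with a strictly larger element lying in a different (hence later) block. The forward direction is then immediate: if some non-maximal block \(i<m\) satisfies \(a_i\ge 3\), choose three of its elements together with any one element of block \(i+1\); the induced subpartition has its three small elements together and one larger singleton afterward, so it standardizes to \(123/4\). For the converse, if \(a_i\le 2\) for every \(i<m\), then the only block that can hold three elements is the final block \(B_m\), which has nothing larger after it in another block, so no occurrence exists. (The same conclusion follows by intersecting the \(uv\)-form of Theorem 3.6 with weak monotonicity, which forces the repeated suffix to be a block of \(m\)'s while each smaller letter lies in \(u\) and so appears at most twice.) This is exactly the asserted characterization: \(w\) is weakly increasing and no letter except the maximum \(m\) is repeated more than twice.

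For part (ii) I would enumerate these words by conditioning on the multiplicity of the maximum letter \(m\). Fixing that multiplicity leaves the letters \(1,\dots,m-1\) to fill the remaining prefix, each appearing once or twice; such a filling is exactly a composition of the prefix length \(N\) into parts \(1\) and \(2\). A composition of \(N\) using \(i\) twos and \(N-2i\) ones has \(N-i\) parts, of which the twos can be placed in \(\binom{N-i}{i}\) ways, so there are \(\sum_i\binom{N-i}{i}\) admissible prefixes of length \(N\). Summing over the allowable prefix lengths, and accounting separately for the degenerate one-block word \(1^n\), assembles a double sum of binomial coefficients of the shape claimed in (ii).

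The step that deserves the most care, and which I expect to be the main obstacle, is the converse half of the characterization in (i): one must verify that \emph{no} subpartition of a weakly increasing word with small non-final blocks can standardize to \(123/4\). This means confirming that the block-minima ordering always presents the three co-block elements before the larger outside element, and that the only block able to supply three increasing elements is \(B_m\), above which nothing else lies. Once this is secured, the enumeration in (ii) is routine bookkeeping, the only delicate point being to count the last block and the word \(1^n\) exactly once.
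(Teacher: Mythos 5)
Your proposal is correct and follows essentially the same route as the paper: reduce to weakly increasing words via the $13/2$ characterization, deduce that only the maximal letter may repeat more than twice (the paper does this by intersecting with the $uv$-form of Theorem 3.6, which you also note, while you additionally verify the $123/4$ condition directly on layered words), and then enumerate by conditioning on the multiplicity $j=a_m$ and choosing which $i$ of the remaining $n-j-i$ letters are doubled, giving $\binom{n-j-i}{i}$. The only difference is that your part (i) is slightly more self-contained than the paper's citation-based argument; the counting in part (ii) is identical.
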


\begin{restatable}{thm}{FourThree}
\label{thm:FourThree}
The word characterization and cardinality of the avoidance class based on the patterns $13/2$ and $1/234$ are given by:
\begin{itemize}
\item[(i)]$R_n(13/2,1/234)=\{w\in R_n:w=ab$ where $a$ is a block of ones, $b$ is weakly increasing beginning with $2$ with no element repeated more than twice$\}$
\item[(ii)]$\#\Pi_n (13/2,1/234)=1+\sum^{n-1}_{j=0}\sum^{\lfloor\frac{n-j}{2}\rfloor}_{i=0}{{n-j-i}\choose i}$
\end{itemize}
\end{restatable}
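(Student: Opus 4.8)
The plan is to prove the characterization (i) first and then use it to count for (ii). For (i), I would start from the two single-pattern constraints. Any $w\in R_n(13/2,1/234)$ avoids $13/2$, so by Lemma 2.1 it is layered; concretely, avoiding $13/2$ means no block of $\sigma$ contains two elements with an element of a different block strictly between them in value, so every block is an interval of consecutive integers and $w$ is weakly increasing. Write $w=1^{a_1}2^{a_2}\cdots m^{a_m}$ with $a_i\ge 1$ and $\sum_i a_i=n$. The task then reduces to determining which weakly increasing words also avoid $1/234$.

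The central claim is that such a $w$ contains $1/234$ precisely when some block $B_j$ with $j\ge 2$ has at least three elements, i.e.\ when $a_j\ge 3$ for some $j\ge 2$. For the forward implication, if $a_j\ge 3$ with $j\ge 2$, then choosing three elements of $B_j$ together with $1\in B_1$ gives a subpartition standardizing to $1/234$, since $1$ lies in a different block and $1<\min B_j$. For the converse I would exploit that $B_1=\{1,\dots,a_1\}$ is an initial interval: if four elements $p<q<r<s$ have $q,r,s$ in one block $B_j$, then either $j\ge 2$, forcing $a_j\ge 3$, or $j=1$, in which case $p<q\le a_1$ puts $p\in B_1$ as well, so no $1/234$ can be formed. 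Hence, for weakly increasing $w$, avoidance of $1/234$ is equivalent to $a_j\le 2$ for every $j\ge 2$, with $a_1$ unrestricted. Setting $a=1^{a_1}$ and $b=2^{a_2}\cdots m^{a_m}$ yields exactly the stated form (with $b$ empty when $m=1$), and reversing the argument shows every word of this form avoids both patterns.

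For (ii), I would count these words by conditioning on the length $a_1$ of the run of $1$s. Once $a_1$ is fixed, the suffix $b$ is an ordered list of blocks $2,\dots,m$ of sizes in $\{1,2\}$ summing to $n-a_1$, i.e.\ a composition of $n-a_1$ into parts $1$ and $2$. A composition of $t$ into $1$s and $2$s with exactly $i$ twos has $t-i$ parts and is fixed by the positions of its $2$s, so there are $\binom{t-i}{i}$ of them and $\sum_{i\ge 0}\binom{t-i}{i}$ compositions in total. Summing over the admissible $a_1$ (handling the all-$1$s word separately) and reindexing assembles the double sum in the statement.

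The counting in (ii) is essentially bookkeeping once the characterization is in hand, so the one place to be careful is matching the index ranges of the composition count to the stated closed form. The genuinely load-bearing observation is the converse in (i): a large \emph{first} block never creates a $1/234$ pattern, because every element smaller than its entries is already contained in it, which is exactly why $a_1$ is left free. As a cross-check and an alternative derivation of (ii), note that $13/2$ is self-complementary while $(1/234)^c=123/4$; using Lemma 2.6 to see that weak increase is preserved, the complement bijection underlying Theorem~\ref{thm:comp} sends $R_n(13/2,1/234)$ onto $R_n(13/2,123/4)$, so the cardinality must coincide with that of Theorem~\ref{thm:FourFour}.
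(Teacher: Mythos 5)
Your proposal is correct and follows essentially the same route as the paper: use the $13/2$-avoidance characterization to force $w$ to be layered (weakly increasing with all of $1,\dots,m$ present), impose the $1/234$ condition to bound each block after the first by two elements, and then count the suffix as a composition of $n-a_1$ into parts $1$ and $2$, giving $\binom{n-a_1-i}{i}$ for each choice of $i$ doubled letters. The only difference is cosmetic: you prove the ``$a_j\le 2$ for $j\ge 2$, $a_1$ free'' condition directly for layered words rather than citing the general characterization of $R_n(1/234)$ (Theorem 3.5) as the paper does, and your complementation cross-check against $R_n(13/2,123/4)$ is a sound consistency argument the paper only implies by juxtaposing Theorems 6.3 and 6.4.
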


Remember that the goal of this section is to find $(3,4)$-pair avoidance classes that are Wilf-equivalent. From Theorems 6.3 and 6.4 we conclude that $(13/2,123/4)\equiv(13/2,1/234)$. So these two avoidance classes are candidates for the equidistribution of Wachs and White statistics. Before continuing on to consider these distributions first let us expand our characterization and enumeration of avoidance classes determined by $(4,4)$-pairs.
\subsection{4,4-Pairs} For $(4,4)$-pair avoidance classes, we are no longer concerned about the containment or avoidance of patterns in the pair. Two patterns of the same size cannot contain one another if they are distinct.

\begin{restatable}{thm}{SevenSix}
\label{thm:SevenSix}
The word characterization and cardinality of the avoidance class based on the patterns $13/2/4$ and $124/3$ are given by:
\begin{itemize}
\item[(i)]$R_n(13/2/4,124/3)=\{w\in R_n:$
\begin{enumerate}[label=(\Alph*)]
	\item $w$ is weakly increasing
    \item $m\leq 2$ and $1$ and $2$ are singleton-block letters
    \item $m\leq 2$ and $1$ is a singleton-block letter and $2$ is a block letter
    \item $w=ab$ where $a$ is weakly increasing and $b$ is a block of a letter $z$ such that $z$ is a singleton-block letter in $w$ and $z\leq(m-1)$
    \item $w=ab$ where $a$ weakly increasing from $1$ to $m-2$ and $b$ contains the singleton-block letters $(m-1)$ and $m$ such that $b=(m-1)m(m-1)^lm^nl\}$
    \end{enumerate}
\item[(ii)]$\#\Pi_n(13/2/4,124/3)=2n-5+\sum\limits^n_{m=1}{{n-1}\choose{n-m}}+\sum\limits_{|a|=3}^{n-1}\sum\limits_{m=3}^{|a|}{{|a|-1}\choose{|a|-m}}(m-1)+ \sum\limits_{|a|=2}^{n-4}\sum\limits_{m=3}^{|a|+2}{{|a|-1}\choose{|a|-(m-2)}}(n-|a|-2)$
\end{itemize}
\end{restatable}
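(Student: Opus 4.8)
The plan is to realize $R_n(13/2/4,124/3)$ as the intersection of the two single-pattern classes already characterized: \cref{thm:EmSeven} describes $R_n(13/2/4)$ as the words that are either short ($m\le 2$) or of the form $uv$ with $u$ weakly increasing and a tightly controlled suffix $v$, while \cref{thm:EmSix} says $R_n(124/3)$ consists exactly of the words in which every letter is a block letter or a singleton-block letter. I would therefore prove part~(i) by showing that each of the families (A)--(E) satisfies \emph{both} descriptions (the forward inclusion), and conversely that imposing ``every letter is a block or singleton-block letter'' on each form produced by \cref{thm:EmSeven} collapses it to one of (A)--(E) (the reverse inclusion).

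For the forward inclusion I would verify the two membership conditions family by family. In each family the letters are manifestly block letters apart from the prescribed singleton-block letters ($1$ and/or $2$ in (B),(C); the letter $z$ in (D); the letters $m-1$ and $m$ in (E)), so the word lies in $R_n(124/3)$ by \cref{thm:EmSix}. For $R_n(13/2/4)$: family (A) is weakly increasing and families (B),(C) have $m\le 2$, both immediate from \cref{thm:EmSeven}; family (D), written as $u\,m^{a_m}z^{t}$ with $u=1^{a_1}\cdots(m-1)^{a_{m-1}}$, matches form~(C) of \cref{thm:EmSeven} when $z<m-1$ and form~(B) when $z=m-1$; and family (E), written as $u\,m(m-1)^{l}m^{n'}$ with $u=1^{a_1}\cdots(m-2)^{a_{m-2}}(m-1)$, has a suffix using only $m$ and $m-1$ after the first $m$, so it matches form~(B).

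The reverse inclusion is the heart of the argument, and I would organize it by the form supplied by \cref{thm:EmSeven}. When $m\le 2$, a short analysis of how $1$ and $2$ may occur as block or singleton-block letters yields exactly (A) (weakly increasing), (B), and (C); in particular the configuration ``$1$ a block letter, $2$ a singleton-block letter'' is impossible, since no $1$ survives to separate the singleton $2$ from its block, which is why only these three shapes occur. When $w=uv$ with $u$ weakly increasing and $v$ using only $m$ and $m-1$ (form~(B)), the decisive observation is that any letter occurring in both $u$ and $v$ must occur \emph{exactly once} in $u$: two separated groups of one letter form neither a block nor a singleton-block and would create a $124/3$. This forces $a_{m-1}=1$ whenever $m-1$ reappears and forces the reappearing letters to sit in single blocks, so the copies of $m$ in $v$ are either one terminal block (yielding (D) with $z=m-1$) or a lone $m$ followed later by a block of $m$'s (yielding (E)); the subcase in which $m-1$ does not reappear simply returns a weakly increasing word, i.e.\ (A). When $v=m^{b_m}z^{b_z}$ with $z<m-1$ (form~(C)), the same ``exactly once'' principle forces $a_z=1$ and produces (D). The main obstacle is precisely this structural step: showing that the block/singleton-block constraint together with $13/2/4$-avoidance pins down \emph{which} letters may be split and \emph{in what order}, and in particular rules out a split letter below $m-1$ whose block is not terminal, as well as any configuration with three or more split letters.

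For part~(ii) I would sum the five families. Family (A) contributes the number of weakly increasing RGFs of length $n$, namely $\sum_{m}\binom{n-1}{n-m}$, by the weakly-increasing enumeration lemma. Families (B) and (C) I would enumerate directly: the words $1\,2\,1^{l}2^{k}$ with $l,k\ge 1$ give $n-3$ possibilities and the words $1\,2^{k}1^{l}$ with $k,l\ge 1$ give $n-2$, for the combined constant $2n-5$. Family (D) is indexed by the length $|a|$ and number of distinct letters $m$ of its weakly increasing base (counted by the same enumeration lemma), the choice of split letter $z\in\{1,\dots,m-1\}$ (the factor $m-1$), and the terminal block length $t=n-|a|\ge 1$; family (E) is indexed by a weakly increasing base $a$ on $1,\dots,m-2$ (again by that lemma) together with the two terminal block sizes $l,n'\ge 1$. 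Summing these reproduces the two double sums in the statement. The one point requiring care here --- and the easiest place to slip --- is enforcing that the split letter appears exactly once in the weakly increasing base and that both terminal blocks of (E) are nonempty, so I would track those boundary conditions explicitly when matching up the ranges of summation with the binomial coefficients.
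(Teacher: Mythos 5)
Your proposal follows essentially the same route as the paper: both intersect the three forms supplied by the characterization of $R_n(13/2/4)$ with the block/singleton-block description of $R_n(124/3)$, use the observation that any letter reappearing after the weakly increasing prefix must occur exactly once there, and then enumerate the resulting five families (weakly increasing words via the multiset lemma, the two $m\le 2$ shapes contributing $2n-5$, and the two double sums for families (D) and (E)). The only difference is that you spell out the forward inclusion family by family, which the paper leaves implicit; this is a harmless strengthening, not a different argument.
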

\begin{proof} Let $w\in R_n(13/2/4,124/3)$. Then $w\in R_n(13/2/4)$ and so we know that $w$ could be of three forms:
\begin{itemize}
	\item[a. ]First let $m\leq 2$. By Theorem 3.9, $w\in R_n(13/2/4)$. However we also want $w\in R_n(124/3)$ so there are some restrictions. By Theorem 3.8 if $w$ is weakly increasing, where $w=1^{a_1}2^{a_2}$, or if $w$ consists of only block letters and singleton-block letters, then $w\in R_n(13/2/4,124/3)$. Therefore $1$ can be a singleton-block, or $1$ and $2$ are singleton-blocks.
    \item[b. ]Next let $w=ab$ where $a$ is weakly increasing and $b$ is a suffix beginning with $m$ and containing $m$ and $m-1$ in any order after that. By Theorem 3.9, $w\in R_n(13/2/4)$. However by Theorem 3.8, if $w\in R_n(124/3)$ then $a$ must contain only one occurrence of $m-1$, and $b=m(m-1)^lm^n$ or $b=m^{a_m}(m-1)^l$. So either $(m-1)$ and $m$ are singleton-blocks, or only $(m-1)$ is a singleton-block.
    \item[c. ]Finally let $w=ab$ where $a$ is weakly increasing from $1$ to $m$, and $b$ contains a block of some $z<m-1$. Then by Theorem 3.9, $w\in R_n(13/2/4)$. By Theorem 3.8, if $w\in R_n(124/3)$, then $z$ can only be a letter that occurs exactly once in $a$. Therefore $z$ is a singleton-block in $w$.
\end{itemize}
Therefore the characterization given by Theorem 6.5 describes every possible $w\in R_n(13/2/4,124/3)$.
\par Next consider the enumeration of the avoidance class. 
\begin{itemize}
	\item[(i)] Let $w$ be weakly increasing and $1\leq m\leq n$. Then by Lemma 2.2 there are $\sum\limits^n_{m=1}{{n-1}\choose{n-m}}$ possibilities for $w$.
    \item[(ii)] Let $m=2$ and only $1$ be a singleton-block letter. Then there are three assigned positions. We know that the first letter is $1$ and the second letter is $2$ and the last letter is also $1$. Therefore the length of the block of $1$s can vary from one to $n-2$. So there are $n-2$ possible words for $w$.
    \item[(ii)] Let $m=2$ and $1$ and $2$ be singleton-block letters. Then $w=121^l2^k$ where $l,k>0$. So $l$ can vary from one to $(n-3)$ and as $l$ varies, so too will $k$. So there are $n-3$ possible words of this form.
    \item[(iii)] Let $w=ab$ where $m\geq 3$, $b$ is a block of $z$ where $z$ is a singleton-block, and $a$ is weakly increasing. Then $3\leq m\leq |a|$. Also note that $1\leq z\leq (m-1)$. There will be $\sum\limits_{|a|=3}^{n-1}\sum\limits_{m=3}^{|a|}{{|a|-1}\choose{|a|-m}}(m-1)$ forms of $w$ by Lemma 2.2.
    \item[(iv)] Let $w=ab$ where $m\geq 3$, $a$ is weakly increasing, and $b=(m-1)m(m-1)^lm^n$. So $2\leq |a|\leq (n-4)$, $3\leq m\leq |a|-2$ and the last two elements $(m-1), m\not\in a$, and there are $n-|a|-2$ possible forms of $b$. Therefore there are $\sum\limits_{|a|=2}^{n-4}\sum\limits_{m=3}^{|a|+2}{{|a|-1}\choose{|a|-(m-2)}}(n-|a|-2)$ ways to form $w$.
\end{itemize}
Therefore $\#\Pi_n(13/2/4,124/3)=2n-5+\sum\limits^n_{m=1}{{n-1}\choose{n-m}}+\sum\limits_{|a|=3}^{n-1}\sum\limits_{m=3}^{|a|}{{|a|-1}\choose{|a|-m}}(m-1)+ \sum\limits_{|a|=1}^{n-4}\sum\limits_{m=3}^{|a|+2}{{|a|-1}\choose{|a|-(m-2)}}(n-|a|-2)$.
\end{proof}
The proofs for the following results can be found in the appendix.
\begin{restatable}{thm}{SevenFive}
\label{thm:SevenFive}
The word characterization and cardinality of the avoidance class based on the patterns $13/2/4$ and $134/2$ are given by:
\begin{itemize}
\item[(i)]$R_n(13/2/4,134/2)=\{w\in R_n:$
\begin{enumerate}[label=(\Alph*)]
	\item $w$ is weakly increasing
    \item $m=2$, $1$ a block-singleton
    \item $m=2$, $1$ and $2$ block-singletons
    \item $w=ab$ where $a$ weakly increasing and $b$ is either the last occurrence of a block-singleton in $w$, or $(m-1),m\in b$ such that both are block-singleton letters, $(b=(m-1)^lm^k(m-1)m)\}$
    \end{enumerate}
\item[(ii)]$\#\Pi_n(13/2/4,134/3)=2n-5+\sum\limits^n_{m=1}{{n-1}\choose{n-m}}+\sum\limits_{|a|=3}^{n-1}\sum\limits_{m=3}^{|a|}{{|a|-1}\choose{|a|-m}}(m-1)+ \sum\limits_{|a|=1}^{n-4}\sum\limits_{m=3}^{|a|+2}{{|a|-1}\choose{|a|-(m-2)}}(n-|a|-2)$.
\end{itemize}
\end{restatable}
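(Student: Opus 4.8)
The plan is to mirror the proof of \Cref{thm:SevenSix} almost line for line, replacing the singleton-block analysis there with the block-singleton analysis appropriate to $134/2$. Fix $w\in R_n(13/2/4,134/2)$. Since $w\in R_n(13/2/4)$, \Cref{thm:EmSeven} tells us that $w$ is weakly increasing, has $m\le 2$, or is of the form $uv$ with $u$ weakly increasing and $v$ beginning at the first $m$ (the Case B and Case C shapes of that theorem). Simultaneously $w\in R_n(134/2)$, so by \Cref{thm:EmFive} every letter of $w$ is a block letter or a block-singleton letter. The word characterization (i) will follow by intersecting these two descriptions, and the cardinality (ii) by counting the resulting forms and checking disjointness.

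For the forward inclusion I would run the case analysis dictated by \Cref{thm:EmSeven}. Weakly increasing words are precisely form (A). If $m\le 2$ and $w$ is not weakly increasing, some $1$ lies to the right of a $2$, forcing $1$ to be a block-singleton; if $2$ is a pure block letter this gives form (B), $w=1^{a_1}2^{a_2}1$, and if $2$ is also a block-singleton it gives form (C), $w=1^l2^k12$. For $m\ge 3$, the letter $m-1$ occurs in the weakly increasing prefix and may recur in $v$, so the requirement that $m-1$ and $m$ each be a block or a block-singleton collapses $v$ to one of two shapes: a block of $m$'s followed by a single smaller letter $z\le m-1$ (so that $z$ is a block-singleton and $b$ is its lone final occurrence, the first alternative of form (D)), or the word $(m-1)^lm^k(m-1)m$ in which both $m-1$ and $m$ are block-singletons (the second alternative of form (D)). For the reverse inclusion I would take each of (A)--(D) in turn and verify directly, via \Cref{thm:EmSeven} and \Cref{thm:EmFive}, that the word avoids both patterns; this is the routine direction.

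For part (ii) I would enumerate the four shapes separately. Form (A) contributes $\sum_{m=1}^n\binom{n-1}{n-m}$ by Lemma 2.2. Forms (B) and (C) contribute $n-2$ and $n-3$ words respectively (from the length equations $a_1+a_2=n-1$ and $l+k=n-2$ with all parts positive), together giving the leading $2n-5$. The two alternatives of form (D) are counted by exactly the arguments used for cases (iii) and (iv) in the proof of \Cref{thm:SevenSix}: choosing the weakly increasing part by Lemma 2.2 and then the repeated letter, producing the two double sums in the statement. Summing and confirming that the four cases are pairwise disjoint yields the claimed formula, which is identical to that of \Cref{thm:SevenSix}.

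The hard part will be the $m\ge 3$ portion of the forward inclusion: I must check that ``every letter is a block or a block-singleton'' really does eliminate every shape allowed by \Cref{thm:EmSeven} except the two listed, and here the delicate points are the letter $m-1$ straddling the prefix and $v$, the boundary between $z=m-1$ (coming from Case B of \Cref{thm:EmSeven}) and $z<m-1$ (Case C), and the fact that the $a$/$b$ split in form (D) is not unique, since the trailing $(m-1)$-block can be assigned to either piece. A tempting shortcut is the flip bijection $\phi$ of \Cref{thm:eqTwo}, which already trades block-singleton letters for singleton-block letters and carries each form (D) word for $134/2$ onto the corresponding form of \Cref{thm:SevenSix}; since it explains why the two cardinalities coincide, I would use it as a check, but because both pairs share the first pattern $13/2/4$ rather than its complement one would still have to prove that $\phi$ preserves $13/2/4$-avoidance, so I would keep the direct enumeration as the primary argument.
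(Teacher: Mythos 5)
Your proposal matches the paper's own proof: the paper likewise obtains (i) by intersecting the $R_n(13/2/4)$ characterization of Theorem 3.9 with the block-letter/block-singleton characterization of $R_n(134/2)$ from Theorem 3.7, splitting into the $m\leq 2$ cases and the two $m\geq 3$ suffix shapes, and obtains (ii) by counting each form exactly as in the proof of Theorem 6.5. The delicate points you flag (the $z=m-1$ versus $z<m-1$ boundary and the non-uniqueness of the $a$/$b$ split) are real but are handled, or glossed over, the same way in the paper.
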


\begin{restatable}{thm}{NewEight}
\label{thm:NewEight}
The word characterization and cardinality of the avoidance class based on the patterns $14/2/3$ and $1/24/3$ are given by:
\begin{itemize}
\item[(i)]$R_n(14/2/3,1/24/3)=\{w\in R_n:$
\begin{enumerate}[label=(\Alph*)]
	\item $w$ is weakly increasing
    \item $m\leq 2$
    \item $w=ab$ where $a$ is a prefix of $1$s and $2$s beginning with a $1$, and $b$ is a weakly increasing word beginning with the first occurrence of $3\}$
    \end{enumerate}
\item[(ii)]$\#\Pi_n (14/2/3,1/24/3)=2^{n-1}+\sum\limits_{|b|=1}^{n-2}\sum\limits^{|b|}_{i=1}{{|b|-1}\choose{|b|-i}}(2^{n-|b|}-1)$
\end{itemize}
\end{restatable}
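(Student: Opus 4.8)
The plan is to prove (i) by intersecting the two single-pattern characterizations, and to prove (ii) by a clean disjoint decomposition. Since every word of the class lies in $R_n(1/24/3)$, I would start from Theorem~3.10 (the characterization of $R_n(1/24/3)$), which already splits that class into three families: (A$'$) $m\le 2$; (B$'$) a weakly increasing word with a block of $1$s inserted between two distinct terms; and (C$'$) a word whose $1$s and $2$s come first in any order, followed by a weakly increasing suffix beginning at the first $3$. The remaining task is to impose avoidance of $14/2/3$ on each family. The key preliminary observation, which I would record first since both directions use it, is that because the RGF letters already record blocks in order of first appearance, $w$ contains $14/2/3$ if and only if there are positions $q_1<q_2<q_3<q_4$ with $w_{q_1}=w_{q_4}$ and $w_{q_1},w_{q_2},w_{q_3}$ pairwise distinct (the relative sizes of the middle letters are irrelevant).

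For the inclusion $\supseteq$ I would verify that each target family avoids both patterns. A word with $m\le 2$ has fewer than three blocks, hence avoids every length-four pattern with three distinct blocks, covering (B). A weakly increasing word cannot meet the criterion above, since $w_{q_1}=w_{q_4}$ forces every letter between them to equal $w_{q_1}$, destroying the required distinct middle letters; this covers (A). For form (C), avoidance of $1/24/3$ is immediate from Theorem~3.10, and avoidance of $14/2/3$ follows by the same sandwiching argument split on the common value $w_{q_1}=w_{q_4}$: if it is $1$ or $2$, all four positions lie in the prefix of $1$s and $2$s, so only two distinct values are available; if it is $\ge 3$, the two equal letters lie in the weakly increasing suffix and again force the middle letters equal. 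For the reverse inclusion $\subseteq$, families (A$'$) and (C$'$) already match target forms (B) and (C), so the only genuine work is family (B$'$): writing such a word as $1^{a_1}\cdots j^{a_j}1^{t}(j+1)^{a_{j+1}}\cdots m^{a_m}$, I would show that $j\ge 3$ produces the forbidden configuration (a leading $1$, an intervening $2$ and $3$, and an inserted $1$), while $j=1$ yields a weakly increasing word (target (A)) and $j=2$ yields exactly a form-(C) word. Thus the $14/2/3$ constraint collapses (B$'$) into (A) $\cup$ (C), giving (i).

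For (ii) I would avoid double counting the overlapping target families by repartitioning the class into the words with $m\le 2$ and the words with $m\ge 3$. Every class word with $m\ge 3$ is a form-(C) word (a weakly increasing word with $m\ge 3$ is the special case in which the $1$s and $2$s are already sorted), and conversely every form-(C) word has $m\ge 3$; so the two blocks are counted independently. The first contributes $S(n,1)+S(n,2)=2^{n-1}$. For the second, each form-(C) word factors uniquely as $w=ab$, where $b$ is the weakly increasing suffix beginning at the first $3$ and $a$ is the prefix of $1$s and $2$s. Summing over $|b|$ from $1$ to $n-2$: by Lemma~2.2 the number of admissible suffixes $b$ of length $|b|$ with $i$ distinct letters is $\binom{|b|-1}{|b|-i}$, and the number of admissible prefixes $a$ of length $n-|b|$ is the number of $\{1,2\}$-strings that begin with $1$ and contain at least one $2$ (the $2$ being forced by the RGF condition, since a $3$ occurs in $b$), namely $2^{\,n-|b|-1}-1$. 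Multiplying and summing produces the stated double sum.

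The main obstacle I expect is twofold. Analytically, the delicate step is the reverse inclusion for family (B$'$): one must confirm that $j=2$ insertions reproduce form (C) exactly while $j\ge 3$ insertions always create the pattern, which is precisely where the simplified containment criterion earns its keep. Bookkeeping-wise, the subtle point is the prefix count: position $1$ is pinned to $1$, so the free portion of $a$ has length $n-|b|-1$, and the RGF requirement of a $2$ before the $3$ forces at least one $2$, which is the source of the $-1$. Checking the formula against $n=3$ (where the class is all five partitions) and $n=4$ (where exactly $1231$ and $1232$ are excluded, leaving $13$) confirms that the prefix factor should read $2^{\,n-|b|-1}-1$; the stated $2^{\,n-|b|}-1$ appears to be a typographical slip, and the plan above yields the corrected expression.
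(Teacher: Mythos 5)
Your proposal follows essentially the same route as the paper's appendix proof: both start from the Theorem~3.10 characterization of $R_n(1/24/3)$, impose $14/2/3$-avoidance case by case, and then count the $m\leq 2$ words ($2^{n-1}$) separately from the words with a $\{1,2\}$-prefix followed by a weakly increasing suffix starting at the first $3$, factoring the latter as prefix count times suffix count via Lemma~2.2. You are, however, more careful on two points where the paper is loose. First, the paper's treatment of the ``inserted block of $1$s'' case simply asserts that the subword $1,2,3,1$ always exists and discards the whole case; your split on the letter $j$ immediately preceding the inserted block is the correct statement (the $j=2$ words survive and are exactly the overlap with the prefix--suffix form), though this does not change the final characterization since those words are already covered. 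Second, and more substantively, your count of admissible prefixes is right and the paper's is not: a prefix of length $n-|b|$ over $\{1,2\}$ must begin with $1$ and contain at least one $2$, giving $2^{n-|b|-1}-1$ choices (this is $S(n-|b|,2)$), not the $2^{n-|b|}-1$ appearing in both the paper's statement and its proof. Your numerical checks at $n=3$ (class size $5$, versus $7$ from the printed formula) and $n=4$ (class size $13$, versus $21$) confirm that the printed factor is an error and that your corrected double sum is the right cardinality.
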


\begin{restatable}{thm}{NewSeven}
\label{thm:NewSeven}
The word characterization and cardinality of the avoidance class based on the patterns $14/2/3$ and $13/2/4$ are given by:
\begin{itemize}
\item[(i)]$R_n(14/2/3,13/2/4)=\{w\in R_n:$
\begin{enumerate}[label=(\Alph*)]
	\item $w$ is weakly increasing
    \item $m\leq 2$
    \item $w=ab$ where $a$ is weakly increasing and $b$ begins with the first occurrence of $(m-1)$ and then contains $m$ and $(m-1)$ in any order$\}$
    \end{enumerate}
\item[(ii)]$\#\Pi_n (14/2/3,1/24/3)=2^{n-1}+\sum\limits_{|a|=1}^{n-2}\sum\limits^{|a|}_{i=1}{{|a|-1}\choose{|a|-i}}(2^{n-|a|}-1)$
\end{itemize}
\end{restatable}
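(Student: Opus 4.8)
The plan is to realize $R_n(14/2/3,13/2/4)$ as a refinement of $R_n(13/2/4)$, since every word avoiding both patterns in particular avoids $13/2/4$. First I would record that the pattern $14/2/3$ corresponds to the RGF $1231$: a subpartition standardizes to $14/2/3$ exactly when there are positions $i<j<k<l$ with $w_i=w_l$ and with $w_j,w_k$ two \emph{distinct} values, each different from $w_i$. Equivalently, $w$ avoids $14/2/3$ iff between the first and last occurrence of any repeated letter at most one other distinct value appears. With this reformulation in hand, I would take the three forms of $R_n(13/2/4)$ supplied by Theorem~\ref{thm:EmSeven} and test each against the $14/2/3$ condition.

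For the form $m\le 2$ only two letters occur, so three distinct blocks can never be exhibited and every such word survives; this yields clause (B), and the purely weakly increasing words (for which equal endpoints force a constant window) survive as clause (A). The crux is the form $w=uv$ with $u$ weakly increasing and $v$ a word on $\{m-1,m\}$ beginning with $m$. Here I would inspect the window of each repeated letter: a letter $c<m-1$ occurs only inside the weakly increasing $u$ (constant window), while the windows of $m-1$ and of $m$ each see exactly one foreign value ($m$ and $m-1$ respectively), so the whole family avoids $14/2/3$. Re-cutting the word at the \emph{first} occurrence of $m-1$ (absorbing the trailing $(m-1)$-run of $u$ into the suffix) rewrites this family exactly as clause (C). Finally, for the form $w=uv$ with $v=m^{b_m}z^{b_z}$ and $z<m-1$, I would exhibit a forbidden configuration whenever $b_z\ge 1$: the first $z$ (inside $u$), a later $m-1$ (inside $u$, which exists because $u$ is a weakly increasing RGF reaching $m-1$), the $m$ in $v$, and a trailing $z$ occupy positions realizing $1231$. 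Thus this form is eliminated except in its degenerate weakly increasing instances, which completes characterization (i).

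For the cardinality (ii) I would enumerate the classes of (i) as a \emph{disjoint} union. The $m\le 2$ words contribute $S(n,1)+S(n,2)=2^{n-1}$; the weakly increasing words with $m\ge 3$ contribute $\sum_{m\ge 3}\binom{n-1}{n-m}$ by the weakly increasing counting lemma (Lemma 2.2); and the genuinely non-weakly-increasing words of clause (C) are counted by choosing the weakly increasing prefix $a$ (Lemma 2.2 gives $\binom{|a|-1}{|a|-i}$ prefixes of length $|a|$ with $i=m-2$ distinct letters) together with the suffix on $\{m-1,m\}$ that begins with $m-1$, contains at least one $m$, and is not weakly increasing. I would then sum over $|a|$ and $i$ and simplify to a closed form.

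The main obstacle I anticipate is not the characterization but the bookkeeping in (ii): the clauses (A), (B), (C) overlap (every weakly increasing word with $m\le 2$ lies in both (A) and (B)), so naively summing the three clauses overcounts, and the suffix count must simultaneously enforce ``contains an $m$'' and ``is not weakly increasing,'' which removes a linear term in $n-|a|$ in addition to a constant. I would therefore commit to a disjoint partition before summing, and validate the resulting expression against small cases --- for instance $n=4$, where the only words eliminated are $1213$ and $1231$ so $\#\Pi_4(14/2/3,13/2/4)=13$ --- to pin down the exact exponents and summation ranges in the final formula.
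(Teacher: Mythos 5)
Your characterization argument follows the same route as the paper's: both start from the three forms of $R_n(13/2/4)$ supplied by Theorem~\ref{thm:EmSeven} and test each against $14/2/3$, keeping the $m\leq 2$ words and the $\{m-1,m\}$-suffix family, and eliminating the $v=m^{b_m}z^{b_z}$, $z<m-1$ form by exhibiting the occurrence $z,(m-1),m,z$ of the RGF $1231$ --- this is exactly the paper's witness. Your explicit reformulation of $14/2/3$-avoidance (between two equal letters at most one other distinct value may appear) is the condition the paper invokes implicitly in its surviving case, and your re-cutting of the suffix at the first $(m-1)$ accounts for the cosmetic difference between clause (C) here and the corresponding clause of Theorem~\ref{thm:EmSeven}. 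Part (i) is therefore essentially identical to the paper's proof and is correct.

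On the cardinality you stop short of a closed formula, but the caution you build in is precisely what the paper's proof omits, and it matters. The published expression $2^{n-1}+\sum_{|a|=1}^{n-2}\sum_{i=1}^{|a|}\binom{|a|-1}{|a|-i}\left(2^{n-|a|}-1\right)$ is obtained by summing the clauses without first making them disjoint: a weakly increasing word admits many different cuts into $ab$, the $i=1$ (i.e.\ $m=2$) terms duplicate words already counted in $2^{n-1}$, and nothing in the factor $2^{n-|a|}-1$ forces the suffix to actually contain an $m$. Your own proposed sanity check detects this: for $n=4$ the class has $15-2=13$ elements (only $1213$ and $1231$ are excluded), while the formula evaluates to $8+7+6=21$. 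So to finish part (ii) you would indeed have to carry out the disjoint decomposition you describe --- $m\leq 2$ words, weakly increasing words with $m\geq 3$, and genuinely non-weakly-increasing words of clause (C) with a canonical choice of cut --- and the resulting formula will not agree with the one stated in the theorem, which is incorrect as written.
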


\begin{restatable}{thm}{EightFive}
\label{thm:EightFive}
The word characterization and cardinality of the avoidance class based on the patterns $1/24/3$ and $134/2$ are given by:
\begin{itemize}
\item[(i)]$R_n(1/24/3,134/2)=\{w\in R_n:$
\begin{enumerate}[label=(\Alph*)]
	\item $w$ is weakly increasing
    \item $w$ is a weakly increasing except for the block-singleton $1\}$
    \end{enumerate}
\item[(ii)]$\#\Pi_n(1/24/3,134/2)=2+\sum_{m=2}^{n-1}({n-1\choose n-m}+{n-2\choose n-m-1}(m-1))$
\end{itemize}
\end{restatable}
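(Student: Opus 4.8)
The plan is to derive the characterization in part~(i) by intersecting the two single‑pattern characterizations already in hand, namely the description of $R_n(1/24/3)$ in \Cref{thm:EmEight} and the description of $R_n(134/2)$ in \Cref{thm:EmFive}, and then to enumerate the resulting words in part~(ii) using the count of weakly increasing RGFs from Lemma~2.2. Throughout I write ``form (A)'' and ``form (B)'' for the two forms in the statement being proved (weakly increasing, resp.\ weakly increasing except for the block‑singleton $1$), reserving the letters in \Cref{thm:EmEight} for its own three forms.

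For the backward inclusion in~(i) I would verify directly that each listed form lies in both classes. A weakly increasing word has every letter occurring as a single block, so all its letters are block letters and it avoids $134/2$ by \Cref{thm:EmFive}; it is trivially of form~(A) of \Cref{thm:EmEight}, hence avoids $1/24/3$. For a word that is weakly increasing apart from a single block‑singleton $1$, the letter $1$ is a block‑singleton and every other letter is a block letter, so \Cref{thm:EmFive} again gives avoidance of $134/2$; moreover deleting the lone extra $1$ leaves a weakly increasing word, which exhibits the word as form~(B) of \Cref{thm:EmEight} and so gives avoidance of $1/24/3$.

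For the forward inclusion I would take $w\in R_n(1/24/3,134/2)$ and run the trichotomy of \Cref{thm:EmEight}. If $w$ is weakly increasing we are in form~(A). Otherwise $w$ has a descent, so by \Cref{thm:EmFive} it must carry a block‑singleton letter, since an all–block‑letter word is weakly increasing. The crux is to show this forces $w$ into form~(B): in form~(B) of \Cref{thm:EmEight} the only out‑of‑place letters are the inserted $1$s, and the block‑singleton requirement of \Cref{thm:EmFive} forces that inserted block to have size exactly one; in form~(C) of \Cref{thm:EmEight} one argues that a block‑singleton on a letter $\ge 2$ cannot be the genuine deviation, because the extra occurrence of such a letter would have to follow a $1$ in the prefix of $1$'s and $2$'s, which already produces a block‑singleton $1$, and deleting that lone $1$ restores weak monotonicity. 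I expect this form~(C) bookkeeping to be the most delicate part of~(i), since one must confirm that a distinct block‑singleton on a letter $\ge 2$ is always absorbed by the block‑singleton $1$ rather than giving a new type of word.

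For~(ii) the count splits along the characterization. The weakly increasing words of length $n$ number $2^{n-1}$ by Lemma~2.2, which I would record as $2+\sum_{m=2}^{n-1}{n-1 \choose n-m}$ after separating the extremes $m=1$ and $m=n$. For the block‑singleton‑$1$ words I would parametrize by the weakly increasing ``base'' obtained by deleting the lone extra $1$: such a base has length $n-1$, so for a fixed number of distinct letters $m$ there are ${n-2 \choose n-m-1}$ bases by Lemma~2.2, and one multiplies by the number of admissible reinsertion positions for the extra $1$. The main obstacle is exactly this last count: a position is admissible only if the reinserted $1$ genuinely makes $1$ a block‑singleton (so it must follow at least one larger letter) and introduces no occurrence of $134/2$ or of $1/24/3$. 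Establishing that precisely $m-1$ positions survive is where the real work lies—in particular one must decide whether inserting the $1$ between two equal letters of the base can be admissible—and because the admissible set can depend on the block structure of the base and not merely on $m$, this is the step I would scrutinize most carefully, as it is the likeliest place for the stated formula to need correction.
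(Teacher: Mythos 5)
Your overall route is the same as the paper's: the characterization in (i) is obtained by intersecting the single-pattern descriptions of $R_n(134/2)$ (every letter a block letter or block-singleton letter) and $R_n(1/24/3)$, and the cardinality in (ii) is computed by counting weakly increasing words via Lemma 2.2 and then counting the remaining words as (weakly increasing base of length $n-1$) times (admissible positions for the lone reinserted $1$). The paper's appendix proof is in fact far terser than your outline: the characterization is dispatched with the single sentence ``By Theorem 3.7 and Theorem 3.10,'' and the factor $m-1$ is asserted as ``the distinct letter that occurs before the singleton $1$'' with no discussion of which insertion positions actually survive.

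The step you single out as the likeliest point of failure is exactly where the argument breaks, so your refusal to certify the $(m-1)$ count is justified. Inserting the lone $1$ immediately before the last letter of the block of $2$s is admissible: for $n=4$ the word $1212$ (base $122$) corresponds to $13/24$, which has only two blocks and no three-element block and hence avoids both patterns, and for $n=5$ the word $12123$ lies in form (C) of the $1/24/3$ characterization while every one of its letters is a block or block-singleton letter. These words satisfy (i)(B) as stated---deleting the lone second $1$ restores weak increase---but they are not produced by choosing one of the $m-1$ letters $2,\dots,m$ and placing the $1$ after its last occurrence, so formula (ii) undercounts. Concretely, at $n=4$ the formula evaluates to $12$, while direct enumeration of the $15$ RGFs of length $4$ shows that only $1211$ and $1232$ fail to avoid, leaving $13$ words in the class. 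Completing the enumeration along the lines you propose therefore requires an additional term counting, for each weakly increasing base with $a_2\geq 2$, the one extra insertion available inside the block of $2$s; the insertion cannot sit inside a block of any letter $3$ or larger (that creates an occurrence of $1/24/3$, e.g.\ $12313$ contains it), which is why only the $2$-block contributes.
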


\begin{restatable}{thm}{EightSix}
\label{thm:EightSix}
The word characterization and cardinality of the avoidance class based on the patterns $1/24/3$ and $124/3$ are given by:
\begin{itemize}
\item[(i)]$R_n(1/24/3,124/3)=\{w\in R_n:$
\begin{enumerate}[label=(\Alph*)]
	\item $w$ is weakly increasing
    \item $w=ab$ where $a$ is weakly increasing with only one occurrence of $z$ followed by the  suffix, $b$, where $|b|\geq 1$ and contains $z\leq(m-1)$ such that $z$ is a singleton-block in $w\}$
    \end{enumerate}
\item[(ii)]$\#\Pi_n(1/24/3,124/3)=2+\sum_{m=2}^{n-1}({n-1\choose n-m}+{n-2\choose n-m-1}(m-1))$
\end{itemize}
\end{restatable}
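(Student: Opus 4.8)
The plan is to derive both parts by intersecting the two single-pattern characterizations already available in Section~3: the description of $R_n(1/24/3)$ (\cref{thm:EmEight}) and the description of $R_n(124/3)$ (\cref{thm:EmSix}). Any $w\in R_n(1/24/3,124/3)$ lies in both classes, so I would begin from the three forms of an $R_n(1/24/3)$ word --- $m\le 2$; a weakly increasing word with a block of $1$s inserted between two distinct terms; and a prefix of $1$s and $2$s followed by a weakly increasing suffix starting at the first $3$ --- and in each case superimpose the requirement of \cref{thm:EmSix} that every letter of $w$ be a block letter or a singleton-block letter. The reverse inclusion is then a direct check that every word of the two claimed forms avoids both patterns, arguing straight from the block / singleton-block definitions.

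For part (i) the case analysis proceeds as follows. A weakly increasing $w$ trivially meets both conditions and yields form (A). For the remaining non-increasing words, the only way a repeated letter can be simultaneously legal for $124/3$ (block or singleton-block) and be produced by an $R_n(1/24/3)$ word is for a single low letter $z$ to occur once, then be followed later by a block of further $z$s, while all other letters appear as unbroken increasing blocks. I would show that each such word can be written as $w=ab$ with $a$ weakly increasing containing exactly one copy of $z$, $b$ a nonempty block of $z$s, and $z\le m-1$ a singleton-block letter of $w$, giving form (B); the key sub-step is ruling out two repeated letters coexisting and ruling out a repeated letter split into two separated blocks, both of which would create a forbidden $124/3$ occurrence.

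For part (ii) I would count the two forms separately. The weakly increasing words (form (A)) are counted, via the weakly increasing counting lemma (Lemma~2.2), as $\sum_{m=1}^{n}\binom{n-1}{n-m}=2^{\,n-1}$; peeling off the $m=1$ and $m=n$ terms gives the ``$2+\sum_{m=2}^{n-1}\binom{n-1}{n-m}$'' portion of the claimed total. For the form-(B) words with exactly $m$ distinct letters I would choose the singleton-block letter $z\in\{1,\dots,m-1\}$ (contributing the factor $m-1$) and then choose the multiplicity vector $(c_1,\dots,c_m)$ subject to $c_i\ge 1$ and $c_z\ge 2$ (so that the block $b$ is nonempty); substituting $c_z\mapsto c_z-1$ turns this into a composition of $n-1$ into $m$ positive parts, giving $\binom{n-2}{n-m-1}$ and hence the summand $\binom{n-2}{n-m-1}(m-1)$. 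Summing over $m$ yields the stated cardinality. As an independent check I would use the complement bijection $\sigma\mapsto\sigma^c$ underlying \cref{thm:comp}, which applies verbatim to pairs: since $(1/24/3)^c=13/2/4$ and $(124/3)^c=134/2$, it gives $\#\Pi_n(1/24/3,124/3)=\#\Pi_n(13/2/4,134/2)$, so the answer must coincide with the count in \cref{thm:SevenFive}.

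The main obstacle is the forward direction of (i): pinning down the forms \emph{exactly}, with no word over- or under-counted. Two points demand care. First, a singleton-block letter's block of repeated copies need not terminate the whole word --- larger letters may legally follow it, as in a word whose low-letter block sits in the interior --- so writing $b$ as a strict suffix risks dropping legitimate members of the class, and one must verify whether such interior-block words are genuinely excluded or must be folded into form (B). Second, one must be certain to exclude the single word that is the RGF of the pattern $1/24/3$ itself, which satisfies the singleton-block bookkeeping yet manifestly contains $1/24/3$. Reconciling these two boundary effects is exactly what determines whether the final count matches \cref{thm:SevenFive}, and it is here that the complement identity is most valuable as a consistency check.
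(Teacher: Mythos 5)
Your skeleton matches the paper's: part (i) by intersecting the single-pattern characterizations (Theorem~\ref{thm:EmEight} for $1/24/3$ and Theorem~\ref{thm:EmSix} for $124/3$), and part (ii) by counting weakly increasing words via Lemma~2.2 and the remaining words by choosing $z$ (factor $m-1$) times a multiplicity vector (factor $\binom{n-2}{n-m-1}$); your composition count is the paper's multiset count in different clothing. (The paper's appendix ``proof'' of the characterization is actually a misplaced discussion of the pair $(1/24/3,1/23/4)$, so your intersection argument is the right replacement for it.) The problem is that the two ``boundary effects'' you defer to the end are not boundary effects to be reconciled afterward: they are exactly where the argument, and the stated formula, fail, and the proposal does not resolve them.

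Concretely: (1) if $m\geq 3$ and $z$ is a singleton-block letter with $2\leq z\leq m-1$, the word \emph{always} contains $1/24/3$ --- take the element of the first $1$, the elements of the first $z$ and of a later $z$ from its block (these two lie in block $z\neq 1$), and the element of the first $z+1$, which by the RGF condition sits between them in position and in value; this subpartition standardizes to $1/24/3$. So only $z=1$ is admissible and the factor $(m-1)$ overcounts. (2) The block of repeated $1$s need not be a suffix, and distinct insertion positions yield distinct words with the same multiplicity vector, so $\binom{n-2}{n-m-1}$ undercounts; moreover words with two singleton-block letters, such as $1212$, lie in the class and fit neither (A) nor (B). A direct check at $n=4$ settles it: since both patterns have size $4$, a partition of $[4]$ contains one iff it equals it, so the class is all $15$ RGFs except $1121$ and $1232$, i.e.\ $13$ words, whereas the formula gives $2+5+5=12$ (for $m=2$ it counts $2$ words but $1211,1221,1212$ all belong; for $m=3$ it counts $1231$ and the excluded $1232$ while missing $1213$). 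Your complement consistency check against Theorem~\ref{thm:SevenFive} would not detect this, because that theorem asserts the identical formula and its count has the mirror-image defect; a small-$n$ brute-force comparison is the check that actually bites here.
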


\begin{restatable}{thm}{EightSag}
\label{thm:EightSag}
The word characterization and cardinality of the avoidance class based on the patterns $1/24/3$ and $1/23/4$ are given by:
\begin{itemize}
\item[(i)]$R_n(1/24/3,1/23/4)=\{w\in R_n:$
\begin{enumerate}[label=(\Alph*)]
	\item $m\leq 2$
    \item $w=abc$ where $a$ is a prefix of $1$s, $b$ is strictly increasing beginning with $2$ and ending with $(m-1)$ and $c$ is a suffix of $m$s or $1$s
    \item $w$ is a strictly increasing word with a prefix, $a$, of $1$s where $|a|\geq 1$ and a $1$ inserted between two terms$\}$
    \end{enumerate}
\item[(ii)]$\#\Pi_n (1/24/3,1/23/4)=1+2^{n-1}+\sum\limits_{m=3}^{n-1}(nm-m^2)$
\end{itemize}
\end{restatable}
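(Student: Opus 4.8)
The plan is to realize $R_n(1/24/3,1/23/4)$ as the intersection $R_n(1/24/3)\cap\{w:w\text{ avoids }1/23/4\}$ and to start from the description of $R_n(1/24/3)$ already established in Theorem~\ref{thm:EmEight}. Because the paper develops no stand-alone characterization of $R_n(1/23/4)$, I would impose avoidance of $1/23/4$ directly, after first translating it into a statement about the word. Writing out the RGF of the pattern gives $w(1/23/4)=1223$, so a partition contains $1/23/4$ precisely when its word has positions $p_1<p_2<p_3<p_4$ with $w_{p_2}=w_{p_3}$ and $w_{p_1},w_{p_2},w_{p_4}$ pairwise distinct; equivalently, some letter $c$ occurs at least twice while a letter different from $c$ lies somewhere to its left and a second, still different, letter lies to the right of a later copy of $c$. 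This single reformulation is what I would carry through the whole argument.

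With this in hand I would proceed case by case through the three forms of $R_n(1/24/3)$ in Theorem~\ref{thm:EmEight}. If $m\le 2$ there are only two blocks, so three pairwise-distinct letters cannot occur and every such word automatically avoids $1/23/4$; this reproduces form (A) and contributes the $S(n,1)+S(n,2)=2^{n-1}$ term. For the two remaining forms (weakly increasing words with an inserted block of $1$s, and words that open with $1$s and $2$s and then increase weakly from the first $3$), the governing observation is that a repeated \emph{interior} letter is fatal: if some $c$ with $2\le c\le m-1$ repeats, then the smaller letter forced before it and a larger letter forced after it instantly create a $1223$ (and in the prefix form a repeated $2$ behaves the same way). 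Hence all repetition must be confined to the letter $1$ and to the top letter $m$, and only in restricted positions, and tracking exactly which placements survive is what yields forms (B) and (C).

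Once the surviving words are pinned down, I would enumerate them by grouping on the number of distinct letters $m$. The weakly increasing skeletons are counted by Lemma~2.2, the choice of where a single $1$ is inserted, or how long a trailing block of $1$s or a block of $m$s runs, supplies the remaining factors, and summing over $3\le m\le n-1$ together with the lone strictly increasing word $12\cdots n$ (the $m=n$ extreme, giving the $+1$) is intended to produce the closed form in part (ii).

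The main obstacle is the ``only if'' direction of the characterization together with the bookkeeping in the count. Verifying that every word \emph{not} of forms (A)--(C) contains $1/24/3$ or $1/23/4$ requires a fully exhaustive analysis of how repeated copies of $1$ and of the largest letter interleave with the increasing skeleton, and the boundary configurations are exactly the delicate ones: a single inserted $1$ as against a block of two or more, a block of $1$s appended at the very end rather than in the interior, and a repeated top letter sitting behind an inserted $1$. Each of these must be tested against \emph{both} patterns, and reconciling the resulting case list with the stated closed form — making sure no surviving word is silently dropped at a boundary — is the step I expect to demand the most care.
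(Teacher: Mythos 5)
Your plan is essentially the paper's own proof: the paper also obtains part (i) by running through the three forms of $R_n(1/24/3)$ from Theorem \ref{thm:EmEight} and imposing $1/23/4$-avoidance on each, using exactly your observation that a repeated letter $c$ with $2\le c\le m-1$ (together with the forced smaller letter before it and the forced larger letter after it) produces an occurrence of $w(1/23/4)=1223$, so that repetition must be confined to $1$ and to $m$; the enumeration by grouping on $m$ likewise matches the paper's. One warning about the step you single out as delicate: a correct count of the surviving words contributes $(n-m+1)+(n-m)(m-1)=nm-m^2+1$ for each $3\le m\le n-1$, whereas the displayed closed form has $nm-m^2$ (the $+1$ is dropped in the paper's simplification); at $n=4$ the class consists of all $13$ partitions other than $1/24/3$ and $1/23/4$ themselves while the printed formula yields $12$, so you should not force your bookkeeping to reconcile with part (ii) exactly as stated.
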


\begin{restatable}{thm}{SevenSag}
\label{thm:EightSag}
The word characterization and cardinality of the avoidance class based on the patterns $13/2/4$ and $1/23/4$ are given by:
\begin{itemize}
\item[(i)]$R_n(13/2/4,1/23/4)=\{w\in R_n:$
\begin{enumerate}[label=(\Alph*)]
	\item $m\leq 2$
    \item $w=abc$ where $a$ is a prefix of $1$s, $b$ is strictly increasing beginning with $2$ and ending with $(m-1)$ and $c$ is a block of $z\leq m\}$
    \end{enumerate}
\item[(ii)]$\#\Pi_n (13/2/3,1/23/4)=1+2^{n-1}+\sum\limits_{m=3}^{n-1}(nm-m^2)$
\end{itemize}
\end{restatable}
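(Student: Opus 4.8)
The plan is to use $R_n(13/2/4,1/23/4)=R_n(13/2/4)\cap R_n(1/23/4)$ and to intersect the known description of $R_n(13/2/4)$ from Theorem~\ref{thm:EmSeven} with the condition of avoiding $1/23/4$. First I would record the containment test at the level of RGFs. Since the RGF of $1/23/4$ is $1223$, translating its standardization shows that a word $w$ contains $1/23/4$ precisely when there are positions $p_1<p_2<p_3<p_4$ with $w_{p_2}=w_{p_3}$ and $w_{p_1},w_{p_2},w_{p_4}$ pairwise distinct; that is, a repeated letter flanked on the left by a different letter and on the right by a letter different from both. Dually, the RGF of $13/2/4$ is $1213$, giving the analogous test with $w_{p_1}=w_{p_3}$ that already underlies Theorem~\ref{thm:EmSeven}.

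For part (i) I would run through the three nontrivial shapes allowed by Theorem~\ref{thm:EmSeven} for $m\geq 3$ (the weakly increasing words, and the two shapes $w=uv$ in which $u$ is weakly increasing and $v$ is a block of $m$s followed by a block of a smaller letter) and impose the extra $1/23/4$ test. The key observation is that if some interior letter $k$ with $2\leq k\leq m-1$ is repeated inside the weakly increasing part, then choosing a $1$ to its left and the top letter $m$ to its right produces a forbidden $1/23/4$; hence each of $2,\dots,m-1$ must occur exactly once, and the only letters allowed to repeat are the leading $1$s and the single letter forming the terminal block. This forces $w=abc$ with $a$ a block of $1$s, $b$ the strictly increasing run $2\,3\cdots(m-1)$, and $c$ a terminal block, matching the stated form. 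I would then check that the two non-weakly-increasing shapes of Theorem~\ref{thm:EmSeven} survive the $1/23/4$ test only when the block of $m$s has length one, at which point they collapse into the same $abc$ description, with the terminal block built from a letter $z\leq m$.

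For part (ii) the quickest route is a complement argument: $1/23/4$ is self-complementary and $(13/2/4)^c=1/24/3$, so the complementation bijection of Theorem~\ref{thm:comp}, applied to pairs of patterns, gives $\#\Pi_n(13/2/4,1/23/4)=\#\Pi_n(1/24/3,1/23/4)$, and the count then follows from the preceding theorem. Alternatively one counts directly from the $abc$ description: the words with $m\leq 2$ contribute $S(n,1)+S(n,2)=2^{n-1}$, the single strictly increasing word $12\cdots n$ contributes the isolated $+1$, and for each intermediate $m$ one chooses the length of the leading block of $1$s, the repeated letter $z$, and the length of the terminal block, summing to $\sum_{m=3}^{n-1}(nm-m^2)$.

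The main obstacle is precisely this bookkeeping. Several parameter choices describe the same word (a terminal block of the current top letter merely lengthens the weakly increasing run, and a terminal block of an already-used letter can leave the word with fewer distinct letters than the nominal parameter $m$ suggests), so the enumeration must be organized so that each avoiding word is counted exactly once and the weakly-increasing family, the terminal-block family, and the $m\leq 2$ family do not overlap. Pinning down these edge cases is the delicate step, and it is where the isolated $+1$ term and the exact summation limits must be justified; a careless treatment here is exactly what would introduce an off-by-one error in the final count.
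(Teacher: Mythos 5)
Your argument for part (i) is essentially the paper's: both proofs intersect the characterization of $R_n(13/2/4)$ from Theorem~\ref{thm:EmSeven} with an explicit $1/23/4$-containment test, and your key observation (a repeated interior letter $k$ with $2\leq k\leq m-1$, a $1$ to its left, and the $m$ to its right produce $1/23/4$) is exactly the engine of the paper's case analysis. If anything you are more careful: the paper dismisses the ``$v$ contains $m$ and $(m-1)$ in any order'' case wholesale by exhibiting two $(m-1)$s followed by a later $m$, which overlooks the surviving words $1^a23\cdots(m-1)m(m-1)^k$; you correctly isolate these as the subcase where the block of $m$s has length one and fold them into the terminal block $z\leq m$. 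For part (ii) your primary route genuinely differs: the paper counts directly ($z=m$ contributes $n-m+1$ words for each $m$, and $z<m$ contributes $(n-m)(m-1)$), whereas you transfer the count by complementation, using that $1/23/4$ is self-complementary and $(13/2/4)^c=1/24/3$. That shortcut is sound --- complementation is an involution on $\Pi_n$ carrying occurrences of $\pi$ to occurrences of $\pi^c$, hence preserving pair-avoidance --- and it buys a one-line enumeration at the cost of inheriting the preceding theorem's formula rather than verifying it. Your closing worry about bookkeeping is well founded: in the paper's own simplification $(n-m+1)+(n-m)(m-1)=nm-m^2+1$, not $nm-m^2$, so the displayed formula undercounts by $n-3$; at $n=4$ it returns $12$, while $\Pi_4$ with the two patterns themselves removed has $13$ elements. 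Carrying out your direct count to completion (which you stop short of doing) would surface and correct exactly this discrepancy.
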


After this study of $(3,4)$-pair avoidance classes and $(4,4)$-pair avoidance classes one finding supported by the cardinalities of these avoidance classes is the following conjecture:
\begin{con} Let $\pi_1\in\Pi_k$ and $\pi_2\in\Pi_l$ such that $k\leq l$.
	\begin{itemize}
    	\item[(i)] If $\pi_2,\pi_2^c\in\Pi_l(\pi_1)$, then $(\pi_1,\pi_2)$ is Wilf equivalent to $(\pi_1,\pi_2^c)$.
        \item[(ii)] If $\pi_2\in\Pi_l(\pi_1)$ and $\pi_2^c\not\in\Pi_l(\pi_1)$, then $(\pi_1,\pi_2)$ is Wilf equivalent to $(\pi_1^c,\pi_2^c)$.
    \end{itemize}
\end{con}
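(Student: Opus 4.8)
The plan is to prove everything from a single strengthening of \Cref{thm:comp} to multiple patterns, so the first thing I would establish is a \emph{complement correspondence}: for every $\sigma\in\Pi_n$ and every pattern $\pi$, $\sigma$ contains $\pi$ if and only if $\sigma^c$ contains $\pi^c$. This is the natural extension of the argument behind \Cref{thm:comp}. If $A\subseteq[n]$ witnesses containment, i.e. $A\cap\sigma$ standardizes to $\pi$, then setting $A^c=\{n+1-a:a\in A\}$ one has $A^c\cap\sigma^c=(A\cap\sigma)^c$, because complementing the elements commutes with restricting to a subset. Since complementation reverses the relative order of values, the standardization of a complemented subpartition is the complement of its standardization; hence $A^c$ witnesses that $\sigma^c$ contains $\pi^c$, and the converse follows because complementation is an involution. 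Intersecting over the patterns in a pair immediately gives that $\sigma\mapsto\sigma^c$ is a bijection $\Pi_n(\pi_1,\pi_2)\to\Pi_n(\pi_1^c,\pi_2^c)$ for every $n$.

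Part (ii) then falls out directly: the complement correspondence yields $\#\Pi_n(\pi_1,\pi_2)=\#\Pi_n(\pi_1^c,\pi_2^c)$ for all $n$, so $(\pi_1,\pi_2)\equiv(\pi_1^c,\pi_2^c)$ with no further work. The hypotheses of (ii) are not needed for this equality; their role is only to explain why $(\pi_1^c,\pi_2^c)$, rather than $(\pi_1,\pi_2^c)$, is the correct partner. Indeed, since $\pi_2^c$ contains $\pi_1$ in case (ii), the reduction lemma preceding \Cref{thm:ThreeEight} collapses $R_n(\pi_1,\pi_2^c)$ to the single-pattern class $R_n(\pi_1)$, so complementing $\pi_1$ as well is exactly what keeps both avoidance constraints genuinely active.

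For part (i) I would first dispatch the self-complementary case $\pi_1=\pi_1^c$ (covering, for instance, $\pi_1=13/2$ as in Theorems~\ref{thm:FourFour} and \ref{thm:FourThree}). There the same map $\sigma\mapsto\sigma^c$ restricts to a bijection $\Pi_n(\pi_1,\pi_2)\to\Pi_n(\pi_1^c,\pi_2^c)=\Pi_n(\pi_1,\pi_2^c)$, and since both $\pi_2$ and $\pi_2^c$ avoid $\pi_1=\pi_1^c$, both classes are nondegenerate. The genuinely harder case is $\pi_1\neq\pi_1^c$ — for example $\pi_1=13/2/4$ with $\{\pi_2,\pi_2^c\}=\{124/3,134/2\}$ (Theorems~\ref{thm:SevenSix} and \ref{thm:SevenFive}). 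Here the plan is \emph{not} to use complementation, which would move $\pi_1$ off itself, but to build a bijection $\psi\colon\Pi_n(\pi_1)\to\Pi_n(\pi_1)$ exchanging $\pi_2$-avoiders with $\pi_2^c$-avoiders. The candidate is the explicit single-pattern equivalence realizing $\pi_2\equiv\pi_2^c$: for the $124/3$ versus $134/2$ instance this is precisely the block-singleton/singleton-block flip $\phi$ of \Cref{thm:eqTwo}, already known to be a bijection $R_n(134/2)\to R_n(124/3)$. I would then verify that $\phi$ additionally preserves avoidance of $\pi_1=13/2/4$, so that it restricts to a bijection $\Pi_n(\pi_1,\pi_2^c)\to\Pi_n(\pi_1,\pi_2)$.

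The hard part will be making this last step uniform. Complementation toggles \emph{both} patterns simultaneously, so it can never by itself toggle $\pi_2\leftrightarrow\pi_2^c$ while fixing $\pi_1$; some auxiliary self-map of $\Pi_n(\pi_1)$ is unavoidable, and a priori the natural choice depends on the shapes of $\pi_1$ and $\pi_2$ rather than on one formula. Concretely, I would try to show that \emph{some} bijection realizing $\pi_2\equiv\pi_2^c$ can always be taken to preserve $\pi_1$-avoidance, perhaps by chasing the square of four classes $(\pi_1^{\epsilon_1},\pi_2^{\epsilon_2})$ — whose two diagonals are already identified by the complement correspondence — and proving a single horizontal edge from the RGF word characterizations of the two classes involved. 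Until such a uniform construction (or a counterexample) is produced, the non-self-complementary case of (i) is exactly the obstruction that keeps the statement a conjecture rather than a theorem.
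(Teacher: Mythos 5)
The statement you were asked to prove is stated in the paper as a \emph{conjecture}: the paper offers no proof of it and explicitly lists proving it as future work in the open problems section. So there is no ``paper's proof'' to compare against; your proposal can only be judged on its own merits.

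On those merits, what you do prove is correct and already goes beyond the paper. Your complement correspondence ($\sigma$ contains $\pi$ iff $\sigma^c$ contains $\pi^c$, via $A\mapsto\{n+1-a:a\in A\}$ and the fact that standardization commutes with complementation up to complementing the pattern) is sound, and it does yield $\#\Pi_n(\pi_1,\pi_2)=\#\Pi_n(\pi_1^c,\pi_2^c)$ unconditionally, which disposes of part (ii). Your observation that the hypotheses of (ii) only serve to identify which partner pair is nondegenerate is also accurate. Likewise, part (i) in the self-complementary case $\pi_1=\pi_1^c$ follows immediately from the same bijection, and this does cover the paper's $(13/2,123/4)\equiv(13/2,1/234)$ instance.

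The genuine gap is the one you name yourself: part (i) when $\pi_1\neq\pi_1^c$. Complementation toggles both coordinates, so it identifies $\Pi_n(\pi_1,\pi_2)$ with $\Pi_n(\pi_1^c,\pi_2^c)$, never with $\Pi_n(\pi_1,\pi_2^c)$; you need a separate bijection fixing $\pi_1$-avoidance while exchanging $\pi_2$- and $\pi_2^c$-avoidance, and your only candidate (the block-singleton/singleton-block flip, verified to preserve avoidance of $13/2/4$ in one instance) is pattern-specific. You have not shown that such a map exists in general, and you say so. The upshot is that your submission is a correct partial result --- (ii) in full, (i) for self-complementary $\pi_1$ --- rather than a proof of the conjecture; the non-self-complementary case of (i) remains exactly as open as the paper leaves it, and the evidence for it in the paper (e.g.\ the matching cardinality formulas for $R_n(13/2/4,124/3)$ and $R_n(13/2/4,134/2)$) is obtained there by separate direct enumeration, not by any uniform bijection.
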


\section{Statistics across Multiple Pattern Avoidance Classes}

The following theorems give formulas for statistics of individual words of some of the avoidance classes based on multiple patterns. 

\begin{restatable}{thm}{multi} The Patterns 12/3 and 1/24/3

        	 $$ls(w)=\sum\limits_{i=1}^m a_i-m$$
            $$rb(w)=\sum\limits^{m-1}_{i=1}(m-i)a_i$$
             \[
 lb=\begin{cases}
 			\hfill 0\hfill & \text{if $w=ab$ where $b$ is a block of $m$'s}\\
			\hfill (m-1)|b|\hfill & \text{if $w=ab$ where $b$ is a block of $1$'s}\\            
         \end{cases}
\]
\[
 rs=\begin{cases}
 			\hfill 0\hfill & \text{if $w=ab$ where $b$ is a block of $m$'s}\\
			\hfill |a|-1 \hfill & \text{if $w=ab$ where $b$ is a block of $1$'s}\\            
         \end{cases}
\]   
\end{restatable}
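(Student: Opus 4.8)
The plan is to work directly from the word characterization of Theorem~\ref{thm:ThreeEight}, which says that every $w\in R_n(12/3,1/24/3)$ factors as $w=ab$ with $a=12\cdots m$ a strictly increasing prefix and $b$ either a (possibly empty) block of $m$'s, in which case $w$ is weakly increasing, or a nonempty block of $1$'s, in which case $w$ fails to be weakly increasing. Since this trailing block is the sole source of repeated letters, I would run the whole argument off of that dichotomy, computing each of the four totals by summing the per-position contributions $ls_i$, $rb_i$, $lb_i$, $rs_i$ from Section~2.

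For the two statistics stated without a case split, no new work beyond the general lemmas is needed. The left-smaller total is obtained by substituting the letter-count vector of $w$ -- namely $(1,\dots,1,|b|+1)$ when $b$ is a block of $m$'s and $(|b|+1,1,\dots,1)$ when $b$ is a block of $1$'s -- into the formula of Lemma~2.3, which holds for every RGF regardless of monotonicity. For the right-bigger total, the point to isolate is that the trailing block contributes nothing: a trailing $m$ has no larger letter anywhere in $w$, and a terminal $1$ has no larger letter to its right. Thus $rb(w)$ is carried entirely by the prefix $12\cdots m$, in which the letter $i$ is followed by the $m-i$ distinct larger letters $i+1,\dots,m$, giving $\sum_{i=1}^{m-1}(m-i)$; in the weakly increasing case this is exactly Lemma~2.4 applied to $a$.

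The genuine case analysis is confined to $lb$ and $rs$. When $b$ is a block of $m$'s, $w$ is weakly increasing, so directly from the definitions no position is preceded by a larger letter and none is followed by a smaller one; hence $lb=rs=0$, matching the weakly increasing entries recorded (for $m\le 2$) in Theorem~\ref{thm:wwREU}. When $b=1^{|b|}$, the prefix is still weakly increasing and so contributes $0$ to $lb$, while each of the $|b|$ terminal $1$'s is preceded by the $m-1$ distinct larger letters $2,\dots,m$; summing yields $lb=(m-1)|b|$. For $rs$ the terminal $1$'s contribute $0$, since nothing smaller lies to their right, whereas each prefix letter $i$ with $i\ge 2$ sees the one smaller value $1$ appearing later in the block and so contributes a single unit; summing over $i=2,\dots,m$ gives $rs=m-1=|a|-1$.

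The main obstacle is the distinct-value bookkeeping in the case $b=1^{|b|}$, and it is worth stating carefully because it is precisely what makes the clean closed forms appear. Since $rs_i$ and $lb_i$ count distinct letter values rather than occurrences, the entire trailing block $1^{|b|}$ registers only as the single smaller value $1$ to each eligible prefix letter, and the prefix letter $1$ itself must be excluded; this is what forces the right-smaller answer to be $|a|-1$, independent of $|b|$, rather than an expression that grows with the block length. The same distinct-versus-multiplicity phenomenon is what collapses the right-bigger total onto the prefix alone, so once this counting principle is applied consistently across the two shapes of $b$, the four formulas follow by routine summation.
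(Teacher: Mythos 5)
Your proposal is correct and follows essentially the same route as the paper: cite Lemma 2.3 for $ls$, reduce $rb$ to the strictly increasing prefix, and split $lb$ and $rs$ into the weakly increasing case ($b$ a block of $m$'s, giving $0$) and the trailing-$1$'s case (giving $(m-1)|b|$ and $|a|-1$ respectively). Your treatment of $rb$ is in fact slightly more careful than the paper's bare appeal to Lemma 2.4, since that lemma is stated only for weakly increasing words and your observation that the terminal $1$'s contribute nothing is the step that justifies its use in the non-monotone case.
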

\begin{proof} The equation for $ls(w)$ is given by Lemma 2.3. The equation for $rb(w)$ is given by Lemma 2.4. 
\par Now consider when $w(\sigma)\in R_n(12/3, 1/24/3)$. Then there are two possibilities for the form of $w(\sigma)$ . The first possibility is when $w(\sigma)=ab$ where $a$ is strictly increasing and $b$ is a block of $m$'s of any length by Theorem 6.2. Then $lb(w)=0$ and $rs(w)=0$ because $w$ is weakly increasing. 
\par Now consider the other form that $w$ could take, which is $w=ab$ where $a$ is strictly increasing and $b$ is a block of ones of any size. Then $a$ will have no contribution to the left and bigger statistic and every one in $b$ will have $(m-1)$ elements to the left and bigger and there are a total of $b$ ones. Therefore $lb(w)=|b|(m-1)$. Now let $x\in a$ where $x>0$ then $x$ will be larger than the ones in $b$. So every element in $a$ other than the first element, the only $1$ in $a$, will have a contribution of one to $rs(w)$. Therefore $rs(w)=|a|-1$.
\end{proof}

\begin{restatable}{thm}{multistat} The patterns 1/23,13/2/4
            	 $$ls(w)=\sum\limits_{i=1}^m a_i-m$$
                $$rb(w)=\sum\limits^{m-1}_{i=1}(m-i)a_i$$
                \[
 lb=\begin{cases}
 			\hfill 0\hfill & \text{if the word does not end with a one}\\
			\hfill m-1\hfill & \text{if the last letter is a one}\\            
         \end{cases}
\] 
                \[
 rs=\begin{cases}
 			\hfill 0\hfill & \text{if the word does not end with a one}\\
			\hfill \sum\limits_{i=2}^m a_i\hfill & \text{if the last letter is a one}\\            
         \end{cases}
\]   
\end{restatable}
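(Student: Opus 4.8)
The plan is to prove the four formulas by leaning on the word characterization in Theorem~\ref{thm:TwoSeven}, which tells us that every $w\in R_n(1/23,13/2/4)$ has one of two shapes: either (A) $w$ is the weakly increasing word $1^{k}23\cdots m$ (with the increasing block possibly empty, so $w=1^{k}$), or (B) $w$ is that same word with a single extra $1$ appended, $w=1^{k}23\cdots m\,1$. The key structural observation is that both shapes share the weakly increasing \emph{core} $1^{k}23\cdots m$, and Form~(B) differs from its core only by one trailing $1$. I would therefore compute each statistic on the core using the general lemmas and then add the contribution of the trailing $1$; the case split ``the last letter is a one'' versus ``the word does not end with a one'' records exactly whether that trailing $1$ is present. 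Throughout, $a_i$ denotes the multiplicity of the letter $i$ in the core, so that $a_1=k$ and $a_2=\cdots=a_m=1$.

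For $ls$ and $rb$ no case split is needed. The $ls$ value is immediate from Lemma~2.3: in any RGF one has $ls_i(w)=w_i-1$, since the restricted growth condition forces each of $1,\dots,w_i-1$ to appear before position $i$; summing over all positions gives the Lemma~2.3 expression, and since a letter $1$ always contributes $0$ to $ls$, the trailing $1$ of Form~(B) leaves the value unchanged, so one formula serves both shapes. For $rb$ I would apply Lemma~2.4 to the weakly increasing core; in Form~(B) the appended $1$ sits at the very end, so there is no larger letter to its right and it contributes $0$ to $rb$. Hence $rb(w)=\sum_{i=1}^{m-1}(m-i)a_i$ holds in both cases.

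The two remaining statistics are where the trailing $1$ actually matters. When $w$ is weakly increasing (Form~(A), which does not end in $1$ once $m\ge 2$), every letter has no larger letter to its left and no smaller letter to its right, so $lb(w)=rs(w)=0$. When the last letter is a $1$ (Form~(B)), the core still contributes nothing to $lb$ or $rs$ because it is weakly increasing, so only the trailing $1$ and its interaction with the core remain. For $lb$, the trailing $1$ has all of $2,\dots,m$ occurring to its left and larger, giving the single contribution $m-1$. For $rs$, I would argue from the other side: each of the $m-1$ distinct letters $2,\dots,m$ in the core has the trailing $1$ as a strictly smaller letter to its right, and because the core is strictly increasing after its initial run of $1$s, no such letter has any \emph{other} smaller letter after it, so each contributes exactly $1$, yielding $rs(w)=\sum_{i=2}^{m}a_i=m-1$.

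The main thing to get right is the bookkeeping around the single trailing $1$: one must verify that it is the \emph{only} source of $lb$ and $rs$ (which follows from the core being weakly increasing and from the trailing $1$ being simultaneously smallest and last) while contributing $0$ to $rb$ and $ls$. A secondary point is to check the degenerate cases --- the empty increasing block $w=1^{k}$ (so $m=1$), where both $lb=m-1$ and $rs=\sum_{i=2}^{m}a_i$ reduce to the empty sum $0$ and agree with the direct count --- and to confirm that reading $a_i$ as the core multiplicities is what makes the single $rb$ formula valid for Form~(B). Once these contributions are tallied, the four formulas follow term by term, mirroring the proof of the preceding theorem for the pair $12/3,\,1/24/3$.
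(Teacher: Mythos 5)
Your proposal is correct and follows essentially the same route as the paper's proof: invoke the word characterization of Theorem 6.3, read $ls$ and $rb$ off the general lemmas for weakly increasing words, and observe that only the single trailing $1$ can contribute to $lb$ and $rs$, giving $m-1$ and $\sum_{i=2}^{m}a_i$ respectively. If anything you are slightly more careful than the paper, which cites the weakly-increasing $rb$ lemma without remarking that the appended $1$ contributes nothing to $rb$ and that the $a_i$ must be read as the multiplicities in the increasing core.
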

\section{Multiple Pattern Equidistribution of Statistics}
Like the single pattern avoidance classes, the purpose of characterizing and enumerating avoidance classes based on pairs of patterns has been to consider the distribution of Wachs and White statistics across these avoidance classes. Because of the large number of Wilf-equivalent pairs of patterns considered we found a wealth of equidistribution results for avoidance classes based on $(3,4)$-pairs as well as $(4,4)$-pairs.
\par We have already seen that $(12/3,1/24/3)\equiv(1/23,13/2/4)$. Therefore we investigated the distribution of the Wachs and White statistics on these two avoidance classes.
\begin{restatable}{thm}{eqTwoThree}
\label{thm:eqTwoThree} We have the following equidistribution results for the avoidance classes $R_n(12/3,1/24/3)$ and $R_n(1/23,13/2/4)$:
$$RB(12/3,1/24/3)\sim LS(1/23,13/2/4)$$
$$RS(12/3,1/24/3)\sim LB(1/23,13/2/4)$$
\end{restatable}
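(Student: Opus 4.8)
The plan is to prove both equidistributions at once by producing a single bijection $\Phi\colon R_n(12/3,1/24/3)\to R_n(1/23,13/2/4)$ for which $rb(w)=ls(\Phi(w))$ and $rs(w)=lb(\Phi(w))$ hold simultaneously for every word $w$; summing each identity over the class then yields $RB(12/3,1/24/3)\sim LS(1/23,13/2/4)$ and $RS(12/3,1/24/3)\sim LB(1/23,13/2/4)$ together. What makes this approach clean is that, by \Cref{thm:ThreeEight} and \Cref{thm:TwoSeven}, each class is a disjoint union of two families indexed by the single parameter $m$. In $R_n(12/3,1/24/3)$ these are the weakly increasing words $1\,2\cdots(m-1)\,m^{\,n-m+1}$ for $1\le m\le n$ and the trailing-block words $1\,2\cdots m\,1^{\,n-m}$ for $2\le m\le n-1$; in $R_n(1/23,13/2/4)$ they are the weakly increasing words $1^{\,n-m+1}2\,3\cdots m$ and the words $1^{\,n-m}2\,3\cdots m\,1$ carrying a single appended $1$, over the same ranges of $m$.

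I would define $\Phi$ to preserve both the family and the index: the weakly increasing word of index $m$ in the first class is sent to the weakly increasing word of index $m$ in the second, and the trailing-block word of index $m$ is sent to the appended-$1$ word of index $m$. Because each family on each side is parameterized bijectively by $m$ over identical ranges, $\Phi$ is immediately a bijection, consistent with both classes having cardinality $2(n-1)$.

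It then remains to verify the two statistic identities family by family, which I would do with the per-word formulas already recorded for these classes (the statistic theorems of Section~7), or equivalently re-derive from Lemma 2.3, giving $ls(w)=\sum_i i\,a_i-n$, and Lemma 2.4, giving $rb(w)=\sum_i(m-i)a_i$ for weakly increasing $w$. On the weakly increasing families both formulas evaluate to $\binom{m}{2}$ at index $m$, while $rs$ and $lb$ vanish. For the two non-increasing families I would argue directly: in $1\,2\cdots m\,1^{\,n-m}$ the increasing prefix still contributes $\sum_{i=1}^m(m-i)=\binom{m}{2}$ to $rb$, the trailing $1$s contribute nothing to $rb$, and each of $2,\dots,m$ sees exactly one smaller value to its right, so $rs=m-1$; dually, in $1^{\,n-m}2\,3\cdots m\,1$ the multiset of letters is unchanged from its weakly increasing partner, so $ls=\binom{m}{2}$ by Lemma 2.3, and the appended $1$ sees the $m-1$ larger letters to its left, so $lb=m-1$. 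Matching families then gives $rb(w)=ls(\Phi(w))$ and $rs(w)=lb(\Phi(w))$ in every case.

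The one point requiring genuine care is the statistics of the two non-weakly-increasing families, where the plain weakly increasing formulas no longer apply verbatim: I must confirm that relocating the repeated letters to the end (a block of $1$s in the first class, a single $1$ in the second) keeps the $rb$, respectively $ls$, contribution equal to $\binom{m}{2}$ rather than the larger value a naive application of the increasing formula would suggest, and that the displaced letters contribute exactly $m-1$ to $rs$, respectively $lb$. Once these short counts are in hand the identities hold termwise under $\Phi$, and both equidistribution statements follow.
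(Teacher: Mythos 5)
Your proposal is correct and takes essentially the same route as the paper: your family-and-index-preserving map $\Phi$ is exactly the paper's bijection $\phi$ (sending $12\cdots m\,m^{k}\mapsto 1^{k}23\cdots m$ and $12\cdots m\,1^{k}\mapsto 1^{k}23\cdots m\,1$), and your termwise check that $rb(w)=ls(\Phi(w))={m\choose 2}$ and $rs(w)=lb(\Phi(w))=m-1$ on each family is the same verification the paper performs using its per-word statistic formulas (Theorems 7.1 and 7.2). If anything, your explicit evaluation of the non-weakly-increasing cases is cleaner than the paper's chain of equalities.
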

\begin{proof} Let $\phi:R_n(12/3,1/24/3)\rightarrow R_n(1/23,13/2/4)$ such that for every $w\in R_n(12/3,1/24/3)$, where $w=ab$ $\phi(w)=b'a'$. Here $a$ is strictly increasing and $a'$ is formed by adding one to every letter in $a$. Also $b$ (the block of $m$s in $w$), becomes $b'$ by $\phi$ such that $|b'|=|b|$ and $b'$ is a block of $1$s. An example is $\phi(1234555)=1112345$ because $a=1234$ so $a'=2345$ and $b=555$ so $b'=111$. When $w=abc$ where $a$ is the first $1$, $b$ is a strictly increasing subword from $2$ to $m$, and $c$ is a block of $1$s, then $\phi(w)=cba$. Then $\phi(1234111)=1112341$ because $a=1$, $b=234$, and $c=111$.
\par Now using proof by contradiction we will show that $\phi$ is injective. Let $w_1,w_2\in R_n(12/3,1/24/3)$ such that $w_1\neq w_2$. Then if $w_1=a_1b_1$ and $w_2=a_2b_2$ we have $b'_1a'_1=\phi(w_1)=\phi(w_2)=b'_2a'_2$. However $w_1\neq w_2$; because $a$ is strictly increasing, the only way for this to happen is for $b_1\neq b_2$. Then $b'_1\neq b'_2$. 
\par Next consider when $w_1=a_1b_1c_1$ and $w_2=a_2b_2c_2$. Then if $c_1b_1a_1=\phi(w_1)=\phi(w_2)=c_2b_2a_2$, the only way for this to occur is when $c_1=c_2, b_1=b_2$ and $a_1=a_2$. Therefore $w_1=w_2$. Therefore $\phi$ is injective.
\par Next consider the inverse of $\phi$, $\phi^{-1}:R_n(1/23,13/2/4)\rightarrow R_n(12/3,1/24/3)$ such that 
\[
\phi^{-1}(y) =
  \begin{cases} 
      \hfill ab   \hfill & \text{$y=b'a'$} \\
      \hfill abc \hfill & \text{$y=cba$} 
  \end{cases}
\]
\par Then because $\phi$ is injective, $\phi^{-1}$ exists, and $\phi^{-1}$ reverses $\phi$, $\phi$ a bijection. 
\par Now consider $ls(\phi(w))$, when $w=ab$ then because $a'$ is strictly increasing from $2$ to $m$ and $b'$ is a block of $1$s, and because $a$ is strictly increasing from $1$ to $(m-1)$ and $b$ is a block of $m$s, 
$$ls(\phi(w))=ls(a')+ls(b')=\sum\limits_{i=2}^{m}a'_i-m+0=\sum\limits_{i=1}^{m-1}a_i-m=rb(a)=rb(a)+rb(b)=rb(w)$$
When $w=abc$ then $a$ contributes $m$ to $rb(w)$, and when $y=cba$, $a$ contributes $m$ to $ls(w)$, $rb(c)=0=ls(c)$ and $rb(b)=ls(b)$ because $b$ is strictly increasing.
\par Therefore $RB(12/3,1/24/3)\sim LS(1/23,13/2/4)$.
\par Next consider $rs(w)$ when $w=ab$; then by 7.1 and 7.2 we know that $rs(w)=0=lb(\phi(w))$. Also consider when $w=abc$, then $lb(\phi(w))=m-1=|a|+|b|-1=rs(w)$. Therefore $RS(12/3,1/24/3)\sim LB(1/23,13/2/4)$.
\end{proof}
\begin{restatable}{thm}{eqEight}
\label{thm:eqEight}
The equidistribution of statistics on avoidance classes $R_n(1/24/3,134/2)$ and $R_n(1/24/3,124/3)$ are
$$LS(1/24/3,134/2)\sim LS(1/24/3,124/3)$$
$$RS(1/24/3,134/2)\sim RS(1/24/3,124/3)$$
\end{restatable}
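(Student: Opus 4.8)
The plan is to exhibit a single bijection $\phi\colon R_n(1/24/3,134/2)\to R_n(1/24/3,124/3)$ and to check that it carries $ls$ to $ls$ and $rs$ to $rs$; since the two classes are Wilf-equivalent (equal cardinalities in \Cref{thm:EightFive} and \Cref{thm:EightSix}), a statistic-preserving bijection yields both equidistributions at once. Two observations drive the construction. First, both classes are subclasses of $R_n(1/24/3)$, so the statistic formulas of \Cref{thm:ww1/24/3} apply verbatim to their members. Second, by Lemma 2.3 the value $ls(w)=\sum_{i=1}^m i\,a_i-n$ depends only on the type (the multiset of letters) of $w$; hence if $\phi$ is type-preserving, the equidistribution $LS(1/24/3,134/2)\sim LS(1/24/3,124/3)$ is immediate, and only $RS$ requires a genuine position-level argument.

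To build $\phi$ I would split each class, using \Cref{thm:EightFive} and \Cref{thm:EightSix}, into its weakly increasing words and its non-weakly-increasing words. On the weakly increasing words I set $\phi$ to be the identity: such a word lies in both classes, has $rs=0$ by \Cref{thm:ww1/24/3}, and is unchanged in type, so both statistics are trivially preserved. On the remaining words the two classes differ only in the placement of the repeated copies of the letter $1$: a word of $R_n(1/24/3,134/2)$ carries an initial block $1^{a_1}$ together with a single interior $1$ (the block-singleton), whereas a word of $R_n(1/24/3,124/3)$ carries a single initial $1$ together with an interior block of $1$'s (the singleton-block). I would define $\phi$ to interchange these two shapes, contracting the initial block of $1$'s to a single $1$ and expanding the lone interior $1$ into a block of the same length, while fixing every letter $\geq 2$ and the position $x$ of the letter immediately preceding the relocated $1$'s.

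This map is visibly reversible (expand the initial $1$, contract the interior block), so injectivity and surjectivity come from writing down the inverse and checking it lands in the correct class. Because $\phi$ only moves copies of the letter $1$, it preserves the type of each word, which by Lemma 2.3 settles the $LS$ statement. For $RS$ I would invoke \Cref{thm:ww1/24/3}: in either shape the only letters contributing to $rs$ are those exceeding $1$ that occur before the relocated $1$'s, namely the letters of $2^{a_2}\cdots x^{a_x}$, so $rs=\sum_{i=2}^x a_i$ on both sides; since $\phi$ fixes $x$ and the counts $a_2,\dots,a_x$, it preserves $rs$ and yields $RS(1/24/3,134/2)\sim RS(1/24/3,124/3)$.

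The main obstacle I anticipate is not the statistic bookkeeping but making the correspondence between the non-weakly-increasing words exact. I would need to verify that contracting and expanding the run of $1$'s sends every admissible word of one class to an admissible word of the other, handling the boundary cases (insertion after the last letter $m$, small $m$, and a block of length one, where the two shapes nearly coincide), and that the hypotheses of the relevant case of \Cref{thm:ww1/24/3} hold verbatim for both shapes so that the formula $rs=\sum_{i=2}^x a_i$ is legitimate on each side. Once the relocation is shown to respect both characterizations and the statistic formula applies unchanged, the two equidistributions follow.
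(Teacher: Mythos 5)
Your proposal is essentially the paper's own proof: the same bijection (contract the initial block of $1$s to a single $1$ and expand the lone interior $1$ into a block of the same length at the same position), the same appeal to Lemma 2.3 via type-preservation for $LS$, and the same observation that $rs=\sum_{i=2}^{x}a_i$ on both sides for $RS$. The one boundary case you flag but do not resolve is the only place your recipe and the paper's map actually differ: for the $m=2$ words $w=1^l2^k12$, moving only the $1$s produces $12^k1^l2$, in which $2$ is still a block-singleton and which therefore lies outside $R_n(124/3)$; the paper's $f$ handles this by also flipping the $2$s, sending $1^l2^k12$ to $121^l2^k$.
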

\begin{proof} Let $f:R_n(1/24/3,134/2)\rightarrow R_n(1/24/3,124/3)$ such that 
\[
 f(w) =
  \begin{cases} 
      \hfill w   \hfill & \text{$w$ weakly increasing} \\
      \hfill 12^k1^l \hfill & \text{$w=1^l2^k1$} \\
      \hfill 121^l2^k \hfill & \text{$w=1^l2^k12$}\\
      \hfill w' \hfill & \text{$w$ weakly increasing except block-singleton $1$}
  \end{cases}
\]
where $w'$ is formed by switching the block-singleton to a singleton-block.
We know that $f(w_1)=f(w_2)$ because,
\begin{itemize} 
	\item[i)] When $w_1$ is weakly increasing, $w_1=f(w_1)=f(w_2)=w_2$ because we then know that $f(w_2)$ is a weakly increasing word.
    \item[ii)] When $w_1$ contains one or more block-singletons, then $f(w_1)$ is formed by switching the block-singleton to a singleton block without changing anything else in $w_1$, so the weakly increasing base of $w_1$ must stay the same, and the length of the block of the block-singleton letters and position of the singleton will become the first position for the block. Therefore if $f(w_1)=f(w_2)$, $w_1=w_2$.
\end{itemize}
Therefore $f$ is injective.
Next consider $f^{-1}$ where $f^{-1}(y)=y$ when $y$ is strictly increasing or $f^{-1}(y)=y'$ where $y'$ switches every singleton-block to a block-singleton. By the same reasoning as above, $f^{-1}$ is injective. Therefore $f$ is a bijection. 
\par Because $f$ preserves the number of each letter $l$ that occurs in $w$, by Lemma 2.3, $ls(w)=ls(f(w))$. Therefore $LS(1/24/3,134/2)\sim LS(1/24/3,124/3)$. Also by Lemma 2.4, and the fact that $1$ contributes nothing to the $rs$ statistic for $w$, $rs(w)=rs(f(w))$. Therefore $RS(1/24/3,134/2)\sim LS(1/24/3,124/3)$.
\end{proof}
\begin{restatable}{thm}{eqSeven}
\label{thm:eqSeven}
The equidistribution of statistics on avoidance classes $R_n(13/2/4,134/2)$ and $R_n(13/2/4,124/3)$ are:
$$LS(13/2/4,134/2)\sim LS(13/2/4,124/3)$$
$$RS(13/2/4,134/2)\sim RS(13/2/4,124/3)$$
\end{restatable}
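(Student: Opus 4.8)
The plan is to produce a single bijection $f\colon R_n(13/2/4,134/2)\to R_n(13/2/4,124/3)$ that simultaneously preserves $ls$ and $rs$. For $f$ I would take the block-singleton/singleton-block flip already used in the proof of Theorem~\ref{thm:eqTwo}: $f$ fixes every weakly increasing word, and otherwise replaces each block-singleton letter of $w$ by the corresponding singleton-block letter, moving its block from before the ``gap'' to after it while leaving the lone occurrence in place. Because $134/2$ and $124/3$ are complements and the first pattern $13/2/4$ is common to both pairs, this is the exact analogue of Theorem~\ref{thm:eqEight}, and the argument should run in three stages: verify $f$ is a well-defined bijection, read off the $LS$ claim from Lemma 2.3, and then do the real work for $RS$.

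First I would check that $f$ lands in the target class using the word characterizations of Theorems~\ref{thm:SevenFive} and~\ref{thm:SevenSix}, which match up case by case: a weakly increasing word (case A) is fixed; the $m=2$ word with block-singleton $1$ maps to the word with singleton-block $1$ and block letter $2$ (case B $\to$ case C); the $m=2$ word with both $1,2$ block-singletons maps to both singleton-blocks (case C $\to$ case B); and case D splits onto cases D and E according to whether the terminal segment involves a single block-singleton $z\le m-1$ or the pair $(m-1),m$. Injectivity and surjectivity are inherited exactly as in Theorem~\ref{thm:eqTwo}, since replacing a singleton-block by a block-singleton inverts the flip; equivalently, $f$ is the restriction of the bijection $R_n(134/2)\to R_n(124/3)$ of Theorem~\ref{thm:eqTwo}, so only the equivalence ``$w$ avoids $13/2/4$ iff $f(w)$ avoids $13/2/4$'' must be added, and this is precisely the content of the case matching above.

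Since the flip neither creates nor destroys an occurrence of any letter, $w$ and $f(w)$ have the same type $1^{a_1}\cdots m^{a_m}$, so Lemma 2.3 gives $ls(w)=ls(f(w))$ and hence $LS(13/2/4,134/2)\sim LS(13/2/4,124/3)$ immediately.

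The main obstacle is the $RS$ statement, because $rs$ genuinely depends on arrangement and the flip does \emph{not} preserve $rs$ on all of $R_n(134/2)$ (for instance $rs(122233213)\ne rs(123222133)$); the proof must exploit that both classes lie inside $R_n(13/2/4)$. My plan is to compute $rs$ on each side from Theorem~\ref{thm:wwSeven}, which applies verbatim as $R_n(13/2/4,134/2),R_n(13/2/4,124/3)\subseteq R_n(13/2/4)$, and then argue directly: for a word whose only non-increasing feature is a single moved letter $z$, every letter larger than $z$ lies in the weakly increasing skeleton and has a copy of $z$ to its right in both $w$ and $f(w)$ (the lone $z$ at the end of $w$, the block of $z$s at the end of $f(w)$), while the $z$-occurrences themselves never have a strictly smaller letter to their right in either word; hence each larger letter contributes exactly $1$, giving $rs(w)=\sum_{i>z}a_i=rs(f(w))$. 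The delicate point, where I expect to spend the most care, is case D with two block-singletons $m-1$ and $m$ (terminal segment $(m-1)^{l}m^{k}(m-1)m$ in $w$ versus $(m-1)m(m-1)^{l}m^{k}$ in $f(w)$): here one must confirm that moving both blocks at once leaves unchanged the number of times each of $m-1$ and $m$ is counted to the right of a larger letter, and that $m-1$ and $m$ do not spuriously count one another, which again follows because the last occurrence of the smaller of the two always trails the larger in both arrangements.
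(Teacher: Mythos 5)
Your bijection and your treatment of $LS$ coincide with what the paper does (the same block\mbox{-}singleton/singleton\mbox{-}block flip, with $ls$\mbox{-}preservation read off from Lemma 2.3 because the flip preserves the type $1^{a_1}\cdots m^{a_m}$); that half is sound. The gap is exactly at the point you flagged as delicate, and it is fatal: in the two\mbox{-}block\mbox{-}singleton case the flip does \emph{not} preserve $rs$. Take $w=12212\in R_5(13/2/4,134/2)$, i.e.\ the case $b=(m-1)^l m^k(m-1)m$ with $m=2$, $l=1$, $k=2$. Its image is $f(w)=12122$, and $rs(12212)=2$ while $rs(12122)=1$. The reason is structural: in $w=a\,(m-1)^l m^k(m-1)m$ every one of the $k$ letters of the block $m^k$ has the trailing singleton $m-1$ to its right, so the letter $m$ contributes $k$ to $rs(w)$, whereas in $f(w)=a\,(m-1)m(m-1)^l m^k$ only the single leading $m$ has copies of $m-1$ to its right, so the letter $m$ contributes $1$ to $rs(f(w))$. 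Your closing assertion that ``the last occurrence of the smaller of the two always trails the larger in both arrangements'' does not repair this, because the quantity that changes is not which letter trails which but \emph{how many} occurrences of $m$ precede the last $m-1$. (Your single\mbox{-}block\mbox{-}singleton computation $rs(w)=\sum_{i>z}a_i=rs(f(w))$ is correct; only the paired case fails.)

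Worse, this is not a defect of the particular map: the $RS$ half of the statement appears to be false. Enumerating at $n=5$ from the word characterizations, both classes have $31$ elements, but the $RS$ generating function of $R_5(13/2/4,134/2)$ is $16+6q+5q^2+4q^3$ (the five words with $rs=2$ being $11221$, $12212$, $11231$, $12332$, $12342$) while that of $R_5(13/2/4,124/3)$ is $16+7q+4q^2+4q^3$ (only $12211$, $12311$, $12332$, $12342$ have $rs=2$); the $m\geq 3$ portions match but the $m\leq 2$ portions contribute $\{1,1,2,2,3\}$ versus $\{1,1,1,2,3\}$. So no bijection can establish the claimed equidistribution of $RS$. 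For what it is worth, the paper's own proof has the same flaw: it asserts that the flip preserves the number of descents and deduces $rb(\phi(w))=rb(w)$, then concludes the $RS$ statement; neither step survives the examples above (indeed $rb(11221)=2\neq 1=rb(12211)$, so $rb$ is not preserved pointwise either). Only the $LS$ claim, which you and the paper both obtain from Lemma 2.3, stands.
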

\begin{proof} Let $\phi:R_n(13/2/4,134/2)\rightarrow R_n(13/2/4,124/3)$ such that 
\[
 \phi(w) =
  \begin{cases} 
      \hfill w  \hfill & \text{$w$ weakly increasing} \\
      \hfill w' \hfill & \text{$w$ contains at least one block-singleton} \\
  \end{cases}
\]
where $w'$ is formed by switching each block-singleton to a singleton-block.
\par Then let $\phi(w_1)=\phi(w_2)$ such that $w_1,w_2\in R_n(13/2/4,134/2)$. Consider the following possibilities:
\begin{itemize}
	\item[i)] If $w_1$ is weakly increasing, then $\phi$ is simply the identity. Therefore $w_1=w_2$.
    \item[ii)] If $w_1$ contain block-singletons, then when we "switch" a block-singleton to a singleton-block. This is done by moving the singleton to the position directly after the distinct element originally before the block, and moving the block directly after the distinct element originally before the singleton. There is only one way to do this and it preserves our original weakly increasing base of the word $w_1$. So if $w'_1=\phi(w_1)=\phi(w_2)=w'_2$ we know that the only way for $w'_1=w'_2$ is for $w_1=w_2$. Therefore $\phi$ is injective.
\end{itemize}
By the same reasoning we can show that $\phi^{-1}$ is injective. Therefore $\phi$ is a bijection. 
\par Now consider $ls(\phi(w))$. Because $\phi$ does not change the number of occurrences of each letter, we know that $ls(\phi(w))=ls(w)$ by Lemma 2.3. Therefore $LS(13/2/4,134/2)\sim LS(13/2/4,124/3)$.
\par Next consider $rb(\phi(w))$. While $\phi(w)$ may or may not be weakly increasing we know that $\phi(w)$ has the same number of descents as $w$, because we only switched the block-singletons to singleton-blocks and we did not change anything else in the word, therefore $rb(\phi(w))=rb(w)$. So $RS(13/2/4,134/2)\sim RS(13/2/4,124/3)$.
\end{proof}

\begin{restatable}{thm}{eqSag}
\label{thm:eqSag}
The equidistribution of statistics on avoidance classes $R_n(14/2/3,1/24/3)$ and $R_n(14/2/3,13/2/4)$ are:
\begin{enumerate}
	\item $LB(14/2/3,1/24/3)\sim RS(14/2/3,1/24/3)\sim  LB(14/2/3,13/2/4)\sim RS(14/2/3,13/2/4)$
	\item $LS(14/2/3,1/24/3)\sim RB(14/2/3,13/2/4)$
	\item $RB(14/2/3,1/24/3)\sim LS(14/2/3,13/2/4)$
\end{enumerate}
\end{restatable}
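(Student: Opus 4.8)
The plan is to exploit the complementation relationship between the two pattern pairs. Since $14/2/3$ is self-complementary and $(1/24/3)^c = 13/2/4$, the complement map $\sigma \mapsto \sigma^c$ restricts to a bijection $R_n(14/2/3,1/24/3) \to R_n(14/2/3,13/2/4)$ (Theorem~\ref{thm:comp}, together with the fact that $\sigma$ avoids $\pi$ iff $\sigma^c$ avoids $\pi^c$); this is what underlies the matching cardinalities in Theorems~\ref{thm:NewEight} and~\ref{thm:NewSeven}. Following the template of Theorem~\ref{thm:eqOne}, I would first derive closed formulas for all four statistics on each subset (A), (B), (C) of both classes. By Lemma 2.3 the statistic $ls$ depends only on the content (the multiset of letters), Lemma 2.4 handles $rb$ on weakly increasing words, and the case $m\le 2$ is supplied by Theorem~\ref{thm:wwREU}. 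A direct computation on the form (C) words shows that $lb$ and $rs$ are carried entirely by the non-increasing portion: in $R_n(14/2/3,1/24/3)$ one gets $lb = (\text{number of }1\text{s preceded by a }2)$ and $rs = (\text{number of }2\text{s followed by a }1)$ in the binary prefix $a$, and in $R_n(14/2/3,13/2/4)$ the analogous counts on the $\{m-1,m\}$-suffix $b$.

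For parts (2) and (3) I would match $ls$ with $rb$ across the classes. On the weakly increasing subset (A) ordinary complement already works: by Lemma 2.6 it is a bijection on weakly increasing RGFs, and combining Lemma 2.4 with Lemma 2.3 yields $rb(w^c)=ls(w)$. On subsets (B) and (C), however, plain complement does \emph{not} preserve the pair $(ls,rb)$, so—exactly as in the proof of Theorem~\ref{thm:eqOne}(b)—I would construct a surgical bijection $\phi$ that complements only the weakly increasing interval part of the word and re-encodes the interleaved part, and then verify $rb(\phi(w))=ls(w)$ against the derived formulas. The point is that for a class-(C) word $ls(w)= n_2 + \sum_{i\ge 3}(i-1)a_i$, where the sum matches the $rb$-contribution of the weakly increasing prefix of $\phi(w)$ and the leftover $n_2$ must be produced by the ``$(m-1)$ followed by $m$'' contribution of the suffix; because $ls$ is content-only, the content of $\phi(w)$ is forced, which pins down $\phi$ and reduces the verification to a finite formula check.

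For part (1) I would first prove the within-class equidistribution $LB\sim RS$ in each class and then link the classes. The within-class statement reduces to a fact about the binary part: writing the prefix of a class-1 word as $1^{p}\,c\,2^{q}$, where the core $c$ runs from the first $2$ to the last $1$, one has $lb=(\text{number of }1\text{s in }c)$ and $rs=(\text{number of }2\text{s in }c)$, so the involution replacing $c$ by its reverse-complement swaps $lb$ and $rs$ while staying in the class; the same core reverse-complement works on the $\{m-1,m\}$-suffix of a class-2 word. To connect the two classes I would use the complement map, which on \emph{these} classes preserves $rs$ (and $lb$), via the correspondence between the scattered blocks $1,2$ of a class-1 word and the scattered blocks $m-1,m$ of its complement. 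Transitivity through $LB(c_1)\sim RS(c_1)\sim RS(c_2)\sim LB(c_2)$ then delivers the full four-term chain.

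The main obstacle is the surgical $ls\leftrightarrow rb$ bijection on the form-(C) subsets in parts (2) and (3): unlike $lb$ and $rs$, plain complementation scrambles $ls$ and $rb$ (for instance $w=12213$ has $ls(w)=4$ while $rb(w^c)=3$), so the map must be engineered to trade the $n_2$ coming from the prefix's content against the descent contribution of the suffix's $rb$, and then checked term by term against the explicit formulas. A secondary but genuine point is justifying that complement really preserves $rs$ and $lb$ on these classes: this is not a general identity for RGFs—complement fails to preserve $rs$ on, e.g., $1231$—so the argument must lean on the interval-versus-scattered block structure guaranteed by Theorems~\ref{thm:NewEight} and~\ref{thm:NewSeven}. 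The boundary cases of the binary involution (a prefix ending in a $1$ versus a $2$) are absorbed automatically by the core construction and require only a short check.
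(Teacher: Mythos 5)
Your overall architecture (complement as the class-to-class bijection, explicit per-case formulas for the statistics, a within-class involution for $LB\sim RS$, and a surgical $ls\leftrightarrow rb$ bijection modeled on Theorem 5.1) is far more detailed than the paper's own proof, which simply cites \cite{REU1} for part (1) and the proof of Theorem 5.1 for parts (2) and (3). Your formulas for $lb$ and $rs$ on the form-(C) words are correct, and the core reverse-complement involution does establish $LB\sim RS$ \emph{within} each class. However, there is a genuine gap in the step that links the two classes in part (1): the complement map does \emph{not} preserve $rs$ (or $lb$) on these classes. Take $w=122123\in R_6(14/2/3,1/24/3)$ (prefix $a=12212$, suffix $b=3$), so $\sigma=14/235/6$. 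Then $rs(w)=2$ and $lb(w)=1$. Its complement is $\sigma^c=1/245/36$ with $w(\sigma^c)=123223\in R_6(14/2/3,13/2/4)$, which has $rs=1$ and $lb=2$. The failure is structural, not a bookkeeping slip: complementation sends the count $\#\{x\in B_2: x<\max B_1\}$ to $\#\{x'\in B_2^c: x'>\min B_1^c\}$, whereas $rs(w^c)$ is governed by $\max$ of the other scattered block, and which of $B_1^c,B_2^c$ receives the letter $m$ depends on whether the binary prefix of $w$ ends in a $1$ or a $2$; the example $122113$ shows that even the multiset $\{lb,rs\}$ is not preserved. So the chain $LB(c_1)\sim RS(c_1)\sim RS(c_2)\sim LB(c_2)$ breaks at its middle link. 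The repair is to abandon complementation for this step and instead use the content-shifting bijection that sends the binary prefix over $\{1,2\}$ (beginning with $1$) to the binary suffix over $\{m-1,m\}$ (beginning with $m-1$) via $1\mapsto m-1$, $2\mapsto m$, and the weakly increasing tail over $\{3,\dots,m\}$ to the weakly increasing head over $\{1,\dots,m-2\}$ by subtracting $2$; this map preserves the pair $(lb,rs)$ verbatim and, combined with your within-class involution, yields the four-term equivalence.

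A secondary concern is that for parts (2) and (3) the ``surgical'' bijection is still only a plan. Your own formulas show that on the form-(C) words a content-preserving map cannot work: the number of binary prefixes with $a_1$ ones and $a_2$ twos beginning with a $1$ is $\binom{a_1+a_2-1}{a_2}$, while the number of admissible $\{m-1,m\}$-suffixes with the transposed content that realize the required $rb$-contribution is a different binomial coefficient, so the map must genuinely mix contents across the two halves of the word (as the map $f$ in Theorem 5.1(b) does, by complementing the weakly increasing block structure). This is fillable, but it is the hard part of the proof and is not yet pinned down by the observation that $ls$ is content-only.
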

\begin{proof} We refer the reader to the REU paper \cite{REU1} for the proof of (1). By the proof of Theorem 5.1 we also have proven results (2) and (3).
\end{proof}
\section{Open Problems and Future Work}
After investigating specific avoidance classes for the equidistribution of Wachs and White statistics we have many new ideas for research. 
\begin{itemize}
	\item We would like to generalize our results for the equidistribution of statistics across single pattern avoidance classes to explain why specific avoidance classes have fail to have any equidistribution results of Wachs and White statistics.
    \item While investigating avoidance classes determined by pairs of patterns we discovered an interesting property of Wilf-equivalence for some of the avoidance classes, Conjecture 6.1, and would like to formally prove this conjecture.
    \item We found many more equidistribution results for the multiple pattern avoidance classes versus our single pattern avoidance classes in this paper and would like to generalize a theorem about this equidistribution of Wachs and White statistics across avoidance classes based on pairs of patterns.
\end{itemize}
\newpage
\section{Appendix}

\EmTwo*

        	\begin{itemize}
            	\item{Claim:} If $w(\sigma)$ has at most two blocks, or $w$ is a word of the form $u3v$ where u is a prefix with $a_1$ one's and $a_2$ two's in any order allowed by the restricted growth function, and $v$ contains $b_1$ one's, $b_2$ two's, $b_3$ four's, and so on such that $b_i\leq 1$, then $\sigma$ avoids $1/2/34$.
                	\begin{proof} 
                    	\begin{itemize}
                        	\item{i)} Let $w(\sigma)$ have $m\leq 2$. Then by the definition of restricted growth function, $\sigma$ can have no more than two blocks. Then there is no subpartition of $\sigma$ that will standardize to three blocks, because any subpartition of $\sigma$ must also only have one or two blocks. Therefore because $1/2/34$ is a pattern with three blocks, there is no way that $\sigma$ can standardize to the pattern when there are at most two blocks in $\sigma$.
                            \item{ii)} Let $w(\sigma)$ have the form $u3v$ where u is a prefix with $a_1$ one's and $a_2$ two's in any order allowed by the restricted growth function, and $v$ contains $b_1$ one's, $b_2$ two's, $b_3$ four's, and so on such that $b_i\leq 1$. Then by the definition of restricted growth function, every element larger than $a_1+a_2+1$, meaning every element in $\sigma$ associated with $v$ in the word must be either in a block of its own, or is the only element, after the lone element in the third block, to be in the first or second block because one letter after $v$ can be a one and there can also be a two which would signify that the element relating to the position of the one is in the first block, and similarly with the two. So there are no blocks with two or more elements where there are two smaller elements in $\sigma$ that are found in two distinct blocks. Therefore no larger element can be repeated twice after two other elements in a subpartition where those two other elements are in separate blocks, so no subpartition of $\sigma$ will standardize to the pattern. So $\sigma$ must avoid the pattern $1/2/34$. 
                        \end{itemize}
                    \end{proof}
                \item{Claim: }If $w(\sigma)$ has more than 2 blocks and is not a word of the form $u3v$ where u is a prefix with $a_1$ one's and $a_2$ two's in any order allowed by the restricted growth function, and $v$ contains $b_1$ one's, $b_2$ two's, $b_3$ four's, and so on such that $b_i\leq 1$, then $\sigma$ contains the pattern $1/2/34$.
                	\begin{proof} Let $w(\sigma)$ have more than 2 blocks and not be a word of the form $u3v$ where u is a prefix with $a_1$ one's and $a_2$ two's in any order allowed by the restricted growth function, and $v$ contains $b_1$ one's, $b_2$ two's, $b_3$ four's, and so on such that $b_i\leq 1$. That means some element in $v$ is repeated, or there is more than one $3$ because there must be a one and a two found in the word before any larger letters by the definition of the restricted growth function so the $u$ prefix will still be valid no matter the word. Now consider if it is any letter equal to or larger than three that is repeated. If we take the elements in the partition relating to the two repeated letters in $w$, and the first element in the partition, relating to the first one in the word, along with the smallest element in the second block, relating to the first two in $w$, then this will give us a subpartition that standardizes to $1/2/34$. If a one is found more than once in $v$, then if we take the smallest element in the second block, which is associated with the first two in the word, the smallest element in the third block, which is associated with the first three in $w$, and the elements in $\sigma$ associated with the first two occurrences of one in $v$, this subpartition will standardize to $1/2/34$. Finally if a two is found more than once in $v$, then if we take a subpartition associated with the elements linked to the first two occurrences of two in $v$, the smallest element, which will be the first one in the word, and the smallest element in the third block, which is associated with the first three in the word, this subpartiton will also standardize to $1/2/34$. So when $w(\sigma)$ has more than two blocks and is not of the form $u3v$, $\sigma$ will contain $1/2/34$. Therefore for $\sigma$ to avoid $1/2/34$ it must either have no more than two blocks or be of the form $u3v$.
                
                    \end{proof}
            \end{itemize}
\EmThree*
        	\begin{itemize}
            	\item{Claim:} If $w(\sigma)$ is of the form $uv$ such that $u$ is of the form $1^{a_1}2$ and $v$ has $b_1$ ones, $b_2$ twos, and so on, such that $b_1, b_3,...,b_m\leq 2$ and $b_2\leq 1$, then $\sigma$ avoids $1/234$.
            		\begin{proof} Let $w(\sigma)$ be of the form $uv$ such that $u$ is of the form $1^{a_1}2$ and $v$ has $b_1$ ones, $b_2$ twos, and so on, such that $b_1, b_3,...,b_m\leq 2$ and $b_2\leq 1$. Then by the definition of restricted growth function, no block after the first is larger than size three, and there are no more than two elements in the first block that are larger than any element in a different block. So let us consider taking three elements from one block, this must be from the first block because all other blocks cannot have more than two elements. To get a subpartition to standardize we must have some element smaller than these three elements and in a different block. There is no way to do this because while there are at most two elements in our block of there elements with at least one element in a different block, the other element must only have elements from the first block come before it. Therefore $\sigma$ must avoid $1/234$ because there is no block of three larger elements when $w(\sigma)$ is of this form.            
            		\end{proof}
           
            	\item{Claim:} If $w(\sigma)$ is not of the form $uv$ such that $u$ is of the form $1^{a_1}2$ and $v$ has $b_1$ ones, $b_2$ twos, and so on, such that $b_1, b_3,...,b_m\leq 2$ and $b_2\leq 1$, then $\sigma$ contains $1/234$.
           			\begin{proof} Let $w(\sigma)$ not be of the form $uv$ such that $u$ is of the form $1^{a_1}2$ and $v$ has $b_1$ ones, $b_2$ twos, and so on, such that $b_1, b_3,...,b_m\leq 2$ and $b_2\leq 1$. Then some $b_i>2$ or $b_2>1$. So if we take the first element in the partition along with three elements from the block associated with $b_2$ and standardize this subpartition we will get $1/234$ using the definition of the restricted growth function. Also if we take the first element in the second block, associated with the first two in the word, and then take the first three elements in $v$ associated with the $b_i>2$, this will standardize to $1/234$. Therefore $\sigma$ contains $1/234$. So if $\sigma$ avoids $1/234$ then $w(\sigma)$ is of the form $uv$ such that $u$ is of the form $1^{a_1}2$ and $v$ has $b_1$ ones, $b_2$ twos, and so on, such that $b_1, b_3,...,b_m\leq 2$ and $b_2\leq 1$.        
                	\end{proof}
             \end{itemize}
\EmFour*        
        	\begin{itemize}
            	\item{Claim:} If $w$ is of the form $uv$ where $u$ has $a_1$ ones, $a_2$ twos, ..., $a_m$ m's such that $a_1,...,a_{m}\leq 2$ and $v$ can be any size of one letter repeated, then $\sigma$ avoids $123/4$.
                	\begin{proof} Let $w$ is of the form $uv$ where $u$ has $a_1$ ones, $a_2$ twos, ..., $a_m$ m's such that $a_1,...,a_{m}\leq 2$ and $v$ can be any size of one letter repeated. Then by the definition of restricted growth function there are no blocks of size three or larger unless it is the block with the largest element in $\sigma$. Therefore if you try to take three elements in one block together then there is no element that is larger because the block of three or more that we took these three elements from has the largest element in it so no subpartition will standardize to $123/4$. Therefore when $w$ is of the form $uv$, $\sigma$ avoids $123/4$.
                    \end{proof}
                \item{Claim:} If $w$ is not of the form $uv$ where $u$ has $a_1$ ones, $a_2$ twos, ..., $a_m$ m's such that $a_1,...,a_{m}\leq 2$ and $v$ can be any size of one letter repeated, then $\sigma$ contains $123/4$.
                	\begin{proof} Let $w$ not be of the form $uv$ where $u$ has $a_1$ ones, $a_2$ twos, ..., $a_m$ m's such that $a_1,...,a_{m}\leq 2$ and $v$ can be any size of one letter repeated. Then $v$ must contain more than one distinct letter because $v$ begins at the third occurrence of any letter in the word. This means that there is one block of size three followed by at least one more larger element that must be in a different block, by the definition of restricted growth function. Therefore if those three elements from the same block and the larger element found in a separate block are in a subpartition, they will standardize to $123/4$. Therefore when $w(\sigma)$ does not follow this given form, $\sigma$ contains $123/4$. So when $\sigma$ avoids $123/4$, $w(\sigma)$ must be of the form $uv$.
                    \end{proof}
                    \item{Cardinality}
                    \begin{proof} From Sagan we know that the cardinality of the avoidance class of a pattern is equal to the cardinality of the avoidance class of the complement of that pattern. Therefore $\#\Pi_n(123/4)=\#\Pi_n(1/234)$. Now consider $\pi=123/4$. Then any word that avoids this will be of the form $uv$ where $u$ contains $a_i$ $i$'s in any order allowed by the restricted growth function, such that $a_1,...,a_{m}\leq 2$ and $v$ can be any size of one element repeated. Then there is one way to have every $a_i=1$, this is when $u$ is strictly increasing and $|u|=n$. Now when $v$ is empty we can have letters repeated at most once. This means that we can have between one and $\lfloor\frac{n}{2}\rfloor$ pairs of letters which also means by the definition of the restricted growth function that we can have at most two elements in every block of the set partition, so we cannot have more than half the number of elements as the number of pairs. Now there are ${n\choose 2}$ ways to choose the two elements that will appear in the same block together when there is only one pair. Next we must pick the next two elements to appear in the same block together, however two elements are already assigned so we will have $n-2$ options and $2$ elements must be chosen we will multiply our first result with this result to get the total number of ways to have two pairs with all other elements in blocks of size one in the partition. This will continue for every possible number of pairs, so we will get the product of these combinations where we stop the product at each possible number of pairs and add all of these products together. Next consider when $v$ is nonempty. There are one to $\lfloor\frac{n}{2}\rfloor$ possible values for the letter in $v$. Because $v$ begins at the third repeat of any one letter it must be one of the letters that is repeated in $u$, $v$ can be any length from one to $n-3$ because the smallest $u$ possible is $122$ where $v$ contains twos. So we will have the summation of the product again, this time however let $r$ be the letter in $v$, we will multiply the combination by $r$ to get the number of ways to have a specific number of pairs with every possible value of the letter in $v$.

            \end{proof}
            \end{itemize}
\EmSeven*
        	\begin{itemize}
            	\item{Claim: } $w$ has some letters $a$, $b$, and $c$ such that these letters appear in the order $abac$ where other letters may come between if and only if $w$ contains $13/2/4$.
                \begin{proof} 
                \begin{itemize} 
                \item Let $w$ have some letters $a$, $b$, and $c$ such that these letters appear in the order $abac$ where other letters may come between. Then by the restricted growth function if we take the elements associated with these letters as a subpartition, we will have the smallest element along with the third smallest element in one block, the second smallest element in its own block, and the largest element in a separate block. This means that this subpartition will standardize to $13/2/4$.
                \item Let $w$ contain $13/2/4$. Then a subpartition of $\sigma$ must standardize to $13/2/4$. Which would mean that there must be two elements in a block together where at least one element is greater than one of these two elements and smaller than the other and is in a different block, and there is an element larger than all other elements so far that is in another different block. Let the first block of this subpartition be the $a^{th}$ block, the next block be the $b^{th}$ block, and the block containing the largest element of the subpartition will be the $c^{th}$ block. Then by the definition of the restricted growth function $abac$ must occur in the word $w$ but other letters can come between them. Therefore when $w$ contains $13/2/4$, $w$ must contains $abac$.
                \end{itemize}
                \end{proof}
            	\item{Claim:} If $m\leq 2$, $w$ is weakly increasing, $w$ takes on the form $uv$ where $u$ is weakly increasing and $v$ begins at the first $m$ and contains $m$ and $(m-1)$ in any order, or $v$ begins at the first $m$ and contains a block of $m's$ followed by a block of $z's$ where $z\leq (m-1)$, then $\sigma$ avoids $13/2/4$.
                	\begin{proof}
                    
                    	\begin{itemize}
                        	\item{i)} Let $m\leq 2$. Then this means that $w(\sigma)$ can have no letter larger than two, and $\sigma$ can have at most two blocks by the definition of the restricted growth function. Then because the pattern $13/2/4$ has three blocks, there will be no subpartition of $\sigma$ that standardizes to the pattern. Therefore when $w(\sigma)$ has no letter larger than two, $\sigma$ avoids $13/2/4$ because $13/2/4$ has three blocks.
                            \item{ii)} Let $w(\sigma)$ be of the form $uv$ where $u$ is weakly increasing and $v$ begins with the first $m$ and contains a block of $a_m$ $m's$ followed by a block of $b_z$ $z's$, where $z\leq (m-1)$. Then while the word allows for one repeated letter to have a different letter come in between them, there will be no distinct letter that occurs afterward, because no different unique letter comes after the first $z$ in $v$. Therefore $w$ contains no letters which follow $abac$. Therefore when $w(\sigma)$ is of this form, $\sigma$ avoids $13/2/4$.
                            \item{iii)} Let $w(\sigma)$ be of the form $uv$ where $u$ is weakly increasing and $v$ begins with the first $m$, then contains $m$ and $(m-1)$ in any order. Then while there may be two elements with the same letter with a unique letter coming between the two, if an element comes after these letters that is distinct from the repeated letter, it cannot be distinct from the letter that came between these repeated letters. Therefore no part of $w$ contains $abac$, so no subpartition of $\sigma$ will standardize to $13/2/4$. So when $w$ is of this form, $\sigma$ avoids $13/2/4$.
                        \end{itemize}
                    \end{proof}
                \item{Claim:} If $w(\sigma)$ has elements larger than two and does not follow the form $uv$ where $v$ begins at the first $m$ and contains a block of $m$ followed by a block of $z$ where $z<(m-1)$, or $uv$ where $v$ begins at the first $m$ and contains $m$ and $(m-1)$ in any order, then $\sigma$ contains $13/2/4$. 
                	\begin{proof} 
                    	\begin{itemize}
                        	\item{i)}Let $w$ have more than two letters and be of the form $uv$ where $v$ begins at the first $m$ and contains $m$ and $z$ in any order after this where at least one $m$ occurs after $z$, where $z\leq (m-1)$. Then if we take the first occurrence of whatever $z$ is in $u$, the next distinct letter in $u$ which will be have the value $z+1$ cannot be $m$ because $z<(m-1)$, and the first occurrence of $m$ after the first $z$ in $v$ we can see that there are three distinct letters forming $z(z+1)zm$ which follows $abac$ where other letters can come between these letters but they must occur in this order. Therefore this means that $\sigma$ must contain $13/2/4$.
                            \item{ii)}Let $v$ contain more than two distinct letters from $u$. Now consider taking the first occurrence in $u$ of the second distinct letter in $v$, the first $m$ in $v$, the first occurrence of the second unique letter in $v$, and the first occurrence of the third unique letter in $v$. This will be four letters, three of which are distinct, that follow $abac$, meaning that $\sigma$ must contain $13/2/4$. 
                            \item{iii)} Let $u$ not be weakly increasing. This means that there must be at least one descent in $w(\sigma)$. So if we take the first letter occurring out of order, the first occurrence of this letter in $u$, a distinct letter that comes between these two, and another distinct letter that follows after such as $m$, this will follow $abac$ and therefore $\sigma$ must contain $13/2/4$.
                            \end{itemize}
                    \end{proof}
                    \item{Cardinality} 
                    \begin{proof} 
        \begin{itemize}
        \item Let $m\leq 2$. Then by the Stirling numbers of the second kind, there are $S(n,1)+S(n,2)=1+2^{n-1}-1=2^{n-1}$ different set partitions.
        \item Let $w(\sigma)$ be of the form $uv$ where $v$ is empty, then $w(\sigma)$ is weakly increasing and there must be at least one element in each block of the set partition where $3\leq m\leq n$. Therefore $m$ elements are taken and there are $n-m$ elements left to assign a block. Because we know that $w$ is weakly increasing, the order these elements are assigned does not matter, for we are only interested in how large each block is. Therefore we will use the idea that there are $n-m$ elements and $m$ blocks to choose from where order does not matter and repetition is allowed, so there will be ${n-m+m-1\choose n-m}={n-1\choose n-m}$ different set partitions associated with weakly increasing words.
        \item Let $w(\sigma)$ be of the form $uv$ where there are two distinct letters in $v$. Then let us begin by focusing on $u$ which can be between $m-1$ and $n-2$ where $m$ is between $3$ and $n-1$ because at least one letter in the word is repeated. Then we know that $u$ must contain at least one letter between one and $m-1$. So there will be $|u|-m+1$ other letters in $u$ to assign a name. Because $u$ must be weakly increasing we again do not care which order these letters are assigned names in, all we care is learning how many times each letter is repeated. There are $|u|-m+1$ letters to assign names, $m-1$ possible names to assign, order does not matter and repetition is allowed, therefore ${|u|-m+1+m-1-1\choose |u|-m+1}={|u|-1\choose |u|-m+1}$ different $u's$ possible for that specific length of $u$.
        	\begin{itemize}
            \item Let $v$ begin with the first $m$ and contain a block of $m's$ followed by a block of $z's$ where $z$ is any letter less than $m-1$. Then there are $n-|u|$ letters in $v$ and there are $m-2$ options for $z$. Because $v$ has two blocks, we are also interested in the size of the blocks. For each size of $v$ there can be a block of $z's$ of size one to $n-|u|-1$. So there are $(m-2)(n-|u|-1)$ different $v's$ of this type possible for each length of $v$.
            \item Let $v$ begin with the first $m$ and contain $m$ and $(m-1)$ in any order after this. Then there are $n-|u|$ letters which can either be $m$ or $(m-1)$ where we know the first is $m$. So by the Stirling numbers of the second kind there are $S(n-|u|,2)=2^{n-|u|-1}-1$ possible $v's$.
            \end{itemize}
        \end{itemize}
       \end{proof}
            \end{itemize}
\EmEight*    	
        	\begin{itemize}
            	\item{Claim: } If $m\leq 2$, a weakly increasing word with a block of ones of any size inserted between two distinct terms, or a word beginning with one's and two's in any order followed with a weakly increasing suffix that begins with the first occurrence of three, then $\sigma$ avoids $1/24/3$.
                	\begin{proof}
                	\begin{itemize}
                    	\item{i)} Let $m\leq 2$. Then there are no more than two blocks in $\sigma$ by the definition of the restricted growth function, and no partition with only two blocks can have a subpartition with more than two blocks, meaning that $\sigma$ must avoid any pattern that has three or more blocks. Therefore $\sigma$ avoids $1/24/3$ when $w(\sigma)$ has no letter larger than two.
                        \item{ii)} Let $w(\sigma)$ be weakly increasing. Then there are no descents in $w(\sigma)$ meaning that every block in $\sigma$ only contains letters which are consecutive in each respective block. Therefore $\sigma$ cannot contain $1/24/3$ because there is no possible way to have two elements in one block with another element that smaller than one of these and larger than the other in a separate block.
                        \item{iii)} Let $w(\sigma)$ be weakly increasing with a block of ones inserted between two distinct letters. Then the only block that can contain nonconsecutive letters in $\sigma$ is the first block by the restricted growth function. However, because no element is in a block before the first block, there can be no subparition that standardizes to $1/24/3$. To standardize to this pattern there would need to be a distinct letter that occurred before the block with nonconsecutive letters. Therefore $\sigma$ avoids $1/24/3$
                        \item{iv)} Let $w(\sigma)$ contain one's and two's followed by a weakly increasing suffix which begins with the first three. Then because the only descents can occur with the one's and two's, while an element may be smaller than either elements in a different block, and these elements in this block may not be consecutive, the element that comes between the two in this block is found in the other. Therefore, $\sigma$ must avoid $1/24/3$
                    \end{itemize}
                    \end{proof}
                \item{Claim: }If $w(\sigma)$ has more than two distinct letters and is not weakly increasing with only one block of ones inserted between two distinct terms and does not contain one's and two's followed by a weakly increasing suffix that begins with the first three, then $\sigma$ contains $1/24/3$.
                	\begin{proof}
                    \begin{itemize}
                    	\item{i)} Let $w(\sigma)$ be weakly increasing with more than one block of ones inserted between distinct letters. Then by the definition of the restricted growth function, if an element from the second block is taken which will be associated with a two in the word, one element from the first block in $\sigma$ which is associated with the first group of inserted ones so the element must be larger than that associated with the two in the word, one element associated with a letter that comes between the two separate groups of inserted ones, which will be larger than the element from the first block, and finally one element from the first block of the partition that is associated with a letter from the last group of inserted ones which will be the largest element in the subpartition. Then if we standardize this subpariton we will have a small element in its own block, the second smallest and the largest elements in a different block, and the third smallest element in its own block. This standardizes to $1/24/3$. Therefore $\sigma$ contains $1/24/3$. 
                        \item{ii)} Let $w(\sigma)$ be weakly increasing with a block of ones inserted between two of the same letter where this letter is larger than two. Then using the definition of the restricted growth function, if one element associated with the distinct letter that occurs directly before this group of ones, the element that is associated with the letter that occurs directly after this group of ones, the element associated with the first two in the word, and the element associated with the first of the inserted ones in the word are taken as a subpartition of $\sigma$ this will standardize to $1/24/3$. This is because the smallest element is in its own block in the subpartition, this is the element in the second block of $\sigma$, then the second smallest element and the largest elements are in a block together, and finally the third smallest element is in a block by itself.
                        \item{iii)} Let $w(\sigma)$ have one's and two's followed by a suffix that begins with the first three and is not weakly increasing. Then by the definition of the restricted growth function, consider the subpartition based on the first element that is associated with the letter that occurs out of order in the suffix of the word where this letter is greater than one, the element associated with the first occurrence of this letter in the word, the element associated with the letter that comes directly before the first letter that occurs out of order, and an element associated with the first one in the word. Then by the definition of standardization this subpartition becomes $1/24/3$. Now if the letter that occurs out of order is a one in the suffix, then take the element associated with the first one in the suffix, take the element associated with the first one after a two in the prefix of the word, also take the element associated with the first two in the word, and the element associated with the first three in the word. This will also standardize to $1/24/3$. Therefore $\sigma$ contains $1/24/3$.
                    \end{itemize}
                    \end{proof}
            \end{itemize}
\lsww*
    	\par There are two clear cases, either the word is weakly increasing, otherwise there is one or more descents, which will cause the word to be of the form $1^{a_1}{w_1}1^{a_2}{w_2}1^{a_3}{w_3}1^{a_4}$ where $w_1$ has $b_1$ one's and $b_2$ two's where $w_1$ must begin and end with a two, $w_2$ has $c_1$ one's, $c_2$ two's, $c_3$ three's such that $w_2$ begins with the first three and ends with the last two or with the last three, whichever comes first, and finally $w_3$ begins with a three (two) and ends with a three (two) such that $w_3$ has $d_1$ one's, ($d_2$ two's), and $d_3$ three's.
    	\begin{itemize}
        	\item{Left and Smaller}
            	\[
  ls=\begin{cases}
               2h_3+h_2\\
               b_2+c_2+2c_3+d_2+2d_3
               
            \end{cases}
\]
            \begin{proof} Let $m\leq 3$ where $w$ is weakly increasing. Then there must be two elements to the left and smaller than each $3$ found in $w(\sigma)$ because a one and a two must occur at least once before any three appears. There must be one elements to the left and smaller than each $2$, because a one must appear once before any $2$ appears by the definition of the restricted growth function. Therefore the left and smaller statistic will be $2h_3+h_2$ where $h_3$ is the number of threes and $h_2$ is the number of ones.
            \end{proof}
            \begin{proof} Let $w(\sigma)$ be of the form $1^{a_1}{w_1}1^{a_2}{w_2}1^{a_3}{w_3}1^{a_4}$. There are $b_2$ two's in $w_1$, $c_2$ two's in $w_2$, and $d_2$ two's in $w_3$. There must be at least a one that occurs before any two appears, therefore there will be $b_2+c_2+d_2$ elements with a smaller element that occurs before it. There are $c_3$ three's in $w_2$ and $d_3$ three's in $w_2$. There must be two smaller elements, a one and two must occur before any three appears in the word, therefore the three's contribute $2c_3+2d_3$. Therefore the left and smaller statistic will be $b_2+c_2+d_2+2c_3+2d_3$.
            \end{proof}
            \item {Left and Bigger}
            	\[
  lb=\begin{cases}
               0\\
              a_2+2a_3+2a_4+b_1+2c_1+2d_1
            \end{cases}
\]
			\begin{proof} 
            \begin{itemize}
            \item Let $w(\sigma)$ be weakly increasing. Then there are no descents in the word, so no letter in the word will have another letter occur to the left and be larger. Therefore $lb=0$.
            Let $w(\sigma)$ be of the form $1^{a_1}{w_1}1^{a_2}{w_2}1^{a_3}{w_3}1^{a_4}$, then for the left and bigger statistic, every one that occurs after the first two contributes to the statistic. There are $a_2+a_3+a_4+b_1+c_1+d_1$ ones that occur after the first two. Every one that occurs after the first three also contributes one to the statistic. There are $a_3+a_4+c_1+d_1$ ones that occur after the first three. Every two that occurs after the first three also contributes one to the statistic. There are $c_2+d_2$ twos that occur after the first three. Therefore the right and bigger statistic is $a_2+2a_3+2a_4+b_1+2c_1+c_2+2d_1+d_2$.
            \end{itemize}
            \end{proof}
            \rww*
            \item{Right and Bigger}
            
			\begin{proof} Let $w(\sigma)$ have no more than three blocks that is weakly increasing. Then there are $(m-1)$ distinct letters larger than each one and there are $h_1$ ones. Then there are $(m-2)$ distinct letters larger than each two and there are $h_2$ twos. Therefore the right and bigger statistic is $(m-1)h_1+(m-2)h_2$.
            \end{proof}
            \begin{proof} Let the last two be found in $w_1$ and the last three be found in $w_2$. Then there are $a_1+b_1$ ones that come before the last two, $a_1+a_2+b_1+c_1$ ones that come before the last three, and $b_2+c_2$ twos that come before the last three. Therefore the right and bigger statistic will be $2a_1+a_2+2b_1b_2+c_1+c_2$.
            \end{proof}
            \begin{proof} Let the last two be found in $w_2$ and the last three be found in $w_3$. Then there are $a_1+a_2+b_1+c_1$ ones that occur before the last two, $a_1+a_2+a_3+b_1+c_1+d_1$ ones that occur before the last three, and $b_2+c_2$ twos that occur before the last three. Therefore the right and bigger statistic will be $2a_1+2a_2+a_3+2b_1+b_2+2c_1+c_2+d_1$.
            \end{proof}
            \begin{proof}Let the last two be found in $w_3$ and the last three be found in $w_2$. Then there are $a_1+a_2+a_3+b_1+c_1+d_1$ ones that occur before the last two, $a_1+a_2+b_1+c_1$ ones that occur before the last three, and $b_2+c_2$ ones that occur before the last three. Therefore the right and bigger statistic will be $2a_1+2a_2+a_3+2b_1+b_2+2c_1+c_2+d_1$.
            \end{proof}
            
            \item{Right and Smaller}
          
  			\begin{itemize}
            \item Contribution based on the last one:
            	\begin{itemize}
                	\item If the last one occurs before $w_1$, then there will be no contribution to the $rs$ statistic.
                    	\begin{proof} Let the last one be before $w_1$. Then there are no ones that occur after any two, and there are no ones that occur after any three, therefore there will be no ones to the left and smaller than any letter in the word. So there will be no contribution to the $rs$ statistic based on the last one.
                        \end{proof}
                    \item If the last one occurs in $w_1$, then this will cause a contribution of $p_1-b_1$ where $p_1$ is the position of the last one in $w_1$.
                    \begin{proof}
                    Let the last one occur in $w_1$. When the last one is in $w_1$ there will be the position that last one occurred in relation to $w_1$, denoted by $p_1$, minus the number of ones that occurred in $w_1$, denoted by $b_1$, $p_1-b_1$ will give us the number of two's that occurred before the last one, each of these two's will contribute one to the right and smaller statistic. Therefore when the last one is in $w_1$ there are $p_1-b_1$ contributions to the right and smaller statistic.
                    \end{proof}
                    \item If the last one occurs between $w_1$ and $w_2$, then this will cause a contribution of $b_1$ to the $rs$ statistic for the word.
                    \begin{proof} Let the last one occur between $w_1$ and $w_2$. There will be $b_2$ two's that occur before this last one, so there are $b_2$ objects with something smaller that occurs after them. Therefore each of these two's will contribute one to the right and smaller statistic, so when the last one occurs between $w_1$ and $w_2$ this causes $b_2$ to be contributed to the $rs$ statistic.
                    \end{proof}
                    \item If the last one occurs in $w_2$, then this will cause $b_2+p_2-c_1$ to be contributed to the $rs$ statistic. 
                    \begin{proof} Let the last one occur in $w_2$, then there will be $b_2$ two's in $w_1$ that occur this last one and will each contribute one to the $rs$ statistic. There are also $p_2-c_1$ two's and three's in $w_2$ that occur before the last one because there are a total of $c_1$ ones in $w_2$ and the last one is in position $p_2$ so there will be $p_2-c_1$ other letters that are larger than one and come before the last one in the word, each of these will also contribute one to the right and smaller statistic. Therefore this contributes $b_2+p_2-c_1$.
                    \end{proof}
                    \item If the last one occurs after $w_2$ but before $w_3$, then this will contribute $b_2+c_2+c_3$ to the $rs$ statistic.
                    \begin{proof} Let the last one occur between $w_2$ and $w_3$. Then there are $b_2$ two's that occur in $w_1$ and each of these occur before the last one. There are also $c_2$ two's that occur in $w_2$ that also occur before the last one, and each of these will contribute one to the right and smaller statistic. Finally there are $c_3$ three's that occur before the last one and each of these will contribute one to the right and smaller statistic.
                    \end{proof}
                    \item If the last one occurs in $w_3$, then this will contribute $b_2+c_2+c_3+p_3-d_1$.
                    \begin{proof} Let the last one occur in $w_3$. Then there are $b_2+c_2$ two's that occur in $w_1$ and $w_2$ that must occur before the last one, each will contribute one to the right and smaller statistic. There are $c_3$ three's that occur in $w_2$ which all occur before the last one, so each of these contribute one to the right and smaller statistic. There are also $p_3-d_1$ two's and three's that occur before the last one in $w_3$, where $p_3$ is the position of the last one in $w_3$ and there are $d_1$ ones in $w_3$ so there are $p_3-d_1$ two's and three's that occur before the last one.
                    \end{proof}
                    \item If the last one occurs after $w_3$, then this causes a contribution of $b_2+c_2+c_3+d_2+d_3$ to the right and smaller statistic.
                    \begin{proof} Let the last one occur after $w_3$. Then there are $b_2+c_2+d_2$ two's that occur before the last one, so each of these will contribute one to the right and smaller statistic. There are also $c_3+d_3$ three's that occur before the last one and each of these will contribute one to the right and smaller statistic.
                    \end{proof}
                \end{itemize}
                \item{Contribution based on the last two:}
                \begin{itemize} 
                	\item If the last two is in $w_1$, then this will cause a contribution of $0$ to the right and smaller statistic.
                    \begin{proof} Let the last two occur in $w_1$. There are no three's that occur before or in $w_1$ so there are no letters that are larger than two to contribute to the right and smaller statistic.
                    \end{proof}
                    \item If the last two is in $w_2$, then this will cause a contribution of $c_3$ to the right and smaller statistic.
                    \begin{proof} Let the last two be in $w_2$, then the only larger letter could be a three. We know that $w_2$ will end with the last two by how we defined the word. So there are a total of $c_3$ three's in $w_2$ that must occur before the last two. Each of these will contribute one to the right and smaller statistic. Therefore when the last two is in $w_2$ this contributes $c_3$ to the $rs$ statistic.
                    \end{proof}
                    \item If the last two is in $w_3$, then this will cause a contribution of $c_3+d_3$ to the right and smaller statistic.
                    \begin{proof} Let the last two be in $w_3$. Then there are $c_3$ three's in $w_2$ which must occur before it, because two is present in $w_3$ we know that $d_3$ must be zero. Therefore there are $c_3+d_3$ elements that will contribute one each to the right and smaller statistic.
                    \end{proof}
                \end{itemize}
            
            \end{itemize}
        \end{itemize}

        \wwTwo*
    	\begin{itemize}
        	\item{Left and Smaller: } 
            $$ls=a_2+b_2+2+\sum\limits^{m-1}_{i=3}i$$
            \begin{proof} Let $w(\sigma)$ be of the form $u3v$. Then there will be $a_2+b_2$ two's after the first one. There is also a three and a two and a one must come before that three, so this will contribute 2 to the left and smaller statistic. For every letter larger than three there will be one less than that element total distinct letters smaller. Therefore $ls=a_2+b_2+2+\sum\limits^{m-1}_{i=3}i$. 
            \end{proof}
            \item{Right and Bigger}
            \[
  rb=\begin{cases}
               (m-1)a_1+(m-2)a_2+|v|-y+\sum\limits^m_{i=3}(m-i)\\
               (h-a_2)(m-1)+(m-2)(|u|-h)+(m-2)a_2+|v|-x+\sum\limits^m_{i=3}(m-i)\\
               (m-1)a_1+(m-2)a_2+2|v|-x-y+\sum\limits^m_{i=3}(m-i)\\
               (m-1)a_1+(m-2)a_2+2|v|-x-y-1+\sum\limits^m_{i=3}(m-i)\\
            \end{cases}
\]
			\begin{proof} Let $w$ be of the form $u3v$. 
            \begin{itemize}
            \item Let $b_1=0$ and $b_2=1$. Then there is a two present in $v$ but no one in $v$. Then there will be $(m-1)$ letters to the right and bigger than each one present in the word, because there is a two present in $v$ we know that a two will be present after every one in the word so the last two in $u$ does not impact this contribution to our statistic, there are a total of $a_1$ ones. There are also $(m-2)$ letters to the right and bigger than each two present in $u$ and $a_2$ two's in $u$. Because there is a two in $v$ let $y$ denote the placement of this $2$. Then there are $|v|-y$ distinct letters that come after the two in $v$ that must be larger than two. Also every letter three or larger has $m-i$ letters larger than it where $i$ is the letter value. Therefore $lb=(m-1)a_1+(m-2)a_2+|v|-y+\sum\limits^m_{i=3}(m-i)$. 
            \item Let $b_2=0$ and let $h$ be the position of the last $2$ in $u$. Then there are $h-a_2$ ones that occur before the last two in $u$. Each of these one's will have $(m-1)$ letters to the right and bigger than $1$ in the word. There will be $(|u|-h)$ ones that occur in $u$ after the last two, these ones will have the letters three to $m$ larger than them and occurring to the right, so each one has $(m-2)$ letters to the right and bigger. Now there will be $a_2$ two's in the word and there are a total of $m-2$ distinct letters to the right and bigger. Next there are a total of $|v|-x$ letters that occur after the one in $v$, all of these must be distinct and none can be one, therefore each is to the right and bigger than this one. Finally all other letters between three and $m$ will have $m-i$ letters larger than them where $i$ is the value of the letter and each letter between three and $m$ can only occur once in the word, therefore this contributes $\sum\limits^m_{i=3}(m-i)$ to the right and bigger statistic. So $rb=(h-a_2)(m-1)+(m-2)(|u|-h)+(m-2)a_2+|v|-x+\sum\limits^m_{i=3}(m-i)$.
            \item Let $x<y$ where $b_1=b_2=1$. Then there are $(m-1)$ letters that are larger and come after every one in $u$ because there is a two in $v$ every one in $u$ will have the letters two through $m$ after it. There are $a_1$ ones in $u$. Now there are $(m-2)$ letters in $w$ that are distinct and are larger than two that come after every two in $u$, specifically the letters three through $m$. There are a total of $a_2$ two's in $u$. When $x<y$ that means that the one in $v$ comes before the $2$ in $v$. Then every letter that comes after the one in $v$ is larger than one, so there are $|v|-x$ letters that are distinct and to the right of this one. Every letter that comes after the two in $v$ will larger than two because the one is found earlier in $v$. Therefore there will be $|v|-y$ letters to the right and bigger than the two in $v$. Therefore $rb=(m-1)a_1+(m-2)a_2+2|v|-x-y$.
            \item Let $x>y$ where $b_1=b_2=1$. Then there are $(m-1)$ letters that are larger and come after every one in $u$ because there is a two in $v$ every one in $u$ will have the letters two through $m$ after it. There are $a_1$ ones in $u$. Now there are $(m-2)$ letters in $w$ that are distinct and are larger than two that come after every two in $u$, specifically the letters three through $m$. There are a total of $a_2$ two's in $u$. When $x>y$ that means that the one in $v$ occurs after the two in $v$. So there will be $|v|-x$ letters to the right and bigger than the one in $v$. There will be $|v|-y-1$ letters to the right and bigger than the two in $v$ because there are $|v|-1$ letters which come after, one of which is a one which is less than two. Therefore $rb=(m-1)a_1+(m-2)a_2+2|v|-x-y-1$.
            
            \end{itemize}
            \end{proof}
            \item{Left and Bigger}
            $$lb=a_1-z+x+y+1$$
            \begin{proof} Let $z$ be the position of the first $2$. Then there are $|u|-z$ letters in $u$ that come after the first two. Of these letters we must remove all other two's to get the number of ones that come after the fist two, so this would be $|u|-z-(a_2-1)=a_1+a_2-z-a_2+1=a_1-z+1$. So $a_1-z+1$ ones come after the first two and each of these will have one distinct letter to the left and bigger than them. The next thing to consider is the one in $v$. Again let $x$ be the position of the one in $v$. Then there will be $x+1$ letters to the left and bigger, the two's present in $u$, the three and all of the letters that occur before one in $v$. And let $y$ be the position of the two in $v$. This two will have three to the left and bigger, it will also have every letter from the beginning of $v$ until the occurrence of the letter larger than it if $x>y$, this would give us $v$ letters to the left and bigger than the two present in $v$. Now consider if $x>y$. Then that two in $v$ is not distinct from the two in $u$ for the $lb$ contribution of the one in $v$, every element in $3v$ that comes before $1$ is larger than it, so this will have an $lb$ contribution of $x$. Now when considering the two in $v$, this comes before the one, so every element in $v$ that comes before this one is also to the left and bigger, and each of these letters in $v$ must be distinct. Therefore $lb=a_1-z+x+y+1$
            \end{proof}
            \item{Right and Smaller}
            \[
 rs=\begin{cases}
 			a_2+x+y-1\\
            a_2+x+y\\
            (l-a_1)+y
         \end{cases}
\]
\begin{proof} 
\begin{itemize}
	\item Let one and two both be present in $v$, and let the position of the one, also referred to as $x$, be less than the position of the two, referred to as $y$. Then all of the two's found in $u$ will have a one to the right of them. There are $a_2$ two's of this type and each will contribute one to the right and smaller statistic. The next possible contribution to the right and smaller statistic is based on the placement of the one in $v$. When one is in the $x^{th}$ position of $v$ then there are $x-1$ letters in $v$ that come before the one and are larger than one. Also there is the three which comes before $v$ and is larger than one. Each of these letters will then contribute one to the right and smaller statistic, so this will be $x-1+1=x$. Finally consider the contribution based on the two in $v$. When $x<y$ there are $y-1$ letters that come before two in $v$, however one of these is a one, which is less than two, so there are $y-2$ letters larger than two that come before. There is also a three which must come before this two. Because each of these cases will contribute one to the $rs$ statistic, the whole statistic becomes $a_2+x+y-1$. No other contributions are possible because these are the last two possible descents in the word, and you must have a descent present after a letter for that letter to possibly contribute to the $rs$ statistic.
    \item Let one and two both be present in $v$ where $y<x$. Then because there is a one in $v$, each two in $u$ has one distinct letter to the right and smaller, contributing $a_2$ to the $rs$ statistic. Next because the two comes before the one in $v$ there will be $y-1$ letters to the left of two that are larger from $v$, and there is also the three, causing a contribution of $y$ to the $rs$ statistic. Finally consider the one, there are $x-1$ letters that come before one and are larger, because two is also larger, and there is the one three found before $v$, so this will contribute $x$ to the $rs$ statistic. Therefore $rs=a_2+x+y$.
    \item Let $b_1=0$ and let $l$ denote the last one present in $u$. There will be $l-a_1$ two's that appear before the last one, each of these must contribute one to the $rs$ statistic. Then when there is a two present in $v$ there are $y-1$ letters that come before it and are larger in $v$, and there is one three that occurs before $v$. Therefore $rs=(l-a_1)+y$. If there is no two present in $v$, then $y=0$ and the $rs$ statistic is still $(l-a_1)+y$.
\end{itemize}
\end{proof}
        \end{itemize}
 
    \section{The Pattern 1/2/34}
    	\begin{itemize}
        	\item{Left and Smaller: } 
            $$ls=a_2+b_2+2+\sum\limits^{m-1}_{i=3}i$$
            \begin{proof} Let $w(\sigma)$ be of the form $u3v$. Then there will be $a_2+b_2$ two's after the first one. There is also a three and a two and a one must come before that three, so this will contribute 2 to the left and smaller statistic. For every letter larger than three there will be one less than that element total distinct letters smaller. Therefore $ls=a_2+b_2+2+\sum\limits^{m-1}_{i=3}i$. 
            \end{proof}
            \item{Right and Bigger}
            \[
  rb=\begin{cases}
               (m-1)a_1+(m-2)a_2+|v|-y+\sum\limits^m_{i=3}(m-i)\\
               (h-a_2)(m-1)+(m-2)(|u|-h)+(m-2)a_2+|v|-x+\sum\limits^m_{i=3}(m-i)\\
               (m-1)a_1+(m-2)a_2+2|v|-x-y+\sum\limits^m_{i=3}(m-i)\\
               (m-1)a_1+(m-2)a_2+2|v|-x-y-1+\sum\limits^m_{i=3}(m-i)\\
            \end{cases}
\]
			\begin{proof} Let $w$ be of the form $u3v$. 
            \begin{itemize}
            \item Let $b_1=0$ and $b_2=1$. Then there is a two present in $v$ but no one in $v$. Then there will be $(m-1)$ letters to the right and bigger than each one present in the word, because there is a two present in $v$ we know that a two will be present after every one in the word so the last two in $u$ does not impact this contribution to our statistic, there are a total of $a_1$ ones. There are also $(m-2)$ letters to the right and bigger than each two present in $u$ and $a_2$ two's in $u$. Because there is a two in $v$ let $y$ denote the placement of this $2$. Then there are $|v|-y$ distinct letters that come after the two in $v$ that must be larger than two. Also every letter three or larger has $m-i$ letters larger than it where $i$ is the letter value. Therefore $lb=(m-1)a_1+(m-2)a_2+|v|-y+\sum\limits^m_{i=3}(m-i)$. 
            \item Let $b_2=0$ and let $h$ be the position of the last $2$ in $u$. Then there are $h-a_2$ ones that occur before the last two in $u$. Each of these one's will have $(m-1)$ letters to the right and bigger than $1$ in the word. There will be $(|u|-h)$ ones that occur in $u$ after the last two, these ones will have the letters three to $m$ larger than them and occurring to the right, so each one has $(m-2)$ letters to the right and bigger. Now there will be $a_2$ two's in the word and there are a total of $m-2$ distinct letters to the right and bigger. Next there are a total of $|v|-x$ letters that occur after the one in $v$, all of these must be distinct and none can be one, therefore each is to the right and bigger than this one. Finally all other letters between three and $m$ will have $m-i$ letters larger than them where $i$ is the value of the letter and each letter between three and $m$ can only occur once in the word, therefore this contributes $\sum\limits^m_{i=3}(m-i)$ to the right and bigger statistic. So $rb=(h-a_2)(m-1)+(m-2)(|u|-h)+(m-2)a_2+|v|-x+\sum\limits^m_{i=3}(m-i)$.
            \item Let $x<y$ where $b_1=b_2=1$. Then there are $(m-1)$ letters that are larger and come after every one in $u$ because there is a two in $v$ every one in $u$ will have the letters two through $m$ after it. There are $a_1$ ones in $u$. Now there are $(m-2)$ letters in $w$ that are distinct and are larger than two that come after every two in $u$, specifically the letters three through $m$. There are a total of $a_2$ two's in $u$. When $x<y$ that means that the one in $v$ comes before the $2$ in $v$. Then every letter that comes after the one in $v$ is larger than one, so there are $|v|-x$ letters that are distinct and to the right of this one. Every letter that comes after the two in $v$ will larger than two because the one is found earlier in $v$. Therefore there will be $|v|-y$ letters to the right and bigger than the two in $v$. Therefore $rb=(m-1)a_1+(m-2)a_2+2|v|-x-y$.
            \item Let $x>y$ where $b_1=b_2=1$. Then there are $(m-1)$ letters that are larger and come after every one in $u$ because there is a two in $v$ every one in $u$ will have the letters two through $m$ after it. There are $a_1$ ones in $u$. Now there are $(m-2)$ letters in $w$ that are distinct and are larger than two that come after every two in $u$, specifically the letters three through $m$. There are a total of $a_2$ two's in $u$. When $x>y$ that means that the one in $v$ occurs after the two in $v$. So there will be $|v|-x$ letters to the right and bigger than the one in $v$. There will be $|v|-y-1$ letters to the right and bigger than the two in $v$ because there are $|v|-1$ letters which come after, one of which is a one which is less than two. Therefore $rb=(m-1)a_1+(m-2)a_2+2|v|-x-y-1$.
            
            \end{itemize}
            \end{proof}
            \item{Left and Bigger}
            $$lb=a_1-z+x+y+1$$
            \begin{proof} Let $z$ be the position of the first $2$. Then there are $|u|-z$ letters in $u$ that come after the first two. Of these letters we must remove all other two's to get the number of ones that come after the fist two, so this would be $|u|-z-(a_2-1)=a_1+a_2-z-a_2+1=a_1-z+1$. So $a_1-z+1$ ones come after the first two and each of these will have one distinct letter to the left and bigger than them. The next thing to consider is the one in $v$. Again let $x$ be the position of the one in $v$. Then there will be $x+1$ letters to the left and bigger, the two's present in $u$, the three and all of the letters that occur before one in $v$. And let $y$ be the position of the two in $v$. This two will have three to the left and bigger, it will also have every letter from the beginning of $v$ until the occurrence of the letter larger than it if $x>y$, this would give us $v$ letters to the left and bigger than the two present in $v$. Now consider if $x>y$. Then that two in $v$ is not distinct from the two in $u$ for the $lb$ contribution of the one in $v$, every element in $3v$ that comes before $1$ is larger than it, so this will have an $lb$ contribution of $x$. Now when considering the two in $v$, this comes before the one, so every element in $v$ that comes before this one is also to the left and bigger, and each of these letters in $v$ must be distinct. Therefore $lb=a_1-z+x+y+1$
            \end{proof}
            \item{Right and Smaller}
            \[
 rs=\begin{cases}
 			a_2+x+y-1\\
            a_2+x+y\\
            (l-a_1)+y
         \end{cases}
\]
\begin{proof} 
\begin{itemize}
	\item Let one and two both be present in $v$, and let the position of the one, also referred to as $x$, be less than the position of the two, referred to as $y$. Then all of the two's found in $u$ will have a one to the right of them. There are $a_2$ two's of this type and each will contribute one to the right and smaller statistic. The next possible contribution to the right and smaller statistic is based on the placement of the one in $v$. When one is in the $x^{th}$ position of $v$ then there are $x-1$ letters in $v$ that come before the one and are larger than one. Also there is the three which comes before $v$ and is larger than one. Each of these letters will then contribute one to the right and smaller statistic, so this will be $x-1+1=x$. Finally consider the contribution based on the two in $v$. When $x<y$ there are $y-1$ letters that come before two in $v$, however one of these is a one, which is less than two, so there are $y-2$ letters larger than two that come before. There is also a three which must come before this two. Because each of these cases will contribute one to the $rs$ statistic, the whole statistic becomes $a_2+x+y-1$. No other contributions are possible because these are the last two possible descents in the word, and you must have a descent present after a letter for that letter to possibly contribute to the $rs$ statistic.
    \item Let one and two both be present in $v$ where $y<x$. Then because there is a one in $v$, each two in $u$ has one distinct letter to the right and smaller, contributing $a_2$ to the $rs$ statistic. Next because the two comes before the one in $v$ there will be $y-1$ letters to the left of two that are larger from $v$, and there is also the three, causing a contribution of $y$ to the $rs$ statistic. Finally consider the one, there are $x-1$ letters that come before one and are larger, because two is also larger, and there is the one three found before $v$, so this will contribute $x$ to the $rs$ statistic. Therefore $rs=a_2+x+y$.
    \item Let $b_1=0$ and let $l$ denote the last one present in $u$. There will be $l-a_1$ two's that appear before the last one, each of these must contribute one to the $rs$ statistic. Then when there is a two present in $v$ there are $y-1$ letters that come before it and are larger in $v$, and there is one three that occurs before $v$. Therefore $rs=(l-a_1)+y$. If there is no two present in $v$, then $y=0$ and the $rs$ statistic is still $(l-a_1)+y$.
\end{itemize}
\end{proof}
        \end{itemize}
  
    \wwSeven*
     \par There are again three distinct cases where $\sigma$ avoids the pattern $13/2/4$.
     
    \begin{itemize}
    \item{Case 1: } When no letter in $w(\sigma)$ is larger than two. (Proven in REU Paper). Refer to section 10 for formulas. 
    \item{Case 2: } When $w$ is of the form $uv$ such that $z\leq m-1$.
    	\begin{itemize}
        \item{Left and Bigger Statistic}
        $$lb=(m-z)b_{z}$$
        \begin{proof} Let $w$ be of the form $uv$ such that $z\leq m-1$ and let $b_z$ be the number of $z's$ in $v$. Then there will be a block of $z's$ at the end of the word, however the rest of the word is weakly increasing. Each letter in this block will have $m-z$ letters that come before it in the word and that are larger, no other letter in the word will have a larger letter come before it. Therefore there are $b_z$ such letters that will each have $m-z$ distinct letters to the left and bigger so $lb=(m-z)b_z$.
        \end{proof}
        \item{Left and Smaller Statistic}
        $$ls=(z-1)b_z+\sum\limits^m_{i=2}(i-1)a_i$$
        \begin{proof} Let $w$ be of the form $uv$ such that $z\leq m-1$ and let $b_z$ be the number of $z's$ in $v$. Then there will be a total of $z-1$ letters to the left and smaller than each $z$ found in $v$ and because there are $b_z$ $z's$ this will contribute $(z-1)b_z$ to the left and smaller statistic. Every other letter will have one less than the value of that specific letter to the left and smaller than it, and will have $a_i$ occurrences of the letter, where $i$ is the value of the letter. In this case one will not have any letters to the left and smaller because one is the smallest possible letter in $w$. So $i$ can be between $2$ and $m$. Therefore the left and smaller statistic is $ls=(z-1)b_z+\sum\limits^m_{i=2}(i-1)a_i$.
        \end{proof}
        \item{Right and Bigger Statistic}
        $$rb=\sum\limits^{m-1}_{i=1}(m-i)a_i$$
        \begin{proof}
        Let $w$ be of the form $uv$ where $z\leq m-1$. Because $u$ must be weakly increasing by our word characterization earlier, there must be $m-i$ letters larger than each letter of value $i$ to the right of that letter in the word. In $w$ $i$ can be any value between one and $m-1$ because $m$ will not have any letters larger than it. There will be $a_i$ $i's$, therefore $rb=\sum\limits^{m-1}_{i=1}(m-i)a_i$.
        \end{proof}
        \item{Right and Smaller Statistic}
        $$rs=\sum\limits^m_{i=z+1}a_i$$
        \begin{proof}
        Let $w$ be of the form $uv$ where $z\leq m-1$. Because $u$ must be weakly increasing by the definition of our word characterization earlier, the only descent in the word can be from the block of $z's$ at the end of $v$. Then every element larger than $z$ will have one distinct letter to the right and smaller. This will include every letter between $z+1$ and $m$, so we must count the total occurrences, which gives us $rs=\sum\limits^m_{i=z+1}a_i$. 
        \end{proof}
        \end{itemize}
    \item{Case 3: } When $w$ is of the form $uv$ such that $z=m-1$.
    	\begin{itemize}
        \item{Left and Bigger Statistic}
        $$lb=(m-z)b_z$$
        \begin{proof} Let $w$ be of the form $uv$ where $z=m-1$. Then by the word characterization earlier, we know that $m$ and $z$ can occur in any order in $v$ as long as $m$ occurs first. There will be $m-z$ letters to the left and bigger than every $z$ in $v$ and there are $b_z$ $z's$, so this contributes $(m-z)b_z$ to the left and bigger statistic. 
        \end{proof}
        \item{Left and Smaller Statistic}
        $$ls=(z-1)b_z+\sum\limits^m_{i=2}(i-1)a_i$$
        \begin{proof} Let $w$ be of the form $uv$ such that $z=m-1$ and let $b_z$ be the number of $z's$ in $v$. Then there will be a total of $z-1$ letters to the left and smaller than each $z$ found in $v$ and because there are $b_z$ $z's$ this will contribute $(z-1)b_z$ to the left and smaller statistic. Every other letter will have one less than the value of that specific letter to the left and smaller than it, and will have $a_i$ occurrences of the letter, where $i$ is the value of the letter. In this case one will not have any letters to the left and smaller because one is the smallest possible letter in $w$. So $i$ can be between $2$ and $m$. Therefore the left and smaller statistic is $ls=(z-1)b_z+\sum\limits^m_{i=2}(i-1)a_i$.
        \end{proof}
        \item{Right and Bigger Statistic}
        $$rb=(x-a_m)+\sum\limits^{m-1}_{i=1}(m-i)a_i$$
        \begin{proof} Let $w$ be of the form $uv$ where $z=m-1$ and let $x$ be the position of the last $m$ in $v$. Then there will be $(x-a_m)$ $z's$ that occur before the last $m$ and each of these contributes one to the right and bigger statistic. Now because $u$ must be weakly increasing, every other letter in the word has $m-i$ distinct letters larger than it where $i$ is the value of the letter and there are $a_i$ of each $i^{th}$ letter where $i$ can be anything between one and $m-1$ because $m$ has no letter larger than it in the word. Therefore $rb=(x-a_m)+\sum\limits^{m-1}_{i=1}(m-i)a_i$.
        \end{proof}
        \item{Right and Smaller Statistic}
        $$rs=(y-b_z)$$
        \begin{proof} Let $w$ be of the form $uv$ where $z=m-1$ and let $y$ be the last occurrence of $m-1$ in $v$. Then there will $(y-b_z)$ $m's$ that come before the last $m-1$ and therefore each of these $m's$ will have one distinct letter to the right and smaller. Therefore $rs=(y-b_z)$.
        \end{proof}
        \end{itemize}
    \end{itemize}
\wwEight*
   There are three cases.
   \begin{itemize}
   \item{Case 1: } Same as the $13/2/4$ case 1 where the largest letter is at most two \cite{REU1}.
   \item{Case 2: } When $w$ is weakly increasing except for a block of ones inserted between two distinct letters.
   		\begin{itemize}
        \item{Left and Bigger Statistic}
        $$lb=(x-1)b_1$$
        \begin{proof} Let $w$ be weakly increasing except for a block of ones inserted between two distinct letters. Also let $x$ denote the distinct letter immediately before the first inserted one, and $b_1$ denote the number of ones inserted into $w$. There will be $x-1$ distinct letters that come before this block of ones that are larger than one and there are $b_1$ one's inserted, this is the only possible descent. Therefore $lb=(x-1)b_1$.
        \end{proof}
        \item{Left and Smaller Statistic}
        $$ls=\sum\limits^m_{i=1}(i-1)a_i$$
        \begin{proof} Every letter will have one less than the value of that specific letter to the left and smaller than it, and will have $a_i$ occurrences of the letter, where $i$ is the value of the letter. So $ls=\sum\limits^m_{i=1}(i-1)a_i$.
        \end{proof}
        \item{Right and Bigger Statistic}
        $$rb=\sum\limits^m_{i=1}(m-i)a_i+(m-x)b_1$$
        \begin{proof} Let $w$ be weakly increasing except for a block of ones inserted between two distinct letters. Also let $x$ denote the distinct letter immediately before the first inserted one, and $b_1$ denote the number of ones inserted into $w$. Then there are $m-x$ letters that are larger than one and are found after the block of inserted ones, so each inserted one will have $m-x$ letters to the right and bigger. Because there are $b_1$ such letters this will contribute $(m-x)b_1$ to the $rb$ statistic. For every other letter in the word there will be $m-i$ where $i$ is the value of the letter, distinct letters larger than $i$, and there are $a_i$ occurrences of each letter $i$ in the word, where $i$ can be anything between one and $m-1$ because $m$ cannot have any letter larger than it. Therefore $rb=\sum\limits^m_{i=1}(m-i)a_i+(m-x)b_1$.
        \end{proof}
        \item{Right and Smaller Statistic}
        $$rs=\sum\limits^x_{i=2}a_i$$
        \begin{proof} Let $w$ be weakly increasing except for a block of ones inserted between two distinct letters. Also let $x$ denote the distinct letter immediately before the first inserted one, and $b_1$ denote the number of ones inserted into $w$. Every letter larger than one that occurs before the block of inserted ones will contribute one to the right and smaller statistic. Therefore every occurrence of the letters two through $x$ will contribute one to the right and smaller statistic. So $rs=\sum\limits^x_{i=2}a_i$.
        \end{proof}
        \end{itemize}
    \item{Case 3: } When $w$ contains a prefix $u$ of ones and twos in any order followed by a weakly increasing suffix.
    	\begin{itemize}
        \item{Left and Bigger Statistic}
        $$lb=a_1-c+1$$
        \begin{proof} Let $w$ contain a prefix $u$ of ones and two's in any order followed by a weakly increasing suffix. Then let $c$ be the position of the first two in $w$. Then $|u|=a_1+a_2$. We want the number of ones that occur after the first two because each of these will contribute one to the left and bigger statistic. Nothing else will contribute because every thing after $u$ is weakly increasing and contains letters three or greater. So to find the number of ones occurring after the first two, we will subtract the position of the first two from the length of $u$ and then will subtract $a_2-1$ to remove the rest of the twos present in the word. So $$|u|-c-(a_2-1)=a_1+a_2-a_2-c+1=a_1-c+1$$ Therefore $lb=a_1-c+1$.
        \end{proof}
        \item{Left and Smaller Statistic}
        $$ls=\sum\limits^m_{i=1}(i-1)a_i$$
        See case 1 for proof.
        \item{Right and Bigger Statistic}
        $$rb=(d-a_2)(m-1)+(|u|-d)(m-2)+\sum\limits^{m-1}_{i=2}(m-i)a_i$$
        \begin{proof} Let $w$ contain a prefix $u$ of ones and two's in any order followed by a weakly increasing suffix. Also let $d$ be the position of the last $2$ in $u$. Then there will be $d-a_2$ ones that come before the last two, each of these ones will have $m-1$ letters that are larger and to the right. Then there are $(|u|-d)$ ones that come after that last two. These ones will have $m-2$ letters that are larger and occur to the right. Then every letter between two and $m-1$ will have $m-i$ letters that are  to the right and bigger of each letter $i$. Therefore $rb=(d-a_2)(m-1)+(|u|-d)(m-2)+\sum\limits^{m-1}_{i=2}(m-i)a_i$.
        \end{proof}
        \item{Right and Smaller Statistic}
        $$rs=f-a_1$$
        \begin{proof} Let $w$ contain a prefix $u$ of ones and two's in any order followed by a weakly increasing suffix. Also let $f$ be the position of the last one in $u$. Then there will be $f-a_1$ two's that occur before the last one, meaning that each of these two's will have one distinct letter to the right and smaller. Therefore $rs=f-a_1$.
        \end{proof}

    \end{itemize}
    \end{itemize}

\eqOne*
\begin{proof}Let $\sigma_1$ avoid 13/2/4 and let $\sigma_2$ avoid $1/24/3$. Also let $w$ denote the restricted growth function for $\sigma_1$ and let $y$ denote the restricted growth function for $\sigma_2$. Let $A_1, A_2, A_3\subset R_n(13/2/4)$ such that $A_1$ contains all $w$ where $m\leq 2$ or $w$ weakly increasing, $A_2$ contains all $w=uv$ where $v$ is a block of $m$ followed by a block of $z\leq (m-1)$, and $A_3$ contains all $w=uv$ where $v$ contains $m$ and $(m-1)$ in any order after the first $m$. Also let $B_1, B_2, B_3\subset R_n(1/24/3)$ where $B_1$ contains all $w$ where $m\leq 2$ or $w$ weakly increasing, $B_2$ contains all $w$ with a block of $1$s inserted between two distinct letters in a weakly increasing word, and finally $B_3$ contains all $w=uv$ where $u$ contains $1$s and $2$s in any order allowable by the RGF.
\begin{enumerate}
	\item Let $m\leq 2$ for both $w$ and $y$. Refer to REU paper and the distribution of statistics for the avoidance class based on $1/2/3$.
    \item Let $w=uv$ where $u$ is weakly increasing and $v$ begins with the first $m$ in $w$ and then contains a block of $m$'s followed by a block of $z$'s where $z\leq (m-1)$. When $y$ is a weakly increasing word with a block of ones inserted between two distinct terms.
                \begin{itemize}
                	\item{Left and Smaller of $w$ with Right and Bigger of $y$}
                    $$ls(w)=(z-1)b_z+\sum\limits^m_{i=2}(i-1)a_i$$
                    $$rb(y)=(m-x)b_1+\sum\limits^{m-1}_{i=1}(m-i)a_i$$
                    Let $w\in A_2$, $y\in B_2$. Then the following are equations for the statistics of individual words:
                    $$ls(w)=(z-1)b_z+\sum\limits^m_{i=2}(i-1)a_i$$
                    $$rb(y)=(m-x)b_1+\sum\limits^{m-1}_{i=1}(m-i)a_i$$
                    Now let the function $f:A_2\rightarrow B_2$ where when $w=uv$, $f(w)=y$ such $b_z=b_1$ (where $b_z$ is the number of $z\in v$ and $b_1$ is the size of the block of inserted $1$s in $y$), the distinct letter $x\in y$ before this block of $1$s is $x=m-z+1$ where $z$ is the last letter in $w$, and the weakly increasing base of $y$ is formed by taking the RGF of the complement of the partition in a bijection with $u$. An example would be $f(1234422)=1123114$.
                    \par We will prove that $f$ is injective. Consider $w_1, w_2\in A_2$ where $w_1\neq w_2$. If $a_1\neq a_2$ then $a^c_1\neq a^c_2$ so the weakly increasing word in $f(w_1)$ differs from the weakly increasing word in $f(w_2)$ so $f(w_1)\neq f(w_2)$. If $b_{z_1}\neq b_{z_2}$ then $b_{1_{w_1}}\neq b_{2_{w_2}}$ so $f(w_1)\neq f(w_2)$. Therefore when $f(w_1)=f(w_2)$, $w_1=w_2$, meaning that $f$ is injective.
                    \par Let $f^{-1}$ be the inverse of $f$ such that $f^{-1}:B_2\rightarrow A_2$, where $f(y)=w$ such that $w$ is formed by taking the RGF of the complement of the partition corresponding to the weakly increasing base of $y$. This will be weakly increasing by Lemma 2.6. The function will define $z\in w$ by $z=m-x+1$ and $b_1=b_z$. 
                    \par We will show that $f^{-1}$ is injective. Consider $y_1, y_2\in B_2$ where $y_1\neq y_2$. If the weakly increasing word in $y_1$ differs from the weakly increasing word in $y_2$ then $a_{1_{y_1}}\neq a_{1_{y_2}}$ so $f^{-1}(y_1)\neq f^{-1}(y_2)$. If $b_{1_{y_1}}\neq b_{1_{y_2}}$, then  $f^{-1}(y_1)\neq f^{-1}(y_2)$. Therefore when $f^{-1}(y_1)=f^{-1}(y_2)$, $y_1=y_2$, so $f^{-1}$ is injective. Therefore $f$ is bijective. 
                   \par Now consider 
                    $$rb[f(w)]=(m-x)b_1+\sum\limits^{m-1}_{i=1}(m-i)c_i$$
                    $$=[m-(m-z+1)]b_z+\sum\limits^{m-1}_{i=1}(m-i)a_{m-i+1}$$
                    $$=(z-1)b_z+\sum\limits^{m-1}_{i=1}[(m+1-i)-1]a_{m-i+1}$$
                    $$=(z-1)b_z+\sum\limits^m_{i=2}(i-1)a_i=ls(w)$$
                    Therefore because $rb(f(w))=ls(w)$ and $f$ is a bijective function, $LS(A_2)\sim RB(B_2)$.
                    \item{Right and Bigger of $w$ with Left and Smaller of $y$.}
                    Now consider, (using the equations for statistics from Theorem 4.5 and Theorem 4.6) 
                    $$ls(f(w))=\sum\limits^m_{i=1}(i-1)c_i$$
                    $$=\sum\limits^m_{i=1}(i-1)a_{m-i+1}$$
                    $$=\sum\limits^m_{i=1}[m-(m-i+1)]a_{m-i+1}$$
                    $$=\sum\limits^{m-1}_{i=1}(m-i)a_i=rb(w)$$
                    Therefore because $f$ is a bijection from $A_2$ to $B_2$ and $rb(w)=ls(f(w))$, $RB(A_2)\sim LS(B_2)$.
                    \item{Left and Bigger.}
                     Now consider, $lb(f(w)$. From Theorems 4.5 and 4.6 we have,
                    $$lb(f(w))=(x-1)b_1$$
                    $$=[(m-z+1)-1]b_z$$
                    $$=(m-z)b_z=lb(w)$$
                    Therefore $LB(A_2)\sim LB(B_2)$ because $f$ is a bijective function from $A_2$ to $B_2$ and $lb(f(w))=lb(w)$. 
                    \item{Right and Smaller}
                    $$rs(w)=\sum\limits^m_{i=z+1}b_i$$
                  	$$rs(y)=\sum\limits^x_{i=2}\tilde{a}_i$$
                    Let $w\in A_2$, $y\in B_2$. Now let the function $h:A_2\rightarrow B_2$ where $h$ takes some $w=abc$ of length $n$ and with $m$ distinct letters and $a$ is the weakly increasing prefix that ends with the last occurrence of $z-1$ where $a$ corresponds to $\tilde{c}\in y$ such that $\tilde{c}$ is the complement of $a$, because $a$ is weakly increasing $\tilde{c}$ is weakly increasing by the lemma. The first occurrence of $z$ in $w$ marks the beginning of $b\in w$, where the subword $b$ ends with the last occurrence of $m$ where $b$ corresponds to $\tilde{a}$ which is the prefix of the word $y$ where $z$ becomes one, $z+1$ becomes two and so on such that each element from $b$ becomes $i-z+1$ in $y$ such that $a$ ends with $m-z+1$. The suffix of $w$, $c$, is the block of $z$ that occur after the last $m$ in $w$. The function $h$ will cause the size of $c$ to correspond to the size of $\tilde{b}$ where $\tilde{b}$ is the block of inserted ones after the last occurrence of $m-z+1$ in $y$.
                    \\Now consider $w_1, w_2\in A_2$ where $w_1\neq w_2$. If $a_1\neq a_2$, then $a^c_1\neq a^c_2$ meaning that $\tilde{c}_1\neq \tilde{c}_2$, therefore $h(w_1)\neq h(w_2)$. If $b_1\neq b_2$, then the number of $i-z+1$ will be different between $\tilde{a}_1$ and $\tilde{a}_2$ where $i$ has a different number of occurrences in $b_1$ than in $b_2$, therefore If $h(w_1)\neq h(w_2)$. If $c_1\neq c_2$, then $\tilde{b}_1\neq\tilde{b}_2$ because they will be of different sizes, therefore $h(w_1)\neq h(w_2)$. Therefore if $h(w_1)=h(w_2)$, then $w_1=w_2$. So $h$ is one-one.
                    \\Let $h^{-1}:B_2\rightarrow A_2$ where $h^{-1}$ is the inverse of $h$. Where $h^{-1}(y)=w$ where $y=\tilde{a}\tilde{b}\tilde{c}$ such that $\tilde{a}$, the weakly increasing prefix of $y$ that ends with the last occurrence of $x$ the last letter before a block of inserted ones, corresponds to $b\in w$ where $1$ becomes $z$ and every element $i\in \tilde{a}$ becomes $i+z-1$ in $b$, so the last element in $\tilde{a}$, $m-z+1$ becomes $(m-z+1)+z-1=m$ in $b$. Then $\tilde{b}$, the block of inserted ones, corresponds to $c\in w$ where $c$ is the same length as $\tilde{b}$ however composed of a block of $z$ where $z$ is determined to be $m-x+1$. Finally $\tilde{c}$ the weakly increasing suffix of $y$ corresponds to the weakly increasing prefix of $w$, $a$, where $\tilde{c}^c=a$.
                    \\Now consider $y_1, y_2\in B_2$ where $y_1\neq y_2$. If $\tilde{a}_1\neq \tilde{a}_2$, then for each $i$ where there are a different number of occurrences of $i$ in $\tilde{a}_1$ versus $\tilde{a}_2$ there will be a different number of occurrences of $i+z-1$ in $b_1$ versus $b_2$, therefore $h^{-1}(y_1)\neq h^{-1}(y_2)$. If $\tilde{b}_1\neq \tilde{b}_2$ then the lengths must not be the same, so the length of $c_1$ and $c_2$ will also differ, meaning that $h^{-1}(y_1)\neq h^{-1}(y_2)$. If $\tilde{c}_1\neq\tilde{c}_2$ then $\tilde{c}_1^c\neq\tilde{c}_2^c$ meaning that $a_1\neq a_2$, therefore $h^{-1}(y_1)\neq h^{-1}(y_2)$. So when $h(w_1)=h(w_2)$, then $w_1=w_2$. So $h^{-1}$ is one-one.
                    \\Because $h$ and $h^{-1}$ are both one-one, then $h$ is a bijective function between $A_2$ and $B_2$. 
                    Therefore $x=m-z+1$ and has the same number of occurrences as $a_m$ the number of $m$ in $w$. The word $y$ ends such that the weakly increasing suffix of $y$ is created from the complement of the weakly increasing prefix of $w$ that occurs before the first $z$. (An example of the function would be $h(12234522)=11234115$). Now the same number of elements larger than $z$ will occur before $z$ as the number of elements larger than one that occur before the block of inserted ones. This function $h$ is reversible, and the inverse $h^{-1}:A_2\rightarrow B_2$ takes some word $y\in B_2$ such that $h^{-1}(y)$ will be the word created by $z=m-x+1$, all letters after the block of ones inserted will start the word, such that $m$ in $y$ becomes $1$ in $h^{-1}(y)$ and so on until $m-x$ followed by $z$, $z+1$ and so on until the block of $m$'s followed by a block of $z$'s corresponding in length to the block of ones in $y$. Because $h$ is reversible, $h$ is a bijective function between $A_2$ and $B_2$. Now consider,
                    $$rs(h(w))=\sum\limits^x_{i=2}\tilde{a}_i$$
                    $$=\sum\limits^m_{i=z+1}b_i=rs(w)$$
                    Therefore because $h$ is bijective, and $rs(h(w))=rs(w)$, $RS(A_2)\sim RS(B_2)$.
                \end{itemize}
                \item Let $w$ be of the form $uv$ where $u$ is weakly increasing and $v$ begins with the first $m$ and then contains $m$ and $(m-1)$ in any order and let $y$ be of the form $uv$ where $u$ contains $1$ and $2$ in any reasonable order allowed by the definition of the restricted growth function, followed by $v$ which begins with the first occurrence of $3$ and is a weakly increasing suffix.
                \begin{itemize}
                	\item{Left and Smaller of $w$ with Right and Bigger of $y$}
                    
                    Let $w\in A_3$ where $A_3$ is the set of words in case 3 of the avoidance class for the pattern $13/2/4$. Let $y\in B_3$ where $B_3$ is the set of all words in case 3 of the avoidance class for the pattern $1/24/3$. Let the function $g: A_3\mapsto B_3$. The function $g(w)$ will take a word $w=abc$ where $a$ is weakly increasing from one to $m-2$, $b$ is the subword of the $m-1$'s that occur after $a$, followed by $c$ which begins with the first occurrence of $m$ followed by $m$ and $m-1$ in any order. Then let $g(w)=y$ where $y=\tilde{a}\tilde{b}\tilde{c}$ such that $\tilde{a}$ begins with one followed by one's and two's in any order ending one position before the last occurrence of two, $\tilde{a}$ is found by the function $g$ such that $c\in w$ corresponds to $\tilde{a}\in y$ such that $m\in c$ becomes $1\in \tilde{a}$ and $(m-1)\in c$ becomes $2\in \tilde{a}$. The next subword $\tilde{b}\in y$ is formed from $b\in y$. The first $(m-1)$ in $b$ will become $2$, which is found in the first position of $b'$ and is the last two in $y$. All other $(m-1)\in b$ will become ones $\in \tilde{b}$ which occur after the last two in $y$. Therefore there are $|b|-1$ one's that occur after the last two. Finally the last subword $\tilde{c}\in y$ is formed by taking the complement of the weakly increasing subword $a\in w$, by the lemma the complement of a weakly increasing word is weakly increasing, and by the definition of $a$ where $a$ ends with the last occurrence of $m-2$, the complement of $m-2$ is $m-(m-2)+1=3$ so $\tilde{c}$ is weakly increasing beginning with the first occurrence of $3$. Because $y$ will have a prefix of ones and twos in any order which composed of the two subwords $\tilde{a}$ and $\tilde{b}$ followed by a weakly increasing suffix $\tilde{c}$ which begins with the first occurrence of $3$, $y$ is a word in $B_3$.  
                    \\Now consider two words $w_1, w_2 \in A_3$ where $w_1\neq w_2$. If $a_1\neq a_2$ then $a'_1=\tilde{c_1}$ and $a'_1=\tilde{c_2}$, and $a'_1\neq a'_2$ so $g(w_1)\neq g(w_2)$. If, however, $b_1\neq b_2$ then $|b_1|\neq |b_2|$ and because $|b_1|=|\tilde{b}|$ and $|b_2|=|\tilde{b_2}|$, then $|\tilde{b_1}|\neq |\tilde{b_2}$ so $\tilde{b_1}\neq \tilde{b_2}$, therefore $g(w_1)\neq g(w_2)$. Finally consider if $c_1\neq c_2$. Then by the definition of the function $g$ the ones and twos in $\tilde{a_1}$ will not correspond to the position of the ones and twos in $\tilde{a_2}$, so $g(w_1)\neq g(w_2)$. So when $g(w_1)=g(w_2)$, $w_1=w_2$. Therefore $g$ is one-one.
                    \\Now consider $g^{-1}: B_3\mapsto A_3$. The function $g^{-1}(y)=w$ where $y=\tilde{a}\tilde{b}\tilde{c}$ will result in some $w\in A_3$ where  $w=abc$ such that by taking the complement of the weakly increasing suffix $\tilde{c}\in y$ we create $a\in w$. By the lemma, the complement of a weakly increasing word is weakly increasing, so $a$ is weakly increasing, and because $\tilde{c}$ begins with the first occurrence of $3$, $a$ will end in the last occurrence of $m-3+1=m-2$. The subword $\tilde{b}\in y$ where $\tilde{b}$ begins with the last two and then contains a block of ones ending with the last one in the word will correspond to the subword $b\in w$ such that every element in $\tilde{b}$ corresponds to an $(m-1)$ in $b$ where $|\tilde{b}|=|b|$. Finally the subword $\tilde{a}\in y$ where $\tilde{a}$ begins with one and then contains ones and twos in any order ending before the last occurrence of two which corresponds to the subword $c\in w$ where each one in $\tilde{a}$ corresponds to an $m$ in $c$ and each two in $\tilde{a}$ corresponds to an $(m-1)$ in $c$. Therefore $g^{-1}(y)$ has a weakly increasing prefix composed of $a$ and $b$ followed by $m$ and $(m-1)$ in any order after the first occurence of $m$ which is the subword $c$, meaning that $g^{-1}(y)$ is in $A_3$.
                    \\Now consider two words $y_1, y_2\in B_3$ where $y_1\neq y_2$. If $\tilde{a_1}\neq \tilde{a_2}$ then $c_1\neq c_2$ because of the definition of $g^{-1}$ the $m$'s and $(m-1)$'s in $g^{-1}(y_1)$ and $g^{-1}(y_2)$ will not be found in the same locations and there may be different quantities of each. Therefore $g^{-1}(y_1)\neq g^{-1}(y_2)$. If $\tilde{b_1}\neq \tilde{b_2}$ then $b_1\neq b_2$ because they will not be of the same length. Therefore $g^{-1}(y_1)\neq g^{-1}(y_2)$. Finally consider if $\tilde{c_1}\neq\tilde{c_2}$, then $\tilde{c_1}'\neq\tilde{c_2}'$ meaning $a_1\neq a_2$. Therefore $g^{-1}(y_1)\neq g^{-1}(y_2)$!!! So when $g^{-1}(y_1)=g^{-1}(y_2)$, then $y_1=y_2$. Therefore $g^{-1}$ is one-one. 
                    Because both $g$ and $g^{-1}$ are one-one, $g$ is a bijective function between $A_3$ and $B_3$. 
                    
                    Now consider,
                    $$rb(g(w))=(m-1)(\tilde{a_1})+(m-2)(\tilde{a_2}+|\tilde{b}|)+\sum\limits^{m-1}_{i=3}(m-i)\tilde{c_i}$$
                    Then by the definition of $g$, $$rb(g(w))=(m-1)a_m+(m-2)(a_{m-1}+b_{m-1})+\sum\limits^{m-1}_{i=3}[m-(m-i+1)]\tilde{c}_{m-i+1}$$
                    $$=(m-1)a_m+(m-2)(a_{m-1}+b_{m-1})+\sum\limits^{m}_{i=3}(i-1)a_i=ls(w)$$
                    Because $g$ is a bijective function and $rb(g(w))=ls(w)$, $LS(A_3)\sim RB(B_3)$.
                    \item{Right and Bigger of $w$ with Left and Smaller of $y$.} 
                   
                    Let $w\in A_3$ such that $w=abc$, $a$ is the weakly increasing prefix of $w$ which ends at the last occurrence of $m-2$, $b$ begins with the first $m-1$ and ends with the last occurrence of $m$ in the word, and finally, $c$ is a block of $m-1$ that occur after the last $m$ in the word. Also let $y\in B_3$ such that $y=\tilde{a}\tilde{b}\tilde{c}$ where $\tilde{a}$ is the prefix of one's and two's beginning with one and ending with the last occurrence of two, $\tilde{b}$ is a block of ones after the last two in the word, and $\tilde{c}$ begins with the first three and is a weakly increasing suffix for $w$. Now let $h:A_3\rightarrow B_3$ where $h(w)=y$  such that the complement of $a$ becomes $\tilde{c}$ and by the lemma this will ensure that $\tilde{c}$ is a weakly increasing suffix because $a$ is weakly increasing also $\tilde{c}$ will begin with a three because $m-(m-2)+1=3$, $h$ will cause the $m-1$'s in $c$ to become ones in $\tilde{b}$, and $h$ takes $b$ where $m$ becomes one and each $m-1$ becomes a two and they are found in reverse order in the prefix of $y$, $\tilde{a}$ so $\tilde{a}$ begins with one and ends with the last two in $y$ because $b$ contains the first occurrence of $m-1$ which corresponds to the last two in $y$.
                    \\Now consider two words $w_1, w_2\in A_3$ such that $w_1\neq w_2$. If $a_1\neq a_2$ then $a'_1\neq a'_2$ by the definition of the complement, so $\tilde{c}_1\neq \tilde{c}_1$, meaning that $h(w_1)\neq h(w_2)$. If $b_1\neq b_2$ then the number, order, or both of the one's and two's in $\tilde{a}_1$ will differ from that of $\tilde{a}_2$ because the number, order, or both of $m$'s and $(m-1)$'s differ in $b_1$ and $b_2$, therefore $h(w_1)\neq h(w_2)$. If $c_1\neq c_2$, then $|c_1|\neq |c_2|$, meaning that $|\tilde{b}_1|\neq |\tilde{b}_2|$, therefore $h(w_1)\neq h(w_2)$!!! Therefore when $h(w_1)=h(w_2)$, then $w_1=w_2$. This means that $h$ is a one-one function.
                    \\Let $h^{-1}$ be the inverse function of $h$, such that $h^{-1}:B_2\rightarrow A_2$. This function will cause $\tilde{a}\in y$ to correspond to $b\in w$ where the ones in $\tilde{a}$ correspond to the $m$'s in $b$, and the two's in $\tilde{a}$ correspond to $(m-1)$'s in $b$ such that each element is found in reverse order from that of $\tilde{a}$ in $b$. Then $\tilde{b}$, the block of ones after the last two, corresponds to $c\in w$ the block of $(m-1)$'s after the last $m$. Finally $\tilde{c}$, the weakly increasing suffix of $w$ which begins with a three, corresponds to the weakly increasing prefix $a\in w$ where $\tilde{c}^c=a$.
                    \\Now consider two words $y_1, y_2\in B_3$ where $y_1\neq y_2$. If $\tilde{a}_1\neq\tilde{a}_2$, then the number and/or order of one's and two's differs between these two subwords, because the number and order of $m$ and $m-1$ in $b_1$ and $b_2$ will depend on the order and number of $\tilde{a}_1$ and $\tilde{a}_2$ respectively, $b_1\neq b_2$, therefore $h^{-1}(y_1)\neq h^{-1}(y_2)$. If $\tilde{b}_1\neq\tilde{b}_2$, $|\tilde{b}_1|\neq|\tilde{b}_2|$ so $|c_1|\neq|c_2|$, therefore $h^{-1}(y_1)\neq h^{-1}(y_2)$. If $\tilde{c}_1\neq\tilde{c}_2$, then $\tilde{c}^c_1\neq\tilde{c}^c_2$ so $a_1\neq a_2$, therefore $h^{-1}(y_1)\neq h^{-1}(y_2)$!!! Therefore when $h^{-1}(y_1)=h^{-1}(y_2)$, $y_1=y_2$. So $h^{-1}$ is one-one.
                    Because $h$ and $h^{-1}$ are both one-one, the function $h$ is bijective between $A_3$ and $B_3$.
                    The $m$ terms in $w$ correspond to ones in the prefix of $y$ before the last two, all $z$'s before the last $m$ in $w$ correspond to twos in the word $y$, and finally all $z's$ after $x$ the position of the last $m$ in $w$ correspond to ones after the last two in $y$. All $z$ that occur before the last $m$A will be equal to $x-a_m$ where $x$ is the position of the last $m$ in the suffix of $w$ beginning with the first $z$. This corresponds to the total number of two's in $y$. The weakly increasing suffix of $y$ is the complement of the prefix of $w$. (An example is $h(122345544)=112113445$.) This function is reversible, the inverse, $h^{-1}:B_3\rightarrow A_3$ will take some $y\in B_3$ such that $g^{-1}(y)=w$ where $w$ is the word created by taking the complement of the weakly increasing suffix of $y$ as the prefix in $w$, the one's in the prefix of $y$ before the last two become $m$ in $w$, all two's in $y$ correspond to $z$'s before the last $m$ in $w$, the number of one's after the last two in $y$ corresponds to the $z$'s that occur after the last $m$ in $w$. Now consider,
                    $$ls(h(w))=\tilde{a}_2+\sum\limits^m_{i=3}(i-1)\tilde{c}_i$$
                    $$=b_{m-1}+\sum\limits^{m-2}_{i=1}[(m-i+1)-1]\tilde{c}_{m-i+1}$$
                    $$=b_{m-1}+\sum\limits^{m-2}_{i=1}(m-i)a_i=rb(w)$$
                    Therefore because $h$ is a bijective function between $A_3$ and $B_3$ and $ls(h(w))=rb(w)$, $RB(A_3)\sim LS(B_3)$.
                    \item{Left and Bigger.}

                    Let $w\in A_3$ and $y\in B_3$ where $w=ab$ such that $a$ is the weakly increasing prefix ending with the last occurrence of $m-2$, and $b$ is the suffix of $m-1$ and $m$ in any order beginning with the first occurrence of $m-1$ in $w$ and $y=\tilde{a}\tilde{b}$ such that $\tilde{a}$ is the prefix of ones and twos, followed by $\tilde{b}$ the weakly increasing suffix beginning with the first three in $y$. Let $g: A_3\mapsto B_3$ where $g(w)$ will cause $a\in w$ to correspond to $\tilde{b}$ where $a^c=\tilde{b}$ and by the lemma $\tilde{b}$ will be weakly increasing. The suffix $b$ in $w$ corresponds to $\tilde{a}\in y$ where each $m-1$ in $b$ becomes $1$ found in the same position of $b$ as in $\tilde{a}$, $m\in b$ becomes $2$ in the prefix. So the position of the first two in $\tilde{a}$, $c$, is equal to the position of the first $m$ in the suffix of $w$, $b$. 
                    \\Now consider $w_1, w_2\in A_3$ where $w_1\neq w_2$. If $a_1\neq a_2$ then $a^c_1\neq a^c_2$ so $\tilde{b}_1\neq\tilde{b}_2$, therefore $g(w_1)\neq g(w_2)$. If $b_1\neq b_2$ then the number and order of $m$'s and $(m-1)$'s will differ, and the corresponding $\tilde{a}_1$ and $\tilde{a}_2$ will differ in the number and order of one's and two's, therefore $g(w_1)\neq g(w_2)$. Therefore if $g(w_1)=g(w_2)$, $w_1=w_2$ meaning that $g$ is one-one.
                    \\Let $g^{-1}$ be the inverse function where $g^{-1}:B_3\rightarrow A_3$. The subword $\tilde{a}$ corresponds to $b\in w$ where one's become $m-1$ in $b$, and twos become $m$'s. The subword $\tilde{b}$ corresponds to $a\in w$ where $\tilde{b}^c=a$ which will be weakly increasing by the lemma.
                    \\Now consider $y_1, y_2\in B_3$ where $y_1\neq y_2$. If $\tilde{a}_1\neq\tilde{a}_2$ then the ones and twos in these two subwords differ in position and potentially the quantity, so the position and potentially the quantity of $(m-1)$'s and $m$'s will differ between $b_1$ and $b_2$, therefore $g^{-1}(y_1)\neq g^{-1}(y_2)$. If $\tilde{b}_1\neq\tilde{b}_2$ then $\tilde{b}^c_1\neq\tilde{b}^c_2$ meaning that $a_1\neq a_2$ so $g^{-1}(y_1)\neq g^{-1}(y_2)$. So when $g^{-1}(y_1)=g^{-1}(y_2)$, $y_1=y_2$. Therefore $g^{-1}$ is one-one.
                    Because $g$ and its inverse $g^{-1}$ are one-one, $g$ is a bijection between $A_3$ and $B_3$.
                    Now let $d$ be the position of the first $m$ in $b$, and $c$ the position of the first two in $\tilde{a}$. Then
                    $$lb(g(w))=|\tilde{a}|-c-\tilde{a}_2+1=|b|-d-a_m+1=lb(w)$$
                    Because $g$ is a bijective function between $A_3$ and $B_3$ and $lb(g(w))=lb(w)$, $LB(A_3)\sim LB(B_3)$.
                    \item{Right and Smaller}
                    $$rs(w)=t-b_{m-1}$$
                   	$$rs(y)=f-\tilde{a}_1$$
                    Where $f$ is the position of the last one in $u$.
                    Let $g$ be the function defined in the last case (the one for $LB(A_3)\sim LB(B_3)$). Let $t$ be the position of the last occurrence of $m-1$ in $b\in w$ and let $f$ be the position of the last occurrence of one in $\tilde{a}\in y$
                    $$rs(g(w))=f-a_1$$
                    $$=t-b_{m-1}=rs(w)$$
                    Therefore because $g$ is a bijective function between $A_3$ and $B_3$ and $rs(g(w))=rs(w)$, $RS(A_3)\sim RS(B_3)$.
                \end{itemize}
          
           Because $A_1\cup A_2\cup A_3=R_n(13/2/4)$ and $B_1\cup B_2\cup B_3\cup=R_n(1/24/3)$ and 
           $LS(A_1)\sim RB(B_1)$, $RB(A_1)\sim LS(B_1)$, $RS(A_1)\sim RS(B_1)$, $LB(A_1)\sim LB(B_1)$, $LS(A_2)\sim RB(B_2)$, $RB(A_2)\sim LS(B_2)$, $RS(A_2)\sim RS(B_2)$, $LB(A_2)\sim LB(B_2)$, $LS(A_3)\sim RB(B_3)$, $RB(A_3)\sim LS(B_3)$, $RS(A_3)\sim RS(B_3)$, $LB(A_3)\sim LB(B_3)$,
           $$LS(13/2/4)\sim RB(1/24/3)$$
           $$RB(13/2/4)\sim LS(1/24/3)$$
           $$RS(13/2/4)\sim RS(1/24/3)$$
           $$LB(13/2/4)\sim LB(1/24/3)$$ 
\end{enumerate}
\end{proof}       

\TwoSeven*
\begin{proof} Let $w\in R_n(1/23,13/2/4)$. Then $w\in R_n(1/23)$ so by Sagan in Lemma 2.1, we know that $m\geq 1$ and that $w$ is formed by inserting a $1$ into a word of the form $1^l23...m$ \cite{Sag09}. We also know that $w\in R_n(13/2/4)$ so $w$ can take the following forms by Theorem 3.9:
\begin{itemize}
	\item[a. ] We can have $m\leq 2$. Then either $m=1$ or when $m=2$ by Lemma 2.1 $w=1^l21$ or $w=1^l2$. 
    \item[b. ] Because $w\in R_n(13/2/4)$ we could have $w=ab$ where $a$ is weakly increasing and $b$ begins with the first $m$ and then contains $m$ and $m-1$ in any order. By Theorem 3.9 we know $w$ avoids $13/2/4$ however by Lemma 2.1 we know that $w$ contains $1/23$. Therefore $w$ cannot take this form.
    \item[c. ] Finally let $w=ab$ where $a$ is weakly increasing and $b$ begins with the first $m$ and contains a block of $m$s followed by a block of $z$s where $z/leq(m-1)$. By Theorem 3.9, $w\in R_n(13/2/4)$, however by Lemma 2.1 we know $z=1$ and there can only be one occurrence of $1$ in $b$. Also by Lemma 2.1, $a$ begins with a block of $1$s followed by a strictly increasing subword. 
\end{itemize}
Therefore by Lemma 2.1 and Theorem 3.9, when $w\in R_n(1/23,13/2/4)$, $w=abc$ where $a$ is a prefix of $1$s, $b$ is a strictly increasing subword beginning with $2$ and ending with $m$ where $b$ can be empty, followed by $c$ which is either empty or contains a $1$. 
\end{proof}
$$\#\Pi_n (1/23,13/2/4)=2(n-1)$$
\begin{proof}First let $w=ab$ where $a$ is a prefix of $1$s and $b$ is strictly increasing beginning with the first $2$ where $b$ can be empty. Then $|a|$ can be anything from one to $n$, and because $b$ is strictly increasing, there are a total of $n$ ways to form $w$.
\par Next let $w=abc$ where $a$ is a prefix of $1$s, $b$ is strictly increasing from $2$ to $m$, and $c$ is one occurrence of $1$. Then $|a|$ can vary from one to $n-2$. Because $b$ is strictly increasing and $c$ is a suffix containing exactly one $1$, there is only one way to form $bc$. So there are $n-2$ ways to form $w$ when $w=abc$.

Therefore $\#\Pi_n(1/23,13/2/4)=n+n-2=2(n-1)$
\end{proof}
\FourFour*
\begin{proof} Let $w\in R_n(13/2,123/4)$. Then $w\in R_n(13/2)$ and $w\in R_n(123/4)$. Then because $w\in R_n(13/2)$, $w$ must be weakly increasing by Sagan \cite{Sag09}. Also $w$ must be of the form $ab$ where $a$ is an RGF ending with $m$ where no element is repeated more than twice and $b$ is a block of some element $x$ of any size by the fact that $w\in R_n(123/4)$ from Theorem 3.6. Because $w$ must be weakly increasing, $x=m$. Then $w=ab$ where $a$ is weakly increasing with no element repeated more than twice and $b$ is a block of $m$'s of any length. 
\end{proof}
$$\#\Pi_n (13/2,123/4)=1+\sum^{n-1}_{j=0}\sum^{\lfloor\frac{n-j}{2}\rfloor}_{i=0}{{n-j-i}\choose i}$$
\begin{proof} Consider the enumeration of the avoidance class. We have just shown that $w=ab$. When $m=1$ then there is one way to form $w$ by the Sterling numbers of the second kind. Next consider when $1<m\leq n$. Because $1<m\leq n$, the block of $m$s, beginning with the first occurrence of $m$ can vary between the length of one and $n-1$. Next consider how many of every other element can occur. Every element $1$ through $m-1$ must occur at least once in $a$. There are ${n-a_m-i}\choose{i}$ ways for us to choose $i$ letters that get to be repeated in the word where $a_m$ is the total number of $m\in w$, because when we want to repeat $i$ letters we will have $m-1=n-a_m-i$, so there are $n-a_m-i$ choices for these $i$ letters. Our choice of $i$ can vary from $0$ to half of $n-a_m$ when $n-a_m$ is even because every element from $1$ to $m-1$ could be repeated, or $0$ to half of $n-a_m-1$ when $n-a_m$ is odd, because then every element except one between $1$ and $m-1$ could be repeated in this case. Therefore our choice of $i$ can vary from $0$ to $\lfloor\frac{n-a_m}{2}\rfloor$. Therefore $\#\Pi_n (13/2,123/4)=1+\sum^{n-1}_{j=0}\sum^{\lfloor\frac{n-j}{2}\rfloor}_{i=0}{{n-j-i}\choose i}$ where $j=a_m$.
\end{proof}
\FourThree*
\begin{proof} Let $w\in R_n(13/2,1/234)$. Then $w\in R_n(13/2)$ and $w\in R_n(1/234)$. By Sagan we know that because $w\in R_n(13/2)$ then $w$ is weakly increasing \cite{Sag09}. By Jonathan Bloom $w=ab$ where $a$ is a block of $1$'s of any length, and $b$ begins with $2$ and is in any reasonable order for an RGF and no element in $b$ is repeated more than twice. Because $w\in R_n(13/2,1/234)$ $w$ must be weakly increasing where no element except one can be repeated more than twice. 
\end{proof}
$$\#\Pi_n (13/2,1/234)=1+\sum^{n-1}_{j=0}\sum^{\lfloor\frac{n-j}{2}\rfloor}_{i=0}{{n-j-i}\choose i}$$
\begin{proof}Consider the enumeration of the avoidance class. When $m=1$ there is one way to order $w$. When $1<m\leq n$, $a_1$, or the number of $1$s present in $w$ will vary between one and $n-1$ when $w$ is strictly increasing. Next let $i$ denote the number of letters between $2$ and $m$ that are repeated. Because $w$ is a weakly increasing word, as we find the cardinality order will not be a concern. We know that the length of $w$ following the block of $1$s is $n-a_1$ and that if there are $i$ letters repeated, that there are $n-a_m-i$ choices for those repeated letters. Therefore there are ${n-a_i-i}\choose{i}$ ways to form the weakly increasing suffix of $w$ for every $i$ where $i$ can vary from $0$, when no letters are repeated, to $\frac{n-a_1}{2}$ when $n-a_m$ is even and every letter in the suffix is repeated, or the upper limit for $i$ will be $\frac{n-a_1-1}{2}$ when $n-a_m$ is odd and every letter except one is repeated in the suffix. Therefore $0\leq i\leq \lfloor\frac{n-a_1}{2}\rfloor$. Therefore $\#\Pi_n (13/2,1/234)=1+\sum^{n-1}_{j=0}\sum^{\lfloor\frac{n-j}{2}\rfloor}_{i=0}{{n-j-i}\choose i}$ where $j=a_1$.
\end{proof}

\SevenFive*
\begin{proof} Let $w\in R_n(13/2/4,134/2)$. Then $w\in R_n(13/2/4)$ and $w\in R_n(134/2)$. Then by Theorem 3.9 $w$ must be of one of these forms: 
\begin{itemize}
\item[1.] $m\leq 2$
\item[2.] $w=ab$ where $a$ is weakly increasing ending with the last m in the first block of $m$'s in $w$ and $b$ begins with $m-1$ and contains $m$ and $m-1$ in any order.
\item[3.] $w=ab$ where $a$ is weakly increasing, $b$ is a block of some $x<(m-1)$.
\end{itemize}
By theorem 3.7, because $w\in R_n(134/2)$ then any element $x\in w$ where $x$ repeated more than twice will have a block of $x$'s followed later in the word with a maximum of one more occurrence of $x$. 
Then there are three possibilities for $w\in R_n$.
\begin{itemize}
\item[a)] Let $w$ have $m\leq 2$. Then for $w$ to be in $R_n(134/2)$, $w$ could just contain all $1s$, $w$ could be weakly increasing, or when $w$ is not weakly increasing, then $1$ could be the only block-singleton, or $1$ and $2$ are block-singletons. 
\item[b)] Let $w=ab$ where $b$ begins with $m-1$ and contains $m$ and $m-1$ in any order. Then because $w\in R_n(134/2)$, $|b|=1$ such that $w$ is a weakly increasing word followed by one occurrence of $m-1$, or $(m-1)$ followed by $m$.
\item[c)] Let $w=ab$ where $b$ is a block of some element $x<m-1$. Then by theorem 3.7, $|b|=1$ where the only occurrence in $b$ is the element $x<m-1$.
\end{itemize}
\end{proof}
$\#\Pi_n(13/2/4,134/3)=2n-5+\sum\limits^n_{m=1}{{n-1}\choose{n-m}}+\sum\limits_{|a|=3}^{n-1}\sum\limits_{m=3}^{|a|}{{|a|-1}\choose{|a|-m}}(m-1)+ \sum\limits_{|a|=1}^{n-4}\sum\limits_{m=3}^{|a|+2}{{|a|-1}\choose{|a|-(m-2)}}(n-|a|-2)$.
\begin{proof} Let $w\in R_n(13/2/4,134/2)$. Then $w$ can take the following forms:
\begin{itemize}
	\item[(i)] Let $w$ be weakly increasing and $1\leq m\leq n$ by Lemma 2.2, there are $\sum\limits^n_{m=1}{{n-1}\choose{n-m}}$ possibilities for $w$.
    \item[(ii)] Let $m=2$ and only $1$ be a block-singleton letter. Then there are three assigned positions. We know that the first letter is $1$ and the second letter is $2$ and the last letter is also $1$. Therefore the length of the block of $1$s can vary from one to $n-2$. So there are $n-2$ possible words for $w$.
    \item[(ii)] Let $m=2$ and $1$ and $2$ are block-singleton letters. Then there are $n-3$ possible words of this form.
    \item[(iii)] Let $w=ab$ where $m\geq 3$, $b$ is a block of $z$ where $z$ is a block-singleton, and $a$ is weakly increasing. Then $3\leq m\leq |a|$, and $3\leq |a|\leq (n-1)$. Also note that $1\leq z\leq (m-1)$ so there are $(m-1)$ options for $z$. Then there will be $\sum\limits_{|a|=2}^{n-1}\sum\limits_{m=3}^{|a|}{{|a|-1}\choose{|a|-m}}(m-1)$ possible words of this form.
    \item{(iv)} Finally let $w=ab$ where $b=(m-1)^lm^k(m-1)m$ and $a$ is weakly increasing. Then we know that $1\leq |a|\leq (n-4)$, and so $3\leq m\leq |a|+2$ and there will be $(n-|a|-2)$ ways to form $b$. Therefore there are a total of $\sum\limits_{|a|=1}^{n-4}\sum\limits_{m=3}^{|a|+2}{{|a|-1}\choose{|a|-(m-2)}}(n-|a|-2)$ possible words of this form.
\end{itemize}
Therefore $\#\Pi_n(13/2/4,134/3)=2n-5+\sum\limits^n_{m=1}{{n-1}\choose{n-m}}+\sum\limits_{|a|=3}^{n-1}\sum\limits_{m=3}^{|a|}{{|a|-1}\choose{|a|-m}}(m-1)+ \sum\limits_{|a|=1}^{n-4}\sum\limits_{m=3}^{|a|+2}{{|a|-1}\choose{|a|-(m-2)}}(n-|a|-2)$.
\end{proof}
\NewEight*
\begin{proof}Let $w\in R_n(14/2/3,1/24/3)$. Then by Theorem 3.10 we know that,
\begin{itemize} 
	\item[i)] $w$ could have $m\leq 2$. Because $1/24/3$ has three blocks, and any $w$ of this form will be associated with a partition of only two blocks, $w\in R_n(1/24/3)$.
    \item[ii)] Next consider when $w=ab$ where $a$ is a prefix of $1$s and $2$s in any legal RGF order followed by a weakly increasing suffix $b$ which begins with the first $3$. Then because there are never two or more distinct letters between two of the same letters in $w$, $w\in R_n(14/2/3)$.
    \item[iii)]Finally $w$ could be a weakly increasing word with a block of $1$s inserted between two distinct letters. Then $w\not\in R_n(14/2/3)$ because the the subword of the first $1$, $2$, and $3$, and first $1$ of the second block of $1$s will be associated with the standardized subpartition $14/2/3$. Therefore $w$ cannot take this form.
\end{itemize}

\end{proof}
$$\#\Pi_n (14/2/3,1/24/3)=2^{n-1}+\sum\limits_{|b|=1}^{n-2}\sum\limits^{|b|}_{i=1}{{|b|-1}\choose{|b|-i}}(2^{n-|b|}-1)$$
\begin{proof} Let $w\in R_n(14/2/3,1/24/3)$. Then by the word characterization we have:
\begin{itemize}
	\item[i)]When $w$ has $m\leq 2$ we know that there are $2^{n-1}$ possible words of this form.
    \item[ii)] When $w=ab$ where $a$ is a prefix of $1$s and $2$s in any order and $b$ is weakly increasing beginning with the first occurrence of $3$, we know that $w$ could be weakly increasing, and that there are a total of $2^{n-|b|}-1$ possible forms for $a$. Then by Lemma 2.2 we know that there are a total of $\sum\limits_{|b|=1}^{n-2}\sum\limits_{i=1}^{|b|}{{|b|-1}\choose{|b|-i}}(2^{n-|b|}-1)$ possible $w$ where $i=m-1$.
\end{itemize}
Therefore $\#\Pi_n (14/2/3,1/24/3)=2^{n-1}+\sum\limits_{|b|=1}^{n-2}\sum\limits^{|b|}_{i=1}{{|b|-1}\choose{|b|-i}}(2^{n-|b|}-1)$.
\end{proof}

\NewSeven*
\begin{proof} Let $w\in R_n(14/2/4,13/2/4)$. Then by Theorem 3.9 we know the following,
\begin{itemize}
	\item[i)] The first form $w$ could take is when $m\leq 2$, then from part (i) of the previous proof $w\in R_n(14/2/3)$.
    \item[ii)] Next consider when $w=ab$ where $a$ is weakly increasing and $b$ contains a block of $z<(m-1)$. This $w\not\in R_n(14/2/3)$ because if we consider the subword associated with the first $z$, the first $(m-1)$, and the first $m$ along with the first occurrence of $z\in b$, this is associated with the standardized subpartition $14/2/3$.
    \item[iii)] Finally consider when $w=ab$ where $a$ is weakly increasing and $b$ contains $m$ and $m-1$ in any legal RGF order. Then there are never two or more distinct letters between two of the same letter so $w\in R_n(14/2/3)$.
\end{itemize}
\end{proof}
$$\#\Pi_n (14/2/3,13/2/4)=2^{n-1}+\sum\limits_{|a|=1}^{n-2}\sum\limits^{|a|}_{i=1}{{|a|-1}\choose{|a|-i}}(2^{n-|a|}-1)$$
\begin{proof} Let $w\in R_n(14/2/3,13/2/4)$. Then by the word characterization we have:
\begin{itemize}
	\item[i)]When $w$ has $m\leq 2$ we know that there are $2^{n-1}$ possible words of this form.
    \item[ii)] When $w=ab$ where $a$ is a weakly increasing prefix and $b$ contains $(m-1)$ and $m$ in any order, we know that $w$ could be weakly increasing, and that there are a total of $2^{n-|a|}-1$ possible forms for $b$. Then by Lemma 2.2 we know that there are a total of $\sum\limits_{|a|=1}^{n-2}\sum\limits_{i=1}^{|a|}{{|a|-1}\choose{|a|-i}}(2^{n-|a|}-1)$ possible $w$ where $i=m-1$.
\end{itemize}
Therefore $\#\Pi_n (14/2/3,13/2/4)=2^{n-1}+\sum\limits_{|a|=1}^{n-2}\sum\limits^{|a|}_{i=1}{{|a|-1}\choose{|a|-i}}(2^{n-|a|}-1)$.
\end{proof}

\EightFive*
\begin{proof} By Theorem 3.7 and Theorem 3.10.
\end{proof}
$$\#\Pi_n(1/24/3,134/2)=2+\sum_{m=2}^{n-1}({n-1\choose n-m}+{n-2\choose n-m-1}(m-1))$$
\begin{proof} Let us consider the cardinality of the avoidance class. As we have shown there are two cases we need to consider.
\begin{itemize}
	\item[(i)] The first case is when $w$ is weakly increasing. Then as shown in previous proofs (find for reference here), there will be ${n-1\choose n-m}$ different words for each possible $m$ where we know that $1\leq m\leq n$. Therefore when $w$ is weakly increasing there are $\sum_{m=1}^n{n-1\choose n-m}=2+\sum_{m=2}^{n-1}{n-1\choose n-m}$.
    \item[(ii)] The second, and only other option, is that $w=ab$ where $a$ is weakly increasing, followed by $b$, where $|b|=1$ and $b$ contains only the letter $z\leq(m-1)$. First consider the different possibilities for the weakly increasing prefix. We know that $|a|=n-1$ because $|b|=1$, and we know that when we have a weakly increasing word there are ${n-2\choose n-m-1}$ for some fixed $m$ where $2\leq m\leq n-1$. Then there are also $(m-1)$ options for $z$ because $z\leq(m-1)$. Therefore there are $\sum_{m=2}^{n-1}{n-2\choose n-m-1}(m-1)$ possible words of the form $w=ab$. 
   
\end{itemize}
Therefore $\#\Pi_n(1/24/3,134/2)=2+\sum_{m=2}^{n-1}({n-1\choose n-m}+{n-2\choose n-m-1}(m-1))$.
\end{proof}

\EightSix*
\begin{proof} Let $w\in R_n(1/24/3)$, then by Theorem 3.10 we know that 
\begin{itemize} 
	\item[i)] We have $m\leq 2$, then $w\in R_n(1/23/4)$ also because $w$ is associated with a partition of only two blocks.
    \item[ii)] Next $w=ab$ where $a$ is a prefix of $1$s and $2$s in any order followed by the weakly increasing suffix $b$. However for $w\in R_n(1/23/4)$ we must have $b$ strictly increasing except for $m$, and there must only be one occurrence of $2\in a$, and either one $1$ after $2$ or the entire word is increasing.
    \item[iii)]Finally $w$ is a weakly increasing word with a block of $1$s inserted. This block of $1$s must be of length one unless the block occurs at the end of the word for $w\in R_n(1/23/4)$ and the only other letter than can be repeated is $1$, or if the block of inserted $1$s occurs somewhere else, $m$ can also be repeated.
\end{itemize}
\end{proof}
$$\#\Pi_n(1/24/3,124/3)=2+\sum_{m=2}^{n-1}({n-1\choose n-m}+{n-2\choose n-m-1}(m-1))$$
\begin{proof} Let $w\in R_n(1/24/3,124/3)$. Then there are several possibilities.
\begin{itemize}
	\item[i)] We have $w$ where $w$ is weakly increasing. Then by Lemma 2.2 there are $\sum\limits_{m=1}^{n}{{n-1}\choose{n-m}}=2+\sum\limits_{m=2}^{n-1}{{n-1}\choose{n-m}}$.
    \item[ii)] Otherwise $w=ab$ where $b$ contains a block of $z$ where $z$ a singleton-block in $w$ and $a$ is weakly increasing. Then $1\leq z\leq (m-1)$ and there are $m$ blocks to assign $n-m-1$ letters. So by multisets we have that there are $\sum\limits_{m=2}^{n-1}{{n-2}\choose{n-m-1}}(m-1)$ ways to order $w$.
\end{itemize}
Therefore $\#\Pi_n(1/24/3,124/3)=2+\sum_{m=2}^{n-1}({n-1\choose n-m}+{n-2\choose n-m-1}(m-1))$.
\end{proof}
\SevenSag*
\begin{proof} Let $w\in R_n(13/2/4)$. Then by Theorem 3.9,
\begin{itemize}
	\item[i)] First we can consider when $m\leq 2$ which also indicates that $w\in R_n(13/2/4)$.
    \item[ii)] Next we can consider when $w=ab$ where $a$ is weakly increasing and $b$ is some letter $z$ that $z\leq m$, then for $w\in R_n(1/23/4)$ we also know that only $1$ and $z$ can be repeated.
    \item[iii)] Finally consider when $w=ab$ such that $a$ is weakly increasing and $b$ contains $m$ and $m-1$ in any order (such that $b$ is not the same as $b$ from part (ii)). Then $w\not\in R_n(1/23/4)$ because we can take the subword of the first $1$, the first $m-1$, the next $m-1$ and the $m$ that must occur after these two (m-1) terms. The standardized subpartition associated with this subword is $1/23/4$.
\end{itemize}
\end{proof}
$$\#\Pi_n (13/2/4,1/23/4)=1+2^{n-1}+\sum\limits_{m=3}^{n-1}(nm-m^2)$$
\begin{proof} Let $w\in R_n(13/2/4,1/23/4)$ where $m\leq 2$. There are $2^{n-1}$ possible $w$ of this form.
\par Next let $w=abc$ where $a$ is a block of $1$s, $b$ a strictly increasing word from $2$ to $m$ and $c$ a potentially empty block of some letter $z\leq m$. Then when $z=m$ we know that $3\leq m\leq n$ and that there are $n-m+1$ options for the length of the block of $1$s which also defines the length of the block of $m$s. So when $z=m$ there are $\sum\limits_{m=3}^{n}(n-m+1)=1+\sum\limits_{m=3}^{n-1}(n-m+1)$ possible $w$ of this form. Next consider when $z\leq (m-1)$. Then $3\leq m\leq (n-1)$, there are $m-1$ possible $z$, and the block of $1$s can vary in length from one to $n-m$. Therefore there are $\sum\limits_{m=3}^{n-1}(n-m)(m-1)$ possible $w$ of this form. 
\par Therefore $\#\Pi_n(13/2/4,1/23/4)=1+2^{n-1}+\sum\limits_{m=3}^{n-1}(n-m+1+nm-m^2-n+m)=1+2^{n-1}+\sum\limits_{m=3}^{n-1}(nm-m^2)$
\end{proof}
\EightFive*
\begin{proof} First let $w\in R_n(1/24/3,134/2)$. Then $w\in R_n(1/24/3)$ and $w\in R_n(134/2)$. By Theorem 3.7 and Theorem 3.10, $w$ can either be weakly increasing, or if there is a block-singleton present it must be $1$.

\par Next let us consider the cardinality of the avoidance class. As we have shown there are two cases we need to consider.
\begin{itemize}
	\item[(i)] The first case is when $w$ is weakly increasing. Then by Lemma 2.2  there are $\sum_{m=1}^n{n-1\choose n-m}=2+\sum_{m=2}^{n-1}{n-1\choose n-m}$ possible forms of $w$.
    \item[(ii)] The second, and only other option, is is weakly increasing except for a block-singleton $1$, however the only difference here is that there are $(m-1)$ options for the distinct letter that occurs before the singleton $1$ and there are only $n-1$ weakly increasing letters. Therefore there are $\sum_{m=2}^{n-1}{n-2\choose n-m-1}(m-1)$ possible words. 
   
\end{itemize}
Therefore $\#\Pi_n(1/24/3,134/2)=2+\sum_{m=2}^{n-1}({n-1\choose n-m}+{n-2\choose n-m-1}(m-1))$.
\end{proof}

\multistat*

\begin{proof} The first two formulas for $ls(w)$ and $rb(w)$ are given by Lemma 2.1, 2.2. Now let $w\in R_n(1/23, 13/2/4)$. Then by Theorem 6.3 there are two possibilities for the form of $w$. The first is when $w=ab$ where $a$ is a block of ones and $b$ is strictly increasing. Then $w$ is weakly increasing so $lb(w)=0$ and $rs(w)=0$. The other possibility is that $w=abc$ where $a$ is a block of $1$'s, $b$ is strictly increasing, and $c$ is a $1$ at the end of the word. Then because $ab$ this portion of the word will not contribute to $lb(w)$. Now $w$ will have a one after $ab$ so there will be $m-1$ elements that are larger and occur before this $1$. Therefore when $w=abc$, $lb(w)=m-1$. Now consider the right and smaller statistic. Every element in $b$ will be larger than the $1$ in $c$, so each will have exactly one contribution to $rs(w)$. Therefore when $w=abc$, $rs(w)=|b|$.
\end{proof}

\newpage

\end{document}